\documentclass[12pt]{article}
\usepackage[utf8]{inputenc}
\usepackage{graphicx} % Required for inserting images
\usepackage{amsmath,amssymb,amstext,amsthm}
\usepackage[margin=1in]{geometry}
\usepackage[bbgreekl]{mathbbol}
\usepackage{tikz-cd}
\usepackage{stmaryrd}
\usepackage{url}
\usepackage{appendix}

\usepackage[english]{babel}
\newtheorem{theorem}{Theorem}[section]
\newtheorem{corollary}[theorem]{Corollary}
\newtheorem{lemma}[theorem]{Lemma}
\newtheorem{proposition}[theorem]{Proposition}

\theoremstyle{definition}
\newtheorem{definition}[theorem]{Definition}

\newtheorem{remarkx}[theorem]{Remark}
\newenvironment{remark}
  {\pushQED{\qed}\remarkx}
  {\popQED\endremarkx}
  
\newtheorem{examplex}[theorem]{Example}
\newenvironment{example}
  {\pushQED{\qed}\examplex}
  {\popQED\endexamplex}

\usepackage[backend=biber, style=alphabetic, maxnames=99, backref=true]{biblatex}
\usepackage{csquotes} %idk what this does
\newcommand{\N}{\mathbb N}
\newcommand{\C}{\mathbb C}

\newcommand{\Z}{\mathbb Z}

\newcommand{\Span}{\text{span}}
\newcommand{\mc}[1]{\mathcal{#1}}

\newcommand{\<}{\langle}
\renewcommand{\>}{\rangle}

\newcommand{\Ann}{\text{Ann}}

\renewcommand{\hat}[1]{\widehat{#1}}
\renewcommand{\subset}{\subseteq}

\newcommand{\eh}{\otimes_{eh}}
\newcommand{\h}{\otimes_{h}}

\newcommand{\Tr}{\text{Tr}}
\renewcommand{\bar}[1]{\overline{#1}}

\usepackage{mathtools}

\usepackage{float}

\tikzcdset{scale cd/.style={every label/.append style={scale=#1},
    cells={nodes={scale=#1}}}}

\definecolor{nicegreen}{rgb}{0,0.75,0}
\definecolor{niceblue}{rgb}{0,0,.8}
\usepackage[pdftex,pagebackref=false]{hyperref}
\hypersetup{
    colorlinks=true,        % false: boxed links; true: colored links
    linkcolor=niceblue,         % color of internal links
    citecolor=nicegreen,        % color of links to bibliography
    filecolor=magenta,      % color of file links
    urlcolor=cyan           % color of external links
}

\begin{document}

\title{Categorical (Co)Limits of Quantum Graphs}
\author{Jennifer Zhu}
\date{\today}

\maketitle

\begin{abstract}
%    Quantum graphs have garnered increasing interest in the past few years, having been of independent interest in quantum information theory and operator algebras (among other fields) and providing fertile ground for interactions between these fields. This paper mostly develops the theory of quantum graphs from an operator algebraist's point of view, although we make some allusions to quantum information theory and categorical quantum theory as well.    
    We begin with the characterization of quantum graphs as left ideals in $\mathcal M \otimes_{eh} \mathcal M$ (the extended Haagerup tensor product of $\mathcal M$ with itself) to avoid technicalities surrounding representation dependence of quantum graphs. These left ideals roughly correspond to a canonical complement of a quantum graph. Using these left ideals and some operator space theory, we find a new, representation-free characterization of a morphism of quantum graphs compatible with previous representation-dependent morphisms. A notion of categorical (co)limit of quantum graphs follows. We also briefly explore an alternative quantization of graphs as bimodules over $C^*$-algebras ($C^*$-graphs), mostly to emphasize the point that a morphism of $C^*$-graphs is not a morphism of $C^*$-correspondences.
\end{abstract}

\tableofcontents

\section{Introduction}

Many authors have explored the notion of quantum relations set out by Weaver in \cite{weaverqrelations} from a variety of perspectives. A quantum relation by Weaver's definition is an $\mc M'$-bimodule $\mc S \subset B(\mc H)$ (where $\mc M \subset B(\mc H)$ is a von Neumann algebra). When $\mc H$ is finite dimensional, these are precisely the quantum relations in \cite[Section 7]{musto-reutter-verdon}. The \textit{diagrammatic calculus} (known by many names) used in \cite{musto-reutter-verdon} has ben a fruitful avenue of research. Calculations using these diagrams can be elegant and intuitive, but they are fundamentally limited by finite dimensionality (see Chapters 2 and 3 in \cite{heunen-vicary} for an enjoyable explanation and Appendix A of the author's thesis for a brutally short summary). Thus Weaver's quantum relations provide a natural setting for infinite dimensional extensions and versions of these diagrammatically defined objects. The motivation for this paper was to develop a notion of limit for quantum graphs to build a (possibly) infinite quantum graph from finite quantum graphs. At first blush, quantum relations seem representation dependent but Weaver immediately shows that they only depend on the von Neumann algebra $\mc M$, not the ambient space $B(\mc H)$ (\cite[Theorem 2.7]{weaverqrelations}). Generally, however, one works in a particular representation and then appeals to the fact that the relevant properties of quantum graphs are representation independent.

If we were to use this characterization of quantum graphs in to define a categorical limit, however, it is unclear how a morphism of quantum graphs $\mc S_1 \subset B(\mc H_1)$ and $\mc S_2 \subset B(\mc H_2)$ should behave on the ambient spaces $B(\mc H_1)$ and $B(\mc H_2)$. Furthermore, we may encounter issues regarding the uniqueness of the categorical limit. We are thus pushed to avoid the usual characterization of a quantum graph as an $\mc M'$-bimodule $\mc S \subset B(\mc H)$, leading to a new notion of morphism of quantum graphs in Definition \ref{defn: qgraph morph}. This new notion is equivalent to a CP morphism of quantum graphs in \cite[Definition 7.1]{daws} and a classical morphism of quantum graphs in \cite[Definition 5.4]{musto-reutter-verdon} under the appropriate assumptions. It also has the advantage of giving a clean visualization of categorical limits: compare the diagrams in Section \ref{sec: limit of q graphs} where we use this new notion of morphism and the ones in Theorem \ref{thm: c*graphlimit} where we define the morphism on the level of bimodules.

Although this paper is not focused on the quantum information background or implications, we will see some comments sprinkled through the paper referring to the quantum information theoretic roots. We will introduce the connection between quantum channels and quantum graphs here, but the reader may wish to consult the original paper \cite{dsw} for further explanation. The authors of \cite{dsw} are inspired by Shannon's analysis of confusability graphs of noisy classical channels in \cite{shannon}. We can consider such a channel as a probabilistic function $N: A \to B$ between finite sets $A$ and $B$ where we allow for the possibility that $N$ does not always send an input $a$ to a specified output $N(a)$ (this is the introduction of ``noise'' to the channel). We assume we know the probability $N(b|a)$ of $N$ sending $a$ to $b$ for every $a \in A$ and $b \in B$. Such a channel $N$ gives rise to a \textit{confusability graph}.

\begin{definition}
    The \textit{confusability graph} of a noisy classical channel $N: A \to B$ is the graph $G_N = (A, E)$, where
    \[
    (a_1, a_2) \in E \iff \exists b \in B \text{ s.t. } N(b|a_1 )N(b|a_2) \neq 0.
    \]
\end{definition}

The name is derived from the fact that an edge exists between two inputs if and only if they might be sent to the same output (i.e., confused) after being sent through the channel. With an eye towards quantization, we write this classical channel as a quantum channel. Fix Hilbert spaces $\C^{|A|}$ and $\C^{|B|}$ with (orthonormal) bases $\{|a\>\}_{a \in A}$ and $\{|b\>\}_{b \in B}$ respectively. Identify each element $a \in A$ with the quantum state $|a\>\<a| \in B(\C^{|A|})$, and similarly for each $b \in B$. Define the quantum channel $Q_N : B(\C^{|A|}) \to B(\C^{|B|})$ associated to $N$ by
\begin{align*}
    Q_N (T) := \sum_{a \in A, b \in B}\underbrace{\sqrt{N(b|a)} |b\>\<a|}_{K_{ab}} \, T \,  \underbrace{\sqrt{N(b|a)} |a\>\<b|.}_{K_{ab}^\dagger}
\end{align*}
Note that for $a_1, a_2 \in A$ and $b_1,b_2 \in B$ the product of $K_{a_1b_1}^\dagger K_{a_2b_2}$ is
\[
K_{a_1b_1}^\dagger K_{a_2b_2} = \delta_{b_1b_2} \sqrt{N(b_1|a_1) N(b_2|a_2)} \, |a_1\>\<a_2|
\]
so the confusability graph of $N$ can be recovered from the vector space 
\[
\mc S := \Span\{ K_{a_1b_1}^\dagger K_{a_2b_2} : a_1,a_2 \in A, \, b_1, b_2 \in B\} \subset B(\C^{|A|}).
\]
because $|a_1\>\< a_2| \in \mc S$ if and only if there exists some $b$ such that $N(b|a_1)N(b|a_2) \neq 0$. The authors of \cite{dsw} then quantize the above notions by taking a quantum channel (a CPTP map, in their framework) written in a Kraus form
\begin{align*}
    Q : B(\mc H_1) &\to B(\mc H_2) \\
    T &\mapsto \sum_{i \in I} K_i T K_i^\dagger
\end{align*}
and defining its \textit{quantum confusability graph} \cite[Equation (2)]{dsw} by
\[
\mc S := \Span \{K_i^\dagger K_j : i,j \in I\} \subset B(\mc H_1).
\]
These quantum confusability graphs are (the canonical) examples of quantum graphs to which we refer in this paper.

The sections are laid out as follows. Section \ref{sec: limits-prelim} is an overview of the conventions and necessary operator space and category theory for the remainder of the paper. We will use the framework of quantum relations in \cite[Definition 2.4]{weaverqrelations}, but we will depart from Weaver's naming convention and call them quantum graphs. A quantum graph according to \cite[Definition 2.4]{weaverqrelations} will be specified as a reflexive, symmetric quantum graph in this paper. Section \ref{sec: morphisms} presents the intuition behind our new morphism and defines it. Section \ref{sec: limits-qgraphs} shows how some properties of quantum graphs (considered as operator space bimodules over $\mc M'$ for some von Neumann algebra $\mc M$) are reflected in their annihilators (which are left ideals in $\mc M \eh \mc M$). Section \ref{sec: limit of q graphs} finally takes the categorical limit of a class of quantum graphs. See Remark \ref{rmk: limit of q graphs} for the categorical colimit. In Section \ref{sec: opc*sp} we take heavy inspiration from \cite{mawtod} to explore the consequences of defining graphs as $C^*$-algebra bimodules and characterize the morphism given in Section \ref{sec: morphisms} on the level of operator spaces $\mc S$. Ultimately, however, it seems that the notion of morphism in Section \ref{sec: morphisms} is more natural in the context of (co)limits. Section \ref{sec: remarks} contains some reflections on the previous sections, and Section \ref{sec: questions} collects some further questions and future avenues of research. In Appendix \ref{appendix: string} the reader will find the graphical calculus conventions and definitions needed to prove that a ``classical morphism of quantum graphs'' defined in \cite[Definition 5.4]{musto-reutter-verdon} is equivalent to our new definition of morphism under the more restrictive set of assumptions.

\subparagraph{Acknowledgements.} This paper was written as part of the author's PhD thesis. The author thanks her advisor Michael Brannan for a herculean effort of patience and encouragement over the years and her thesis committee for providing feedback and corrections.

\section{Preliminaries} \label{sec: limits-prelim}

By limits in this paper we refer to categorical limits (projective limits) and colimits (inductive limits); see \cite[Chapter III.4]{cats-working}, \cite[Chapter III.3]{cats-working}. Such limits are defined in purely categorical terms (i.e., by objects and morphisms), so we need only identify the correct notion of quantum relations and their morphisms to apply the definition of categorical (co)limit. 

Our objects will of course be quantum relations as defined in Definition \ref{defn: qgraph}. As mentioned in the introduction, we refer to Weaver's quantum relations as quantum graphs. Weaver's quantum graphs will be specified as symmetric, reflexive quantum graphs.

A quantum graph in \cite{weaverqrelations} is given by a von Neumann algebra $\mc M$ (roughly, the quantization of the vertices) and an $\mc M'$-bimodule $\mc S \subset B(\mc H)$ , where $B(\mc H)$ houses a representation of $\mc M$ (roughly, the quantization of the edges). However, our morphism uses alternate characterization of quantum graphs -- keeping the same von Neumann algebra $\mc M$ as our quantization of vertices, we will quantize the complementary edges of the graph instead. These complementary edges live in $\mc M \eh \mc M$, the extended Haagerup tensor product of $\mc M$ with itself. We will introduce the extended Haagerup tensor product $\otimes_{eh}$ in Section \ref{subsec: opsp} and state more precisely which subspaces correspond to quantum graphs. 

The symbol $\otimes$ will denote the algebraic tensor product. We will be using both $^*$ and $^\dagger$ in this paper. The former will denote the involution in a $*$-algebra and the latter will denote the adjoint of a bounded linear map between Hilbert spaces. At times these notions will coincide, but the context will hopefully be clear enough to avoid confusion. When a predual of a space $X$ exists and is unique, we will denote it by $X_*$. We will denote the topology on $X^*$ induced by a space $X$ by $\sigma(X)$. The unit of an algebra $\mc A$ will be denoted $1_{\mc A}$.

\subsection{Operator Algebras} \label{prelim: op alg}

We will assume the reader is familiar with operator algebras, but we set down some notation and naming conventions here. Throughout this paper $\mathcal H$ will denote a Hilbert space and $B(\mathcal H)$ the bounded operators on $\mc H$. A vector $|\xi\rangle \in \mc H$ will be denoted as so, and our inner products $\langle \cdot |\cdot \rangle$ will be $\C$-linear on the right and anti-linear on the left. Our trace $\Tr_{\mc H}: B(\mc H) \to \C \cup \{\infty\}$ will be the following:
\[
\Tr_{\mathcal H} (T) = \sum_{i \in I}\<\xi_i | T|\xi_i\>
\]
for any orthonormal basis $\{|\xi_i\>\}_{i \in I}$. We denote the identity operator on a Hilbert space $\mc H$ by $I_{\mc H}$.  The \textit{trace class operators} in $B(\mc H)$ are operators $T$ such that 
\[
\Tr_{\mc H} (\sqrt{T^*T}) < \infty.
\]
We will at times identify the trace class operators in $B(\mc H)$ with the predual $B(\mc H)_*$ via 
\begin{align*}
    \{\text{trace class operators}\} &\to B(\mc H)_* \\
    T &\mapsto \Tr_{\mc H}(T \cdot).
\end{align*}
The compact operators in $B(\mc H)$ will be denoted $K(\mc H)$. The notation $B(\ell^2)$ refers to the bounded operators on the Hilbert space $\ell^2(I)$ for some index $I$. For nearly every application in this paper, we may assume $I=\N$.

A von Neumann algebra $\mc M$ will be a $C^*$-algebra with a (unique) Banach space predual $\mc M_*$. The spatial tensor product of von Neumann algebras $\mc M$ and $\mc N$ will be a von Neumann algebra denoted $\mc M \overline{\otimes} \mc N$. We will define it as the dual of $\mc M_* \hat \otimes \mc N_*$ (see Definition \ref{defn: osptp} for $\hat \otimes$), but if one were to represent $\mc M \subset B(\mc H)$ and $\mc N \subset B(\mc K)$ then $\mc M \bar \otimes \mc N = (\mc M \otimes \mc N)'' \subset B(\mc H \otimes_2 \mc K)$ (see \cite[Definition 5.1]{takesaki1}). 

\subsection{Quantum Graphs}

For reference we state the quantum graph definitions and conventions we will be using with the modifications mentioned above.

\begin{definition} \protect{\cite[Definition 2.1]{weaverqrelations}} \label{defn: qgraph}
A \textit{quantum graph} on a von Neumann algebra $\mc M \subset B(\mc H)$ is a weak$^*$ closed subspace $\mc S \subseteq B(\mc H)$ such that $\mc M' \mc S \mc M' \subset \mc S$.
\end{definition}

\begin{definition} \protect{\cite[Definition 2.4(d)]{weaverqrelations}} \label{defn: qgraph properties}
    Let $\mc M \subset B(\mc H)$ be a von Neumann algebra. A quantum graph $\mc S$ on $\mc M$ is
    \begin{enumerate}
        \item \textit{reflexive} if $\mc M' \subset \mc S$
        \item \textit{symmetric} if $\mc S^* = \mc S$
        \item \textit{transitive} if $\mc S^2 \subset \mc S$.
    \end{enumerate}
\end{definition}

\begin{remark}
We immediately have a reflexive, symmetric, transitive quantum graph is a von Neumann algebra containing $\mc M'$. As Weaver remarks in \cite[Definition 2.6]{weaverqrelations}, there is a bijection between von  Neumann algebras containing $\mc M'$ and von Neumann algebras contained in $\mc M$. Hence the choice to make quantum graphs $\mc M'$-bimodules instead of $\mc M$-bimodules is arbitrary from this point of view, although there is some motivation for $\mc M'$-bimodules from the Knill-Laflamme model of quantum error correction. (See \cite[``Subsystems'']{klv} or Chapter 1.3 of the author's thesis.)
\end{remark}

\begin{example}\protect{\cite[Proposition 2.2]{weaverqrelations}} \label{ex: classical-to-quantum-graph}
    A classical graph $G = (V,E)$ on any number of vertices where $E \subset V \times V$ is a quantum graph $(\mc M, \mc S)$ in the following way: Take the von Neumann algebra to be $\mc M := \ell^\infty(V) \subset B(\ell^2(V))$, where we consider a function $f \in \ell^\infty(V)$ to be a bounded operator on $\ell^2(V)$ by pointwise multiplication. Take the operator space $\mc S$ to be
    \[
    \mc S := \overline{\Span}^{w^*} \{|x\>\<y|: (x,y) \in E\} \subseteq B(\ell^2(V)).
    \]
    This coincides with the notion of graph operator system used in \cite{ortpaul}, for example, for graphs on finitely many vertices.
\end{example}

\subsection{Operator Space Theory} \label{subsec: opsp}
As mentioned in the introduction, we will need some operator space theory in order to quantize the complementary edges of a quantum graph. The full explanation of this quantization is given in Subsection \ref{subsec: classical graph morphisms}, but at the end of this subsection we will see how $\eh$ recovers the complementary edges of a finite classical graph. We will also make use of the topologies laid down here to ensure we have the correct continuity of various operations in Section \ref{sec: limit of q graphs}.

Basic definitions from operator space theory are given below, but the following is by no means an in-depth exploration or explanation. A few standard references for operator spaces are \cite{effros-ruan-book}, \cite{intro-op-sp-pisier}, and \cite{paulsen-cb}. 

\begin{definition}
    A \textit{concrete operator space} is a norm-closed subspace $X \subset B(\mc H)$ for some Hilbert space $\mc H$.
\end{definition}

\begin{definition}
    An \textit{abstract operator space} is a vector space $X$ with a sequence of norms $\|\cdot \|_{M_n(X)} : M_n \otimes X \to [0,\infty)$ such that 
    \begin{enumerate}
        \item [(OS1)] the norm $\|\cdot\|_{M_1(X)} : M_1 \otimes X \to [0,\infty)$ endows $M_1 \otimes X \cong X$ with a Banach space structure,
        \item [(OS2)] for $x \in M_m \otimes X$ and $y \in M_n \otimes X$,
        \[
        \|x \oplus y\|_{M_{m+n}(X)} = \max\{\|x\|_{M_m(X)},\|y\|_{M_n(Y)}\},
        \]
        \item [(OS3)] and for $\alpha, \beta \in M_m$ and $x \in M_m \otimes X$
        \[
        \|\alpha x \beta\|_{M_m(X)} \leq \|\alpha\| \|x\|_{M_m(X)} \|\beta\|.
        \]
    \end{enumerate}
    We may occasionally abbreviate $\|\cdot \|_{M_m(X)}$ by $\|\cdot \|_m$ when the context is clear. This sequence of norms is the operator space structure (OSS) on the operator space $X$.
\end{definition}

Concrete and abstract operator spaces are ``the same.'' To state this precisely we need some terminology.

\begin{definition}
    Given a linear map $\theta: X \to Y$ between two abstract operator spaces, we can define an \textit{amplification} of $\theta$:
    \[
    I_{M_n} \otimes \theta : M_n \otimes X \to M_n \otimes Y.
    \]
    We say $\theta$ is \textit{completely bounded} if
    \[
    \|\theta\|_{cb} := \sup_n \{\|Id_n \otimes \theta\|\} < \infty,
    \]
    where $\|Id_n \otimes \theta\|$ is the operator norm of the amplification between the normed spaces $(M_n \otimes X, \|\cdot\|_n)$ and $(M_m \otimes Y, \|\cdot\|_m)$. We say $\theta$ is \textit{completely contractive} if $\|\theta\|_{cb} \leq 1$ and \textit{completely isometric} if for $I_{M_n} \otimes \theta$ is an isometry for each $n$.
    Finally $\theta$ is a \textit{completely isometric isomorphism} if $\theta$ is a surjective complete isometry.
\end{definition}

\begin{remark}
Every $C^*$-algebra $\mc A$ admits a unique norm such that 
\[
\|a^*a\| = \|a\|^2 \qquad \forall a \in \mc A.
\]
Fundamentally, this is because the norm is induced by an algebraic property of the elements. Therefore $M_n \otimes \mc A$ has a unique norm $\|\cdot \|_n$ that ensures it is a $C^*$-algebra, and we always have a canonical OSS on a $C^*$-algebra. 
\end{remark}

Every concrete operator space therefore is an abstract operator space. The reverse implication comes from Ruan's thesis.

\begin{theorem} \cite[Theorem 3.1]{ruan}
    Every abstract operator space is completely isometrically isomorphic to a concrete operator space.
\end{theorem}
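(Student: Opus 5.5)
The plan is to build, from the matrix norms of an abstract operator space $X$, a large enough family of complete contractions $\varphi: X \to M_n$ to recover every matrix norm, and then to take a direct sum to produce a single Hilbert space. Concretely, for each $n$ set
\[
\mathcal F_n := \{\varphi : X \to M_n \text{ linear}, \ \|\varphi\|_{cb} \le 1\},
\]
and define the map
\[
J : X \to B\Big(\bigoplus_{n}\bigoplus_{\varphi \in \mathcal F_n} \C^n\Big), \qquad J(x) = \bigoplus_{n,\varphi} \varphi(x).
\]
Each summand is a complete contraction, so $J$ is a complete contraction. It is therefore enough to show the following: for every $m$ and every $x \in M_m(X)$ with $\|x\|_m = 1$, there is some $\varphi \in \mathcal F_m$ with $\|(I_{M_m}\otimes\varphi)(x)\| = 1$. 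This makes $J$ completely isometric. Its image is then closed, which gives a concrete operator space $J(X)$ completely isometrically isomorphic to $X$.

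The key step, and the main obstacle, is this norming property, which is a matricial Hahn--Banach theorem. I would prove it by a separation argument in the style of Effros--Ruan. Fix $x$ with $\|x\|_m = 1$. By ordinary Hahn--Banach, pick a functional $f$ on $M_m(X)$ with $\|f\| = 1$ and $f(x) = 1$. Then use (OS2) and (OS3) to upgrade $f$ to a functional that respects the matrix structure. The aim is states $\sigma, \tau$ on $M_m$ such that
\[
|f(\alpha y \beta)| \le \sigma(\alpha\alpha^*)^{1/2}\, \|y\|_{k}\, \tau(\beta^*\beta)^{1/2}
\]
for all $\alpha \in M_{m,k}$, $y \in M_k(X)$, $\beta \in M_{k,m}$.

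To get this, consider on the convex compact set of pairs of states $(\sigma, \tau)$ the functions
\[
(\sigma,\tau) \mapsto \tfrac12\big(\sigma(\alpha\alpha^*) + \tau(\beta^*\beta)\big) - \mathrm{Re} f(\alpha y \beta), \qquad \|y\|_k \le 1 .
\]
Conditions (OS2) and (OS3) show that these functions form a convex cone, in the sense that direct sums of the data correspond to sums of the functions. They also show that each such function is nonnegative somewhere, using the bound $\|\alpha y \beta\| \le \|\alpha\|\|\beta\|$ together with $ab \le (a^2+b^2)/2$. A minimax argument (Ky Fan, or Hahn--Banach on $C(\text{state space})$) then gives a single pair $(\sigma,\tau)$ at which all of these functions are nonnegative. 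Rescaling $\alpha$ and $\beta$ converts the arithmetic-mean bound into the geometric-mean bound displayed above.

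Given $\sigma$ and $\tau$, the GNS construction yields Hilbert spaces of dimension at most $m$, which can be padded to $\C^m$. It also yields vectors $\xi, \eta$ and a map $\varphi: X \to M_m$ defined by $\langle \xi_\alpha | \varphi(\cdot)|\eta_\beta\rangle$-type formulas, so that $f(x) = \langle \xi | (I\otimes\varphi)(x) \eta\rangle$ with $\|\xi\| = \|\eta\| = 1$. The displayed inequality is exactly the statement $\|\varphi\|_{cb} \le 1$. Then
\[
1 = f(x) \le \|(I\otimes\varphi)(x)\| \le 1 ,
\]
which gives the norming property and completes the argument. Verifying the cone and convexity step from (OS2) and (OS3) is the delicate part. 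Everything else is routine.
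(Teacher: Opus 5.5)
Your argument is correct. It is the standard Effros--Ruan proof of Ruan's theorem: norm each $x\in M_m(X)$ by a complete contraction into $M_m$, obtained from the two-state separation lemma and a GNS construction, then take the direct sum over all such maps. The paper gives no proof of its own and simply cites Ruan, so this is the argument it relies on.

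The one point to make explicit is that the paper's (OS3) is stated only for square $\alpha,\beta\in M_m$. The rectangular estimate $\|\alpha y\beta\|_m\le\|\alpha\|\,\|y\|_k\,\|\beta\|$ in your nonnegativity step should therefore first be derived by padding $\alpha$, $y$, $\beta$ with zero blocks and invoking (OS2).
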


It thus generally suffices to assume an operator space is embedded in some $B(\mc H)$.

\subsubsection{Operator Space Tensor Norms}

The compatibility of the morphisms in the limit of quantum graphs relies heavily on a particular operator space tensor product, the extended Haagerup tensor product $\otimes_{eh}$ (and to some extent the normal Haagerup tensor product $\otimes_{\sigma h}$). Rather than introducing $\otimes_{eh}$ in isolation, it may be more elucidating to present all three Haagerup tensor products (and the operator space projective tensor product) together in relation to the map $\Phi$ introduced below.

Given two operator spaces $X$ and $Y$, there are many OSSs one can impose on $X \otimes Y$ under which the completion is an operator space (see \cite{opsptennorm} for a slew of them). Two examples relevant to this paper are the operator space projective tensor norm $\hat \otimes$ and Haagerup tensor norm $\otimes_h$. These are operator spaces obtained by defining a norm on every matrix amplification $M_n(X \otimes Y)$ of the algebraic tensor product and completing the space under the respective norms. We will also be using $\eh$ and $\otimes_{\sigma h}$ which are perhaps better understood as algebraic tensor products completed under a predual topology. In particular, they are not constructed in the usual norm-completion procedure, and so are not technically operator space tensor products according to some authors. We will go into more detail below. 

\paragraph{\underline{$\widehat \otimes$: Operator Space Projective Tensor Product}}

\begin{definition} \cite[Chapter 4]{intro-op-sp-pisier} \label{defn: osptp}
    Let $X,Y$ be operator spaces. Every element $z \in M_n \otimes X \otimes Y$, the $rs$th entry $z_{r,s}$ of $z$ is an element of $X \otimes Y$ that can be written like so:
    \[
    z_{r,s} = \sum_{\substack{1 \leq i,j \leq \ell \\ 1 \leq p,q \leq m}} \alpha_{r,ip}\,  x_{ij} \otimes y_{pq} \, \beta_{jq,s} =: \alpha \cdot (x \otimes y) \cdot \beta
    \]
    where $x \in M_\ell \otimes X$, $y \in M_m \otimes Y$, and $[\alpha_{r,ip}]_{i,p} \in M_{\ell, m}$ and $[\beta_{jq,s}]_{q,j} \in M_{m, \ell}$ are rectangular matrices. Implicitly we are considering $x \otimes y \in M_{\ell m} \otimes X \otimes Y$. The \textit{operator space projective tensor product norm} of $z \in M_n \otimes X  \otimes Y$ is
    \[
    \|z\| = \inf \{\|\alpha\|_{M_{n,\ell m}} \|x\|_{M_\ell(E)} \|y\|_{M_m(F)} \|\beta\|_{M_{\ell m,n}}: z = \alpha \cdot (x \otimes y)\cdot \beta\}.
    \]
    The projective tensor product $X \hat \otimes Y$ is the completion of $X \otimes Y$ under the norm above.
\end{definition}

\begin{example}
    We give an example of what this $\alpha \cdot (x \otimes y) \cdot \beta$ might look like. We will take $\ell = 2$ and $m = 2$ so that $x$ and $y$ are
    \begin{align*}
        x &= \begin{pmatrix}
            x_{11} & x_{12} \\ x_{21} & x_{22}
        \end{pmatrix} \in M_2 \otimes X
        & y &= \begin{pmatrix}
            y_{11} & y_{12}  \\ y_{21} & y_{22} 
        \end{pmatrix} \in M_2 \otimes Y.
    \end{align*}
    We will take $n =2$ so that $\alpha \in M_{2, 4}$ and $\beta \in M_{4,2}$. We then have 
    \begin{align*}
        z &= \alpha \cdot (x \otimes y) \cdot \beta \\
        &= \begin{pmatrix}
            \alpha_{11} & \alpha_{12} &\alpha_{13} &\alpha_{14}  \\
            \alpha_{21} & \alpha_{22} &\alpha_{23} &\alpha_{24} 
        \end{pmatrix}
        \underbrace{\begin{pmatrix}
            x_{11} \otimes \begin{pmatrix}
            y_{11} & y_{12}  \\ y_{21} & y_{22} 
        \end{pmatrix} & x_{12} \otimes \begin{pmatrix}
            y_{11} & y_{12}  \\ y_{21} & y_{22} 
        \end{pmatrix} \\
        x_{21} \otimes \begin{pmatrix}
            y_{11} & y_{12}  \\ y_{21} & y_{22} 
        \end{pmatrix} & x_{22} \otimes \begin{pmatrix}
            y_{11} & y_{12}  \\ y_{21} & y_{22} 
        \end{pmatrix}
        \end{pmatrix}}_{\in M_{4} \otimes X \otimes Y}
        \begin{pmatrix}
            \beta_{11} & \beta_{12} \\
            \beta_{21} & \beta_{22} \\
            \beta_{31} & \beta_{32} \\
            \beta_{41} & \beta_{42} \\
        \end{pmatrix}.
    \end{align*}
\end{example}

\paragraph{\underline{$\h, \eh, \otimes _{\sigma h}$: Haagerup Tensor Products}} 

Rather than introducing the Haagerup tensor products from definitions, we will first exhibit a map that induces a completely isometric isomorphism from each of these tensor products into more familiar spaces. (Definitions will be given below for interested readers.) We will only claim that if $X, Y \subset B(\mc H)$ are operator spaces then $X \otimes_h Y$, $X \eh Y$, and $X \otimes_{\sigma h} Y$ are operator spaces which contain isometric copies of the algebraic tensor product $X \otimes Y$ and that for every $x \in M_n \otimes X$ and $y \in M_m \otimes Y$, the norm on each Haagerup tensor product $\alpha \in \{h,eh,\sigma h\}$ is the same:
\begin{align}
\|x \otimes y\|_{M_{mn}(X \otimes_\alpha Y)} = \|x \|_{M_n(X)} \|y\|_{M_m(Y)}.
\end{align}
The details of these spaces will follow after the presentation of these completely isometric isomorphisms.

\begin{theorem} \label{defn: Phi-map}
    The following map on the algebraic tensor product
    \begin{align*}
    \Phi : B(\mc H) \otimes B(\mc H) &\to B(\mc H) \\
    x \otimes y &\mapsto xTy \qquad \forall T \in B(\mc H)
    \end{align*}
    extends to a completely isometric isomorphism between the following spaces.
    \begin{itemize}
        \item $\Phi: B(\mc H) \otimes_h B(\mc H) \to CB (K(\mc H))$ (The Haagerup tensor product \cite[Theorem 4.3]{haagerup-smith}) 
        \item $\Phi: B(\mc H) \otimes_{eh} B(\mc H) \to CB^\sigma(B(\mc H))$ (The extended Haagerup tensor product \cite[Proposition 2.1]{blecher-smith-w*h}) 
        \item $\Phi: B(\mc H) \otimes_{\sigma h} B(\mc H) \to CB (B(\mc H))$ (The normal Haagerup tensor product \cite[Theorem 2.5]{normal-haagerup}) 
    \end{itemize}
    The notation $CB^\sigma(B(\mc H))$ indicates the subset of the completely bounded maps on $B(\mc H)$ which are additionally normal (that is, continuous with respect to the weak-* topology on $B(\mc H)$).
\end{theorem}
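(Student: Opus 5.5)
The plan is to prove all three bullets by one template. First, compute the norm of a tensor concretely as an infimum over row–column factorizations. Second, identify that norm with the cb norm of the associated map using the Wittstock–Paulsen factorization theorem for completely bounded maps. Third, use the structure of representations of $K(\mc H)$ and of normal representations of $B(\mc H)$ to get surjectivity. We take as given the standard concrete description of the Haagerup norm: for $u=\sum_{i=1}^N x_i\otimes y_i\in M_n(X\otimes Y)$,
\[
\|u\|_h=\inf\{\|[x_1\ \cdots\ x_N]\|\,\|[y_1\ \cdots\ y_N]^t\|\},
\]
with the infimum over all such representations. Elements of $X\eh Y$ are then formal sums $\sum_{i\in I} x_i\otimes y_i$ with $[x_i]_i$ a bounded row and $[y_i]_i$ a bounded column, the sums converging weak$^*$. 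Elements of $X\otimes_{\sigma h}Y$ are limits of such objects in the normal (predual-induced) topology.

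\textbf{Step 1 (contractivity).} For $u$ as above, write $R=[x_1\ \cdots\ x_N]$ and $C=[y_1\ \cdots\ y_N]^t$. Then
\[
\Phi(u)(T)=R\,(I_N\otimes T)\,C .
\]
At matrix level $M_m$, the $m$-th amplification is $T\mapsto (I_m\otimes R)(I_{mN}\otimes T)(I_m\otimes C)$ after a canonical shuffle. Hence $\|\Phi(u)\|_{cb}\le\|R\|\,\|C\|$, and taking the infimum gives $\|\Phi(u)\|_{cb}\le\|u\|_h$. The same computation with an infinite index set $I$ shows that a bounded row $R$ and column $C$ define a normal cb map $T\mapsto R(I_{\ell^2(I)}\otimes T)C$ on $B(\mc H)$, with sums converging weak$^*$. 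So $\Phi$ extends contractively, and completely contractively by repeating the argument on $M_n$-valued entries, to $\eh$ with values in $CB^\sigma(B(\mc H))$.

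\textbf{Step 2 (isometry and surjectivity).} Given a cb map $\varphi$, the Wittstock–Paulsen theorem gives a representation $\pi$ on some $\mc K$ and operators $V,W:\mc H\to\mc K$ with $\varphi=V^\dagger\pi(\cdot)W$ and $\|V\|\|W\|=\|\varphi\|_{cb}$.

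For $K(\mc H)$, every nondegenerate representation is unitarily equivalent to an amplification $T\mapsto I_{\ell^2(I)}\otimes T$ on $\ell^2(I)\otimes\mc H$. Absorbing the unitary into $V,W$ gives $\varphi(T)=R(I\otimes T)C$ with $\|R\|\|C\|=\|\varphi\|_{cb}$. Reading off the entries gives $\varphi=\Phi(\sum_i x_i\otimes y_i)$ with $\|\sum_i x_i\otimes y_i\|\le\|\varphi\|_{cb}$.

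For a normal $\varphi$ on $B(\mc H)$, the Stinespring/Paulsen construction can be done with $\pi$ normal. Normal representations of $B(\mc H)$ are again amplifications, so the same argument yields the $\eh$ case.

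Both cases give isometry and surjectivity at once, and applying them to $M_n(\varphi)$ yields complete isometry.

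\textbf{Step 3 (normal Haagerup case).} Here I would identify $B(\mc H)\otimes_{\sigma h}B(\mc H)$ as the dual of $B(\mc H)_*\otimes_h B(\mc H)_*$. I would then match this duality with the standard duality between $CB(B(\mc H))$ and a suitable tensor product of preduals, checking that $\Phi$ is the adjoint of a complete quotient map, or complete isometry, at the predual level. Cb maps that are not normal are then reached as weak$^*$ limits of the normal ones from Step 2.

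\textbf{Main obstacle.} I expect the hard step to be the normality bookkeeping: showing that the factorization of a \emph{normal} cb map can be chosen with $\pi$ normal, and, in Step 3, that the duality pairings genuinely match. For the first point I would decompose the Paulsen $2\times2$ completely positive dilation into its normal and singular parts and discard the singular part. For the pairings I would test on rank-one trace-class elements, where $\Phi$ is explicit, and then use density together with the weak$^*$ continuity established in Step 1.
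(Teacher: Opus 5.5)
The paper gives no proof of this statement, only citations, so your argument has to stand on its own. Step 1 and the normal Wittstock--Paulsen factorization prove the second bullet; your plan of discarding the singular part of the Paulsen $2\times 2$ map is sound.

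The $K(\mc H)$ case of Step 2 does not prove the first bullet. Writing $\pi\cong I_{\ell^2(I)}\otimes\mathrm{id}$ produces a row and a column indexed by a generally infinite set $I$. That is an element $\sum_{i\in I}x_i\otimes y_i$ of $B(\mc H)\eh B(\mc H)$, not of $B(\mc H)\h B(\mc H)$, which is the norm closure of finite sums. For infinite-dimensional $\mc H$ no argument can close this, because the surjectivity in the first bullet is false.

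The image of $B(\mc H)\h B(\mc H)$ is the cb-norm closure of the elementary operators $T\mapsto\sum_{k=1}^n a_kTb_k$, all of which preserve compactness. So $T\mapsto\langle e_1,Te_1\rangle I_{\mc H}=\sum_i e_{i1}Te_{1i}$ is a complete contraction on $K(\mc H)$ that lies outside the image. Even the diagonal truncation $D(T)=\sum_i e_{ii}Te_{ii}$, which maps $K(\mc H)$ into $K(\mc H)$, lies outside it:
\begin{itemize}
\item Averaging $u^*E(uTv)v^*$ over pairs of diagonal unitaries turns an elementary approximant $E=\sum_k a_k(\cdot)b_k$ into the Schur multiplier with symbol $\sum_{k=1}^n\alpha_k(i)\beta_k(j)$, where $\alpha_k,\beta_k$ are the diagonals of $a_k,b_k$. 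This does not increase the distance to $D$.
\item Choosing $i\neq i'$ with $(\alpha_k(i))_k$ close to $(\alpha_k(i'))_k$ shows that such a symbol is never within uniform distance $<1/2$ of $\delta_{ij}$.
\end{itemize}
What your computation actually proves is $B(\mc H)\eh B(\mc H)\cong CB(K(\mc H),B(\mc H))$, which is the second bullet again by unique normal extension. The correct first bullet is a complete isometry \emph{into} $CB(K(\mc H))$. Even for that you only obtain $\|u\|_{eh}\le\|\Phi(u)\|_{cb}\le\|u\|_h$, so you still need the nontrivial fact that $X\h Y\to X\eh Y$ is completely isometric.

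Step 3 starts from the wrong duality. As recorded in Remark~\ref{rmk: predual-tops}, $(B(\mc H)_*\h B(\mc H)_*)^*=B(\mc H)\eh B(\mc H)$, which corresponds to the normal maps. By contrast, $B(\mc H)\otimes_{\sigma h}B(\mc H)=(B(\mc H)_*\eh B(\mc H)_*)^*$. With your identification, reaching non-normal maps as weak$^*$ limits of normal ones is impossible. $B(\mc H)\eh B(\mc H)$ is the whole dual of $B(\mc H)_*\h B(\mc H)_*$, hence weak$^*$ closed, so a map such as $T\mapsto\omega(T)I_{\mc H}$ with $\omega$ a singular state is never reached.

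What has to be proved is that $\Phi$ matches the predual $B(\mc H)_*\eh B(\mc H)_*$ with the predual $B(\mc H)\hat\otimes B(\mc H)_*$ of $CB(B(\mc H))$. An alternative is Effros--Kishimoto's description of $\otimes_{\sigma h}$ through separately normal functionals on $B(\mc H)\h B(\mc H)$. Testing on rank-one trace-class elements and using density will not do this, because $B(\mc H)_*\otimes B(\mc H)_*$ is \emph{not} norm dense in $B(\mc H)_*\eh B(\mc H)_*$. If it were, the two duals would coincide and every cb map on $B(\mc H)$ would be normal.
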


\begin{remark}
    A historical note: the $\otimes_{eh}$ tensor product was initially introduced in \cite{blecher-smith-w*h} as the weak$^*$ Haagerup tensor product $\otimes_{w^*h}$ between dual operator spaces. Effros and Ruan later generalized $\otimes_{w^*h}$ to $\otimes_{eh}$ between any operator spaces in \cite{opsp-hopf}. In this paper we will only use the notation $\otimes_{eh}$ but may refer to results proven for $\otimes_{w^*h}$.
\end{remark}

We will be particularly interested in $\mc M \otimes_{eh} \mc M$ where $\mc M$ is a von Neumann algebra. The key feature of $\mc M \eh \mc M$ is that it immediately preserves the bimodular structure of quantum graphs (see Definition \ref{defn: qgraph}). Theorem \ref{thm: m-haagerups} is also a collection of results from various sources, but a short proof of the second complete isometry can be found in \cite[Theorem 2.2]{gp-obs}.

\begin{definition} \cite[3.1.1]{blecher-lemerdy} \label{defn: op-modules}
    Let $X$ be an operator space and $A$ be an algebra. We call $X$ a \textit{left operator $A$-module} if there is a complete isometry $\pi_X: X \to B(\mc H_1, \mc H_2)$ for some Hilbert spaces $\mc H_1$ and $\mc H_2$ and a completely contractive homomorphism $\pi_A: A \to B(\mc H_2)$ such that $\pi_X(ax) = \pi_A(a) \pi_X(x)$. Right operator $A$-modules are similarly defined but with a map $\rho_A: A \to B(\mc H_1)$. An operator $A$-bimodule is an operator space that is both a left operator $A$-module and a right operator $A$-module.
\end{definition}

\begin{definition} \label{defn: bimod map}
    If $X,Y$ are both operator $A$-bimodules, an \textit{$A$-bimodule map} (i.e., a homomorphism of $A$-bimodules) is a completely bounded map $\pi: X \to Y$ such that for all $a,b \in A$ and $x \in X$ we have
    \[
    \pi(axb) = a \pi(x) b.
    \]
\end{definition}

\begin{theorem} \label{thm: m-haagerups}
    Let $\mc M \subset B(\mc H)$ be a von Neumann algebra. The map
    \begin{align*}
    \Phi : \mc M \otimes \mc M &\to B(\mc H) \\
    x \otimes y &\mapsto xTy \qquad \forall T \in B(\mc H)
    \end{align*}
    extends to a complete isometry between the following spaces.
    \begin{itemize}
        \item $\Phi: \mc M \otimes_h \mc M \to CB_{\mc M' \mc M'} (K(\mc H))$ (The Haagerup tensor product) 
        \item $\Phi: \mc M \otimes_{eh} \mc M  \to CB^\sigma_{\mc M' \mc M'}(B(\mc H))$ (The extended Haagerup tensor product)  
        \item $\Phi: \mc M \otimes_{\sigma h} \mc M \to CB_{\mc M' \mc M'}(B(\mc H))$ (The normal Haagerup tensor product) 
    \end{itemize}
    The notation $CB_{\mc M',\mc M'}(B(\mc H))$ denotes completely bounded $\mc M'$-bimodule maps on $B(\mc H)$, and $CB_{\mc M',\mc M'}^\sigma(B(\mc H))$ denotes the subset of such maps which are additionally normal.
\end{theorem}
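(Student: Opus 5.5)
The plan is to reduce everything to Theorem \ref{defn: Phi-map}, using two facts: the Haagerup-type tensor products are injective, and every completely bounded $\mc M'$-bimodule map is represented by a tensor whose components lie in $\mc M = \mc M''$.

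\textbf{Step 1: $\Phi$ is a complete isometry on $\mc M \otimes_\alpha \mc M$.} The Haagerup tensor product $\h$ is injective, so $\mc M \h \mc M \subset B(\mc H) \h B(\mc H)$ completely isometrically. Composing with the first bullet of Theorem \ref{defn: Phi-map} shows that $\Phi$ restricted to $\mc M \h \mc M$ is a complete isometry into $CB(K(\mc H))$. For $\eh$ I would use the description of $X \eh Y$ as a subspace of $(X^* \h Y^*)^*$, together with the fact (Effros--Ruan) that $\eh$ is injective as well. For weak$^*$ closed $\mc M$, this gives $\mc M \eh \mc M \subset B(\mc H)\eh B(\mc H)$ completely isometrically, and the second bullet of Theorem \ref{defn: Phi-map} then applies. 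For $\otimes_{\sigma h}$, injectivity on weak$^*$ closed subspaces is proved in \cite{normal-haagerup}; failing that, one can realize $\mc M\otimes_{\sigma h}\mc M$ as the dual of $\mc M_* \h \mc M_*$ and note that the quotient maps $B(\mc H)_* \to \mc M_*$ tensor to a complete quotient by projectivity of $\h$. Dualizing this quotient gives the desired complete isometry.

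\textbf{Step 2: the image consists of bimodule maps.} For $x,y\in\mc M$ and $a,b\in \mc M'$ we have $\Phi(x\otimes y)(aTb) = xaTby = a\,xTy\,b$. By linearity and continuity, every element of the image is an $\mc M'$-bimodule map. In the $\eh$ and $\sigma h$ cases the continuity is weak$^*$ continuity of $\Phi$, which holds because $\Phi$ comes from the duality in Theorem \ref{defn: Phi-map}.

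\textbf{Step 3: surjectivity onto the bimodule maps.} This is the main obstacle. The approach is through the known representation theorem: a completely bounded $\phi$ on $B(\mc H)$ (or $K(\mc H)$) has the form $\phi(T) = \sum_i x_i T y_i$. Here $(x_i)$ is a bounded row and $(y_i)$ a bounded column, and the sum converges weak$^*$ in the normal case. This is what Theorem \ref{defn: Phi-map} already provides. When $\phi$ is $\mc M'$-bimodular, I would show that the $x_i,y_i$ can be chosen in $\mc M$, following the Haagerup / Smith argument.

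First take a \emph{minimal} representation, meaning that the row $(x_i)$ is linearly independent over $\ell^2$ in the strong sense: $\sum_i x_i c_i = 0$ with $(c_i)$ in $\ell^2$ forces $c=0$, and similarly for the column. For a unitary $u\in \mc M'$, bimodularity gives
\[
\phi(T)=\sum_i (u^*x_iu)T(u^*y_iu).
\]
Uniqueness of the minimal representation up to a unitary $W$ on $\ell^2$ then yields $u^*x_iu=\sum_j W_{ij}x_j$. Averaging over $\mc U(\mc M')$ with an invariant mean, or arguing through the commutant directly, should force $W=1$. Hence $x_i\in\mc M''=\mc M$, and likewise $y_i\in\mc M$.

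The delicate point is making the minimality and uniqueness argument rigorous in infinite dimensions. As a fallback I would cite \cite[Theorem 2.2]{gp-obs}, mentioned before the statement, for the $\eh$ case. I would also cite Smith's theorem that $CB_{\mc M'}(K(\mc H)) \cong \mc M\h\mc M$ for the Haagerup case, and note that the $\sigma h$ case follows by the same argument with weak$^*$ convergence.
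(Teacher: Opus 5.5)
\textbf{Verdict: there is a genuine gap.} Your Steps 1--2 correctly give the embeddings in the $\h$ and $\eh$ cases. But Step 3, which carries the real content of the $\eh$ bullet, does not work as sketched, and the $\otimes_{\sigma h}$ bullet is not reached. The paper gives no argument of its own: it presents the statement as a compilation and refers to \cite[Theorem 2.2]{gp-obs} for the $\eh$ bullet, so your fallback citation there is exactly what the paper does.

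\textbf{Step 3.} Minimal representations are not unique up to a unitary. Already $x\otimes y=(2x)\otimes(\tfrac12 y)$ shows that at best you get an invertible $W$. An invariant mean on $\mc U(\mc M')$ is not available in general, and even with one nothing forces the averaged $W(u)$ to be $1$. You already have the right ingredient, strong independence, so use it directly. Write $\phi(T)=\sum_i x_iTy_i$. After cutting the row and column down by a projection on $\ell^2$, you may assume the column is strongly independent: $\sum_i\alpha_iy_i=0$ with $\alpha\in\ell^2$ forces $\alpha=0$. For $a\in\mc M'$, left bimodularity gives $\sum_i(x_ia-ax_i)Ty_i=0$ for all $T$. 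Take $T=|\xi\rangle\langle\eta|$, pair on the left with $\zeta$, and let $\eta$ vary. This gives $\sum_i\alpha_iy_i=0$ with $\alpha_i=\langle\zeta,(x_ia-ax_i)\xi\rangle$, which is square-summable because the row is bounded. Strong independence then gives $\alpha=0$ for all $\zeta,\xi$, so $x_ia=ax_i$ and $x_i\in\mc M''=\mc M$. Now cut down the row (its entries stay in $\mc M$) and use right bimodularity to get $y_i\in\mc M$. This is the argument behind the identification the paper credits to \cite[Theorem 4.2]{blecher-smith-w*h}. It makes $W(u)=1$ a consequence rather than an input.

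\textbf{The $\otimes_{\sigma h}$ and $\h$ bullets.} $CB_{\mc M'\mc M'}(B(\mc H))$ contains non-normal maps. For $\mc M=B(\mc H)$ with $\dim\mc H=\infty$, take $T\mapsto\omega(T)I$ with $\omega$ a singular state. Such maps admit no representation $\sum_ix_iTy_i$, since every such sum is normal. So ``the same argument with weak$^*$ convergence'' has nothing to act on. Your Step 1 fallback also dualizes the wrong space. It dualizes $\mc M_*\h\mc M_*$, whose dual is $\mc M\eh\mc M$, not $\mc M\otimes_{\sigma h}\mc M=(\mc M_*\eh\mc M_*)^*$ (Remark \ref{rmk: predual-tops}). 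The projectivity you invoke belongs to $\h$, and you have no $\eh$ analogue to replace it. This bullet genuinely needs the result the paper cites, \cite[Theorem 2.5]{normal-haagerup}.

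Finally, drop the surjectivity claim in the $\h$ bullet: it is false, so that bullet can only be read as an embedding. Take $\mc M=\ell^\infty=\mc M'$ on $\ell^2$ and the diagonal expectation $\phi(T)=\sum_ie_{ii}Te_{ii}$, where $e_{ij}$ are the matrix units. This is a completely contractive $\mc M'$-bimodule map of $K(\ell^2)$; it preserves compacts because the diagonal entries of a compact operator tend to $0$. For $z\in\ell^\infty\h\ell^\infty$, define the symbol $m_z(i,j)=\langle e_i,\Phi_z(e_{ij})e_j\rangle$. Since $\|m_z\|_\infty\le\|z\|_h$, $m_z$ is a uniform limit of functions $\sum_{k\le N}\alpha_k(i)\beta_k(j)$. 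Every such function, and hence every uniform limit of them, satisfies $\lim_{i\to p}m(i,i)=\lim_{i\to p}\lim_{j\to p}m(i,j)$ for a free ultrafilter $p$. The symbol $\delta_{ij}$ of $\phi$ gives $1$ on the left and $0$ on the right, so $\phi$ is not of the form $\Phi_z$.
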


\begin{remark} \label{rmk: predual-tops}
There is a subtlety around the topologies on $\mc M \otimes_{eh} \mc M$ that will play a role in Section \ref{sec: limit of q graphs}. The whole of this remark can be summarized by the following inclusion and identifications:
\begin{align*}
    \mc M &\eh \mc M &  &\underset{(3)}{\subset} & \mc M &\otimes_{\sigma h} \mc M \\
    &\rotatebox{90}{$\,=$} \text{\tiny(1)} & &&&\rotatebox{90}{$\,=$}   \text{\tiny(2)} \\
    (\mc M_* &\otimes_h \mc M_*)^* & & & (\mc M_* &\otimes_{eh} \mc M_*)^*
\end{align*}
We therefore have two choices of predual topologies on $\mc M \eh \mc M$ -- one from $\sigma(\mc M_* \otimes_h \mc M_*)$ and one from $\sigma(\mc M_* \otimes_{eh} \mc M_*)$. References for the equalities and inclusion   are below.

\begin{enumerate}
    \item [(1)] From \cite[Section 3]{blecher-smith-w*h}, since $\mc M$ is a dual operator space we have
    \[
    (\mc M_* \otimes_h \mc M_*)^* = \mc M \otimes_{eh} \mc M.
    \]
    We will denote this predual topology by $\sigma(\mc M_* \otimes_h \mc M_*)$. 
    \item [(2)] From the discussion around \cite[Lemma 5.8]{opsp-hopf} and \cite{normal-haagerup},
    \[
    (\mc M_* \eh \mc M_*)^* = \mc M \otimes_{\sigma h} \mc M.
    \]
    \item [(3)] As noted in \cite[Equation 5.15]{opsp-hopf}, the identity map
    \[
    \mc M_* \otimes_h \mc M_*  \to \mc M_* \eh \mc M_* 
    \]
    is a completely isometric injection. The dual of this map yields a projection
    \[
    \mc M  \otimes_{\sigma h} \mc M  \to \mc M  \eh \mc M 
    \]
    so $\mc M \eh \mc M$ is a (complemented) subspace of $\mc M \otimes_{\sigma h} \mc M$:
    \[
    (\mc M_* \eh \mc M_*)^* = \mc M \otimes_{\sigma h} \mc M \supseteq \mc M \otimes_{eh} \mc M.
    \]
\end{enumerate}

Since $\mc M_* \otimes_h \mc M_* \subset \mc M_* \otimes_{eh} \mc M_*$, the predual topology $\sigma(\mc M_* \otimes_{eh} \mc M_*)$ is finer than $\sigma(\mc M_* \otimes_h \mc M_*)$. 
\end{remark}

\paragraph{\underline{$\h$: Haagerup Tensor Product}}

We now give definitions for the Haagerup tensor products in more detail. Elements in the $\otimes_h$ and $\eh$ tensor products can be written as (possibly infinite) sums. The multiplicative product $\odot$ defined below is not necessary to define these sums, but we will use its continuous extension later so we take the time to introduce it here. The definitions are taken from \cite{opsp-hopf} restricted to the bilinear (rather than multilinear) case. Following \cite[Section 3]{opsp-hopf}, for indices $I,J$ we denote by $M_{I,J}(X)$ the space of $I\times J$ (possibly) infinite matrices with entries in $X$ whose finite submatrices are uniformly bounded in norm.

\begin{definition} \protect{\cite[Section 5]{opsp-hopf}} \label{defn: mult-prod-fin}
    Let $X_1, X_2$ be operator spaces and $n_1, n_2, n_3 \in \Z^+$. Given matrices
    \[
    x_1 := [x^{(1)}_{ij}]_{\substack{{i = 1,...,n_1 }\\ {j = 1,...,n_2}}} \in M_{n_1, n_2}(X_1) \qquad x_2 := [x^{(2)}_{k\ell}]_{\substack{{k = 1,...,n_2 }\\ {\ell = 1,...,n_3}}} \in M_{n_2, n_3}(X_2)
    \]
    their \textit{multiplicative product} $x_1 \odot x_2 \in M_{n_1, n_3}(X_1 \otimes X_2)$ is
    \[
    x_1 \odot_{n_2} x_2= [v_1 \odot v_2]_{i\ell} := \bigg[\sum_{k = 1,...,n_2} x_{ik}^{(1)} \otimes x_{k\ell}^{(2)}\bigg]_{i\ell}
    \]
    where the notation $x_1 \odot_{n_2} x_2$ is used to emphasize the index.
\end{definition}

\begin{example}
An example of the multiplicative product in action:
    \begin{align*}
        \begin{pmatrix}
            x_1 & x_2 \\
            x_3 & x_4 
        \end{pmatrix} \odot 
        \begin{pmatrix}
            y_1 \\
            y_2 \\
        \end{pmatrix} = 
        \begin{pmatrix}
            x_1 \otimes y_1 + x_2 \otimes y_2 \\
            x_3 \otimes y_1 + x_4 \otimes y_2
        \end{pmatrix}.
    \end{align*}
\end{example}

For a full exploration of $\h$, see \cite[Chapter 5]{intro-op-sp-pisier}, for instance.

\begin{definition} \cite[Equation 5.4]{opsp-hopf} \label{defn: h}
    Let $X_1, X_2$ be operator spaces. For $x \in M_n \otimes (X_1 \otimes X_2)$, define its norm by
    \begin{align*}
        \|x\|_h := \inf \{\|x_1\| \|x_2\| : x = x_1 \odot_m x_2, x_1 \in M_{n,m} \otimes X_1 , \,  x_2 \in M_{m,n} \otimes X_2\}
    \end{align*}
    where the infimum runs over all possible representations of $x = x_1 \odot_n x_2$ for all $m \in \N$. The completion of $X_1 \otimes X_2$ with respect to this OSS is the \textit{Haagerup tensor product} $X_1 \h X_2$.
\end{definition}

\paragraph{\underline{$\eh$: Extended Haagerup Tensor Product}}

The multiplicative product can be defined for infinite indices as well by taking limits of multiplicative products above.

\begin{definition} \protect{\cite[Section 3]{opsp-hopf}} \label{defn: mult-prod-inf}
Let $I,J,K$ be arbitrary index sets and $y_1 \in M_{I,J}(X_1)$ and $y_2 \in M_{J,K}(X_2)$. For finite subsets $F_1 \Subset I$, $F_2 \Subset J$, $F_3 \Subset K$ let $P(F_i)$ be the projections onto $\ell^2(F_i)$. The truncations of $y_1, y_2$ are
    \[
    P(F_1)y_1P(F_2) := [y_{ij}^{(1)}]_{\substack{i \in F_1 \\ j \in F_2}} \qquad P(F_2)y_2P(F_3) := [y_{k\ell}^{(1)}]_{\substack{k \in F_2 \\ \ell \in F_3}}
    \]
    and their multiplicative product is
    \[
    F:= F_1 \times F_2 \times F_3, \qquad y_1 \odot_F y_2 := P(F_1)y_1P(F_2) \odot_{F_2} P(F_2)y_2P(F_3).
    \]
    The multiplicative product of the infinite matrices is the well-defined SOT-limit
    \[
    y_1 \odot_I y_2 = y_1 \odot y_2 := \lim_{F \Subset I \times J \times K} y_1 \odot_F y_2
    \]
    where we are implicitly using the representation of operator spaces on Hilbert spaces.
\end{definition}

As the name suggests, the extended Haagerup tensor product is much like $\h$.

\begin{definition} \cite[Equation 5.7]{opsp-hopf} \label{defn: eh}
    Let $X_1, X_2$ be operator spaces. For $x \in M_n \otimes (X_1 \otimes X_2)$
    \begin{align*}
        \|x\|_h := \inf \{\|x_1\| \|x_2\| : x = x_1 \odot_I x_2, x_1 \in M_{n,I} \otimes X_1 , \,  x_2 \in M_{I,n} \otimes X_2\}
    \end{align*}
    where the infimum runs over all possible representations of $x = x_1 \odot_I x_2$ for any index $I$. The completion of $X_1 \otimes X_2$ with respect to this OSS is the \textit{extended Haagerup tensor product} $X_1 \eh X_2$.
\end{definition}

If $X$ is furthermore a dual space, we have the following theorem/definition from \cite{blecher-smith-w*h}.
\begin{definition}\cite[Section 2]{blecher-smith-w*h}
    Let $z \in B(\mc H) \otimes_{eh} B(\mc K)$. A \textit{weak representation} of $z$ has the form $z = \sum_{i \in I} x_i \otimes y_i$, where $x_i \in B(\mc H)$ and $y_i \in B(\mc K)$ and the sum converges weak-$*$. More precisely, the notation above means that for all $\phi \otimes \psi \in B(\mc H)_* \otimes B(\mc K)_*$ we have the following convergent net:
    \[
    \lim_{F \Subset I} \sum_{i \in F} \phi(x_i)\psi(y_i) = \phi \otimes \psi (z).
    \]
    Such representations are not generally unique, although there does exist a weak representation such that
    \[
    \bigg\| \sum_i x_i x_i^*\bigg\| \, \bigg\|\sum_i y^*_i y_i \bigg\| = \| \Phi_z\|_{cb}^2,
    \]
    where we recall $\Phi_z := \Phi(z)$.
\end{definition}
\begin{theorem}\cite[Theorem 3.1]{blecher-smith-w*h}
    Suppose $X$ and $Y$ are weak$^*$-closed subspaces of $B(\mc H)$ and $\mc B(K)$ respectively. Then $z \in X \otimes_{eh} Y$ if and only if $z$ has a weak representation $z = \sum_{i \in I} x_i \otimes y_i$ where $x_i \in X$ and $y_i \in Y$.
\end{theorem}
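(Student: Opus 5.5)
The plan is to reduce the statement to the case $X = B(\mc H)$, $Y = B(\mc K)$, where weak representations exist by the definition from \cite[Section 2]{blecher-smith-w*h} recalled above, and then show that the factors can be moved into $X$ and $Y$. First fix the identification of $X \eh Y$ inside $B(\mc H)\eh B(\mc K)$. By Remark \ref{rmk: predual-tops}(1), $B(\mc H)\eh B(\mc K) = (B(\mc H)_*\h B(\mc K)_*)^*$. Since $X$ and $Y$ are weak$^*$-closed, $X = (B(\mc H)_*/X_\perp)^*$ and similarly for $Y$. By injectivity and projectivity of $\h$ on quotients, $X_*\h Y_*$ is a quotient of $B(\mc H)_*\h B(\mc K)_*$. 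Dualizing gives $X\eh Y = (X_*\h Y_*)^*$ as a weak$^*$-closed subspace, namely the annihilator of the kernel $N$ of the quotient map. That kernel is the closure of $X_\perp\otimes B(\mc K)_* + B(\mc H)_*\otimes Y_\perp$.

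For the easy direction ($\Leftarrow$), suppose $z=\sum_i x_i\otimes y_i$ is a weak representation with $x_i\in X$, $y_i\in Y$. Pair $z$ against elementary tensors $\phi\otimes\psi$ with $\phi\in X_\perp$ or $\psi\in Y_\perp$. Each partial sum gives $0$, so the limit is $0$. Provided the representation has $\|\sum x_ix_i^*\|,\|\sum y_i^*y_i\|<\infty$ (the row/column boundedness implicit in weak representations), $z$ is a bounded functional. It kills a dense subset of $N$, so $z\in N^\perp = X\eh Y$.

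For the hard direction ($\Rightarrow$), take $z\in X\eh Y$. Choose a weak representation $z=\sum_i a_i\otimes b_i$ in $B(\mc H)\eh B(\mc K)$ with $R=[a_i]$ a bounded row and $C=[b_i]^T$ a bounded column, so $z=R\odot C$. The main obstacle is that $a_i, b_i$ need not lie in $X, Y$. To handle this, pass to the normal map $\Phi_z(T)=R(T\otimes I_{\ell^2(I)})C$ and reduce to a minimal representation. Replace $\ell^2(I)$ by the closed span $L$ of $\{C\,\xi\}$, intersected with the orthogonal complement of $\ker R$; equivalently, take $[b_i]$ linearly independent in a weak sense, with the $a_i$ likewise. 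Compressing by the projection onto $L$ preserves $z$ and the norms.

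Next, test against $\phi\in X_\perp$. The element $(\phi\otimes\mathrm{id})(z)=\sum_i\phi(a_i)b_i$ is zero, because $z$ annihilates $\phi\otimes\psi$ for every $\psi$. Minimality (independence of the column $(b_i)$ in the sense that $\sum_i\lambda_i b_i=0$ weak$^*$ with $(\lambda_i)\in\ell^2$ forces $\lambda=0$) then gives $\phi(a_i)=0$ for all $i$. This holds for all $\phi\in X_\perp$, so $a_i\in (X_\perp)^\perp = X$ by the bipolar theorem, using weak$^*$-closedness. Symmetrically, minimality of the row gives $b_i\in Y$.

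The delicate point is producing a representation that is simultaneously row- and column-minimal in this $\ell^2$-independence sense. I would obtain it by taking the polar-type decomposition: replace $C$ by $C$ restricted to $\overline{\mathrm{ran}}$ of the map $\ell^2(I)^*\ni\lambda\mapsto\sum\lambda_i b_i$ co-image. Concretely, let $P$ be the projection onto $(\ker(\lambda\mapsto\sum\lambda_ib_i))^\perp$ and set $R' = RP$, $C' = PC$, then repeat on the other side. I need to check that the second compression does not destroy the first minimality; it doesn't, since compression only shrinks kernels appropriately. Then I verify $R'\odot C' = z$, using that $R(1-P)\odot C = R\odot (1-P)C=0$ weak$^*$.
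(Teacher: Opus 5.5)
The paper does not prove this statement; it is quoted from Blecher--Smith, so there is no in-paper argument to compare with. Your strategy is correct:

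\begin{itemize}
\item realize $X\eh Y=(X_*\h Y_*)^*$ as $N^\perp\subseteq B(\mc H)\eh B(\mc K)$ using projectivity of $\h$;
\item start from a weak representation in $B(\mc H)\eh B(\mc K)$;
\item make the row and the column strongly independent;
\item slice against $X_\perp$ and $Y_\perp$.
\end{itemize}

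The identification $\ker=N$ needs one line. Finite rows and columns lift through complete quotient maps with almost the same norm. Hence the induced map $(B(\mc H)_*\h B(\mc K)_*)/N\to X_*\h Y_*$ is isometric on a dense subspace, and therefore injective. The easy direction and the slicing/bipolar step are fine as written.

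The standard alternative is to apply the Christensen--Sinclair/Effros--Ruan factorization directly to $z$ as a functional on $X_*\h Y_*$. This gives $\alpha\in CB(X_*,R_I)\cong M_{1,I}(X)$ and $\beta\in CB(Y_*,C_I)\cong M_{I,1}(Y)$. Their entries lie in $(X_*)^*=X$ and $(Y_*)^*=Y$ automatically, so no minimality argument is needed. Your route replaces that factorization theorem with the $B(\mc H)$ case (which the paper's preliminaries supply) plus Hilbert-space bookkeeping. It also keeps norm attainment, since compressions do not increase $\|R\|\,\|C\|$. And it proves along the way the slice-map description $X\eh Y=\{z:(\phi\otimes\mathrm{id})(z)=0=(\mathrm{id}\otimes\psi)(z)\ \text{for}\ \phi\in X_\perp,\ \psi\in Y_\perp\}$. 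That is the form in which the paper later invokes this theorem, under the name ``slice map property.''

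The step you flagged as delicate does need repair, in three places.

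\emph{Conjugation on the column side.} The identity $(P\otimes 1)C=C$ holds iff $\mathrm{ran}\,P$ contains every slice vector $(\psi(b_i))_i$, $\psi\in B(\mc K)_*$. So $P$ must project onto their closed span $L_C$. This is the complex conjugate of $(\ker\beta)^\perp$ for $\beta(\lambda)=\sum_i\lambda_i b_i$, not $(\ker\beta)^\perp$ itself. On the row side, the plain orthogonal complement of $\ker(\lambda\mapsto\sum_i a_i\lambda_i)$ is correct. Taken literally, your prescription fails: if $b_2=i b_1$, then $(\ker\beta)^\perp=L_C^\perp$ and $(P\otimes 1)C=0$, which destroys $z$.

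\emph{Re-indexing.} Compressing by a proper projection on the same index set never gives independence in your sense on all of $\ell^2(I)$. You must re-index. For example, take $U:\ell^2(I)\to\ell^2(I')$ with $U^*U=P$ and $UU^*=1$, and set $R'=R(U^*\otimes 1)$, $C'=(U\otimes 1)C$. Column independence is then equivalent to density in $\ell^2(I')$ of the slice vectors $(\psi(b'_k))_k=U(\psi(b_i))_i$.

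\emph{Preserving column minimality.} The density reformulation is what shows the subsequent row-side compression preserves column minimality. That compression replaces the column slice vectors by their images under another coisometry $U_2$, and a continuous surjection maps dense sets to dense sets. ``Compression only shrinks kernels'' is not an argument. For the same reason, drop the description of $L$ as $L_C$ intersected with the row-side subspace $L_R$. It is not equivalent to sequential compression and can kill $z$. Take $a_1=a_2=a$, $b_1=b$, $b_2=0$: then $L_C=\C e_1$ and $L_R=\C(e_1+e_2)$, so $L_C\cap L_R=0$, while $z=a\otimes b\neq 0$.

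With these fixes the proof goes through.
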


From the existence of weak representations one can deduce the following corollary.

\begin{corollary} \label{cor: alg-eh-density}
    If $X, Y$ are operator spaces, then $X^* \otimes Y^*$ is $\sigma(X \h Y)$-dense in $X^* \eh Y^*$.
\end{corollary}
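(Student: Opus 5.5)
The plan is to identify $X^* \eh Y^*$ with $(X \h Y)^*$ using Remark \ref{rmk: predual-tops}(1), in the general form $(X\h Y)^* = X^* \eh Y^*$ from \cite[Section 3]{blecher-smith-w*h} and \cite{opsp-hopf}. The pairing is determined on elementary tensors by $\langle \phi\otimes\psi, x\otimes y\rangle = \phi(x)\psi(y)$. Given $z \in X^* \eh Y^*$, I would pick a weak representation $z = \sum_{i\in I} \phi_i \otimes \psi_i$ with $\phi_i \in X^*$, $\psi_i\in Y^*$. This exists by the Blecher--Smith theorem quoted above, after representing $X^*$ and $Y^*$ as weak$^*$-closed subspaces of some $B(\mc H)$, $B(\mc K)$. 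I would also arrange $\|\sum_i \phi_i\phi_i^*\|\,\|\sum_i \psi_i^*\psi_i\| = \|z\|^2$, or at least have both quantities finite. The candidate approximating net is then the finite partial sums $z_F := \sum_{i\in F}\phi_i\otimes\psi_i \in X^*\otimes Y^*$, indexed by finite $F \Subset I$.

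Next I would show that $z_F \to z$ in $\sigma(X\h Y)$. For an elementary tensor $x\otimes y$, convergence $\langle z_F, x\otimes y\rangle \to \langle z, x\otimes y\rangle$ is exactly what the weak representation gives, via the identification $X \subset (X^*)_*$. By linearity this extends to all of $X\otimes Y$. To pass to the whole completion $X\h Y$, I would show the net is uniformly bounded: $\|z_F\|_{eh} \le \|\sum_{i\in F}\phi_i\phi_i^*\|^{1/2}\|\sum_{i\in F}\psi_i^*\psi_i\|^{1/2}$, and this is bounded by the same expression over all of $I$, since partial sums of positive terms are dominated by the full sum. A $3\varepsilon$ argument then gives the conclusion: for $u\in X\h Y$, choose $u_0\in X\otimes Y$ with $\|u-u_0\|_h$ small, and use boundedness of $\{z_F\}\cup\{z\}$ as functionals on $X\h Y$.

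The main obstacle is justifying that the weak representation's convergence is really convergence against elementary tensors of $X\otimes Y$. The quoted theorem phrases convergence against $B(\mc H)_*\otimes B(\mc K)_*$, so I must check that the preduals of the chosen weak$^*$-embeddings of $X^*$ and $Y^*$ restrict to the canonical preduals $X$ and $Y$. This holds for a weak$^*$-homeomorphic embedding, because every element of $X$ extends to a normal functional on $B(\mc H)$ by Hahn--Banach in the predual. I would cite the standard fact that a dual operator space admits a weak$^*$-homeomorphic complete isometry into some $B(\mc H)$. If this becomes awkward, a fallback is a bipolar argument: any $u\in X\h Y$ annihilating $X^*\otimes Y^*$ satisfies $\phi\otimes\psi(u)=0$ for all $\phi,\psi$. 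Since the Haagerup tensor product of operator spaces is injective and embeds into the minimal tensor product, this forces $u=0$. Hence the $\sigma(X\h Y)$-closure of $X^*\otimes Y^*$ is all of $(X\h Y)^*$, which avoids weak representations entirely.
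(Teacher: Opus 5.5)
Your main argument is correct and is essentially the paper's proof. The paper also represents $X^*\subset B(\mc H)$ and $Y^*\subset B(\mc K)$, uses weak representations to get density of $X^*\otimes Y^*$, and transfers this to $\sigma(X\h Y)$ through the quotient $B(\mc H)_*\h B(\mc K)_*/N\cong X\h Y$; that quotient is exactly your compatibility-of-preduals check, and your uniform bound on the partial sums $z_F$ supplies the step the paper's one-line appeal to weak representations leaves implicit.

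Your bipolar fallback is also valid and is a shorter route. It needs only that the functionals $\phi\otimes\psi$ separate points of $X\h Y$, which follows from injectivity of $\h$ together with the complete isometry $\Phi$ on the Haagerup tensor product: test $\Phi_u$ against rank-one operators.
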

\begin{proof}
    If we represent $X^* \subset B(\mc H)$ and $Y^* \subset B(\mc K)$, we see that there must exist a preannihilator $N$ such that
    \[
    B(\mc H)_* \h B(\mc K)_* / N \cong X \h Y.
    \]
    From the existence of weak representations we see $X^* \eh Y^*$ is the $\sigma(B(\mc H)_* \h B(\mc K)_*)$ closure of $X^* \otimes Y^*$. Thus $X^* \otimes Y^*$ must also be $\sigma(X \h Y)$-dense in $X^* \eh Y^*$.
\end{proof}

If $\mc A$ and $\mc B$ are operator spaces that are also algebras, then $\mc A \eh \mc B$ is also an algebra. The multiplicative structure of this algebra is key in connecting quantum graphs with left ideals in $\eh$.

\begin{theorem} \cite[Section 4]{blecher-smith-w*h}
    If $\mc A \subset B(\mc H_1)$ and $\mc B \subset B(\mc H_2)$ are w$^*$-closed unital subalgebras, then $A \eh B$ is a dual Banach algebra. The multiplicative structure can be written in terms of weak representations: if $\sum_{i \in I} a_i \otimes b_i, \sum_{j \in J} a_j \otimes b_j \in A \eh B$, the multiplication given by
    \[
    \bigg(\sum_{i \in I} a_i \otimes b_i\bigg)\bigg(\sum_{j \in J} a_j \otimes b_j\bigg) = \sum_{(i,j) \in I \times J} a_ia_j\otimes b_jb_i
    \]
    is well-defined.
\end{theorem}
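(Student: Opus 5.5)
The plan is to transport the whole problem through the complete isometry $\Phi$ of Theorem \ref{defn: Phi-map}, which identifies $\mc A \eh \mc B$ (viewed inside $B(\mc H_1)\eh B(\mc H_2)$, or after amplifying to a common Hilbert space $\mc H=\mc H_1\oplus\mc H_2$) with a space of normal completely bounded maps. The multiplication we want turns out to be composition of maps. Concretely, I would first note that $\Phi_{a\otimes b}\circ \Phi_{a'\otimes b'}(T) = a a' T b' b = \Phi_{aa'\otimes b'b}(T)$. So on the algebraic tensor product, the proposed product $(a\otimes b)(a'\otimes b') = aa'\otimes b'b$ corresponds exactly to composition $\Phi_z\circ\Phi_w$. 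In other words, the algebraic tensor product with this product is the algebra $\mc A\otimes\mc B^{op}$.

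\textbf{Step 1 (closure under composition).} The composition of two normal completely bounded maps is normal and completely bounded, with $\|\Phi_z\circ\Phi_w\|_{cb}\le\|\Phi_z\|_{cb}\|\Phi_w\|_{cb}$. I then have to show the composite again lies in $\Phi(\mc A\eh\mc B)$. For this I would use weak representations. Take $z=\sum_i a_i\otimes b_i$ and $w=\sum_j a_j\otimes b_j$ with $\sum_i a_ia_i^*$ and $\sum_i b_i^*b_i$ (and likewise for $j$) bounded. The doubly indexed family $(a_ia_j)$ is the row product of the row $[a_i]$ with the block-diagonal amplification of $[a_j]$, so $\|\sum_{i,j}a_ia_ja_j^*a_i^*\|\le\|\sum_i a_ia_i^*\|\,\|\sum_j a_ja_j^*\|$. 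The analogous column estimate holds for $(b_jb_i)$. Hence $\sum_{(i,j)}a_ia_j\otimes b_jb_i$ is a legitimate weak representation with entries in $\mc A$ and $\mc B$, since these are algebras. By \cite[Theorem 3.1]{blecher-smith-w*h} it defines an element of $\mc A\eh\mc B$. Using the weak$^*$ convergence of these sums, I would check that its image under $\Phi$ is $\Phi_z\circ\Phi_w$. This gives well-definedness: the product depends only on $\Phi_z$ and $\Phi_w$, hence only on $z$ and $w$ and not on the chosen representations, because $\Phi$ is injective.

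\textbf{Step 2 (Banach algebra).} Associativity and bilinearity are inherited from composition, and submultiplicativity comes from the cb-norm estimate above, since $\Phi$ is isometric. Unitality comes from $1\otimes 1\mapsto \mathrm{id}$.

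\textbf{Step 3 (dual Banach algebra).} By Remark \ref{rmk: predual-tops}(1), $\mc A\eh\mc B=(\mc A_*\h\mc B_*)^*$. For the dual Banach algebra property, I must show that for fixed $w$, the maps $z\mapsto zw$ and $z\mapsto wz$ are weak$^*$ continuous. Equivalently, for each $\omega\in\mc A_*\h\mc B_*$, the functional $z\mapsto\omega(zw)$ should again lie in the predual. I would verify this first on elementary tensors $\omega=\phi\otimes\psi$: for $w=\sum_j a_j\otimes b_j$, the functional $z\mapsto(\phi\otimes\psi)(zw)$ equals $\sum_j (\phi(\cdot\, a_j)\otimes\psi(b_j\,\cdot))(z)$. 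This is a Haagerup-convergent sum in $\mc A_*\h\mc B_*$ precisely because of the row and column boundedness of the $a_j$ and $b_j$. I would then extend to all $\omega$ by density and norm continuity.

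I expect this last step to be the main obstacle. It requires controlling the Haagerup norm of the transported functional uniformly, essentially showing that right multiplication by $w$ is the adjoint of a bounded map on $\mc A_*\h\mc B_*$ built from the module actions $\phi\mapsto\phi(\cdot\,a_j)$. The row and column bounds are exactly what should make that map bounded, but the bookkeeping with infinite index sets is delicate.
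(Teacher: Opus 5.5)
Your Steps 1 and 2 are sound, and they follow the natural route (the paper itself gives no proof, only the citation to Blecher--Smith). The row/column estimates make $\sum_{(i,j)}a_ia_j\otimes b_jb_i$ a bounded weak representation, and its image under $\Phi$ is $\Phi_z\circ\Phi_w$. Injectivity and isometry of $\Phi$ then give well-definedness, associativity and submultiplicativity.

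The gap is Step~3, and it is not a matter of bookkeeping. Row and column boundedness of $[\phi(\cdot\,a_j)]_j$ and $[\psi(b_j\,\cdot)]_j$ only makes $\sum_j\phi(\cdot\,a_j)\otimes\psi(b_j\,\cdot)$ an element of $\mc A_*\eh\mc B_*$, not of $\mc A_*\h\mc B_*$. For $\phi(x)=\langle\xi,x\eta\rangle$, the tail $[\phi(\cdot\,a_j)]_{j\notin F}$ is controlled only by $\|\xi\|\,\|\sum_{j\notin F}|a_j\eta\rangle\langle a_j\eta|\|^{1/2}$, which need not tend to $0$; in the example below it equals $\|\xi\|$ for every finite $F$.

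No other argument can rescue this step, because $z\mapsto zw$ is in general not weak$^*$-continuous. Take $\mc A=\mc B=B(\ell^2(\Z))$, let $S$ be the bilateral shift, pick unit vectors $\eta,\zeta$, and set
\[
w=\sum_{k\in\Z}|e_k\rangle\langle\eta|\otimes|\zeta\rangle\langle e_k|, \qquad \Phi_w(T)=\langle\eta,T\zeta\rangle\,1 .
\]
The bounded sequence $z_n=1\otimes 1-S^n\otimes S^{*n}$ converges to $1\otimes 1$ in $\sigma(\mc A_*\h\mc B_*)$, since $\phi(S^n)\to 0$ for every normal $\phi$. But reindexing gives $(S^n\otimes S^{*n})w=w$, so $z_nw=0$ for all $n$, while $(1\otimes 1)w=w\neq 0$. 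Equivalently, with $\psi(y)=\langle\zeta,y\rho\rangle$, your functional $z\mapsto(\phi\otimes\psi)(zw)$ is $z\mapsto\langle\xi,\Phi_z(1)\rho\rangle=\langle\xi,m(z)\rho\rangle$. This is exactly the $\sigma(\mc M_*\h\mc M_*)$-discontinuity of $m$ exhibited in Example~\ref{ex: non-example graph}.

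The missing idea is an asymmetry. Since $(\phi\otimes\psi)(z)=\langle\xi,\Phi_z(|\eta\rangle\langle\zeta|)\rho\rangle$, on bounded sets $\sigma(\mc A_*\h\mc B_*)$-convergence of $z_\lambda$ is point-weak$^*$ convergence of $\Phi_{z_\lambda}$ on compact operators.

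Left multiplication works. Because $\Phi_{wz}=\Phi_w\circ\Phi_z$ with $\Phi_w$ normal, $z\mapsto wz$ is weak$^*$-continuous (check it on bounded nets, then apply Krein--Smulian). In your bookkeeping this side produces $\sum_j\phi(a_j\,\cdot)\otimes\psi(\cdot\,b_j)$. Its tails are governed by $(\sum_{j\notin F}\|a_j^*\xi\|^2)^{1/2}$ and $(\sum_{j\notin F}\|b_j\rho\|^2)^{1/2}$, which do vanish.

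Right multiplication does not work in general. Here $\Phi_{zw}=\Phi_z\circ\Phi_w$, and $\Phi_w$ need not send compacts to compacts. The map $z\mapsto zw$ is continuous when $\Phi_w$ does preserve compacts (e.g.\ $w\in\mc A\otimes\mc B$), but not otherwise.

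So your method proves that $\mc A\eh\mc B$ is a unital Banach algebra and a dual space (Remark~\ref{rmk: predual-tops}(1)) in which every $z\mapsto wz$ is weak$^*$-continuous. The two-sided separate continuity that ``dual Banach algebra'' carries in the paper is false in general. Note that the proof of Lemma~\ref{lemma: left ideal} uses continuity of $z\mapsto za$, which is precisely the side that fails.
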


The following theorem can be found in \cite{blecher-lemerdy} in the case of dual operator spaces and around \cite{opsp-hopf} in the case of general operator spaces.

\begin{theorem} \cites[1.6.9 (Weak$^*$ Haagerup tensor product)]{blecher-lemerdy}[Lemma 5.4]{opsp-hopf} \label{thm: eh extension}
    If $X_1, X_2, Y_1, Y_2$ are operator spaces and $\theta_1: X_1 \to Y_1$, $\theta_2: X_2 \to Y_2$ are c.b. maps, then there exists a canonical completely bounded extension
    \begin{align*}
        \theta_1 \otimes_{eh} \theta_2 : X_1 \otimes_{eh} X_2 &\to Y_1 \otimes_{eh} Y_2
    \end{align*}
    such that 
    \[
    \|\theta_1 \otimes_{eh} \theta_2\|_{cb} \leq \|\theta_1\|_{cb}\|\theta_2\|_{cb}.
    \]
    If $\theta_1,\theta_2$ are homomorphisms between Banach algebras then $\theta_1 \otimes_{eh} \theta_2$ is a homomorphism as well. If $X_1, X_2, Y_1, Y_2$ are furthermore dual operator spaces and $\sum_i x_i \otimes y_i$ is a weak representation in $X_1 \otimes Y_1$ then
    \[
    x_i \otimes y_i \mapsto \sum_i \theta_1(x_i) \otimes \theta_2(y_i)
    \]
    induces a well-defined map between weak representations. If $\theta_1,\theta_2$ are normal maps between Banach algebras then $\theta_1 \otimes_{eh} \theta_2$ is a normal map as well.
\end{theorem}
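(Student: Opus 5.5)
Since the statement is a theorem about the extended Haagerup tensor product, I plan to work throughout with the representation $X_1 \eh X_2 \subset (X_1^* \h X_2^*)^*$, or equivalently with infinite multiplicative products $x \odot_I y$ as in Definition \ref{defn: eh}.

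\textbf{Existence and the norm bound.} Take $z\in M_n(X_1\eh X_2)$ written as $z=x\odot_I y$ with $x\in M_{n,I}(X_1)$ and $y\in M_{I,n}(X_2)$. I will define
\[
(\theta_1\eh\theta_2)(z):=\theta_1^{(n,I)}(x)\odot_I\theta_2^{(I,n)}(y),
\]
where $\theta^{(n,I)}$ denotes entrywise application. Complete boundedness of $\theta_i$ gives $\|\theta_1^{(n,I)}(x)\|\le\|\theta_1\|_{cb}\|x\|$, because finite truncations control the norm of an infinite matrix. Taking the infimum over all representations then yields the bound $\|\theta_1\|_{cb}\|\theta_2\|_{cb}$.

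\textbf{Well-definedness.} This is the main obstacle: the same $z$ can have many representations $x\odot_I y$. I will handle it by duality. The adjoints $\theta_1^*\otimes\theta_2^*$ act on $Y_1^*\otimes Y_2^*$, and they extend to a completely bounded map $Y_1^*\h Y_2^*\to X_1^*\h X_2^*$ by functoriality of $\h$, which follows directly from Definition \ref{defn: h}. Its adjoint is a map
\[
(X_1^*\h X_2^*)^*\to(Y_1^*\h Y_2^*)^*.
\]
I will then check, by pairing against elementary tensors $\phi\otimes\psi$, that this adjoint agrees with the formula above on any representation. Since the adjoint does not depend on the chosen representation, the formula is independent of it. As a consequence, the map is weak$^*$ continuous for the topologies $\sigma(X^*\h X^*)$, which matches the topologies described in Remark \ref{rmk: predual-tops}.

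\textbf{Homomorphism property.} Suppose $\theta_1,\theta_2$ are homomorphisms. On elementary tensors we have
\[
(a\otimes b)(a'\otimes b')=aa'\otimes b'b,
\]
and this product is clearly preserved by $\theta_1\otimes\theta_2$. Multiplication in $X\eh X$ is separately weak$^*$ continuous, by the weak-representation formula from \cite{blecher-smith-w*h}. Combining this with the density in Corollary \ref{cor: alg-eh-density} and the weak$^*$ continuity established above, the homomorphism identity extends to all of $X_1\eh X_2$.

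\textbf{The dual case.} Here the task is to show that
\[
\sum_i x_i\otimes y_i\mapsto\sum_i\theta_1(x_i)\otimes\theta_2(y_i)
\]
gives a weak representation of the image. When $\theta_1,\theta_2$ are normal, I pair the image with $\phi\otimes\psi$ and use the preadjoints $(\theta_i)_*$:
\[
\sum_i\phi(\theta_1 x_i)\,\psi(\theta_2 y_i)=\sum_i(\theta_1)_*\phi(x_i)\,(\theta_2)_*\psi(y_i),
\]
and the right-hand side converges to the correct value. Normality of $\theta_1\eh\theta_2$ then follows because it is the adjoint of $(\theta_1)_*\h(\theta_2)_*$. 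In the non-normal case I would restrict the claim to representations that arise from the infinite-matrix form $x\odot_I y$, relying on the paper's cited references \cite{blecher-lemerdy} and \cite{opsp-hopf}.
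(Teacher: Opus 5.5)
The paper gives no argument of its own here, only the citations to Blecher--Le Merdy and Effros--Ruan. Your treatment of existence, well-definedness and the cb bound is sound: you realise $\theta_1\otimes_{eh}\theta_2$ as the restriction of $(\theta_1^*\otimes_h\theta_2^*)^*$, and check on elementary tensors $f\otimes g$ that it sends $x\odot y$ to $\theta_1(x)\odot\theta_2(y)$. This rests on the Effros--Ruan identification of $X_1\otimes_{eh}X_2$ with the elements $x\odot y$ of $(X_1^*\otimes_h X_2^*)^*$.

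\textbf{The gap is in the homomorphism step.} The continuity your construction yields is for $\sigma(X_1^*\otimes_h X_2^*)$, where $X_k^*$ is the \emph{full} dual. For $X_k=\mathcal M$ this is $\sigma(\mathcal M^*\otimes_h\mathcal M^*)$. That topology is finer than, and in infinite dimensions different from, the topology $\sigma(\mathcal M_*\otimes_h\mathcal M_*)$ of Remark~\ref{rmk: predual-tops}, so the two do not ``match.''

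Both Corollary~\ref{cor: alg-eh-density} and the Blecher--Smith separate continuity of multiplication are statements about $\sigma(\mathcal M_*\otimes_h\mathcal M_*)$. Your limiting argument therefore needs $\theta_1\otimes_{eh}\theta_2$ to be continuous for the predual topologies, i.e.\ normal. You only establish this in your last paragraph, and only for normal $\theta_k$. For non-normal $\theta_k$ the map is genuinely not normal: if $\theta_2$ is unital then $a\otimes 1\mapsto\theta_1(a)\otimes 1$, and slicing the second leg by a normal state shows that normality of $\theta_1\otimes_{eh}\theta_2$ would force normality of $\theta_1$. Running the argument in the finer topology instead would need separate continuity of multiplication by non-algebraic elements there, which neither Blecher--Smith nor your proposal provides. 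As written, the homomorphism claim is unproved for non-normal maps, and for normal ones it depends on a fact you only prove afterwards.

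\textbf{The repair needs no topology.} In the setting where the paper actually has a product (w$^*$-closed unital algebras), take $z=x\odot y$ and $w=u\odot v$. The Blecher--Smith formula gives $zw=[x_iu_j]_{(i,j)}\odot[v_jy_i]_{(i,j)}$, again a bounded row times a bounded column. You have shown that $\theta_1\otimes_{eh}\theta_2$ acts entrywise on \emph{every} such representation. Multiplicativity of $\theta_1,\theta_2$, together with the same product formula in $Y_1\otimes_{eh}Y_2$, then gives $(\theta_1\otimes_{eh}\theta_2)(zw)=(\theta_1\otimes_{eh}\theta_2)(z)\,(\theta_1\otimes_{eh}\theta_2)(w)$ immediately, normal or not. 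For the normal homomorphisms the paper actually uses, you could alternatively first identify the map with $((\theta_1)_*\otimes_h(\theta_2)_*)^*$, and then run your density argument entirely in $\sigma(\mathcal M_*\otimes_h\mathcal M_*)$.

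\textbf{The non-normal dual case likewise needs no appeal to the references.} Your identity $\langle(\theta_1\otimes_{eh}\theta_2)(x\odot y),f\otimes g\rangle=\sum_i f(\theta_1x_i)\,g(\theta_2y_i)$ holds for all $f\in Y_1^*$ and $g\in Y_2^*$, in particular for normal ones. Complete boundedness makes $\theta_1(x)$ a bounded row and $\theta_2(y)$ a bounded column. Hence $\sum_i\theta_1(x_i)\otimes\theta_2(y_i)$ is a weak representation of the image whether or not the $\theta_k$ are normal.
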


\paragraph{\underline{$\otimes_{\sigma h}$: Normal Haagerup Tensor Product}}

Unlike the other two Haagerup tensor products, the elements of the $\otimes_{\sigma h}$ tensor product do not generally have explicit forms given by $\odot$. Also unlike the other tensor products, the $\otimes_{\sigma h}$ tensor product is only defined for dual operator spaces. Indeed, its most concise description may be $(X \eh Y)^* = X^* \otimes_{\sigma h} Y^*$ for any operator spaces $X$ and $Y$ (see Remark \ref{rmk: predual-tops}). The definition below is not quite the one given in the original paper, but it follows immediately from the identifications in Theorem \ref{thm: m-haagerups}.

\begin{definition} \cite[p. 262]{normal-haagerup}
    If $\mc M, \mc N$ are von Neumann algebras, their normal Haagerup tensor product $\mc M \otimes_{\sigma h} \mc N$ is defined
    \[
    \mc M \otimes_{\sigma h} \mc N := (\mc M_* \eh \mc N _*)^{*}.
    \]
\end{definition}

We will be using a nice feature of $\otimes_{\sigma h}$ to detect a property of quantum graphs in Proposition \ref{prop: qr-to-ann}. If $\mc M$ is a von Neumann algebra, the multiplication map $m: \mc M \times \mc M \to \mc M$ is well-defined. Being a bilinear map, it must obviously factor through the algebraic tensor product $\mc M \otimes \mc M$. The latter is generally not an operator space, but the $\otimes_{\sigma h}$ tensor product provides the correct space that extends $m$. 

\begin{corollary} \label{cor: sigma-mult}
    For a von Neumann algebra $\mc M$, the multiplication map $m: \mc M \otimes_{\sigma h} \mc M \to \mc M$ is a $\sigma(\mc M_* \eh \mc M_*)$-continuous, completely contractive extension of the multiplication map $m: \mc M \times \mc M \to \mc M$.
\end{corollary}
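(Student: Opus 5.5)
We will realise $m$ as the adjoint of a completely contractive map between the preduals. This gives weak$^*$-continuity and complete contractivity at once, and extension of ordinary multiplication is then a matter of evaluating on elementary tensors.

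The natural candidate is the comultiplication-type map
\[
\Delta: \mc M_* \to \mc M_* \eh \mc M_*, \qquad \Delta(\omega) = \text{``}\omega \circ m\text{''},
\]
so that $\langle \Delta(\omega), x\otimes y\rangle = \omega(xy)$.

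To build $\Delta$ concretely, represent $\mc M \subset B(\mc H)$. A normal functional $\omega$ on $\mc M$ extends to a normal functional on $B(\mc H)$ with the same (cb) norm. Such a functional can be written as $\omega(T) = \langle \xi | T \eta\rangle$ with $\xi,\eta \in \ell^2(I)\otimes \mc H$, using an amplification and $\|\xi\|\|\eta\| = \|\omega\|$. Writing $\xi = (\xi_i)$ and $\eta = (\eta_i)$, and inserting a resolution of the identity $I_{\mc H} = \sum_k |e_k\>\<e_k|$, we get
\[
\omega(xy) = \sum_{k} \sum_{i} \langle \xi_i | x e_k\rangle\, \langle e_k| y \eta_i\rangle .
\]
Set $\phi_{ik} = \langle \xi_i|\,\cdot\, e_k\rangle$ and $\psi_{ki} = \langle e_k|\,\cdot\,\eta_i\rangle$. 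These form a row-type matrix $[\phi_{ik}]$ and a column-type matrix $[\psi_{ki}]$ of normal functionals. Then $\Delta(\omega)$ is given by the infinite multiplicative product $[\phi]\odot[\psi]$ in the sense of Definition \ref{defn: mult-prod-inf}, which lies in $\mc M_*\eh\mc M_*$ by Definition \ref{defn: eh}. The norms of the two matrices, viewed in the appropriate $M_{1,J}(\mc M_*)$ and $M_{J,1}(\mc M_*)$ (where $\mc M_*$ carries its predual operator space structure), are controlled by $\|\xi\|$ and $\|\eta\|$. Hence $\|\Delta(\omega)\|_{eh}\le \|\omega\|$.

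The matricial version runs the same way with $\omega\in M_n(\mc M_*)$, written as $\omega(T) = V^*(I\otimes T)W$. This yields complete contractivity of $\Delta$. Independence of the choice of representation follows because the pairing with $\mc M\otimes\mc M$ only sees $\omega(xy)$, and $\mc M\otimes \mc M$ separates points of $\mc M_*\eh \mc M_*$, being weak$^*$ dense in the dual $\mc M\otimes_{\sigma h}\mc M$.

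Define $m := \Delta^*: (\mc M_*\eh\mc M_*)^* = \mc M\otimes_{\sigma h}\mc M \to \mc M_{**}^{\ }=\mc M$. Adjoints of complete contractions are complete contractions, and adjoints are weak$^*$-continuous, i.e.\ $\sigma(\mc M_*\eh\mc M_*)$-to-$\sigma(\mc M_*)$ continuous. For elementary tensors, $\langle m(x\otimes y),\omega\rangle = \langle x\otimes y, \Delta\omega\rangle = \omega(xy)$, so $m(x\otimes y) = xy$ and $m$ extends the multiplication.

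The main obstacle is the norm estimate $\|\Delta\|_{cb}\le 1$. It requires correctly identifying the operator space structure of $M_{n,J}(\mc M_*)$ (dual structure) with the row/column factorisation of normal functionals. Equivalently, it amounts to checking that the elementwise pairing of $[\phi_{ik}]\odot[\psi_{ki}]$ against $\mc M\otimes_h\mc M$ converges and is bounded by $\|\xi\|\|\eta\|\,\|\sum x_jx_j^*\|^{1/2}\|\sum y_j^*y_j\|^{1/2}$.

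If this is awkward, the fallback is to dualise instead. Using Remark \ref{rmk: predual-tops} and Theorem \ref{thm: m-haagerups} (identifying $\mc M\otimes_{\sigma h}\mc M$ with $CB_{\mc M'\mc M'}(B(\mc H))$), one sets $m(u) := \Phi_u(1)$. This map is visibly completely contractive, since evaluation at $1$ is, and it sends $x\otimes y$ to $xy$. Weak$^*$-continuity is then checked by writing $\omega\circ \text{ev}_1$ as the element $\Delta(\omega)$ constructed above.
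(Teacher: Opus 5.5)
Your main argument is correct, and it takes a different route from the paper.

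\textbf{The paper's route.} The paper's proof is a single sentence. It uses the identification $\mc M\otimes_{\sigma h}\mc M\cong CB_{\mc M',\mc M'}(B(\mc H))$ from the normal Haagerup literature and takes $m$ to be evaluation at $I_{\mc H}$. This is essentially your fallback. Complete contractivity and $m(x\otimes y)=xy$ are immediate there. The $\sigma(\mc M_*\eh\mc M_*)$-continuity, however, is never argued. It tacitly relies on the identification being a weak$^*$-homeomorphism onto a point-weak$^*$-type topology, which the paper does not state.

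\textbf{Your route.} You build the preadjoint $\Delta:\mc M_*\to\mc M_*\eh\mc M_*$ directly, from a vector factorization of normal functionals and a resolution of the identity, and then set $m=\Delta^*$. This needs only the infinite-matrix description of $\eh$, not the module-map representation theorem for $\otimes_{\sigma h}$. It also yields weak$^*$-continuity, complete contractivity and the extension property together by duality, rather than by citation.

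\textbf{The estimate you flag.} The cb-norm bound you identify as the main obstacle does go through.
\begin{itemize}
\item The row map $x\mapsto[\langle\xi_i|xe_k\rangle]_{(i,k)}$ equals $\xi^*(x\otimes I)U$, where $U:\delta_{(i,k)}\mapsto e_k\otimes\delta_i$ is unitary. So its norm in $M_{1,J}(\mc M_*)$ is at most $\|\xi\|$.
\item The column map factors the same way through $\eta$.
\item In the matricial case, use the $V,W$ from the normal Wittstock--Haagerup factorization of $\omega\in M_n(\mc M_*)=CB^\sigma(\mc M,M_n)$.
\end{itemize}
Your representation-independence argument is also fine, since elements of $\mc M_*\eh\mc M_*\subset(\mc M\h\mc M)^*$ are determined by their values on $\mc M\otimes\mc M$.

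In short, the paper's proof is shorter given the cited theorem. Yours is self-contained and actually proves the continuity claim that the corollary is later used for (Proposition \ref{prop: qr-to-ann}).
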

\begin{proof}
    Under the identification 
    \[
    \mc M \otimes_{\sigma h} \mc M = CB_{\mc M',\mc M'}(B(\mc H))
    \]
    given by \cite[Theorem 2.5]{normal-haagerup}, the multiplication map is simply evaluation of a CB $\mc M'$-bimodule map at $I_{\mc H}$. 
\end{proof}

\begin{remark}
    As noted above, the tensor products $\hat \otimes$ and $\otimes_h$ are completions of the algebraic tensor product. Since $\eh$ allows its elements to factor through infinite matrices, however, we can no longer be sure that the algebraic tensor product is dense in $\eh$ (or $\otimes_{\sigma h}$, since we saw in Remark \ref{rmk: predual-tops} that $\mc M \eh \mc M \subset \mc M \otimes_{\sigma h} \mc M$ completely isometrically). Indeed, this is not generally true. What \textit{is} true is that the algebraic tensor products are dense in $\eh$ and $\otimes_{\sigma h}$ under the induced predual topologies, a fact that follows from a standard Hahn-Banach argument (see \cite[Lemma 2.1]{w*density-haagerup}, for instance).
\end{remark}

\paragraph{\underline{Continuity of the Multiplicative Product $\odot$}}

Finally, we have the following theorem from \cite{gp-obs} that was implicitly used in the definition of the multiplicative product.

\begin{theorem}\protect{\cite[Lemma 3.1]{gp-obs}} \label{defn: mult-prod-gp}
    The multiplicative product $\odot$ is a bilinear map extending the action
    \begin{align*}
    \odot : \overline{B(\ell^2) \otimes \mc M}^{\sigma(B(\ell^2)_* \hat \otimes \mc M_*)} \times \overline{B(\ell^2) \otimes \mc M}^{\sigma(B(\ell^2)_* \hat \otimes \mc M_*)} &\to \overline{B(\ell^2) \otimes (\mc M \eh \mc M)}^{\sigma(B(\ell^2)_* \widehat \otimes (\mc M_* \otimes_{eh} \mc M_*))} \\
    (S \otimes x) \odot (T \otimes y) &\mapsto ST \otimes (x \otimes y).
    \end{align*}
   It is continuous over bounded sets if the domain is endowed with the product SOT and the codomain with the $\sigma(B(\ell^2)_* \hat \otimes (\mc M_* \otimes_{eh} \mc M_*))$ topology.
\end{theorem}

 We may abbreviate the space in the codomain by $B(\ell^2) \overline{\otimes}^{\sigma(eh)} (\mc M \eh \mc M)$. 

\begin{remark} \label{rmk: gp-topologies}
    The topologies above may require explanation. The space $B(\ell^2) \overline{\otimes} \mc M$ is the usual von Neumann algebraic tensor product (see Subsection \ref{prelim: op alg}) whose unique Banach space predual $B(\ell^2)_* \widehat \otimes \mc M$ is the operator space projective tensor product of the predual $B(\ell^2)_*$ and the von Neumann algebra $\mc M$.
    
    The space $B(\ell^2) \overline{\otimes} (\mc M \otimes_{eh} \mc M)$ is closed in a predual topology induced by the operator space projective tensor product the predual $B(\ell^2)_*$ and the extended Haagerup tensor product $\mc M_* \eh \mc M_*$. To give a flavor of how this topology works, it may be best to contrast the ``natural'' predual topologies of $B(\ell^2) \overline{\otimes} (\mc M \otimes_{eh} \mc M)$ and $B(\ell^2) \overline{\otimes} (\mc M \otimes_{\sigma h} \mc M)$. We begin with the former. Assume that $\mc M = B(\mc H)$. We have that 
    \[
    B(\ell^2) \overline \otimes (\mc M \otimes_{eh} \mc M) = (B(\ell^2)_* \hat \otimes (CB(K(\mc H),B(\mc H)))_*)^*
    \]
    and since $CB(X,Y^*) = (X \hat \otimes Y)^*$ (see \cite[Theorem 4.1]{intro-op-sp-pisier} for a reference),
    \[
    (B(\ell^2)_* \hat \otimes (CB(K(\mc H),B(\mc H)))_*)^* = (B(\ell^2)_* \hat \otimes (K(\mc H) \hat \otimes B(\mc H)_*))^*.
    \]
    Using the symmetry and associativity of $\hat \otimes$ (\cite[Chapter 1.2.2]{opsptennorm}), 
    \begin{align*}
    (B(\ell^2)_* \hat \otimes (K(\mc H) \hat \otimes B(\mc H)_*))^* &= ((K(\mc H) \hat \otimes B(\ell^2)_* \hat \otimes B(\mc H)_*))^* \\
    &= CB(K(\mc H), (B(\ell^2)_* \hat \otimes B(\mc H)_*)^*) \\
    &= CB(K(\mc H), B(\ell^2) \bar \otimes B(\mc H)) \\
    &= CB(K(\mc H), B(\ell^2 \otimes_2 \mc H)).
    \end{align*}
    Every element $[z_{ij}]_{i,j} \in B(\ell^2) \overline{\otimes} (\mc M \otimes_{eh} \mc M)$ can be considered as an infinite matrix with entries $z_{ij} \in \mc M \eh \mc M$. Every operator $T \in K(\mc H)$ turns $[z_{ij}]_{i,j}$ into an infinite matrix with $B(\mc H)$ entries by
    \[
    [z_{ij}]_{i,j} \mapsto [\Phi_{z_{ij}}(T)]_{i,j} \in B(\ell^2) \overline{\otimes} B(\mc H).
    \]
    A net $(z^\lambda := [z^\lambda _{ij}]_{i,j})_{\lambda \in \Lambda}$ converges in the $\sigma(B(\ell^2)_* \hat \otimes (\mc M_* \otimes_{h} \mc M_*))$ topology to $z = [z_{ij}]_{ij}$, then for any $T \in K(\mc H)$ and functional $\phi \in B(\ell^2 \otimes_2 \mc H)_*$ we have 
    \[
    \phi\bigg( [\Phi_{z^\lambda_{ij}-z_{ij}}(T)]_{i,j}\bigg) \overset{\lambda \in \Lambda}{\longrightarrow} 0.
    \]
    (The above convergence is a priori a coarser topology than $\sigma(B(\ell^2)_* \hat \otimes (\mc M_* \otimes_{h} \mc M_*))$, but we hope this demystifies the symbols in the predual.) The natural topology on $B(\ell^2) \overline{\otimes} (\mc M \otimes_{\sigma h} \mc M)$ is almost exactly the same except one replaces $K(\mc H)$ by $B(\mc H)$ and $\otimes_h$ by $\eh$. Although the range of $\odot$ is $B(\ell^2) \bar \otimes (\mc M \eh \mc M)$, the topology is the predual topology from $B(\ell^2) \bar \otimes (\mc M \otimes_{\sigma h} \mc M)$.
\end{remark}

\underline{Representation-Free Characterization of Quantum Graphs}  \label{par: annihilator characterization}

We can now show how the $\eh$ tensor product allows us to quantize the complementary set of edges of a graph. We begin with an example of how this works for a finite classical graph. The example also works for classical graphs on infinitely many vertices, but we only present the finite case to avoid an extra topological consideration. The infinite case will in any case be subsumed in the general characterization in Theorem \ref{thm: gp-bijection}.

\begin{example} \label{ex: eh-ann}
    Let $G = (V,E)$ be a finite graph, and let $\mc M = \ell^\infty(V)$, $\mc S \subset B(\ell^2(V))$ be its quantization as in Example \ref{ex: classical-to-quantum-graph}. Note that the map $\Phi$ (Theorem \ref{defn: Phi-map}) turns $B(\ell^2(V))$ into a left $\ell^\infty(V) \eh \ell^\infty(V)$-module. We can therefore speak of annihilators of $\ell^\infty(V)$-bimodules inside $\ell^\infty(V) \eh \ell^\infty(V)$. For $v \in V$ denote the indicator function on $\{v\}$ by $\chi_v$.
    \begin{align*}
    \Ann(\mc S) &:= \{z \in \ell^\infty(V) \otimes \ell^\infty (V) : \Phi_z(\mc S) = 0 \} \\
    &= \Span\{\chi_v \otimes \chi_w : (v,w) \notin E\}.
    \end{align*}
\end{example}

(Part of) \cite[Theorem 3.3]{gp-obs} shows that correspondence above generalizes to a bijection between quantum graphs on an arbitrary von Neumann algebra $\mc M$ and $\sigma(\mc M_* \eh \mc M_*)$-closed left ideals. 

\begin{theorem}\protect{\cite[Theorem 3.3]{gp-obs}} \label{thm: gp-bijection}
    Let $\mc M \subset B(\mc H)$ be a von Neumann algebra. There is a bijection between $\sigma(\mc M_* \eh \mc M_*)$-closed left ideals in $\mc M \eh \mc M$ and quantum graphs on $\mc M$. Namely, each quantum graph corresponds uniquely to its annihilator in $\mc M \eh \mc M$:
    \[
    \Ann(\mc S) = \{z \in \mc M \otimes_{eh} \mc M : \Phi_z(T) = 0, \, T \in \mc S\}
    \]
    where $\Phi$ is the map in Definition \ref{defn: Phi-map}.
\end{theorem}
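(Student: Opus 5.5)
The plan is to set up a Galois-type correspondence between quantum graphs and left ideals, using the identification $\mc M \eh \mc M \cong CB^\sigma_{\mc M',\mc M'}(B(\mc H))$ from Theorem \ref{thm: m-haagerups}. We define two maps. The first sends a quantum graph $\mc S$ to $\Ann(\mc S)$. The second sends a left ideal $J$ to
\[
\mc S_J := \{T \in B(\mc H) : \Phi_z(T) = 0 \ \forall z \in J\}.
\]
We then show that each map lands in the right class and that the two maps are mutually inverse.

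\textbf{Well-definedness.}
\begin{itemize}
\item $\Ann(\mc S)$ is a left ideal. The product in $\mc M \eh \mc M$ corresponds under $\Phi$ to composition: one checks from the weak-representation formula $(a\otimes b)(c\otimes d)=ac\otimes db$ that $\Phi_{wz}=\Phi_w\circ\Phi_z$. Hence $\Phi_z(\mc S)=0$ implies $\Phi_{wz}(\mc S)=0$.
\item $\Ann(\mc S)$ is $\sigma(\mc M_*\eh\mc M_*)$-closed. For $T\in B(\mc H)$ and $\omega\in B(\mc H)_*$, the functional $z\mapsto\omega(\Phi_z(T))$ is continuous in this topology. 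This continuity is the content of the identification $(\mc M_*\eh\mc M_*)^*=\mc M\otimes_{\sigma h}\mc M=CB_{\mc M',\mc M'}(B(\mc H))$ in Remark \ref{rmk: predual-tops}, generalizing Corollary \ref{cor: sigma-mult}, which is the case $T=I_{\mc H}$.
\item $\mc S_J$ is a quantum graph. It is weak$^*$ closed because each $\Phi_z$ is normal. It is an $\mc M'$-bimodule because each $\Phi_z$ is $\mc M'$-bimodular.
\end{itemize}

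\textbf{The easy inclusions.} Tautologically, $\mc S\subseteq\mc S_{\Ann(\mc S)}$ and $J\subseteq\Ann(\mc S_J)$. It remains to prove the two reverse inclusions.

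\textbf{Recovering $\mc S$.} Suppose $T\notin\mc S$. Since $\mc S$ is weak$^*$ closed and bimodular, I would use Weaver's representation-independence (a quantum graph is determined by the projections $P\otimes Q$ with $P\,\mc S\,Q=0$, \cite[Theorem 2.7]{weaverqrelations}) to locate projections $P,Q\in\mc M$. We want $P\mc S Q=0$ but $PTQ\neq 0$.
\begin{itemize}
\item In an amplified representation, Weaver obtains such $P,Q$ in $\mc M\bar\otimes B(\ell^2)$ rather than in $\mc M$ itself.
\item These should translate into an element $z\in\mc M\eh\mc M$, built by a multiplicative product $x\odot y$ of a row and a column as in Theorem \ref{defn: mult-prod-gp}.
\item This $z$ should satisfy $\Phi_z(\mc S)=0$ and $\Phi_z(T)\neq0$.
\end{itemize}
This gives $\mc S_{\Ann(\mc S)}=\mc S$.

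\textbf{Recovering $J$ (main obstacle).} We must show $\Ann(\mc S_J)\subseteq J$. The plan is a Hahn--Banach argument in the predual $\mc M_*\eh\mc M_*$.
\begin{itemize}
\item Suppose $w\notin J$. Then there is $f\in\mc M_*\eh\mc M_*$ with $f(J)=0$ and $f(w)\neq0$.
\item Identify $f$ with a normal functional of the form $z\mapsto\omega(\Phi_z(T))$. In fact $f$ may require a sum of such functionals, or $T$ in an amplification $B(\mc H\otimes\ell^2)$; this is why the $B(\ell^2)$-amplified topologies of Remark \ref{rmk: gp-topologies} enter.
\item Use the left ideal property to upgrade from one functional to the whole cyclic subspace. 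Let $\mc S'$ be the weak$^*$-closed span of $\{\Phi_u(T):u\in\mc M\eh\mc M\}$ intersected appropriately.
\item From $f(J)=0$ and $\mathcal M\eh\mathcal M\cdot J\subseteq J$, deduce $\Phi_z(T)$-type vanishing. The goal is to show $T$ (or its amplification) lies in $\mc S_J$ while $\Phi_w(T)\neq0$.
\end{itemize}
The delicate points are matching the exact predual in which $J$ is closed with the functionals realizable as $\omega\circ\mathrm{ev}_T$, and handling the need to pass to an amplification. I would first try to reduce to $\mc M$ acting with infinite multiplicity, so that $\mc M'$ is large. In that setting every normal functional on $CB^\sigma_{\mc M',\mc M'}$ should be a vector-state evaluation at a single $T$, and the amplification step disappears.
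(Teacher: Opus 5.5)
The paper does not prove this statement; it imports it from \cite{gp-obs}, so I am judging your argument on its own terms. Most of your scaffolding is correct:
\begin{itemize}
\item $\Phi_{wz}=\Phi_w\circ\Phi_z$ gives the left-ideal property.
\item The functionals $z\mapsto\omega(\Phi_z(T))$ do lie in $\mc M_*\eh\mc M_*$ (a row $\odot$ column of vector functionals), which gives closedness of $\Ann(\mc S)$.
\item $\mc S_J$ is a quantum graph.
\item For $\mc S_{\Ann(\mc S)}=\mc S$, Weaver's separating projections in $\mc M\overline{\otimes}B(\ell^2)$, read off as $z_{st}=\sum_u p_{su}\otimes q_{ut}$, do the job. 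You are right that the amplification is needed there.
\end{itemize}

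The gap is in $\Ann(\mc S_J)\subseteq J$, which is the real content of the theorem. After Hahn--Banach and the reduction to $f(z)=\langle\eta,\Phi_z(T)\xi\rangle$ with $f(J)=0\neq f(w)$, you aim to show that $T$ itself lies in $\mc S_J$. That is false in general, and the ``cyclic subspace of $\{\Phi_u(T)\}$'' does not produce a better operator. For a counterexample, take:
\begin{itemize}
\item $\mc M=\ell^\infty(V)$ and $J=\Ann(\mc S)$ for a classical graph;
\item $(a,b)\in E$ and $(c,d)\notin E$;
\item $T=|a\rangle\langle b|+|c\rangle\langle d|$, $\eta=|a\rangle$, $\xi=|b\rangle$.
\end{itemize}
Then $f$ kills $J$ (since $\Phi_z(|c\rangle\langle d|)\in\C|c\rangle\langle d|$) and $f(1\otimes 1)=1$, yet $T\notin\mc S=\mc S_J$.

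The missing idea is to compress $T$ by projections in $\mc M'$ onto the $\mc M$-cyclic subspaces of the \emph{vectors}, not of $T$. Let $P,Q\in\mc M'$ be the projections onto $\overline{\mc M\eta}$ and $\overline{\mc M\xi}$.
\begin{itemize}
\item For $z\in J$ and $a,b\in\mc M$ we have $(a\otimes b)z\in J$, so $\langle a^*\eta,\Phi_z(T)\,b\xi\rangle=f((a\otimes b)z)=0$. That is, $P\Phi_z(T)Q=0$.
\item By $\mc M'$-bimodularity, $\Phi_z(PTQ)=0$, so $PTQ\in\mc S_J$.
\item Meanwhile $\langle\eta,\Phi_w(PTQ)\xi\rangle=\langle P\eta,\Phi_w(T)Q\xi\rangle=f(w)\neq 0$.
\end{itemize}
Hence $w\notin\Ann(\mc S_J)$. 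This is the mirror image of the Weaver step you already use: there the compressing projections lie in the amplified $\mc M$ and the cyclic subspaces are $\mc M'$-cyclic.

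To get down to a single vector pair, amplify once more. Write $f(z)=\sum_n\langle\eta_n,\Phi_z(T)\xi_n\rangle$, with $T$ possibly in $B(\ell^2\otimes\mc H)$ as you note. This is a vector functional applied to $\Phi_z(T)\otimes I_{\ell^2}$, and the projections then lie in the commutant of the amplified $\mc M$. Since $\Phi_z$ acts entrywise, some matrix entry in $B(\mc H)$ of $P(T\otimes I_{\ell^2})Q$ lies in $\mc S_J$ and is not killed by $\Phi_w$. Your infinite-multiplicity reduction also works: use isometries $V_n\in\mc M'$ with orthogonal ranges and replace $T$ by $\sum_n V_nTV_n^*$. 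In either form, the compression step has to be added for the argument to close.
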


As a result of this theorem, we may at times call a pair $(\mc M, \Ann(\mc S))$ a quantum graph, where $\Ann(\mc S)$ is a $\sigma(\mc M_* \eh \mc M_*)$-closed left ideal. 

\begin{remark}
We pause here to marvel at this correspondence. A common observation (or objection) when encountering the definition of a quantum graph is the dependence on the representation of $\mc M \subset B(\mc H)$. One of the first results \cite[Theorem 2.7]{weaverqrelations} regarding quantum graphs establishes a bijection between quantum graphs on isomorphic von Neumann algebras \cite[Theorem 2.7]{weaverqrelations}. The need for representation independence also drove the development of \textit{intrinsic quantum relations} in \cite[Definition 2.24]{weaverqrelations}, although the definition uses the weak operator topology. We shall also see in Subsubsection \ref{subsubsec: frob-morphism} that the definition of quantum graphs from \cite{musto-reutter-verdon} implicitly chooses an inner product structure for their (necessarily finite-dimensional) von Neumann algebras. However, since the $\eh$ tensor product and the topology $\sigma(\mc M_* \eh \mc M_*)$ are representation independent, we can call these left ideals ``quantum graphs'' without worrying about the ambient space $B(\mc H)$.
\end{remark}

\begin{remark}
    As the author emphasizes in \cite[Section 3]{gp-obs}, the topology under which the annihilators are closed is $\sigma(\mc M_*\otimes_{eh} \mc M_*)$, not $\sigma(\mc M_* \otimes_h \mc M_*)$. See Remark \ref{rmk: predual-tops} for the distinction between these predual topologies. The choice of topology is necessary because quantum graphs are subsets of $B(\mc H)$, not $K(\mc H)$ (the compact operators). Compare the two following topologies:
    \begin{align*}
        z_i \overset{\sigma(\mc M_* \eh \mc M_*)}{\longrightarrow} z &\implies \< \phi, \Phi_{z_i - z}(T) \> = 0 \qquad \forall T \in B(\mc H), \, \phi \in B(\mc H)_* \\
        z_i \overset{\sigma(\mc M_* \otimes_h \mc M_*)}{\longrightarrow} z &\implies \< \phi, \Phi_{z_i - z}(T)\> = 0 \qquad \forall T \in K(\mc H), \, \phi \in B(\mc H)_*.
    \end{align*}
    A left ideal in $\mc M \eh \mc M$ that is the annihilator of a quantum graph must be able to distinguish whether an arbitrary $T \in B(\mc H)$ is in the quantum graph. Even though every $T \in B(\mc H)$ is the $B(\mc H)_*$-limit of compact operators, one would need to interchange two limits in order for the two topologies above to be equivalent. 
\end{remark}

\begin{remark}
    A recent preprint has defined \textit{Hilbert Schmidt quantum relations} \cite[Definition 4.1]{hsqgraphs}. These quantum relations are still $\mc M'$-bimodules $\mc S \subset B(\mc H)$, but they are $\|\cdot \|_{HS}$-closed rather than $\sigma(\mc B(\mc H))_*$-closed. In this case, the $\otimes_{eh}$ is the incorrect tensor product to define quantum graphs in terms of their annihilators. The key point is that for each $z \in \mc M \eh \mc M$, the kernel of $\Phi_z: B(\mc H) \to B(\mc H)$ is a $\sigma(B(\mc H)_*)$-closed subspace. Hence the subspace annihilated by any subset of $\mc M \eh \mc M$ is a $\sigma(B(\mc H)_*)$-closed subspace. 
\end{remark}

\subsection{Categorical (Co)limits} \label{subsec: cat}

If we are to take categorical (co)limits of quantum graphs, we will necessarily need candidates for the ``vertices'' and ``complementary edges'' of the quantum graph in the limit. These will be furnished by the limit of von Neumann algebras and (a closure of a ) limit of operator spaces respectively. A reader who believes that one can always take the inductive/projective limits of von Neumann algebras and operator spaces may wish to skip this section of the preliminaries. The definitions below are not always the canonical category theoretic ones; we will at some times make assumptions that match the more intuitive understanding of categories. First, some vocabulary. 

\begin{definition} \cite[Chapter I.2]{cats-working}
    A category $\mc C$ is a set of objects Ob$(\mc C)$ and a set of morphisms Mor$(\mc C)$. Each morphism $\varphi \in \text{Mor}(\mc C)$ is assigned a domain $\mc A$ and codomain $\mc B$, each of which are objects. This is denoted $\varphi: \mc A \to \mc B$ or $\mc A \overset{\varphi}{\to} \mc B$. Each object $\mc A$ has an identity morphism $\iota_{\mc A}$ whose domain and codomain are both $\mc A$. If the domain of $\varphi_1$ is the same as the codomain of $\varphi_2$, then we may compose the maps $\varphi_2\circ \varphi_1$. This composition operation is associative. Finally, for any morphism $\varphi: \mc A \to \mc B$ we always have
    \[
    \iota_{\mc B} \circ \varphi = \varphi \quad \text{and}f \quad \varphi \circ \iota_{\mc A} = \varphi.    
    \]
\end{definition}

\begin{definition} \cite[Chapter I.5]{cats-working}
    A morphism $\varphi: \mc A \to \mc B$ is an \textit{isomorphism} if there exists a morphism $\psi: \mc B \to \mc A$ such that 
    \[
    \varphi \circ \psi = \iota_{\mc B} \quad \text{and} \quad \psi \circ \varphi = \iota_{\mc A}.
    \]
\end{definition}

\begin{definition} \label{defn: limit}
    Let $\mc C$ be a category and let $(J, \leq)$ be a poset. A \textit{projective system} is a collection of objects $\{\mc A_j\}_{j \in J}$ in $\mc C$ and a collection of morphisms $\{\mc A_j \overset{\varphi^{j,k}}{\leftarrow} \mc A_k\}_{j \leq k}$. If $i \leq j \leq k$, then we also require 
    \[
    \varphi^{i,k} = \varphi^{i,j} \circ \varphi^{j,k}.
    \]
\end{definition}

\begin{definition}
    Let $\{\{\mc A_j\}_{j \in J}, \{\varphi^{j,k}\}_{j\leq k}\}$ be an projective system in $\mc C$. The \textit{limit} is an object $\mc A:= \varprojlim_{\mc C} \mc A_j$ in $\mc C$ and a collection of morphisms $\{\mc A_j \overset{\varphi^{j,\infty}}{\leftarrow} \mc A\}_{j \in J}$ such that
    \begin{enumerate}
        \item for every $j \leq k$ the following diagram commutes
        \begin{center}
        \begin{tikzcd}
\mathcal A_j & \mathcal A_k \arrow[l, "{\varphi^{j,k}}"', bend right] &  & \mathcal A \arrow[ll, "{\varphi^{k,\infty}}", bend left] \arrow[lll, "{\varphi^{j,\infty}}", bend left=49]
\end{tikzcd}
    \end{center}
    \item and if there exists another object $\mc B$ and a collection of morphisms $\{\mc A_j \overset{\phi^{j,\infty}}{\leftarrow} \mc B\}_{j \in J}$ such that the following diagram commutes
    \begin{center}
        \begin{tikzcd}
\mathcal A_j & \mathcal A_k \arrow[l, "{\varphi^{j,k}}"', bend right] &  & \mathcal B \arrow[ll, "{\phi^{k,\infty}}", bend left] \arrow[lll, "{\phi^{j,\infty}}", bend left=49]
\end{tikzcd}
    \end{center}
    there exists a unique morphism $\mc A \overset{\psi}{\leftarrow} \mc B$ such that for all $j$ this last diagram commutes:
    \begin{center}
        \begin{tikzcd}
\mathcal A_j &  & \mathcal A \arrow[ll, "{\varphi^{j,\infty}}"]                         \\
             &  & \mathcal B \arrow[u, "\psi", dashed] \arrow[llu, "{\phi^{j,\infty}}"]
\end{tikzcd}
    \end{center}
    \end{enumerate}
    If $J$ is additionally a directed set, the limit $\varprojlim_{\mc C} \mc A_j$ is sometimes called a \textit{projective limit}.
\end{definition}

Inductive systems and limits are defined in precisely the dual way, but we will take this opportunity to set notation.

\begin{definition}
    Let $\mc C$ be a category and let $(J, \leq)$ be a poset. An \textit{inductive system} is a collection of objections $\{\mc A_j\}_{j \in J}$ in $\mc C$ and a collection of morphisms $\{A_j \overset{\varphi_{j,k}}{\to} A_k\}_{j \leq k}$. If $i \leq j \leq k$, then we also require that the morphisms satisfy
    \[
    \varphi_{i,k} = \varphi_{i,j} \circ \varphi_{j,k}.
    \]
\end{definition}

\begin{definition} \label{defn: colimit}
    Let $\{\{\mc A_j\}_{j \in J}, \{\varphi_{j,k}\}_{j \leq k}\}$ be an inductive system in $\mc C$. A \textit{colimit} is an object $\mc A:= \varinjlim_{\mc C} \mc A_j$ in $\mc C$ and a collection of morphisms $\{\mc A_j \overset{\varphi_{j,\infty}}{\to} \mc A\}_{j \in J}$ such that
    \begin{enumerate}
        \item for every $j \leq k$ the following diagram commutes
    \begin{center}
        \begin{tikzcd}
\mathcal A_j \arrow[r, "{\varphi_{j,k}}", bend left] \arrow[rrr, "{\varphi_{j,\infty}}"', bend right=49] & \mathcal A_k \arrow[rr, "{\varphi_{k,\infty}}", bend right] &  & \mathcal A
\end{tikzcd}
    \end{center}
    \item and if there exists another object $\mc B$ and a collection of morphisms $\{\mc A_j \overset{\phi_{j,\infty}}{\to} \mc B\}_{j \in J}$ such that the following diagram commutes
    \begin{center}
        \begin{tikzcd}
\mathcal A_j \arrow[r, "{\varphi_{j,k}}", bend left] \arrow[rrr, "{\phi_{j,\infty}}"', bend right=49] & \mathcal A_k \arrow[rr, "{\phi_{k,\infty}}"', bend right] &  & \mathcal B
\end{tikzcd}
    \end{center}
    there exists a unique morphism $\mc A \overset{\psi}{\to} \mc B$ such that for all $j$ this last diagram commutes:
    \begin{center}
\begin{tikzcd}
\mathcal A_j \arrow[rr, "{\varphi_{j,\infty}}"] \arrow[rrd, "{\phi_{j,\infty}}"'] &  & \mathcal A \arrow[d, "\psi", dashed] \\ 
&  & \mathcal B                          
\end{tikzcd}
    \end{center}
    \end{enumerate}
    If $J$ is additionally a directed set, the colimit $\varinjlim_{\mc C} \mc A_j$ is sometimes called an \textit{inductive limit}.
\end{definition}

\begin{remark}
    An immediate corollary of the second axioms of Definitions \ref{defn: limit} and \ref{defn: colimit} is that the colimit is unique up to isomorphism.
\end{remark}

\begin{example}
    The poset $(J,\leq)$ is often taken to be $\N$ with the usual ordering and the inductive system is depicted by
    \begin{center}
        \begin{tikzcd}
        \mathcal A_1 \arrow[r, "{\varphi_{1,2}}", bend left=49] \arrow[rr, "{\varphi_{1,3}}", bend left=60, shift left=2] & \mathcal A_2 \arrow[r, "{\varphi_{2,3}}", bend left=49] & \mathcal A_3 \arrow[r, "{\varphi_{3,4}}", bend left=49] & ...
        \end{tikzcd}
    \end{center}
\end{example}

\begin{example}
    If there is an inductive limit for an inductive system along $(\N, \leq)$, we depict the inductive limit like so:
    \begin{center}
        \begin{tikzcd}
        \mathcal A_1 \arrow[r, "{\varphi_{1,2}}", bend left=49] \arrow[rr, "{\varphi_{1,3}}", bend left=60, shift left=2] \arrow[rrrr, "{\varphi_{1,\infty}}"', bend right] & \mathcal A_2 \arrow[r, "{\varphi_{2,3}}", bend left=49] \arrow[rrr, "{\varphi_{2,\infty}}" description, bend right] & \mathcal A_3 \arrow[rr, "{\varphi_{3,\infty}}" description, bend right] & ... & \mathcal A
        \end{tikzcd}
    \end{center}
\end{example}

An operator space theorist might naturally assume that the morphisms of a category whose objects are operator spaces must be the completely bounded maps between them. However, this category does not have all (co)limits, as exhibited by the example below. For this reason, we consider the category of operator spaces with completely contractive morphisms in the remainder of this paper.

\begin{example} \label{ex: ccmap}
    Let $J = (\N,\leq)$ and fix a non-zero operator space $\mc S$. Define $\mc S_j$ to be the $j$-fold direct sum of $\mc S$, and define the connecting morphisms $\varphi_{j,k}$ to be inclusion into the first $j$ summands for $j \leq k$:
    \begin{align*}
    \varphi_{j,k} : \mc S_j &\to \mc S_k. \\
    (T_1, \ldots , T_j) &\mapsto (T_1, \ldots , T_j, 0,...,0)
    \end{align*}
    Suppose there does exist a colimit $(\mc A, \{\varphi_{j,\infty}\}_{j \in \N})$ of this inductive system. Define the maps
    \begin{align*}
        \psi_j : \mc S_j &\to \prod^\infty_{j} \mc S \\
        (T_1, T_2, T_3,...,T_j) &\mapsto (\|\varphi_{1,\infty}\|T_1, 2\|\varphi_{2,\infty}\|T_2, ...,j\|\varphi_{j,\infty}\|T_j, 0,...)
    \end{align*}
    (these maps scale the $n$th summand by $n\|\varphi_{n,\infty}\|$). By the universal property of colimits there exists a unique map $\psi: \mc A \to \mc S$ such that the diagram below commutes.
    \begin{center}
        \begin{tikzcd}  
        \mc S_1 \arrow[rrrd, "\psi_1", bend right] \arrow[rrr, "{\varphi_{1,\infty}}", bend left=49] & \mc S_2 \arrow[rrd, "\psi_2", bend right] \arrow[rr, "{\varphi_{2,\infty}}", bend left] & ... & \mathcal A \arrow[d, "\psi", dashed] \\  
        &   &     & \prod_j^\infty \mc S                                  
        \end{tikzcd}
    \end{center}    
    Since the operator norm is submultiplicative, we have that 
    \[
    j\|\varphi_{j,\infty}\| \leq \|\psi_j\| \leq \|\psi\|\|\varphi_{j,\infty}\| 
    \]
    for all $j \in \N$. Therefore $\|\psi\|_{cb}$ is also unbounded, and there is no inductive limit for this particular inductive system.
\end{example}

We are therefore unable to take colimits in the category of operator spaces and CB maps. To remedy this issue, we require our morphisms between operator spaces to be completely contractive maps and will refer to this category as $\textbf{OpSp}$. This is a slightly stronger convention than necessary in order to take limits; see \cite{daws} for possible relaxations of this condition on the CB norms. However, a nice feature of this category is that we immediately have that that the categorical isomorphism must be completely isometric isomorphisms. For facts about $\textbf{OpSp}$ we will refer to the preprint \cite{categoryopsp}. (We will depart from some of their naming conventions around direct sums vs. products, however.) We begin by demonstrating a ``concrete'' construction of an inductive limit in $\textbf{OpSp}$. This description is one of several taken from \cite{daws} for an inductive limit of Banach algebras along a directed set. Inductive limits in the category of associative algebras are similar to inductive limits in the category of modules, but their colimits can be very different. We will see such an example immediately after this construction.  

\begin{example}[``Concrete'' Construction]
Let $(\mc S_j, \varphi_{i,j})$ be an inductive system along a directed set $(J,\leq)$ in $\textbf{OpSp}$. Denote the $\ell^\infty$ product $\prod^\infty_{j \in J} \mc S_j$ by
\[
\prod^\infty_{j \in J} \mc S_j := \bigg\{ (T_j)_{j \in J}  \in \prod_{j \in J} \mc S_j : \sup_{j \in J} \|T_j\| < \infty\bigg\}
\]
and the $c_0$ product $\prod^{c_0}_{j \in J} \mc S_j$ by
\[
\prod^{c_0}_{j \in J} \mc S_j := \bigg\{ (T_j)_{j \in J}  \in \prod_{j \in J} \mc S_j : \forall \epsilon > 0 \, \exists j_0 \text{ s.t. } j\geq j_0 \implies \|T_j\| < \epsilon \bigg\}.
\]
The $\ell^\infty$ product is an operator space \cite[Proposition 4.11]{categoryopsp}, so an immediate corollary is that the $c_0$ product is also an operator space as a norm-closed subset of the $\ell^\infty$ product. We begin by defining a quotient map using the spaces above:
\[
q: \prod^\infty_{j \in J} \mc S_j \to {\prod^\infty_{j \in J} \mc S_j} \Big/ \prod^{c_0}_{j \in J} \mc S_j.
\]
For each $j \in J$ and $T_j \in \mc S_j$ we define 
\begin{align*}
\psi_{j,\infty}: \mc S_j &\to \prod^\infty_{j \in J} \mc S_j \\
T_j &\mapsto \begin{cases} \varphi_{j,k}(T_j) & j \leq k \\
0 & o.w.
\end{cases}.
\end{align*}
$\varphi_{j,\infty} := q \circ \psi_{j,\infty}$ gives a well-defined complete contraction from $\mc S_j$ into ${\prod^\infty_{j \in J} \mc S_j} \Big/ \prod^{c_0}_{j \in J} \mc S_j$. The inductive limit of $(\mc S_j, \varphi_{i,j})$ is the object  
\[
\varinjlim_{\bf{OpSp}} \mc S_j := \overline{\bigcup_{j \in J}\varphi_{j,\infty}(\mc S_j)}^{\|\cdot \|}
\]
along with the morphisms $\varphi_{j,\infty}$ defined above.
\end{example}

The preceding example is probably somewhat familiar construction to operator algebraists as the colimit of operator spaces. By contrast, Theorem \ref{thm: products-equalizers} below is the usual category theoretic way to obtain (co)limits at the cost of defining (co)products and (co)equalizers.

\begin{definition} \protect{\cite[Section III.3]{cats-working}}
    Let $\mc C$ be a category and $\{\mc A_j\}_{j \in J}$ be a collection of objects in $\mc C$. The coproduct of $\{\mc A_j\}_{j \in J}$ is an object $\mc A$ and morphisms $\{\mc A_j \overset{\varphi_{j,\infty}}{\to} \mc A\}_{j \in J}$ such that for any object $\mc B$ and morphisms $\{\mc A_j \overset{\varphi_{j,\infty}'}{\to} \mc B\}$ there exists a unique morphism $\mu: \mc A \to \mc B$ making the following diagram commute for all $j \in J$:
    \begin{center}
        \begin{tikzcd}
            \mc A_j \arrow[rd, "{\varphi_{j,\infty}'}"'] \arrow[r, "{\varphi_{j,\infty}}"] & \mc A \arrow[d, "\mu.", dotted] \\
            & \mc B                         
        \end{tikzcd}
    \end{center}
\end{definition}

\begin{definition} \protect{\cite[Section III.3]{cats-working}}
    Let $\mc C$ be a category and $\mc A \overset{f,g}{\to} \mc B$ be morphisms with the same domain and codomain. The \textit{coequalizer} of $f,g$ is a morphism $\mc B \overset{q}{\to} \mc E$ such that $q \circ f = q\circ g$ and if there exists a morphism $\mc B \overset{h}{\to} \mc C$ such that $h \circ f = h \circ g$ then there exists a unique morphism $\mc E \overset{h'}{\to} \mc C$ such that the following diagram commutes:
    \begin{center}
        \begin{tikzcd}
        \mc A \arrow[r, "f", shift left] \arrow[r, "g"', shift right] & \mc B \arrow[r, "q"] \arrow[rd, "h"'] & \mc E \arrow[d, "h'.", dashed] \\
        &     & \mc C                        
        \end{tikzcd}
    \end{center}
\end{definition}

\begin{remark}
    As promised, we have a colimit that is not an inductive limit. A coproduct is a colimit along a poset $J$ in which no two distinct indices in $J$ are comparable and every index is comparable to itself.
\end{remark}

As per usual in category theory, the notions of \textit{coproduct} and \textit{coequalizer} are defined in precisely the same way but with the diretions of the morphism reversed. We omit these definitions for brevity.

\begin{theorem} \protect{\cite[Chapter V.2, Theorem 1]{cats-working}} \label{thm: products-equalizers}
    If a category has all small \footnote{\textit{Small} here refers to small classes (i.e., sets) as opposed to proper classes.} products and all equalizers, it has all small limits. If a category has all small coproducts and all coequalizers, it has all small colimits. 
\end{theorem}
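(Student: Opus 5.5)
We prove the second assertion (coproducts and coequalizers give all small colimits) by constructing the colimit explicitly; the first assertion then follows by applying the same argument in the opposite category $\mc C^{op}$, where products become coproducts and equalizers become coequalizers. So let $D$ be a small diagram, i.e. a set of objects $\{\mc A_j\}_{j \in J}$ together with a set of morphisms between them, which for a poset $J$ are the maps $\varphi_{j,k}: \mc A_j \to \mc A_k$ for $j \leq k$.

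First form two coproducts. Let $\mc B := \coprod_{j \in J} \mc A_j$ with coprojections $\iota_j$. Let $\mc A := \coprod_{(j \leq k)} \mc A_j^{(j,k)}$, one copy of the domain $\mc A_j$ for each arrow $j \leq k$, with coprojections $\kappa_{j,k}$. Both exist because the index sets are small. By the universal property of $\mc A$, define two morphisms $f, g: \mc A \to \mc B$ on the summand indexed by $j \leq k$:
\[
f \circ \kappa_{j,k} := \iota_j, \qquad g \circ \kappa_{j,k} := \iota_k \circ \varphi_{j,k}.
\]
Next let $q: \mc B \to \mc E$ be the coequalizer of $f$ and $g$, and set $\varphi_{j,\infty} := q \circ \iota_j$. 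The identity $q \circ f = q \circ g$, precomposed with $\kappa_{j,k}$, gives $\varphi_{j,\infty} = \varphi_{k,\infty} \circ \varphi_{j,k}$. This is exactly the commuting condition in Definition \ref{defn: colimit}(1).

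For the universal property, let $\mc C'$ be an object with morphisms $\phi_{j,\infty}: \mc A_j \to \mc C'$ satisfying $\phi_{k,\infty} \circ \varphi_{j,k} = \phi_{j,\infty}$. The coproduct property of $\mc B$ gives a unique $h: \mc B \to \mc C'$ with $h \circ \iota_j = \phi_{j,\infty}$. Checking on each summand of $\mc A$ shows $h \circ f = h \circ g$, since $h f \kappa_{j,k} = \phi_{j,\infty}$ and $h g \kappa_{j,k} = \phi_{k,\infty}\varphi_{j,k}$. Uniqueness of maps out of a coproduct then gives equality. The coequalizer therefore yields $\psi: \mc E \to \mc C'$ with $\psi \circ q = h$, and hence $\psi \circ \varphi_{j,\infty} = \phi_{j,\infty}$.

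The main point needing care is uniqueness of $\psi$. Suppose $\psi'$ also satisfies $\psi' \circ q \circ \iota_j = \phi_{j,\infty}$ for all $j$. Uniqueness in the coproduct property of $\mc B$ gives $\psi' \circ q = h$. Uniqueness in the coequalizer property then gives $\psi' = \psi$.

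Smallness is used only to guarantee that the two indexing families, the objects and the arrows of $D$, are sets, so that the coproducts exist. Dualizing the whole argument gives the limit as an equalizer of two maps $\prod_j \mc A_j \rightrightarrows \prod_{(j \leq k)} \mc A_k$.
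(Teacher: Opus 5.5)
Your argument is correct and is the standard proof of the cited result (Mac Lane, Chapter V.2, Theorem 1), which the paper quotes without proof: the limit is the equalizer of the two canonical maps from the product over objects to the product over arrows, and you have written out the dual coequalizer-of-coproducts version in full and then dualized back. One point of orientation: for the paper's projective systems, where $\varphi^{j,k}$ runs $\mc A_k \to \mc A_j$, the arrow-indexed product takes the codomain factor, i.e.\ $\prod_{(j\le k)}\mc A_j$, whereas your $\prod_{(j\le k)}\mc A_k$ is the right one for the diagram shape $\mc A_j \to \mc A_k$ that you fixed.
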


\begin{proposition} \protect{\cite[Proposition 4.11]{categoryopsp}}
    The product in $\textbf{OpSp}$ is the $\ell^\infty$ product. 
\end{proposition}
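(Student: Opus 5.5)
We must show that for a family $\{\mc S_j\}_{j\in J}$ of operator spaces, the $\ell^\infty$ product $\prod^\infty_{j\in J}\mc S_j$, together with the coordinate projections $\pi_j$, satisfies the universal property of the product in $\textbf{OpSp}$. The plan has three steps: equip the product with a concrete operator space structure, check that the $\pi_j$ are morphisms, and check the universal property.

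For the operator space structure, represent each $\mc S_j\subset B(\mc H_j)$ completely isometrically, which is possible by Ruan's theorem. Embed $\prod^\infty_{j}\mc S_j$ block-diagonally into $B(\bigoplus_j \mc H_j)$ via $(T_j)_j\mapsto \bigoplus_j T_j$; this is well defined because $\sup_j\|T_j\|<\infty$. The matrix norms are then
\[
\|[(T^{(j)}_{rs})_j]_{r,s}\|_{M_n} = \sup_j \|[T^{(j)}_{rs}]_{r,s}\|_{M_n(\mc S_j)},
\]
since a block-diagonal operator has norm equal to the supremum of the norms of its blocks. Norm-closedness follows from a routine Cauchy argument coordinatewise. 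One could instead verify (OS1)--(OS3) directly from the sup formula, which avoids choosing representations.

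Each projection $\pi_j$ is a complete contraction, because the $M_n$-norm of a tuple dominates the $M_n$-norm of each of its coordinates.

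For the universal property, take an operator space $\mc B$ and complete contractions $\phi_j:\mc B\to\mc S_j$, and define $\mu(b)=(\phi_j(b))_j$. Since $\|\phi_j(b)\|\le\|b\|$ for every $j$, the tuple lies in the $\ell^\infty$ product. At matrix level,
\[
\|(I_{M_n}\otimes\mu)(x)\| = \sup_j\|(I_{M_n}\otimes\phi_j)(x)\|\le\|x\|,
\]
so $\mu$ is a complete contraction. By construction $\pi_j\circ\mu=\phi_j$, and any $\mu'$ satisfying these equations must agree with $\mu$ in every coordinate, so $\mu$ is unique.

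The only real point is that the uniform bound $\|\phi_j\|_{cb}\le 1$ guarantees that $\mu$ lands in the bounded product. This is exactly where complete contractivity is needed, in contrast with Example \ref{ex: ccmap}: with merely completely bounded maps, the norms $\|\phi_j\|_{cb}$ could be unbounded and $\mu$ would fail to exist. A product is unique up to isomorphism, and isomorphisms in $\textbf{OpSp}$ are complete isometries, so the $\ell^\infty$ product is the product.
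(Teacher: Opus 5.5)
Your proof is correct. The paper does not prove this statement itself; it only cites \cite[Proposition 4.11]{categoryopsp}. Your argument is the standard verification behind that result: the supremum matrix norms, realized concretely by the block-diagonal embedding into $B(\bigoplus_j \mathcal H_j)$, together with completely contractive coordinate projections, and the induced map $\mu(b)=(\phi_j(b))_j$ with its coordinatewise uniqueness. Your closing comparison with Example~\ref{ex: ccmap}, which is about colimits, is only a side remark and plays no role in the proof.
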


\begin{proposition} \protect{\cite[Proposition 4.13]{categoryopsp}}
    Let $\mc S \overset{f,g}{\to} \mc T$ be morphisms in \textbf{OpSp}. Their equalizer is the inclusion map of $\mc S_0 := \{ x \in \mc S | f(x) = g(x)\}$ into $\mc S$.
\end{proposition}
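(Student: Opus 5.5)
The plan is to verify directly that $\mc S_0$, together with the inclusion $\iota: \mc S_0 \hookrightarrow \mc S$, satisfies the universal property of the equalizer in $\textbf{OpSp}$.

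\textbf{Step 1: $\mc S_0$ is an object of $\textbf{OpSp}$.} Since $f-g: \mc S \to \mc T$ is linear and completely bounded (in particular norm continuous), its kernel $\mc S_0 = \ker(f-g)$ is a norm-closed subspace of $\mc S$. Representing $\mc S \subset B(\mc H)$ by Ruan's theorem, $\mc S_0$ is then a concrete operator space. We equip it with the restricted matrix norms $\|\cdot\|_{M_n(\mc S_0)} := \|\cdot\|_{M_n(\mc S)}$. The inclusion $\iota$ is then a complete isometry, hence a morphism, and $f \circ \iota = g \circ \iota$ by construction.

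\textbf{Step 2: existence of the factorization.} Let $\mc R$ be an operator space and $h: \mc R \to \mc S$ a complete contraction with $f\circ h = g \circ h$. Then $h(\mc R) \subset \mc S_0$, so $h$ corestricts to a linear map $h': \mc R \to \mc S_0$ with $\iota \circ h' = h$. Because the matrix norms on $\mc S_0$ are restrictions of those on $\mc S$, we have $\|(I_{M_n}\otimes h')(x)\|_{M_n(\mc S_0)} = \|(I_{M_n}\otimes h)(x)\|_{M_n(\mc S)}$ for every $x \in M_n \otimes \mc R$. Hence $\|h'\|_{cb} = \|h\|_{cb} \le 1$, and $h'$ is a morphism.

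\textbf{Step 3: uniqueness.} If $\iota \circ k = h$ for some morphism $k: \mc R \to \mc S_0$, then injectivity of $\iota$ forces $k = h'$.

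The only point needing care is Step 2. One must check that the corestriction stays completely contractive, and this is exactly why $\mc S_0$ carries the subspace operator space structure rather than some other structure. Everything else is formal. A possible secondary concern is closedness of $\mc S_0$, which is needed for it to be an operator space in the (complete) sense used here. This is handled by the continuity of the completely bounded maps $f$ and $g$.
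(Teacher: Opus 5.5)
Your proof is correct: it is the standard direct check of the universal property. It covers closedness of $\ker(f-g)$, the subspace operator space structure that keeps the corestriction completely contractive, and uniqueness via injectivity of $\iota$. The paper gives no proof of its own here; it cites \cite[Proposition 4.13]{categoryopsp}, and for the analogous $\mathbf{W^*}$ equalizer in Proposition~\ref{prop: w*equalizers} it only asserts that the equalizing subalgebra with its inclusion works, so your write-up simply supplies the routine details behind that same approach.
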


\begin{proposition} \protect{\cite[Proposition 4.12]{categoryopsp}}
    The coproduct in $\textbf{OpSp}$ is the $\ell^1$ product:
    \[
    \prod_{j \in J}^{1} \mc S_j := \bigg\{ (T_j)_{j \in J} \in \prod_{j \in J} \mc S_j : \sum_{j \in J} \|T_j\| < \infty\bigg\}.
    \]
\end{proposition}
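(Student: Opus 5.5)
We must show the coproduct in \textbf{OpSp} is $\prod^1_{j\in J}\mc S_j$. First we fix its operator space structure. The natural choice is to realize $\prod^1_{j}\mc S_j$ as the predual-type dual of the $\ell^\infty$ product. Concretely, embed $\prod^1_j \mc S_j$ into $\bigl(\prod^\infty_j \mc S_j^*\bigr)^*$ via $(T_j)_j \mapsto \bigl((f_j)_j \mapsto \sum_j f_j(T_j)\bigr)$, which converges absolutely. Then give it the induced operator space structure, which is equivalently the universal one. For $x=[x_{rs}]\in M_n(\prod^1 \mc S_j)$ we set
\[
\|x\|_n := \sup\Big\{\Big\|\Big[\sum_j u_j(x_{rs}^{(j)})\Big]_{r,s}\Big\| : u_j:\mc S_j\to B(\mc K) \text{ completely contractive}\Big\}.
\]
We check Ruan's axioms (OS1)--(OS3) directly from this supremum formula; each holds because it holds in $M_n(B(\mc K))$. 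At level $n=1$ the norm is exactly $\sum_j\|T_j\|$. It is at most this by the triangle inequality. It is at least this because one can choose, by Hahn--Banach, scalar contractive functionals $f_j$ with $f_j(T_j)=\|T_j\|$, and every contractive functional is automatically completely contractive. This identifies the underlying Banach space as the $\ell^1$ sum, so the space is complete.

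Next we define the structure maps. The inclusions $\iota_j:\mc S_j\to\prod^1_j\mc S_j$ place $T$ in the $j$th slot. They are complete contractions, since every term in the supremum for $\|[\iota_j(x_{rs})]\|_n$ is $\|[u_j(x_{rs})]\|\le\|x\|_n$.

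For the universal property, let $\varphi_j':\mc S_j\to\mc B$ be complete contractions. Represent $\mc B\subset B(\mc K)$ using Ruan's theorem, and define $\mu((T_j)_j):=\sum_j\varphi'_j(T_j)$. This series converges absolutely in norm because $\sum_j\|\varphi_j'(T_j)\|\le\sum_j\|T_j\|<\infty$, and its limit lies in $\mc B$ since $\mc B$ is closed. Clearly $\mu\circ\iota_j=\varphi_j'$. Complete contractivity of $\mu$ is immediate: the tuple $(\varphi'_j)$ is one admissible choice of $(u_j)$ in the supremum defining $\|x\|_n$, so $\|[\mu(x_{rs})]\|\le\|x\|_n$.

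Uniqueness holds because any morphism $\mu'$ satisfying $\mu'\circ\iota_j=\varphi_j'$ agrees with $\mu$ on finitely supported tuples. Those are norm dense in the $\ell^1$ sum, and $\mu'$ is bounded, hence continuous, so $\mu'=\mu$.

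The main obstacle is making the operator space structure legitimate. One must ensure that the supremum over all complete contractions into all $B(\mc K)$ is a genuine set-sized supremum and yields finite norms. Finiteness follows from the bound $\|x\|_n\le\sum_{r,s}\sum_j\|x^{(j)}_{rs}\|$. For the size issue, it suffices to restrict $\mc K$ to Hilbert spaces of dimension at most a cardinal fixed by the density characters of the $\mc S_j$. I would handle this by noting that any complete contraction into $B(\mc K)$ can be compressed to a reducing subspace of bounded dimension without decreasing the relevant matrix norm beyond $\epsilon$. If instead one cites \cite[Proposition 4.12]{categoryopsp}, this structure is exactly the one given there, and the remaining verifications above are routine.
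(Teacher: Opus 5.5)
Your proposal is correct. The paper gives no argument for this proposition; it only cites \cite[Proposition 4.12]{categoryopsp}. So you are supplying a proof rather than reproducing one, and the structure you use is the standard $\ell^1$-direct-sum structure of \cite{intro-op-sp-pisier}: the supremum over families of complete contractions $u_j:\mc S_j\to B(\mc K)$ into a common $B(\mc K)$. All the key steps go through:
\begin{itemize}
\item the level-one identification with $\sum_j\|T_j\|$, via Hahn--Banach and the fact that contractive functionals are completely contractive;
\item complete contractivity of the $\iota_j$;
\item complete contractivity of $\mu$, because $(\varphi'_j)$ is itself an admissible family once $\mc B\subset B(\mc K)$;
\item uniqueness, by norm density of finitely supported tuples.
\end{itemize}

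One point of emphasis is off: what you call the main obstacle is not one. The values being supremized form a subset of $\mathbb{R}$ bounded by $\sum_{r,s}\sum_j\|x^{(j)}_{rs}\|$. Hence the supremum is well defined without any cardinality bookkeeping.

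If you nevertheless want to restrict $\mc K$, do not compress to a ``reducing subspace.'' Instead compress to the span $P\mc K$ of the components of unit vectors $\xi,\eta\in\mc K^n$ with $|\langle\xi,M\eta\rangle|>\|M\|-\epsilon$, where $M=[\sum_j u_j(x^{(j)}_{rs})]_{r,s}$. This subspace has dimension at most $2n$. The map $u\mapsto Pu(\cdot)P|_{P\mc K}$ preserves complete contractivity, and $\langle\xi,M\eta\rangle$ is unchanged, so the supremum may be taken over $\mc K=\C^m$ alone.

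This compression argument is also what justifies your unproved aside that the universal structure coincides with the one induced from $(\prod^\infty_j\mc S_j^*)^*$. The norming elements there are exactly families of complete contractions $\mc S_j\to M_m$ with $\sup_j\|f_j\|_{cb}\le 1$.
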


\begin{proposition} \protect{\cite[Proposition 4.15]{categoryopsp}}
    Let $\mc S \overset{f,g}{\to} \mc T$ be morphisms in $\textbf{OpSp}$, and define 
    \[
    \overline{Im(f-g)}:= \overline{\{(f-g)(T) : T \in \mc S\}}^{\|\cdot\|_{\mc T}}
    \]
    Then their coequalizer is the quotient map $q: \mc T \to \mc T/\overline{Im(f-g)}$.
\end{proposition}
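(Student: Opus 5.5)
We verify directly the universal property in the definition of the coequalizer, with $\mathcal E := \mc T/\overline{Im(f-g)}$ carrying the quotient operator space structure. Write $N := \overline{Im(f-g)}^{\|\cdot\|_{\mc T}}$. It is a norm-closed subspace of $\mc T$, being the closure of the image of a linear map. We equip $\mc T/N$ with the standard quotient OSS:
\[
\|[x_{ij}+N]\|_{M_n(\mc T/N)} := \inf\{\|[x_{ij}+n_{ij}]\|_{M_n(\mc T)} : n_{ij}\in N\},
\]
that is, we identify $M_n(\mc T/N)$ with $M_n(\mc T)/M_n(N)$.

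The first step, and the only one that is not purely formal, is to check that this really is an abstract operator space. Each $M_n(N)$ is closed in $M_n(\mc T)$, because the matrix norms are equivalent to the max of the entry norms. Axiom (OS1) is then the usual fact that a quotient of a Banach space by a closed subspace is Banach. Axioms (OS2) and (OS3) pass to the infimum: for (OS3), $\alpha[x_{ij}+n_{ij}]\beta$ is again a representative of $\alpha[x_{ij}+N]\beta$, since $N$ is a subspace; (OS2) follows by choosing representatives for $x$ and $y$ separately and taking their direct sum. I expect this verification to be the main, though standard, obstacle, and I would cite Effros--Ruan \cite{effros-ruan-book} for it rather than redo it. With this structure, $q$ is a complete contraction, since $\|[x_{ij}+N]\|\le \|[x_{ij}]\|$ by taking $n_{ij}=0$. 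Hence $q$ is a morphism in $\textbf{OpSp}$. Moreover $q\circ f = q\circ g$ because $(f-g)(x)\in N$ for every $x\in\mc S$.

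Next I check the universal property. Let $h:\mc T\to \mc C$ be a complete contraction with $h\circ f = h\circ g$. Then $h$ vanishes on $Im(f-g)$, and since $h$ is bounded it vanishes on the closure $N$. So $h'(x+N) := h(x)$ is a well-defined linear map with $h'\circ q = h$.

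For complete contractivity of $h'$, fix $[x_{ij}+N]\in M_n(\mc T/N)$. For any choice of $n_{ij}\in N$ we have
\[
(I_{M_n}\otimes h')([x_{ij}+N]) = (I_{M_n}\otimes h)([x_{ij}+n_{ij}]),
\]
and the right-hand side has norm at most $\|[x_{ij}+n_{ij}]\|_{M_n(\mc T)}$. Taking the infimum over all $n_{ij}\in N$ gives $\|I_{M_n}\otimes h'\|\le 1$ for every $n$, so $h'$ is completely contractive.

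Uniqueness is immediate because $q$ is surjective. If $h''\circ q = h = h'\circ q$, then $h''$ and $h'$ agree on all of $\mc T/N$.

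Finally, I would remark why the closure is taken and why contractivity matters. The closure makes $\mc T/N$ a genuine (Hausdorff, complete) operator space, and continuity of $h$ makes this cost-free. Complete contractivity of $h'$ is exactly what lets $h'$ inherit the bound $\|h\|_{cb}\le 1$. This is the reason the category was chosen with completely contractive morphisms, in view of Example \ref{ex: ccmap}.
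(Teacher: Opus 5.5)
Your proof is correct and is the standard direct verification of the universal property (quotient operator space structure on $\mathcal T/N$, $h$ vanishing on $N$ by continuity, complete contractivity of $h'$ by taking the infimum over representatives, uniqueness from surjectivity of $q$); the paper gives no argument of its own here and simply cites \cite[Proposition 4.15]{categoryopsp}. Two minor touch-ups: choosing representatives for $x$ and $y$ separately only gives $\|x\oplus y\|\le\max\{\|x\|,\|y\|\}$ in the quotient, and the reverse inequality comes from compressing an arbitrary representative of $x\oplus y$ to its diagonal blocks; also, your closing remark overstates the role of contractivity, since your argument actually gives $\|h'\|_{cb}=\|h\|_{cb}$ and so works equally well with completely bounded morphisms, the obstruction in Example \ref{ex: ccmap} being about colimits of infinitely many objects rather than coequalizers.
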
   

We now review (co)limits in the category of von Neumann algebras. In this paper we will use $W^*$-algebras and von Neumann algebras interchangeably. In some contexts (including \cite{kornell} cited below), $W^*$-algebras are abstract and von Neumann algebras are represented on some Hilbert space. However, every von Neumann algebra has a canonical faithful unital normal $*$-representation and the morphisms of $W^*$-algebras and von Neumann algebras are the same, so there is not really a distinction. To be clear: by $\mathbf{W^*}$ we mean the category in which 
\begin{itemize}
    \item each object is a unital $C^*$-algebra with a unique Banach space predual, dubbed von Neumann algebras
    \item each morphism is a normal, unital $*$-homomorphism.
\end{itemize}
We will be including the zero algebra $B(\{0\})$ in this category. The results below are taken from \cite{kornell}, but (as the author acknowledges) the existing literature contains versions of some results. Notably \cite{guichardet} proves some of the categorical features used in this paper, including a proof of ${\bf W^*}$ colimits using ${\bf C^*}$ (the category of $C^*$-algebras and $*$-homomorphisms) colimits.

\begin{proposition} \protect{\cite[Proposition 5.1]{kornell}}
    The product in the category $\mathbf{W^*}$ is the $\ell^\infty$ product.    
\end{proposition}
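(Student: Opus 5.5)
The plan is to verify the universal property directly. Given a family $\{\mc M_j\}_{j \in J}$ of von Neumann algebras, I set
\[
\prod^\infty_{j \in J} \mc M_j := \Big\{ (x_j)_{j} : x_j \in \mc M_j,\ \sup_j \|x_j\| < \infty\Big\}
\]
with coordinatewise operations and the supremum norm. Together with the coordinate projections $\pi_k : (x_j)_j \mapsto x_k$, this should be the product in $\mathbf{W^*}$.

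The first step is to show that $\prod^\infty_j \mc M_j$ is an object of $\mathbf{W^*}$. It is clearly a unital $C^*$-algebra, with unit $(1_{\mc M_j})_j$ and the $C^*$-identity holding coordinatewise. For the predual, I take the $\ell^1$-sum $\prod^1_j (\mc M_j)_*$ paired by $\langle (x_j),(\phi_j)\rangle = \sum_j \phi_j(x_j)$. The standard duality $(\ell^1\text{-sum of } X_j)^* = \ell^\infty\text{-sum of } X_j^*$ gives an isometric identification, so a predual exists. Uniqueness of the predual is Sakai's theorem for $C^*$-algebras that are dual spaces. Concretely, if $\mc M_j \subset B(\mc H_j)$, then $\prod^\infty_j \mc M_j$ is the von Neumann algebra $\bigoplus_j \mc M_j \subset B(\bigoplus_j \mc H_j)$ of block-diagonal operators, and this block-diagonal picture can replace the duality argument. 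The zero algebra is allowed as a factor and causes no trouble.

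Next, I check that each $\pi_k$ is a morphism. It is a unital $*$-homomorphism. It is normal because, for $\phi \in (\mc M_k)_*$, the functional $\phi \circ \pi_k$ is the element of $\prod^1_j(\mc M_j)_*$ supported at $k$, hence weak$^*$-continuous.

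For the universal property, let $\mc B$ be a von Neumann algebra with morphisms $\rho_j : \mc B \to \mc M_j$. Define $\rho(b) := (\rho_j(b))_j$. Since $*$-homomorphisms between $C^*$-algebras are contractive, $\|\rho_j(b)\| \le \|b\|$, so $\rho(b)$ lies in the $\ell^\infty$ product. Then $\rho$ is a unital $*$-homomorphism with $\pi_j \circ \rho = \rho_j$, and uniqueness follows because an element of the product is determined by its coordinates.

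The step requiring care is normality of $\rho$. Since $\rho$ is bounded, it suffices to check weak$^*$-continuity on bounded nets, or equivalently to check it against the predual. For $(\phi_j) \in \prod^1_j(\mc M_j)_*$,
\[
\langle \rho(b), (\phi_j)\rangle = \sum_j \phi_j(\rho_j(b)).
\]
This sum converges absolutely and uniformly on bounded sets, since $\sum_j \|\phi_j\| < \infty$ and each $\rho_j$ is contractive. Therefore $\langle \rho(\cdot), (\phi_j)\rangle$ is a norm-limit of finite sums of normal functionals $\phi_j \circ \rho_j$. Because the predual of $\mc B$ is norm-closed in $\mc B^*$, this functional is normal, and so $\rho$ is normal.
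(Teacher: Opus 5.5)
Your proof is correct. The paper gives no argument of its own for this statement; it only cites \cite[Proposition 5.1]{kornell}. Your direct verification of the universal property is therefore a self-contained substitute, and it is the standard argument.

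You correctly identify the only non-formal step, normality of the induced map $\rho=(\rho_j)_j$, and your argument for it is sound. For $(\phi_j)$ in the $\ell^1$-sum of the preduals, $\sum_j \phi_j\circ\rho_j$ is a norm limit in $\mc B^*$ of normal functionals, so it lies in the norm-closed subspace $\mc B_*$. Testing against the predual in this way is already the full criterion for weak$^*$-continuity, so your aside about bounded nets is unnecessary.

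An equally short alternative is order-theoretic. Suprema of bounded increasing nets in $\prod^\infty_j \mc M_j$ are computed coordinatewise, so $\rho$ preserves them because each $\rho_j$ does.

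Your construction also covers $J=\emptyset$, where it produces the zero algebra as the empty product, i.e.\ the terminal object. This shows the paper's convention of including $B(\{0\})$ in $\mathbf{W^*}$ is needed for all small products to exist. Without it there is no terminal object, because a nonzero von Neumann algebra admits at least two unital normal $*$-homomorphisms from $\C^2$.
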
 

\begin{proposition}\protect{\cite[Proposition 5.3]{kornell}} \label{prop: w*equalizers}
    The category $\mathbf{W^*}$ has all equalizers.
\end{proposition}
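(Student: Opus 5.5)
The plan is to build the equalizer directly. Let $f,g:\mc M\to\mc N$ be normal unital $*$-homomorphisms. The candidate is
\[
\mc E := \{x \in \mc M : f(x) = g(x)\},
\]
together with the inclusion $\iota:\mc E\hookrightarrow \mc M$. This mirrors the description of equalizers in $\textbf{OpSp}$ quoted above.

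The first step is to check that $\mc E$ is an object of $\mathbf{W^*}$.
\begin{enumerate}
\item Since $f$ and $g$ are linear, multiplicative and $*$-preserving, $\mc E$ is a $*$-subalgebra of $\mc M$.
\item Since both maps are unital, $f(1_{\mc M})=1_{\mc N}=g(1_{\mc M})$, so $1_{\mc M}\in\mc E$.
\item Since $f-g$ is normal, $\mc E=\ker(f-g)$ is $\sigma(\mc M_*)$-closed: if a net $x_\lambda\to x$ weak$^*$ in $\mc E$, then $(f-g)(x_\lambda)=0$ converges weak$^*$ to $(f-g)(x)$, which is therefore $0$.
\end{enumerate}
Hence $\mc E$ is a weak$^*$-closed unital $*$-subalgebra of $\mc M$, i.e.\ a von Neumann subalgebra. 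Its predual is $\mc M_*/\mc E_\perp$, and it is unique by Sakai's theorem. The inclusion $\iota$ is a normal unital $*$-homomorphism, because the weak$^*$ topology of $\mc E$ is the relative one from $\mc M$. By construction $f\circ\iota=g\circ\iota$.

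The second step is the universal property. Let $h:\mc C\to\mc M$ be a morphism with $f\circ h=g\circ h$. Then $h(\mc C)\subseteq \mc E$, so $h$ corestricts to a map $h':\mc C\to\mc E$ with $\iota\circ h'=h$. This $h'$ is a unital $*$-homomorphism. It is normal because $\mc E$ carries the relative weak$^*$ topology: for $\omega\in\mc E_*$, lift $\omega$ to some $\tilde\omega\in\mc M_*$ (Hahn--Banach together with the identification $\mc E_*=\mc M_*/\mc E_\perp$); then $\omega\circ h'=\tilde\omega\circ h$, which is normal. Uniqueness of $h'$ follows because $\iota$ is injective.

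The main point needing care is normality, in two places: the weak$^*$ closedness of $\mc E$, and the normality of the corestriction. Both reduce to the standard fact that a weak$^*$-closed subspace of a dual space has predual $\mc M_*/\mc E_\perp$, with the relative topology. No other obstacle is expected.

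A degenerate case also needs a remark. Because the zero algebra is included and morphisms are unital, $\mc E$ always contains $\mathbb C 1_{\mc M}$, so $\mc E$ is nonzero whenever $\mc M$ is. If $\mc M=B(\{0\})$, then $\mc E=0$, which is allowed since the zero algebra is an object of $\mathbf{W^*}$.
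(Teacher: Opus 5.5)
Your proposal is correct and follows the same route as the paper, which takes the von Neumann subalgebra $\{x \in \mc M : f(x) = g(x)\}$ with its inclusion map as the equalizer. Your checks supply the details the paper leaves implicit: weak$^*$-closedness via normality of $f-g$, normality of the corestriction via $\mc E_* = \mc M_*/\mc E_\perp$, and uniqueness from injectivity of $\iota$.
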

\begin{proof}
    Suppose we have morphisms $\mc M \overset{f,g}{\to} \mc N$. The sub von Neumann algebra 
    \[
    \mc K := \{x \in \mc M : f(x) = g(x)\} 
    \]
    with the inclusion map $\iota : \mc K \to \mc M$ is their equalizer.
\end{proof}

We have expanded the proof of \cite[Theorem 5.5]{kornell} below to hopefully reduce the opacity of the coproduct in ${\bf W^*}$ for those unfamiliar with category theoretical constructions. Thank you to David Penneys for an enlightening clarification of the proof.

\begin{proposition} \protect{\cite[Proposition 5.5]{kornell}}
    The category $\mathbf{W^*}$ has all small coproducts.
\end{proposition}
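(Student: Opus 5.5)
The plan is to construct the coproduct of a family $\{\mc M_j\}_{j \in J}$ in $\mathbf{W^*}$ as a ``universal von Neumann algebra generated by the $\mc M_j$,'' obtained by gluing together all representations of the free algebra into which each $\mc M_j$ maps normally. Concretely, first consider the algebraic free product $\mc F := \ast_{j \in J} \mc M_j$, the free unital $*$-algebra amalgamated over the units, with canonical unital $*$-homomorphisms $\iota_j : \mc M_j \to \mc F$. Next consider the class of unital $*$-representations $\pi : \mc F \to B(\mc H_\pi)$ such that each $\pi \circ \iota_j$ is a normal unital $*$-homomorphism $\mc M_j \to B(\mc H_\pi)$. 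Such representations are automatically bounded on each generator by $\|x\|$, since $*$-homomorphisms of $C^*$-algebras are contractive. The family is nonempty, because it contains the zero representation on $\{0\}$; this is why the paper includes $B(\{0\})$ in $\mathbf{W^*}$.

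The key set-theoretic step is to cut this class down to a set. I would restrict to cyclic representations. Each cyclic representation has Hilbert dimension bounded by $|\mc F| \cdot \aleph_0$, so up to unitary equivalence there is only a set of them, say $\{\pi_\lambda\}_{\lambda \in \Lambda}$. Let $\pi := \bigoplus_\lambda \pi_\lambda$ on $\mc H := \bigoplus_\lambda \mc H_{\pi_\lambda}$. Define the coproduct to be $\mc M := \pi(\mc F)''$, with morphisms $\varphi_j := \pi \circ \iota_j$. Each $\varphi_j$ is normal, since a direct sum of normal representations is normal, and it is unital and a $*$-homomorphism into $\mc M$.

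For the universal property, take normal unital $*$-homomorphisms $\phi_j : \mc M_j \to \mc B$. Represent $\mc B$ faithfully and normally on $\mc K$, so that the $\phi_j$ induce a representation $\rho : \mc F \to B(\mc K)$ in our class. Decompose $\mc K$ into cyclic subspaces for $\rho(\mc F)$; each summand is unitarily equivalent to some $\pi_\lambda$. This yields an isometry $V$ exhibiting $\rho$ as a subrepresentation of an amplification $\pi^{(\kappa)}$. Consequently the map $\pi(f) \mapsto \rho(f)$ factors as amplification, which is a normal $*$-isomorphism onto its image on $\mc M$, followed by compression by the projection $VV^*$, which lies in the commutant of $\pi^{(\kappa)}(\mc F)$ and hence is a normal $*$-homomorphism on $\pi^{(\kappa)}(\mc F)''$. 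Composing gives a normal unital $*$-homomorphism $\mu : \mc M \to \rho(\mc F)'' \subset \mc B$ with $\mu \circ \varphi_j = \phi_j$. The map lands in $\mc B$ because $\mc B$ is weak$^*$-closed and contains $\rho(\mc F)$.

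Uniqueness follows because any two normal $*$-homomorphisms agreeing on $\pi(\mc F)$ agree on its weak$^*$-closure, which equals $\pi(\mc F)''$ by the bicommutant theorem. I expect the main obstacle to be the factorization step: checking that $\pi(f) \mapsto \rho(f)$ is well defined and normal on all of $\mc M$, and not merely on $\pi(\mc F)$. This is exactly where the amplification-then-compression argument, together with the standard fact that amplifications are normal isomorphisms, carries the weight. The cardinality bound on cyclic representations is the other point that needs care.
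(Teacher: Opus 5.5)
Your argument is correct, but it takes a different route from the paper. The paper (after Kornell) runs the standard solution-set argument. A size bound for von Neumann algebras generated by a given set (Kornell's Lemma 5.4) shows that, up to isomorphism, the cocones $\{\mc M_j \to \mc N\}_{j \in J}$ whose target $\mc N$ is generated by the images form a set $S_J$. Each $\mc M_j$ is mapped into the $\ell^\infty$-product $\prod_{\mc N \in S_J} \mc N$ by the tuple of all its structure maps, and the coproduct is the von Neumann algebra generated by these images. There the factorization is almost formal: restrict a given cocone to the subalgebra its images generate, identify that subalgebra with a coordinate, and project. Uniqueness comes from generation.

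You instead build a concrete Hilbert-space model, namely the bicommutant of the direct sum of all cyclic representations of the algebraic free product that are normal on each $\mc M_j$. You then reach an arbitrary cocone by cyclic decomposition followed by amplification and compression. This buys you two things. First, the only size bound you need is the elementary one for cyclic representations ($\dim \le |\mc F|\cdot\aleph_0$). You never have to bound the size of a von Neumann algebra generated by a set, which is the genuinely nontrivial input hidden in the paper's sketch. Second, your model is automatically indexed by families of maps rather than by target algebras, a point the paper's description of $S_J$ leaves implicit (one algebra can receive many different families of morphisms).

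The paper's route, in exchange, uses nothing about Hilbert spaces beyond the existence of products and generated subalgebras in $\mathbf{W^*}$. It is therefore the purely categorical argument, and it transfers to other categories that admit products, generated subobjects, and a comparable size bound.
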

\begin{proof}[Proof Sketch]
    Let $\mc M$ be a von Neumann algebra. Let $C_{\mc M}$ be the collection of $\mathbf{W^*}$ morphisms from $\mc M$:
    \[
    C_{\mc M} := \{\iota^{\mc M}_{\mc N}: \mc M \to \mc N\mid \iota^{\mc M}_{\mc N} \text{ is a $\mathbf{W^*}$ morphism for some von Neumann algebra $\mc N$}\}.
    \]
    Obviously $C_{\mc M}$ is a proper class, so we choose one von Neumann algebra per isomorphism class in $C_{\mc M}$. Call this subclass $S_{\mc M}$. It follows from \cite[Lemma 5.4]{kornell} that $S_{\mc M}$ is a set. Now let $\{\mc M_j\}_{j \in J}$ be a set of von Neumann algebras. From the lines above, the collection below is also a set:
    \[
    S_J := \bigg\{\mc N : \forall j \in J, \, \iota_{\mc N}^{\mc M_j} \in S_{\mc M_j} \text{ and } \bigcup_j \iota_{\mc N}^{\mc M_j} (\mc M_j) \text{ generates $\mc N$} \bigg\}.
    \]
    For each $j \in J$ define the morphism
    \begin{align*}
    \iota_j : \mc M_j &\to \prod_{\mc N \in S_J}^{\infty} \mc N. \\
    x &\mapsto (\iota_{\mc N}^{\mc M_j}(x))_{\mc N \in S_J}
    \end{align*}
    Denote by $\mc M_\infty$ the smallest von Neumann algebra containing $\bigcup_j \iota_j(\mc M_j)$ inside $\prod_{\mc N \in S_J} \mc N$. (We can detect inclusion using equalizers, Proposition \ref{prop: w*equalizers}.) The coproduct of $\{\mc M_j\}_{j \in J}$ is $\mc M_\infty$ with the set of morphisms
    \[
    \{\mc M_j \overset{\iota_j}{\to} \mc M_\infty\}_{j \in J}.
    \]
\end{proof}

\begin{remark} \label{rmk: free product vna}
    The author of \cite{kornell} calls the coproduct in ${\bf{W^*}}$ the \textit{free product} of von Neumann algebras. This is in analogy with the category of $C^*$-algebras and $*$-homomorphisms where the coproduct is the free product $C^*$-algebra (see \cite[Section 2]{pedersen-free-product}). 
\end{remark}

\begin{proposition} \protect{\cite[Proposition 5.7]{kornell}}
    The category $\mathbf{W^*}$ has all coequalizers.
\end{proposition}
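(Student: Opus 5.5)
Given $\mathbf{W^*}$ morphisms $f,g:\mc M \to \mc N$, I will build the coequalizer as a quotient of $\mc N$ by a weak$^*$-closed two-sided ideal. The ideal is
\[
\mc I := \overline{\text{ideal generated by }\{f(x)-g(x) : x \in \mc M\}}^{w^*} \subset \mc N,
\]
the smallest $\sigma(\mc N_*)$-closed two-sided ideal containing $(f-g)(\mc M)$. It is automatically self-adjoint. Indeed, $f$ and $g$ are $*$-homomorphisms, so $(f(x)-g(x))^* = f(x^*)-g(x^*)$, and the weak$^*$ closure of a $*$-invariant ideal is again a $*$-invariant ideal.

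By the standard structure theorem for weak$^*$-closed ideals in von Neumann algebras, $\mc I = z\mc N$ for a unique central projection $z \in \mc N$. Set $\mc E := (1-z)\mc N$, which is a von Neumann algebra with unit $1-z$; it may be the zero algebra, which we included in $\mathbf{W^*}$. Define $q:\mc N \to \mc E$ by $q(y) = (1-z)y$. This $q$ is a normal unital $*$-homomorphism, and its kernel is $z\mc N = \mc I$. Since $\mc I \supseteq (f-g)(\mc M)$, we get $q\circ f = q \circ g$.

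For the universal property, let $h:\mc N \to \mc C$ be a $\mathbf{W^*}$ morphism with $h\circ f = h\circ g$. Then $\ker h$ contains $(f-g)(\mc M)$. It is also a two-sided ideal, and it is weak$^*$-closed because $h$ is normal. Hence $\ker h \supseteq \mc I$, so $h(z)=0$ and $h(y) = h((1-z)y)$ for all $y$. Define $h':\mc E \to \mc C$ as the restriction $h|_{\mc E}$.

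This $h'$ is a normal $*$-homomorphism, being a restriction of one to a weak$^*$-closed $*$-subalgebra. It is unital, since $h'(1-z) = h(1) - h(z) = 1_{\mc C}$. It satisfies $h' \circ q = h$ by the identity above. It is unique because $q$ is surjective.

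The only nontrivial input is the central-projection description of weak$^*$-closed ideals, and I expect that step to be the main obstacle. To prove it, I take a bounded approximate unit $(e_\lambda)$ of the norm closure of $\mc I$, or alternatively the supremum of the range projections of elements of $\mc I$. A weak$^*$ cluster point $z$ lies in $\mc I$ and acts as a unit on $\mc I$, so $z$ is a projection with $z\mc N = \mc I = \mc N z$. Hence $z$ commutes with every unitary of $\mc N$ and is central.

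If one prefers to avoid this, an alternative route is Theorem~\ref{thm: products-equalizers}-style abstract nonsense via the adjoint functor theorem, as in \cite{kornell}. The direct ideal construction above is cleaner.
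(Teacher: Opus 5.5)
Your proof is correct and follows the same route as the paper. The coequalizer is the quotient of $\mc N$ by the weak$^*$-closed two-sided ideal generated by $\{f(x)-g(x) : x \in \mc M\}$, and you make explicit what the paper leaves implicit: you realize $\mc N/\mc I$ as $(1-z)\mc N$ via the central projection $z$, and you check the universal property through $\ker h \supseteq \mc I$.

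Your closing aside is unnecessary and slightly off. Theorem~\ref{thm: products-equalizers} takes coequalizers as input to build colimits rather than producing them, but nothing in your argument depends on it.
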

\begin{proof}
    Suppose we have morphisms $\mc M \overset{f,g}{\to} \mc N$. Let $\mc I$ be the $w^*$-closed two-sided ideal generated by $\{f(x)- g(x) : x \in \mc M\}$ so that the quotient map $q: \mc N \to \mc N/\mc I$ is a ${\bf {W^*}}$ morphism. The map $q$ is the coequalizer of $f$ and $g$. 
\end{proof}

\begin{remark}
One may wonder if this categorical machinery was truly necessary. For the construction in Section \ref{sec: limit of q graphs}, not particularly. However, in the classical case there is an interesting result that a Cayley graph of a profinite group is a profinite graph (see Question \ref{question: qcayleygraph}). The Cayley graph in question is a limit of finite graphs along a directed set that is very unlikely to be $\N$. Moving forward, this categorical machinery will mostly be swept under the rug so the upfront cost of establishing limits along any directed set seemed frugal. 
\end{remark}

To summarize: the categories $\textbf{OpSp}$ and $\mathbf{W^*}$ have all small limits and colimits. Every quantum graph can be characterized in terms of a von Neumann algebra $\mc M$ (roughly, the quantization of the vertices) and an operator space in $\mc M \eh \mc M$ (roughly, the quantization of the complementary edge set). The (co)limits above give us the candidates for the (co)limits of the quantum graphs.

\section{Quantum Graph Morphisms} \label{sec: morphisms}

We continue with a section on quantum graph morphisms rather than quantum graphs, the reason being our definition of connectedness (Definition \ref{defn: connected}) uses the morphism in Definition \ref{defn: qgraph morph}.

Various other authors have given definitions for a morphism of quantum graphs. In Subsection \ref{subsec: equiv-morphs} we will see that any two of the notions are equivalent when taking the more restrictive set of hypotheses. We begin with morphisms of classical graphs to give intuition for all these definitions.

\subsection{Morphisms of Classical Graphs} \label{subsec: classical graph morphisms}
In this subsection the classical graphs will be on a finite set of vertices. The following could be extended to infinite classical graphs as well, but this section mainly serves as motivation for the quantum case. As mentioned above, the case of infinitely many vertices will be subsumed in the quantization.

\begin{definition}
    A graph $G = (V,E)$ consists of a set of vertices $V$ and and a set of edges $E \subseteq V \times V$.
\end{definition}

\begin{definition}
    If $G_1 = (V_1, E_1)$ and $G_2 = (V_2, E_2)$ are two classical graphs, we say a function between the vertices $V_1 \overset{f}{\leftarrow} V_2$ induces a \textit{morphism of classical graphs} if
    \[
    (f(v),f(w)) \in E_1 \Longleftarrow (v,w) \in E_2
    \]
    or equivalently,
    \[
    (f(v),f(w)) \not\in E_1 \implies (v,w) \not\in E_2.
    \]
\end{definition}

In preparation for the quantization of the morphisms above, we demonstrate that a morphism of two classical graphs with (finite) vertex sets can be defined in terms of their annihilators in $\ell^\infty(V_i) \otimes_{eh} \ell^\infty(V_i) \cong \ell^\infty(V_i) \otimes \ell^\infty(V_i)$. (The isomorphism follows from the fact the respective operator spaces are finite dimensional.)

Let $G_1 = (V_1, E_1)$ and $G_2 = (V_2,E_2)$ be two classical finite graphs, and consider them as quantum graphs on $\mc M_1 := \ell^\infty(V_1)$ and $\mc M_2:= \ell^\infty(V_2)$ as in Example \ref{defn: qgraph morph}. Their respective annihilators $\Ann(\mc S_i)$ are left ideals in $\mc M_i \otimes_{eh} \mc M_i$ and a function $V_1 \overset{f}{\leftarrow} V_2$ on their vertices induces a unital $*$-homomorphism $\mc M_1 \overset{\theta}{\to} \mc M_2$ by sending $g \mapsto g \circ f$. Explicitly, for the indicator functions $\chi_u := \chi_{\{u\}} \in \ell^\infty(V_1)$ and $\chi_v := \chi_{\{v\}} \in \ell^\infty(V_2)$, 
\[
\theta(\chi_u) = \sum_{f(v) = u} \chi_v.
\]
If we consider each von Neumann algebra as a Hilbert space with inner product induced by $\<\chi_u|\chi_v\> = \delta_{uv}$, then we can consider the adjoint of $\theta$ as well:
\[
\theta^\dagger(\chi_v) = \chi_{f(v)}.
\]
%\begin{remark}
    %The map $\theta^\dagger: (\mc M_2)_* \to (\mc M_1)_*$ is more properly interpreted as the \textit{preadjoint} to $\theta$ in the infinite dimensional case. However, the preadjoint and adjoint of an operator coincide for finite dimensional $\mc M$ and we will not refer to it in later sections, so we use the usual notion and notation for adjoint here. See \cite[Section 8]{weavergraphsrelations} for further explanation.
%\end{remark}

\begin{definition} \protect{\cite[Proposition 6.12]{daws}}
    If $\mc H$ is a finite dimensional Hilbert space, the orthogonal complement of a quantum graph $\mc S \subseteq B(\mc H)$ is 
    \[
    \mc S^\perp := \{T \in B(\mc H) : Tr_{\mc H}(S^*T) = 0 \, \forall S \in \mc S\}.
    \]
\end{definition}
\begin{remark}
    If $G = (V,E)$ is a (finite) graph and $\mc S \subset B(\ell^2(V))$ is its quantization as in Example \ref{ex: classical-to-quantum-graph}, then 
    \[
    \mc S^\perp = \Span\{|v\>\<w| : (v,w) \notin E\}.
    \]
    In other words, the orthogonal complement corresponds precisely to the complement of the edges of a classical graph. Since the annihilator also corresponds to the complement of the classical graph (Example \ref{ex: eh-ann}), the following proposition may be unsurprising.
\end{remark}
\begin{proposition}\label{thm:classical-morphism}
    Let $G_1 = (V_1, E_1)$ and $G_2 = (V_2, E_2)$ be finite classical graphs, and let $\mc S_1 \subset B(\ell^2(V_1))$ and $\mc S_2 \subset B(\ell^2(V_2))$ be their quantizations. A function $V_1 \overset f \leftarrow V_2$ induces a morphism $G_1 \leftarrow G_2$ if and only if 
    \[
    \Ann(\mc S_1^\perp) \supseteq \theta^\dagger \otimes \theta^\dagger(\Ann(\mc S_2^\perp)).
    \]
\end{proposition}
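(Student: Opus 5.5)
The plan is to compute both sides of the inclusion explicitly in the basis of indicator functions and compare them. Everything here is finite dimensional, so $\ell^\infty(V_i)\eh\ell^\infty(V_i)=\ell^\infty(V_i)\otimes\ell^\infty(V_i)$. The elementary tensors $\chi_a\otimes\chi_b$ with $a,b\in V_i$ form a linear basis. The first step is therefore to identify $\Ann(\mc S_i^\perp)$, in the same spirit as Example \ref{ex: eh-ann} but applied to the complement rather than to $\mc S_i$ itself.

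For the first step, write a general $z=\sum_{a,b} c_{ab}\,\chi_a\otimes\chi_b$. Since $\chi_a$ acts on $\ell^2(V_i)$ as the rank-one projection $|a\>\<a|$, we get
\[
\Phi_z(|v\>\<w|)=\sum_{a,b}c_{ab}\,|a\>\<a|v\>\<w|b\>\<b| = c_{vw}\,|v\>\<w|.
\]
By the Remark preceding the proposition, $\mc S_i^\perp=\Span\{|v\>\<w| : (v,w)\notin E_i\}$. Hence $\Phi_z(\mc S_i^\perp)=0$ if and only if $c_{vw}=0$ for every non-edge $(v,w)$. This gives
\[
\Ann(\mc S_i^\perp)=\Span\{\chi_a\otimes\chi_b : (a,b)\in E_i\},
\]
so $\Ann(\mc S_i^\perp)$ records exactly the edge set $E_i$.

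For the second step, use the formula $\theta^\dagger(\chi_v)=\chi_{f(v)}$ to get
\[
\theta^\dagger\otimes\theta^\dagger(\chi_v\otimes\chi_w)=\chi_{f(v)}\otimes\chi_{f(w)}.
\]
Because $\theta^\dagger\otimes\theta^\dagger$ is linear, the image of $\Ann(\mc S_2^\perp)$ is
\[
\Span\{\chi_{f(v)}\otimes\chi_{f(w)} : (v,w)\in E_2\}.
\]

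The last step compares the two sides. The target $\Ann(\mc S_1^\perp)$ is spanned by a subset of the basis $\{\chi_a\otimes\chi_b\}$. A basis vector $\chi_a\otimes\chi_b$ lies in a span of basis vectors only if it is one of them. So the inclusion $\Ann(\mc S_1^\perp)\supseteq\theta^\dagger\otimes\theta^\dagger(\Ann(\mc S_2^\perp))$ holds if and only if $(f(v),f(w))\in E_1$ for every $(v,w)\in E_2$. That is precisely the definition of $f$ inducing a morphism $G_1\leftarrow G_2$.

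I do not expect a real obstacle. The only point needing care is the first step: checking that the annihilator of the complement is spanned by exactly the edge tensors, including loops $a=b$. That comes down to the computation $\Phi_z(|v\>\<w|)=c_{vw}|v\>\<w|$.
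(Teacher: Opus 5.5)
Your proof is correct and follows the same route as the paper. You identify $\Ann(\mc S_i^\perp)$ as the span of the edge tensors $\chi_a\otimes\chi_b$ with $(a,b)\in E_i$, push the generators forward using $\theta^\dagger(\chi_v)=\chi_{f(v)}$, and compare via linear independence of the $\chi_a\otimes\chi_b$; your explicit computation $\Phi_z(|v\rangle\langle w|)=c_{vw}|v\rangle\langle w|$ simply supplies the detail behind the paper's unproved ``Hence'' in identifying $\Ann(\mc S_2^\perp)$.
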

\begin{proof}
    From the discussion above, the annihilator and orthogonal complement of $\mc S_k$ have concise forms:
    \[
    \Ann(\mc S_k) = \Span\{\chi_v \otimes \chi_w : (v,w) \notin E_k\} \qquad \text{and} \qquad
    \mc S_k^\perp = \Span\{|v\>\<w| : (v,w) \notin E_k\}.
    \]
    Hence
    \[
    \Ann(\mc S_2^\perp) = \Span\{\chi_v \otimes \chi_w : (v,w) \in E_2\}.
    \]
    Since $\theta^\dagger(\chi_v) = \chi_{f(v)}$, we have
    \[
    \theta^\dagger \otimes \theta^\dagger(\Ann(\mc S_2^\perp)) = \Span\{\chi_{f(v)}\otimes \chi_{f(w)}: (v,w) \in E_2\}.
    \]
    Thus the elementary tensors $\chi_{f(v)} \otimes \chi_{f(w)}$ annihilate the operator space $\mc S_1^\perp$ if and only if $(f(v), f(w)) \in E_1$.
\end{proof}

\begin{remark} 
The previous proposition is a direct way to characterize a classical graph morphism using $\ell^\infty(V)$ instead of $V$ and is the intuition behind other notions of morphisms in \cite[Proposition 5.3]{musto-reutter-verdon} and \cite[Definition 7.1]{daws}. Note, however, that it relies on the existence of an orthogonality relation in the ambient space of $B(\mc H)$ of the operator spaces and an inner product structure on the von Neumann algebra. The following characterization of morphisms does not use the orthogonal complement or inner product and corresponds to the fact that $V_1 \overset f \leftarrow V_2$ induces a morphism of classical graphs if and only if $(f(v), f(w)) \notin E_1 \implies (v,w) \not\in E_2$. This will become our definition of a morphism of quantum graphs in Definition \ref{defn: qgraph morph}.
\end{remark}

\begin{proposition}\label{thm:classical-morphism-1}
    The map $V_1 \overset f\leftarrow V_2$ induces a classical graph morphism $G_1 \leftarrow G_2$ if and only if $\theta \otimes \theta (\Ann(\mc S_1)) \subseteq \Ann(\mc S_2)$. 
\end{proposition}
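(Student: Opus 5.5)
The plan is a direct computation, following the proof of Proposition \ref{thm:classical-morphism}. By Example \ref{ex: eh-ann}, and since everything is finite dimensional so that $\mc M_k \eh \mc M_k = \mc M_k \otimes \mc M_k$, we have
\[
\Ann(\mc S_k) = \Span\{\chi_v \otimes \chi_w : (v,w) \notin E_k\}, \qquad k = 1,2.
\]
In particular $\Ann(\mc S_k)$ is spanned by elementary tensors of minimal projections. Recall that $\theta(\chi_u) = \sum_{f(v)=u}\chi_v$. Hence for $u, u' \in V_1$,
\[
\theta\otimes\theta(\chi_u \otimes \chi_{u'}) = \sum_{\substack{f(v) = u\\ f(w) = u'}} \chi_v \otimes \chi_w .
\]

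For the forward direction, assume $f$ induces a morphism. It suffices to check the spanning elements $\chi_u\otimes\chi_{u'}$ with $(u,u')\notin E_1$. Each summand $\chi_v\otimes\chi_w$ above has $(f(v),f(w)) = (u,u') \notin E_1$, so by the contrapositive form of the morphism condition $(v,w)\notin E_2$. Thus every summand lies in $\Ann(\mc S_2)$, and by linearity so does the whole sum.

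For the converse, assume $\theta\otimes\theta(\Ann(\mc S_1))\subseteq \Ann(\mc S_2)$, and take $v,w\in V_2$ with $(f(v),f(w))\notin E_1$. Set $u = f(v)$ and $u' = f(w)$. Then $\chi_u\otimes\chi_{u'}\in \Ann(\mc S_1)$, so $z := \theta\otimes\theta(\chi_u\otimes\chi_{u'})\in\Ann(\mc S_2)$. The main (mild) obstacle is extracting the single term $\chi_v\otimes\chi_w$ from the sum $z$. I will use that $\Ann(\mc S_2)$ is a left ideal together with the multiplication on $\mc M_2\otimes\mc M_2$; note that the product reverses the second factor, although this is irrelevant here since $\ell^\infty$ is commutative. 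Then
\[
(\chi_v\otimes\chi_w)\, z = \chi_v\otimes\chi_w ,
\]
because the $\chi$'s are orthogonal minimal projections and $(v,w)$ occurs among the indices of the sum. Hence $\chi_v\otimes\chi_w\in\Ann(\mc S_2)$.

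Alternatively, one can argue directly without the ideal structure: $\Phi_z(|v\>\<w|) = |v\>\<w|$, so if $(v,w)\in E_2$ then $|v\>\<w| \in \mc S_2$ would not be annihilated by $z$, a contradiction. Finally, since the $\chi_{v'}\otimes\chi_{w'}$ are linearly independent, membership of $\chi_v\otimes\chi_w$ in the span description of $\Ann(\mc S_2)$ forces $(v,w)\notin E_2$, which is exactly the contrapositive morphism condition.
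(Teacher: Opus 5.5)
Your proof is correct and takes the same approach as the paper. The forward direction is the same summand-by-summand check. For the converse, the paper likewise uses that $\Ann(\mathcal S_2)$ is a left ideal to pass from the sum $\theta\otimes\theta(\chi_u\otimes\chi_{u'})$ to its individual elementary tensors; your left multiplication by $\chi_v\otimes\chi_w$ (and the alternative check $\Phi_z(|v\rangle\langle w|)=|v\rangle\langle w|$) simply spells out the step the paper states in one line.
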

\begin{proof}
    For $\chi_u \otimes \chi_t \in \Ann(\mc S_1)$ we have
    \[
    \theta \otimes \theta(\chi_u \otimes \chi_t) = \sum_{\substack{f(v) = u,\\ f(w) = t}} \chi_v \otimes \chi_w.
    \]
    If $f$ induces a morphism of graphs, none of the edges $(v,w)$ can be in $E_2$ else $(u,t) = (f(v),f(w)) \in E_1$. By linearity of $\theta \otimes \theta$, we have 
    \[
    \theta \otimes \theta (\Ann(\mc S_1)) \subseteq \Ann(\mc S_2).
    \]
    On the other hand, as a left ideal in $\ell^\infty(V_2) \otimes \ell^\infty(V_2)$, the annihilator $\Ann(\mc S_2)$ contains all such sums of elementary tensors if and only if it contains each elementary tensor. 
\end{proof}

\subsection{Morphisms of Quantum Graphs} \label{subsec: morphisms of qgraphs}

Note that in the classical case the von Neumann algebra $\ell^\infty (V)$ is its own commutant in the ambient space $B(\ell^2(V))$. Quantum graphs are bimodules over (the commutant of) a von Neumann algebra in the ambient space $B(\mc H)$ and hence the notion of a quantum graph morphism is more subtle. 

In the previous subsection we saw that any map $V_1 \overset f \leftarrow V_2$ between vertices of finite graphs induces a contravariant unital $*$-homomorphism $\ell^\infty(V_1) \overset{\theta}{\to} \ell^\infty(V_2)$. Currently we have the following maps:
\begin{center}
% https://tikzcd.yichuanshen.de/#N4Igdg9gJgpgziAXAbVABwnAlgFyxMJZARgBoAGAXVJADcBDAGwFcYkQA1AfWJAF9S6TLnyEUAZgrU6TVu24AmfoJAZseAkTLFpDFm0QgAwgApuxAJQACALxWAOvYC29HAAsAxkysBZHsqF1USJJHRo9OUNTRWs7Rxd3L0ZfLiUBQJFNFDIFXVkDEHjXT28-YgByANVhDTFkSVzw-PYixNLUyvTqoKySUnE8-RbnYqSrAGV-LrVMuskBpqHDRwhaGAAnRhgAMxx6dfWIAHdgVpLkyeI+B3s4ZgAjOBgcGABHG4TzidSqmdqichSRaRQojdz3bbAAAqEDQfF+NWCKAALECZEtQS8AB44YBuCBOGAEiDrNBuLBwJxweHTRG9VFhdEggCqRgACgienVUY0mQUAOoAPQAVDdsbirE4SWSKVSaSo-kjkICFnzhuKIcAAPLjeUZf4o-qDFns-jSGBQADm8CIoG2hycSDIIBwECQgLVhm2-hojHo9xgjDZdLEIHWWEtbhwVXtBKQChorqQkk9mLcz3oPpAfoDQZD7C2uxjDqQAFZE27EKjU451visznA8Guexw5Ho11Y47EAA2CtIADsifoWEY7BcaDgSb4lD4QA
\begin{tikzcd}
\mathbf{Set} & V_1  &  & V_2 \arrow[ll, "f"']     &  \\
             & \ell^\infty(V_1) =: \mathcal M_1 \arrow[rr, "\theta"] &  & \ell^\infty(V_2) =: \mathcal M_2        &                
\end{tikzcd}
\end{center}

If the objects $\ell^\infty(V_1)$ and $\ell^\infty(V_2)$ are generalized to be arbitrary von Neumann algebras in the quantization, we must require $\theta$ to be a normal unital $*$-homomorphism in order to stay in the category $\bf W^*$. 

\begin{center}
\begin{tikzcd}
\mathbf{Set}  & V_1  &  & V_2 \arrow[ll, "f"']               \\
\textbf{W*}  & \mathcal M_1 \arrow[rr, "\theta"]  &  & \mathcal M_2  
\end{tikzcd}
\end{center}

In order to define a morphism between quantum graphs using $\theta$, we exploit the correspondence between quantum graphs over $\mc M$ and their annihilators in $\mc M \otimes_{eh} \mc M$. The $\theta$ maps naturally induce maps $\theta \otimes \theta: \mc M_1 \otimes_{eh} \mc M_1 \to \mc M_2 \otimes_{eh} \mc M_2$ (Theorem \ref{thm: eh extension}). %The extended Haagerup tensor product of two operator algebras is generally not an operator algebra, but it certainly is a Banach algebra. Hence we take $\theta \otimes \theta$ to be in the category of Banach spaces with contractive maps ($\mathbf{BA_1}$). 
Being (norm) closed subspaces of $\mc M_k \otimes_{eh} \mc M_k$, the annihilators of quantum graphs are operator spaces, and the restriction of $\theta \otimes \theta$ to an annihilator is a morphism in the category $\mathbf{OpSp}$. In summary, we have the following maps:

\begin{center}
\begin{tikzcd}
\mathbf{Set}  & V_1  &  & V_2 \arrow[ll, "f"']               \\
\textbf{W*}  & \mathcal M_1 \arrow[rr, "\theta"]  &  & \mathcal M_2                    \\
%\textbf{BA$_1$} 
& \mathcal M_1 \otimes_{eh} \mathcal M_1 \arrow[rr, "\theta \otimes \theta"] &  & \mathcal M_2 \otimes_{eh} \mathcal M_2 \\
\textbf{OpSp} & \Ann(\mathcal S_1) \arrow[rr, "\theta \otimes \theta"] \arrow[u, hook]    &  & \Ann(\mathcal S_2). \arrow[u, hook]  
\end{tikzcd}
\end{center}

Inspired by the classical case in Theorem \ref{thm:classical-morphism}, we take the following definition as the morphism of quantum graphs.
\begin{definition} \label{defn: qgraph morph}
    Let $\mc S_1$ and $\mc S_2$ be quantum graphs over von Neumann algebras $\mc M_1$ and $\mc M_2$ respectively. We say a normal, unital $*$-homomorphism $\theta: \mc M_1 \to \mc M_2$ induces a quantum graph morphism $(\mc S_1, \mc M_1) \leftarrow (\mc S_2, \mc M_2)$ if $\theta \otimes \theta(\Ann(\mc S_1)) \subseteq \Ann(\mc S_2)$.
\end{definition}

\subsection{Equivalent Morphisms} \label{subsec: equiv-morphs}

As of this writing there seem to be two other notions of morphisms of quantum graphs from \cite{musto-reutter-verdon},  \cite{weavergraphsrelations}. We will show these are all equivalent to Definition \ref{defn: qgraph morph} up to the choices of the dimension of $\mc M$ and the interpretation of $\theta$ as a UCP map or ${\bf W^*}$ morphism.

\subsubsection{Equivalent Morphisms: Frobenius algebras} \label{subsubsec: frob-morphism}

In \cite{musto-reutter-verdon}, a quantum graph morphism between finite dimensional quantum graphs is defined via string diagrams with emphasis on characterizing a graph by its adjacency matrix. To avoid a tangent into graphical calculus, we will write their definition of graph morphism using dagger Frobenius algebra operations. The introduction of the notation is postponed to Appendix \ref{appendix: string}.

We reiterate that $^\dagger$ denotes the adjoint of a linear map between (finite dimensional) Hilbert spaces while $^*$ denotes the involution in the relevant $*$-algebra. In this Subsubsection our von Neumann algebras $\mc M$ will always be finite dimensional and are also implicitly Hilbert spaces (i.e., it is equipped with an inner product). Thus our notation $I_{\mc M}$ should be interpreted as the identity operator on $\mc M$ as a Hilbert space. The multiplicative identity of $\mc M$ as an algebra will be denoted $1_{\mc M}$. Also as a consequence of the finite dimensionality of $\mc M$, the $\eh$ tensor product reduces to 
\[
\mc M \eh \mc M = \mc M \otimes \mc M^{op},
\]
where $\mc M^{op}$ has the same $*$-vector space structure as $\mc M$ but opposite multiplication. Since $\mc M$ is finite dimensional, $\mc M \otimes \mc M^{op}$ is a von Neumann algebra with the multiplication
\[
(x_1 \otimes y_1) (x_2 \otimes y_2)  = x_1 x_2 \otimes y_2 y_1
\]
and involution\footnote{This is not the involutive structure on $\mc M \otimes \mc M^{op}$ that captures the notion of symmetric quantum graphs. See Subsection \ref{subsec: qgraph properties}.}
\[
(x \otimes y)^\diamond := x^* \otimes y^*.
\]

\begin{definition} \protect{\cite[Definition 5.1]{musto-reutter-verdon}} 
    Let $(\mc M, m, u)$\footnote{There is a bijection between finite dimensional $C^*$-algebras and SSFAs (see Appendix \ref{appendix: string}, so the map $m$ here coincides with the multiplication map $m$ defined in Corollary \ref{cor: sigma-mult} for the $\otimes_{\sigma h}$ tensor product.} be a special symmetric Frobenius dagger algebra (SSFA) (Definition \ref{defn: daggerfrob}). We say a map $A: \mc M \to \mc M$ is a \textit{quantum adjacency operator} if $A= A^\dagger$ and the two following equations hold:
    \begin{align}
    A &= m \circ (A \otimes A) \circ m^\dagger \label{eqn: schur} \\
    A &= (I_{\mc M} \otimes u^\dagger) \circ (I_{\mc M} \otimes A \otimes I_{\mc M})\circ (u \otimes I_{\mc M}) \label{eqn: snake}.
    \end{align}
\end{definition}

\begin{remark}
The assumption that $A = A^\dagger$ corresponds to the assumption that the graph is undirected \cite[Proposition 2.6]{daws} or that $A$ corresponds to a symmetric quantum relation in the language of \cite[Definition 2.4]{weaverqrelations}. Proposition \ref{thm: mrv-ann} still holds with or without this assumption. Equation \ref{eqn: schur} above is known as ``Schur idempotence'' since in the classical case $m \circ (A \otimes A) \circ m^\dagger$ implements the Schur product of the adjacency operator $A$ with itself with respect to an implicit basis. This requirement is equivalent to being a quantum graph according to our definition in Definition \ref{defn: qgraph}. Equation \ref{eqn: snake} is the ``snake equation.'' Using the terminology of \cite[Definition 2.4]{daws}, this is the generalization of the adjacency matrix of a classical graph being self-transpose.
\end{remark}

The authors of \cite{musto-reutter-verdon} do not define morphisms of quantum graphs via adjacency operators but rather their \textit{projectors}.

\begin{definition} \protect{\cite[Section 5.1]{musto-reutter-verdon}} \label{defn: projector}
    The \textit{projector} of a quantum adjacency operator $A$ is the map $P :\mc M \otimes_{eh} \mc M \to \mc M \otimes_{eh} \mc M$ defined by
    \begin{align*}
    P &:= (m \otimes I_{\mc M})\circ (I_{\mc M} \otimes A \otimes I_{\mc M})\circ (I_{\mc M} \otimes m^\dagger) \\
    &= (I_{\mc M} \otimes m)\circ (I_{\mc M} \otimes A \otimes I_{\mc M})\circ (m^\dagger \otimes I_{\mc M}).
    \end{align*}
\end{definition}

To translate from this language of projectors to annihilators, we must take an intermediate step from the projectors $P\in \text{End}(\mc M \eh \mc M)$ defined above to projections $p \in \mc M \eh \mc M$. We take the convention that projections are orthogonal; that is, $p^2 = p = p^\diamond$. In the remainder of this subsection, if $f: V \to W$ is a function, we will denote the evaluation of $f$ at a point $v \in V$ by $ev(f,v)$. This is to (hopefully) reduce the cognitive load in interpreting the string of symbols in $P(1_{\mc M} \otimes 1_{\mc M})$, for example. (This string should be interpreted as $ev(P, 1_{\mc M} \otimes 1_{\mc M})$ but might be confused for $P \circ (I_{\mc M} \otimes I_{\mc M})$.)

\begin{lemma} \protect{\cite[Remark 7.3]{musto-reutter-verdon}} \label{lem: proj-proj}
    One obtains a bijection between the projections $p \in \mc M \eh \mc M$ and projectors $P: \mc M \eh \mc M \to \mc M \eh \mc M$ from Definition \ref{defn: projector} via evaluation:
    \[
    p = ev(P,1_{\mc M} \otimes 1_{\mc M}).
    \]
    This process is invertible; see Remark \ref{rmk: left-right-convention}.
\end{lemma}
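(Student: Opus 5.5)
Write $\Psi: \text{End}(\mc M \otimes \mc M^{op}) \to \mc M \otimes \mc M^{op}$ for evaluation, $\Psi(P) := ev(P, 1_{\mc M} \otimes 1_{\mc M})$. The plan is to show that $\Psi$ restricted to projectors of quantum adjacency operators is a bijection onto the (orthogonal) projections of the finite dimensional $*$-algebra $\mc M \eh \mc M = \mc M \otimes \mc M^{op}$. The first step is structural: I would show that every projector $P$ of the form in Definition \ref{defn: projector} is a right module map for the algebra $\mc M \otimes \mc M^{op}$. Concretely, $P(x \otimes y) = P(1_{\mc M} \otimes 1_{\mc M})\,(x \otimes y)$, with the product taken in $\mc M \otimes \mc M^{op}$. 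This comes from the Frobenius law $(m \otimes I)(I \otimes m^\dagger) = m^\dagger m = (I \otimes m)(m^\dagger \otimes I)$ together with associativity of $m$. The outer multiplications in the two expressions for $P$ let one slide $x$ off the left leg and $y$ off the right leg. Consequently $P = R_p$ is right multiplication by $p := \Psi(P)$, and $\Psi$ is injective on such maps. Conversely, any $q \in \mc M \otimes \mc M^{op}$ determines $R_q$, and I would then recover $A$ from $R_q$ by capping off with $u$ and $u^\dagger$:
\[
A = (u^\dagger \otimes I_{\mc M}) \circ R_q \circ (u \otimes I_{\mc M}) \quad\text{(up to the SSFA normalization)},
\]
using the snake equations. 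This is the inverse promised in Remark \ref{rmk: left-right-convention}.

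The second step is to match the axioms, which I would do in order.
\begin{enumerate}
\item Idempotence: $R_p^2 = R_{p^2}$, so $P^2 = P$ iff $p^2 = p$. I would check that $P^2 = P$ is equivalent to Schur idempotence \eqref{eqn: schur}, again by a Frobenius/specialness ($m m^\dagger = I$) computation.
\item Self-adjointness: the Hilbert space adjoint satisfies $R_p^\dagger = R_{p^\diamond}$. This holds because the inner product on $\mc M$ is the trace-induced one from the SSFA structure, so $\langle a, bc\rangle = \langle a c^*, b\rangle$ on each leg. The conditions $A = A^\dagger$ and \eqref{eqn: snake} should then translate into $P = P^\dagger$, i.e.\ $p = p^\diamond$.
\end{enumerate}
Together these give that $p$ is an orthogonal projection exactly when $P$ arises from a quantum adjacency operator.

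The third step is surjectivity. Given a projection $p$, I define $A$ from $R_p$ as above and verify that the projector built from $A$ is $R_p$ again, which amounts to a zig-zag identity. Schur idempotence and self-adjointness are then inherited from $p^2 = p = p^\diamond$ via the second step read backwards.

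I expect the main obstacle to be the self-adjointness correspondence, namely showing that $R_p^\dagger = R_{p^\diamond}$ with the involution $\diamond$ rather than some twisted involution on $\mc M^{op}$. Which involution appears depends on the left/right convention and on the symmetric (trace) property of the Frobenius form. I would first verify it on matrix units for $\mc M = \bigoplus_k M_{n_k}$ with the normalized trace making the algebra special, and only then abstract it into a diagrammatic argument.
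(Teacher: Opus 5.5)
The genuine gap is in the converse half of your second step, which your surjectivity argument relies on. Write $p_A := ev(P_A, 1_{\mc M}\otimes 1_{\mc M}) = (A\otimes I_{\mc M})m^\dagger(1_{\mc M})$. Then $p_A^\diamond = p_{\bar{A}}$ with $\bar{A}(x) := A(x^*)^*$, so $p = p^\diamond$ only says that $A$ is \emph{real}. It does not say that $A = A^\dagger$ or that $A$ satisfies the snake equation. The snake equation corresponds instead to invariance of $p$ under the flip $x\otimes y\mapsto y\otimes x$. So, given $p = p^\diamond$, the adjacency axioms amount to the extra condition $p^\ddagger = p$, and this does not follow from $p^2 = p = p^\diamond$.

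Concretely, take $\mc M = \ell^\infty(\{1,2\})$ with its standard SSFA, where $p_A = \sum_{i,k}A_{ik}\,\chi_i\otimes\chi_k$. The orthogonal projection $p = \chi_1\otimes\chi_2$ forces $A\chi_2 = \chi_1$ and $A\chi_1 = 0$. This $A$ is Schur idempotent and real, but neither self-adjoint nor self-transpose; in fact the two expressions in Definition \ref{defn: projector} disagree on $\chi_1\otimes\chi_2$.

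So the surjectivity onto all projections that you set out to prove is false, and your ``read backwards'' step is exactly where it fails. Evaluation is a bijection from projectors of quantum adjacency operators onto the projections with $p^\ddagger = p$ (the symmetric case of Proposition \ref{prop: qr-to-ann}). It is a bijection onto all projections only if you admit $P_A$ for every Schur idempotent real $A$.

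The rest of your first two steps is sound, and it is essentially all the paper uses. The paper cites \cite[Remark 7.3]{musto-reutter-verdon} with the convention of Remark \ref{rmk: left-right-convention}, namely that $P$ is right multiplication by $p = ev(P,1_{\mc M}\otimes 1_{\mc M})$. Only the direction $P\mapsto p$ is used later, in Proposition \ref{thm: mrv-ann}. There are two further slips, though.

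First, with the product $(x_1\otimes y_1)(x_2\otimes y_2) = x_1x_2\otimes y_2y_1$, the two formulas for $P$ give $P(x\otimes y) = (x\otimes 1_{\mc M})(1_{\mc M}\otimes y)\,p = (x\otimes y)\,p$. So $P$ is a \emph{left} module map. Your displayed $P(x\otimes y) = p\,(x\otimes y)$ has the factors reversed; it already fails for $A = I_{\mc M}$ on $\mc M = M_2$. This does not affect $R_p^\dagger = R_{p^\diamond}$, which holds by traciality of $u^\dagger$.

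Second, your inverse is wrong. Classically, $(u^\dagger\otimes I_{\mc M})\circ R_q\circ(u\otimes I_{\mc M})$ returns the diagonal operator of column sums of $A$, not $A$. You need the cap $u^\dagger\circ m$: by the zig-zag identity, $A(x) = (I_{\mc M}\otimes u^\dagger m)(q\otimes x)$ for $q = ev(R_q, 1_{\mc M}\otimes 1_{\mc M})$.
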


\begin{remark} \label{rmk: left-right-convention}
In \cite[Remark 7.3]{musto-reutter-verdon}, a projector $P$ is implemented by \textit{left} multiplication by a projection $p \in \mc M^{op} \otimes \mc M$ (which is the correct category-theoretic convention). However, we take the convention that that $p \in \mc M \otimes \mc M^{op}$ to align with the operator space theoretic convention for $\otimes_{eh}$, so $P$ is implemented by \textit{right} multiplication by $p$. 
\end{remark}

From \cite[Corollary 5.13]{daws}, for a von Neumann algebra $\mc M \subset B(\mc H)$ there is a bijection between projections in $\mc M \eh \mc M$ and quantum graphs $\mc S$ on $\mc M$. Namely, for every a projection $p \in \mc M \eh \mc M$ we know $\Phi_p$ is a (completely) bounded operator on $B(\mc H)$, and the image of $\Phi_p$ is a quantum graph $\mc S$ on $\mc M$. Since
\begin{align*}
    \text{Ker}(p) \oplus \text{Im}(p) = B(\mc H) 
\end{align*}
we say $p$ is a projection onto $\mc S = \text{Im}(p)$ along $\mc S^\perp = \text{Ker}(p)$ (\cite[Definition 5.11]{daws}). The relationship between $p$ and the annihilator of $\mc S$ is given by the following corollary.

\begin{corollary} \label{cor: proj-ann}
    Let $p \in \mc M \eh \mc M$ be a projection onto $\mc S$ along $\mc S^\perp$. Then
    \begin{align*}
    \Ann(\mc S) &= (\mc M \eh \mc M)(1_{\mc M} \otimes 1_{\mc M} -p) &&\text{and} & \Ann(\mc S^\perp) &= (\mc M \eh \mc M)p.
    \end{align*}
\end{corollary}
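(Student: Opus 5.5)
Write $q := 1_{\mc M} \otimes 1_{\mc M} - p$. The plan is to show the two inclusions $(\mc M \eh \mc M)q \subseteq \Ann(\mc S)$ and $\Ann(\mc S) \subseteq (\mc M \eh \mc M)q$; the statement for $\Ann(\mc S^\perp)$ then follows by the same argument with the roles of $p$ and $q$ swapped. The only inputs are three facts about $\Phi$: it is a homomorphism $\mc M \eh \mc M \to CB^\sigma_{\mc M',\mc M'}(B(\mc H))$ (Theorem \ref{thm: m-haagerups} together with the multiplication formula in terms of weak representations); it sends $1_{\mc M}\otimes 1_{\mc M}$ to the identity map; and $\Phi_p$ is an idempotent with image $\mc S$ and kernel $\mc S^\perp$ (the description of $p$ taken from \cite[Corollary 5.13]{daws}).

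\textbf{Step 1: the multiplication convention.} First I check which way products compose. For elementary tensors,
\[
\Phi_{(x_1\otimes y_1)(x_2\otimes y_2)}(T) = x_1x_2\,T\,y_2y_1 = \Phi_{x_1\otimes y_1}\big(\Phi_{x_2\otimes y_2}(T)\big),
\]
so $\Phi_{zw} = \Phi_z\circ\Phi_w$. This identity extends to all of $\mc M \eh \mc M$ because multiplication is well defined on weak representations and $\Phi$ is an isomorphism onto the normal maps. In particular, an element $zq$ of $(\mc M\eh\mc M)q$ satisfies $\Phi_{zq} = \Phi_z \circ \Phi_q$.

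\textbf{Step 2: the easy inclusion.} Since $\Phi_q = \mathrm{id} - \Phi_p$ is the complementary idempotent, it vanishes on $\operatorname{Im}\Phi_p = \mc S$. Hence $\Phi_{zq}(\mc S) = \Phi_z(\Phi_q(\mc S)) = 0$, which gives $(\mc M\eh\mc M)q \subseteq \Ann(\mc S)$.

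\textbf{Step 3: the reverse inclusion.} Let $z \in \Ann(\mc S)$. For every $T\in B(\mc H)$ we have $\Phi_p(T) \in \mc S$, so
\[
\Phi_{zp}(T) = \Phi_z(\Phi_p(T)) = 0 .
\]
Injectivity of $\Phi$ then forces $zp = 0$, and therefore $z = z(1\otimes 1) = z(p+q) = zq \in (\mc M\eh\mc M)q$.

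\textbf{The second identity.} Swap $p$ and $q$. Now $\Phi_p$ vanishes on $\operatorname{Ker}\Phi_p = \mc S^\perp$, and $\Phi_q$ maps $B(\mc H)$ onto $\mc S^\perp$, so Steps 2 and 3 go through verbatim and give $\Ann(\mc S^\perp) = (\mc M\eh\mc M)p$.

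\textbf{Main obstacle.} The main point to verify is Step 1 in the infinite-dimensional setting: that the product in $\mc M\eh\mc M$ corresponds to composition in the order $\Phi_z\circ\Phi_w$, not the reverse. This matters because Remark \ref{rmk: left-right-convention} deliberately uses right multiplication. The check consists of evaluating both sides against normal functionals on $B(\mc H)$ via weak representations, which is legitimate because every $\Phi_z$ is normal. Everything else is formal idempotent algebra.
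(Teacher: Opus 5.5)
Your proof is correct. The paper gives no argument of its own here. It states the corollary as a direct consequence of \cite[Corollary 5.13]{daws} and of the convention that $p$ is a projection onto $\mc S=\text{Im}(\Phi_p)$ along $\mc S^\perp=\text{Ker}(\Phi_p)$, so your proposal supplies the verification the paper leaves implicit.

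Compared with leaning on Daws' correspondence, your route is self-contained. It uses only three facts: $\Phi$ is injective (Theorem \ref{thm: m-haagerups}), $\Phi_{zw}=\Phi_z\circ\Phi_w$ with $\Phi_{1_{\mc M}\otimes 1_{\mc M}}$ the identity, and $p^2=p$. In particular it needs neither finite dimensionality of $\mc M$ nor $p^\diamond=p$. It also explains why the annihilator is generated on the right by $1_{\mc M}\otimes 1_{\mc M}-p$, consistent with Remark \ref{rmk: left-right-convention}.

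The composition order you flag is indeed the only point requiring care, and it holds. In the finite-dimensional setting of this subsection it is the elementary-tensor computation plus linearity. In general, take a weak representation $z=\sum_i a_i\otimes b_i$ and write $\Phi_z(T)=A(I\otimes T)B$ for the bounded row $A=[a_i]$ and column $B=[b_i]$; do the same for $w=\sum_j c_j\otimes d_j$ with row $C$ and column $D$. Then $\Phi_z\circ\Phi_w$ is implemented by the row $[a_ic_j]_{(i,j)}$ and the column $[d_jb_i]_{(i,j)}$, which is exactly the paper's product formula for $zw$.
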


\begin{definition}[\protect{\cite[Definition 5.4]{musto-reutter-verdon}}] \label{defn: string morph}
    Let $A_1: \mc M_1 \to \mc M_1$ and $A_2: \mc M_2 \to \mc M_2$ be quantum graph adjacency operators. A $*$-cohomomorphism $\theta^\dagger: \mc M_2 \to \mc M_1$ (Definition \ref{defn: *-cohom}) induces a quantum graph homomorphism from $A_2$ to $A_1$ if 
    \[
    (\theta^\dagger \otimes \theta^\dagger) \circ P_{2} = P_{1} \circ (\theta^\dagger \otimes \theta^\dagger) \circ P_{2}
    \]
    where each $P_{k}$ is the projector of the quantum adjacency operator $A_k$.
\end{definition}

\begin{proposition} \label{thm: mrv-ann}
    If $A_1: \mc M_1 \to \mc M_1$ and $A_2: \mc M_2 \to \mc M_2$ are quantum graph adjacency operators on finite dimensional von Neumann algebras $\mc M_k$, the quantum graph morphisms in Definition \ref{defn: qgraph morph} and Definition \ref{defn: string morph} are equivalent. 
\end{proposition}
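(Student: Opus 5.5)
The plan is to translate both conditions into statements about subspaces of the finite-dimensional Hilbert space $\mc M_k \otimes \mc M_k$ (which here equals $\mc M_k \eh \mc M_k = \mc M_k \otimes \mc M_k^{op}$ as a vector space), and then pass between them by taking orthogonal complements. The key observation is that Definition \ref{defn: string morph} only involves the images of the projectors. Since $P_1$ is idempotent, the equation $(\theta^\dagger \otimes \theta^\dagger)\circ P_2 = P_1\circ(\theta^\dagger \otimes \theta^\dagger)\circ P_2$ is equivalent to
\[
(\theta^\dagger \otimes \theta^\dagger)(\text{Im}(P_2)) \subseteq \text{Im}(P_1).
\]
Definition \ref{defn: qgraph morph}, on the other hand, concerns the annihilators $\Ann(\mc S_k)$.

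First I identify $\text{Im}(P_k)$ and $\text{Ker}(P_k)$. By Lemma \ref{lem: proj-proj} and Remark \ref{rmk: left-right-convention}, $P_k$ is right multiplication by the projection $p_k = ev(P_k, 1_{\mc M_k}\otimes 1_{\mc M_k})$. Hence $\text{Im}(P_k) = (\mc M_k \eh \mc M_k)p_k$. Moreover, $zp_k = 0$ if and only if $z = z(1-p_k)$, so $\text{Ker}(P_k) = (\mc M_k \eh \mc M_k)(1_{\mc M_k}\otimes 1_{\mc M_k} - p_k)$. By Corollary \ref{cor: proj-ann}, these are exactly
\[
\text{Im}(P_k) = \Ann(\mc S_k^\perp), \qquad \text{Ker}(P_k) = \Ann(\mc S_k).
\]

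Second, I show that $P_k$ is an orthogonal projection on the Hilbert space $\mc M_k \otimes \mc M_k$, so that $\text{Ker}(P_k) = \text{Im}(P_k)^\perp$. Taking the adjoint of the first expression for $P$ in Definition \ref{defn: projector}, and using $A = A^\dagger$ and $(m^\dagger)^\dagger = m$, gives
\[
P^\dagger = (I_{\mc M}\otimes m)\circ(I_{\mc M}\otimes A\otimes I_{\mc M})\circ(m^\dagger\otimes I_{\mc M}).
\]
This is precisely the second expression for $P$, so $P^\dagger = P$. Together with $P^2 = P$ (equivalently $p^2 = p$), this makes $P$ an orthogonal projection. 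I expect this step to be the main obstacle. The identification must respect the Hilbert structure coming from the SSFA inner product, and the two expressions for $P$ in Definition \ref{defn: projector} must agree; that agreement rests on the Frobenius and snake identities, which I take from the appendix. If $A$ were not self-adjoint, I would instead try to replace orthogonal complements by annihilators under the trace pairing. The cleanest route, however, is the self-adjoint one.

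Finally, I note that $\theta^\dagger\otimes\theta^\dagger$ is the Hilbert-space adjoint of $\theta\otimes\theta$ on the algebraic tensor product, which is all of $\mc M_k\eh\mc M_k$ in finite dimensions. For subspaces $U_k$ of these finite-dimensional Hilbert spaces and a linear map $T$, we have $T^\dagger(U_2)\subseteq U_1$ if and only if $T(U_1^\perp)\subseteq U_2^\perp$. Applying this with $T = \theta\otimes\theta$ and $U_k = \text{Im}(P_k)$ yields
\[
(\theta^\dagger\otimes\theta^\dagger)(\text{Im}(P_2))\subseteq \text{Im}(P_1) \iff (\theta\otimes\theta)(\Ann(\mc S_1))\subseteq \Ann(\mc S_2).
\]
This is exactly the equivalence of the two definitions.
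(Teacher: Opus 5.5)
Your argument is correct, but it follows a different line from the paper's.

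\textbf{Your route.} You read Definition \ref{defn: string morph} as the image containment $(\theta^\dagger\otimes\theta^\dagger)(\text{Im}(P_2))\subseteq\text{Im}(P_1)$. This is the quantum form of the orthogonal-complement criterion $\theta^\dagger\otimes\theta^\dagger(\Ann(\mc S_2^\perp))\subseteq\Ann(\mc S_1^\perp)$ of Proposition \ref{thm:classical-morphism}. You then reach Definition \ref{defn: qgraph morph} by pure Hilbert-space duality, using $\ker P_k=\text{Im}(P_k)^\perp=\Ann(\mc S_k)$. The route is linear algebra and Hilbert-space duality, not algebra inside $\mc M\eh\mc M$.

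\textbf{The paper's route.} The paper takes adjoints to get $P_2\circ(\theta\otimes\theta)=P_2\circ(\theta\otimes\theta)\circ P_1$. It evaluates at $1\otimes 1$, reducing everything to the single identity $p_2=(\theta\otimes\theta)(p_1)\,p_2$ in $\mc M_2\otimes\mc M_2^{op}$. It then translates this through the left ideals $\Ann(\mc S_k)=(\mc M_k\eh\mc M_k)(1-p_k)$. That argument needs $\theta$ unital and $\theta\otimes\theta$ multiplicative, and it works inside the algebra $\mc M\eh\mc M$, as annihilators are handled in the rest of the paper.

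\textbf{What each buys.}
\begin{itemize}
\item Your argument never uses unitality or multiplicativity of $\theta$, only that $\theta^\dagger\otimes\theta^\dagger$ is the Hilbert adjoint of $\theta\otimes\theta$. So the equivalence of the two inclusion conditions holds for any linear $\theta$.
\item It also makes the link with Propositions \ref{thm:classical-morphism} and \ref{thm:classical-morphism-1} explicit.
\item The price is that your last step needs $P_k$ to be an orthogonal projection on the finite-dimensional Hilbert space $\mc M_k\otimes\mc M_k$. That structure has no counterpart for a general $\mc M\eh\mc M$.
\end{itemize}

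Both proofs rest on the self-adjointness of $P_k$. The paper takes it from \cite{musto-reutter-verdon}. You derive it from $A=A^\dagger$ and the equality of the two expressions in Definition \ref{defn: projector}, which is legitimate because the paper asserts that equality.
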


\begin{proof}
    We first rewrite the condition in Definition \ref{defn: string morph} terms of $\theta$. By definition, projectors are self-adjoint \cite[Section 1.4]{musto-reutter-verdon} and projectors defined as in Definition \ref{defn: projector} are indeed projectors in the broad sense. Therefore
    \begin{align*}
        (\theta^\dagger \otimes \theta^\dagger) \circ P_{2} = P_{1} \circ (\theta^\dagger \otimes \theta^\dagger) \circ P_{2} \iff P_{2} \circ (\theta \otimes \theta) &= P_{2} \circ (\theta \otimes \theta) \circ P_{1}.
    \end{align*}
    We now show the latter condition is equivalent to 
    \begin{align} \label{mrv-ann: eqn1}
      p_2 = ev(\theta \otimes \theta, p_1)p_2  
    \end{align}
    where $p_k \in \mc M_k \eh \mc M_k$ are projections from Remark \ref{rmk: left-right-convention}. One direction is obtained from by evaluating both sides of
    \[
    P_{2} \circ (\theta \otimes \theta) = P_{2} \circ (\theta \otimes \theta) \circ P_{1}
    \]
    on $1_{\mc M_1} \otimes 1_{\mc M_1}$. The l.h.s. is reduced to $p_2$ since $\theta$ is unital. The r.h.s. is reduced to $ev(\theta \otimes \theta, p_1)p_2$ since $P_k$ is implemented by right multiplication by $p_k$.
    
    On the other hand, if Equation \ref{mrv-ann: eqn1} holds then for any $a \otimes b \in \mc M_1 \otimes \mc M_1$
    \begin{align*}
        P_{2} \circ ev(\theta \otimes \theta, a \otimes b) &= ev(\theta \otimes \theta, a \otimes b)  p_{2} & \text{Remark } \ref{rmk: left-right-convention} \\
        &= ev(\theta \otimes \theta, a \otimes b) ev(\theta \otimes \theta, p_1)p_2 &\text{Equation } \ref{mrv-ann: eqn1} \\
        &= ev(\theta \otimes \theta, (a \otimes b)p_1)p_2 &\text{homomorphism} \\
        &= P_2 \circ ev(\theta \otimes \theta, (a \otimes b)p_1) & \text{Remark } \ref{rmk: left-right-convention} \\
        &= P_2 \circ ev(\theta \otimes \theta \circ P_1, a \otimes b) & \text{Remark } \ref{rmk: left-right-convention}
    \end{align*} 
    that is, $P_2 \circ (\theta \otimes \theta) = P_2 \circ (\theta \otimes \theta) \circ P_1$. Now since $\theta$ is a $*$-homomorphism, $ev(\theta \otimes \theta, p_1)$ is a projection and Equation \ref{mrv-ann: eqn1}
    \[
    p_2 = ev(\theta \otimes \theta, p_1) p_2
    \]
    is equivalent to 
    \[
    ev(\theta \otimes \theta, 1_{\mc M_1} \otimes 1_{\mc M_1} -p_1) = ev(\theta \otimes \theta, 1_{\mc M_1} \otimes 1_{\mc M_1} -p_1) (1_{\mc M_2} \otimes 1_{\mc M_2} -p_2).
    \]
    From Corollary \ref{cor: proj-ann}
    \[
    \Ann(\mc S_1) = (\mc M_1 \eh \mc M_1) (1-p_1) \qquad \Ann(\mc S_2) = (\mc M_2 \eh \mc M_2) (1-p_2).
    \]
    Finally, we use the fact that $\theta \otimes \theta$ is a homomorphism to conclude
    \[
    (\theta\otimes \theta)\Ann(\mc S_1) \subseteq \Ann(\mc S_2).
    \]
\end{proof}

\begin{remark}
    The morphism of quantum graphs in Defintion \ref{defn: string morph} would be a ``classical morphism of quantum graphs'' in the language of \cite[Remark 5.5]{musto-reutter-verdon}. The authors of \cite{musto-reutter-verdon} are primarily interested in ``quantum morphisms of quantum graphs.'' See Question \ref{question: t-morphisms}.
\end{remark}

\subsubsection{Equivalent Morphisms: CP Morphisms}

Another variation of morphism was introduced by Stahlke \cite[Defintion 7]{stahlke} using CPTP morphisms between traceless quantum graphs. Weaver extended this notion under the label \textit{CP morphism} in \cite[Definition 8.5]{weavergraphsrelations} for CP maps between matrix algebras and showed the notion was independent of the choice of Kraus operator associated to the CP map. Daws further extended the definition to normal CP maps between von Neumann algebras of arbitrary dimension in \cite[Section 7]{daws}. For the interested reader, the same section also contains a construction of a ``Kraus form" for normal CP maps between arbitrary von Neumann algebras, not just finite dimensional ones. Note that contravariant to our convention, Stahlke and Weaver in use the preadjoint $^\dagger \theta: (\mc M_2)_* \to (\mc M_1)_*$ as the primary CP map to induce the quantum graph morphism.

\begin{definition} [\protect{\cite[Section 7]{daws}}] \label{defn: cp morphism}
    Let $\mc M_k \subseteq B(\mc H_k)$ be von Neumann algebras (of arbitrary dimension) and suppose $\mc S_k \subseteq B(\mc H_k)$ is a quantum graph on $\mc M_k$. Suppose $\mc M_1 \overset{\theta}{\to} \mc M_2$ is a normal UCP map with Kraus form $\theta(x) = \sum_{i \in I} K_i^\dagger x K_i$. We say $\theta$ induces a CP morphism from $\mc S_2$ to $\mc S_1$ if 
    \[
    \mc S_1 \supseteq \overline{\Span}^{w^*}\{K_i \mc S_2 K_j^\dagger : i,j \in I\}.
    \]
\end{definition}

\begin{example}
    To motivate the above definition we turn to the classical, finite case (i.e., when the $\mathbf{W^*}$ morphism $\ell^\infty(V_1) \overset{\theta}{\to} \ell^\infty(V_2)$ is implemented by a function $V_1 \overset{f}{\leftarrow} V_2$). We may choose the Kraus operators of $\theta$ to have the form $K_v :=|f(v)\>\<v|$ for $v \in V_2$. If the edge $|v\>\<w|$ is in $\mc S_2$, then 
    \begin{align*}
    |f(v)\>\<f(w)| &=  |f(v)\>\<v| v\>\<w|w\>\<f(w)| \\
    &= K_v|v\>\<w|K_w^\dagger \in \overline{\Span}^{w^*}\{|f(v)\>\<v| y|w\>\<f(w)| : v,w \in V_2, y \in \mc S_2\}.
    \end{align*}
    Thus $V_1 \overset{f}{\leftarrow} V_2$ induces a classical graph morphism if and only if 
    \begin{align*}
    \mc S_1 &\supseteq \overline{\Span}^{w^*}\{|f(v)\>\<v| y|w\>\<f(w)| : v,w \in V_2, \, y \in \mc S_2\} \\
    &= \overline{\Span}^{w^*}\{ K_v \mc S_2 K_w^\dagger: v,w \in V_2 \}.
    \end{align*}
\end{example}

In preparation for the following equivalence, we record a lemma from \cite{weavergraphsrelations}. The lemma is stated for finite dimensional $\mc H_1, \mc H_2$, but the proof lift to Hilbert spaces of arbitrary dimension. 

\begin{lemma}[\protect{\cite[Lemma 8.3]{weavergraphsrelations}}] \label{lemma: positivity-weaver}
Using the same notation as in Definition \ref{defn: cp morphism}, let $A,B \in B(\mc H_1)$ be positive, $T \in B(\mc H_2)$, and $K_i, K_j \in B(\mc H_2,\mc H_1)$. Then
            \[
            (K_i^\dagger A K_i )T (K_j^\dagger B K_j) =0 \quad \iff \quad (AK_i )T (K_j^\dagger B) =0.
            \]
\end{lemma}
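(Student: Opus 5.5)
The plan is to reduce both directions to the single positive-operator identity $X^\dagger X = 0 \iff X = 0$, applied to suitable products. Set $X := AK_iTK_j^\dagger B$, so the right-hand condition reads $X = 0$. The left-hand expression is
\[
(K_i^\dagger A K_i) T (K_j^\dagger B K_j) = K_i^\dagger X K_j .
\]
This immediately gives the direction ($\Leftarrow$): if $X=0$, then $K_i^\dagger X K_j = 0$. No finite-dimensionality is used here, so this direction holds verbatim on Hilbert spaces of arbitrary dimension.

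For ($\Rightarrow$) I will strip off the outer $K_i^\dagger$ and $K_j$ in two steps, each using positivity of $A$ or $B$ through its square root.

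\textbf{Left step.} Write $A = A^{1/2}A^{1/2}$ and put $Y := A^{1/2}K_i$. The hypothesis becomes $Y^\dagger (A^{1/2} K_i T K_j^\dagger B K_j) = 0$. Multiplying on the left by $A^{1/2} K_i$ gives
\[
(A^{1/2}K_i)(K_i^\dagger A^{1/2}) Z = 0, \qquad Z := A^{1/2}K_iTK_j^\dagger BK_j .
\]
The operator $W := A^{1/2}K_iK_i^\dagger A^{1/2}$ is positive, so $WZ=0$ implies $W^{1/2}Z = 0$, since $(W^{1/2}Z)^\dagger(W^{1/2}Z) = Z^\dagger W Z = 0$. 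It follows that
\[
\|K_i^\dagger A^{1/2} Z\xi\|^2 = \langle Z\xi | W Z\xi\rangle = 0 \quad \text{for all } \xi,
\]
which is just the original hypothesis again, so this route is circular. I will therefore rephrase the left step more cleanly. What I actually want is $A^{1/2}K_i\, T\, (\cdots) = 0$, and this follows from a direct norm computation with $V := T K_j^\dagger B K_j$:
\[
\|A^{1/2}K_i V\xi\|^2 = \langle V\xi | K_i^\dagger A K_i V\xi\rangle = 0 .
\]
Hence $A^{1/2}K_iTK_j^\dagger BK_j = 0$, and multiplying on the left by $A^{1/2}$ yields $AK_iTK_j^\dagger BK_j = 0$.

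\textbf{Right step.} Take adjoints and repeat the argument: with $U := AK_iTK_j^\dagger B^{1/2}$, the previous line says $U(B^{1/2}K_j) = 0$. Then
\[
0 = U B^{1/2}K_jK_j^\dagger B^{1/2}U^\dagger ,
\]
and more directly, $\|K_j^\dagger B^{1/2} U^\dagger \eta\|^2 = \langle U^\dagger\eta | B^{1/2}K_jK_j^\dagger B^{1/2}U^\dagger\eta\rangle$, which vanishes because $U B^{1/2}K_j = 0$. This shows $K_j^\dagger B^{1/2}U^\dagger = 0$, i.e.\ $UB^{1/2}K_j = 0$, which is again the hypothesis. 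So a tail factor $K_j$ cannot be removed this way unless it is traded for $K_j^\dagger$ in the middle. The fix is to apply the left-step argument on the adjoint side: from $U B^{1/2}K_j=0$ we get $U B^{1/2}K_j K_j^\dagger B^{1/2} = 0$. The left-step trick does not remove this $K_jK_j^\dagger$; at the level of range projections it only gives $U\,P_{\overline{\mathrm{ran}}(B^{1/2}K_j)} = 0$. I need $U B^{1/2} = 0$ on the right, i.e.\ $UB=0$ after one more factor, and this does not follow in general.

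So the real content, and the main obstacle, is this right-hand removal (and symmetrically the left one): getting from a statement on $\mathrm{ran}(B^{1/2}K_j)$ to a statement on all of $\mathrm{ran}(B^{1/2})$. I would try first to use the Kraus structure of Definition~\ref{defn: cp morphism}, where the equivalence is used after summing over $i,j$ with $\sum_i K_i^\dagger K_i = I$. If the lemma is intended in that summed form, the range projections of the $K_j$ jointly cover everything and the argument closes. Failing that, I would check Weaver's original proof to see whether $A,B$ are implicitly taken in the commutant or with support dominated by the ranges of the $K$'s.
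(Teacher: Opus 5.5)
Your ($\Leftarrow$) direction and the first half of ($\Rightarrow$) are correct and match the paper's argument. With $V = TK_j^\dagger BK_j$ you get $A^{1/2}K_iV = 0$, hence $AK_iTK_j^\dagger BK_j = 0$. The gap is the right-hand step, and your conclusion that it ``does not follow in general'' is wrong. The same goes for your suggestion that the lemma needs the summed Kraus relation or support hypotheses: it holds exactly as stated, in any dimension. The circularity came from pairing $UB^{1/2}K_j$ with its own adjoint, which splits the positive operator $K_j^\dagger BK_j$ lopsidedly. The correct move mirrors your left step with the outer factor on the other side. Set $W := AK_iT$, so what you have already proved is $W(K_j^\dagger BK_j) = 0$. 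Multiply on the right by $W^\dagger$:
\[
0 = W K_j^\dagger B K_j W^\dagger = (B^{1/2}K_jW^\dagger)^\dagger (B^{1/2}K_jW^\dagger).
\]
Hence $B^{1/2}K_jW^\dagger = 0$, i.e.\ $U = WK_j^\dagger B^{1/2} = 0$, and therefore $AK_iTK_j^\dagger B = UB^{1/2} = 0$. This is precisely the paper's ``analogous procedure for $D = AK_iT$.''

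Your range-projection observation would also have closed the argument, once you notice that $U$ factors through $K_j^\dagger B^{1/2}$. You showed $UP = 0$ for $P$ the projection onto $\overline{\mathrm{ran}}(B^{1/2}K_j)$. Meanwhile $U(1-P) = 0$ holds automatically, because $\ker(K_j^\dagger B^{1/2}) = \mathrm{ran}(B^{1/2}K_j)^\perp$. So $U = 0$, with no appeal to $\sum_i K_i^\dagger K_i = I$ or to any commutant or support assumption on $A$ and $B$.
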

\begin{proof}
    One direction is obvious, so we only address the $\Rightarrow$ implication. Let $D = T (K_j^\dagger B K_j)$. From our assumption we have 
    \[
    (K_i^\dagger A K_i )\underbrace{T (K_j^\dagger B K_j)}_{D} =0 \implies D^\dagger(K_i^\dagger AK_i) D = 0.
    \]
    Since we assumed $A$ was positive, we can take its square root and
    \[
    0 = D^\dagger(K_i^\dagger AK_i) D = (D^\dagger K_i^\dagger \sqrt{A} )(\sqrt{A} K_i D ) = (\sqrt{A} K_i D )^\dagger (\sqrt{A} K_i D ).
    \]
    In any $C^*$-algebra $a^* a = 0$ if and only if $a =0$ so we have that $\sqrt{A} K_i D =0$. Multiplying from the left by $\sqrt{A}$ we get 
    \[
    0 = A K_i D = A K_i T (K_j^\dagger B K_j).
    \]
    Performing an analogous procedure for $D = AK_i T$ yields the desired desired result.
\end{proof}

A version of the following lemma is also proven in \cite[Lemma 8.3]{weavergraphsrelations} for finite dimensions. We will use the same ideas, but we need extra topological considerations in the infinite dimensional case.

\begin{lemma} \label{lem: positivity-weaver-2}
    Let $\mc M \subset B(\mc H)$ be a von Neumann algebra and suppose
        \[
        \sum_{i\in I} x_i, \sum_{j \in J} y_j \in B(\ell^2) \bar{\otimes} \mc M
        \]
        are $\sigma(B(\ell^2)_* \hat \otimes \mc M_*)$ convergent sums such that each $x_i, y_j$ is positive. For each $T \in B(\ell^2) \overline{\otimes} B(\mc H)$,
        \[
        \bigg(\sum_{i\in I} x_i\bigg) T \bigg(\sum_{j \in J} y_j\bigg) = 0 \quad \iff \quad x_i T y_j = 0 \quad  \forall i,j.
        \]
\end{lemma}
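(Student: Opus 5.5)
The plan is to reduce the infinite-sum statement to a finite-sum positivity argument, in the spirit of Lemma \ref{lemma: positivity-weaver}. We can then pass to the limit using that the partial sums of positive elements increase. Write $X := \sum_i x_i$ and $Y := \sum_j y_j$, and for finite $F \Subset I$, $G \Subset J$ write $X_F := \sum_{i \in F} x_i$ and $Y_G := \sum_{j\in G} y_j$. Since each $x_i \geq 0$, the net $(X_F)_F$ is increasing. It converges $\sigma(B(\ell^2)_* \hat\otimes \mc M_*)$, i.e.\ weak$^*$ in $B(\ell^2)\bar\otimes\mc M \subset B(\ell^2 \otimes_2 \mc H)$, to $X$. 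By Vigier's theorem (bounded increasing nets of self-adjoint operators converge strongly to their supremum), it in fact converges in SOT to $X$, with $0 \le X_F \le X$ and in particular $0 \le x_i \le X$ for every $i$. The same holds for $Y$ and each $y_j$. All products are now taken in $B(\ell^2\otimes_2\mc H)$, where $T$ also lives.

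The direction ($\Leftarrow$) is a continuity argument. If $x_iTy_j=0$ for all $i,j$, then $X_F T Y_G = 0$ for all finite $F,G$. Fix $G$ and let $F$ increase. Multiplication on the left by the fixed operator $TY_G$ is continuous from the weak$^*$ topology on $X_F$ (equivalently, the weak operator topology on bounded sets) to itself, so $X T Y_G = 0$. Now letting $G$ increase, right multiplication by $XT$ is likewise weak$^*$ continuous, so $XTY=0$. Only separate continuity of multiplication is used, so no joint-continuity issue arises.

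The direction ($\Rightarrow$) is where positivity enters, and I expect it to be the main step. Suppose $XTY=0$ and set $D := TY$. Then $D^\dagger X D = 0$, so $(X^{1/2}D)^\dagger(X^{1/2}D)=0$ and hence $X^{1/2} D = 0$. Since $0 \le x_i \le X$, we get $D^\dagger x_i D \le D^\dagger X D = 0$. Because $x_i\ge 0$, this gives $x_i^{1/2}D = 0$ and therefore $x_i D = x_i T Y = 0$ for each $i$. Next set $E := x_i T$, so that $EY = 0$. Then $E Y E^\dagger = 0$, and the same argument with $0 \le y_j \le Y$ gives $E y_j E^\dagger = 0$, hence $E y_j^{1/2}=0$ and so $x_i T y_j = 0$.

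The point needing care is justifying $0\le x_i\le X$. This holds because $X - x_i = \sum_{i' \neq i} x_{i'}$ is a weak$^*$ limit of positive partial sums, and the positive cone of $B(\ell^2)\bar\otimes \mc M$ is weak$^*$ closed, since positivity is tested against positive normal functionals. Once this is in place, the argument is exactly the finite-dimensional one in Lemma \ref{lemma: positivity-weaver}, with the order inequality $x_i \le X$ replacing term-by-term manipulation.
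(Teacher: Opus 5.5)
Your proof is correct and follows essentially the same route as the paper. For ($\Leftarrow$) both arguments use separate weak$^*$ continuity of multiplication; for ($\Rightarrow$) both conjugate $XTY=0$ by $D=TY$ and use monotonicity of the positive partial sums, then repeat on the other side. The only difference is packaging: the paper says the increasing net $\sum_{i\in F} D^\dagger x_i D$ converges to $0$, while you first derive $x_i\le X$ from weak$^*$-closedness of the positive cone, which is a more explicit justification of the same step. Your Vigier/SOT remark is never used and can be dropped.
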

\begin{proof}
    Since $B(\ell^2) \overline{\otimes} B(\mc H)$ is a dual Banach algebra, multiplication is separately $w^*$ continuous. We can therefore write
    \[
    \bigg(\sum_{i \in I} x_i\bigg) T \bigg(\sum_{j \in J} y_j\bigg) = \lim_{G \Subset J}\lim_{F \Subset I} \sum_{i \in F} x_i T \sum_{j \in J}y_j.
    \]
    Thus if we assume $x_i T y_j = 0$ for all $i,j$ then we have one direction of the statement. For the other direction, denote $T_Y := T\sum_j y_j$. The above equation reduces to
    \[
    \bigg(\lim_{F \Subset I} \sum_{i \in F} x_i\bigg) T_Y= 0,
    \]
    where the limit converges in the $\sigma(B(\mc H)_*)$ topology. Again, using the continuity of multiplication,
    \[
     T_Y^\dagger\bigg( \lim_{F \Subset I} \sum_{i \in F} x_i \bigg)T_Y = 0 \implies  \lim_{F \Subset I} \sum_{i \in F} T_Y^\dagger x_iT_Y = 0.
    \]
    Since the latter limit is an increasing net of positive operators, for all $i$ we have
    \begin{align} \label{posi-weaver-2-eqn2}
    T_Y^\dagger x_iT_Y = 0 \implies x_i^{1/2}T_Y = 0 \implies x_i T_Y = 0.
    \end{align}
    Similarly, Equation \ref{posi-weaver-2-eqn2} implies for all $i$
    \begin{align} \label{posi-weaver-2-eqn3}
    x_iT\bigg( \lim_{F \Subset J} \sum_{j \in F} y_j \bigg) = 0 \implies x_iTy_j = 0.
    \end{align}
\end{proof}

\begin{remark}
Recall that we chose our maps $\theta: \mc M_1 \to \mc M_2$ to be normal unital $*$-homomorphisms. In the proof below, we only need that $\theta$ is a normal CP map, so perhaps we could implemented a morphism of quantum graphs by a normal (U)CP map instead of a normal $*$-homomorphism. See Remark \ref{rmk: w*/ucp} for a comment about this choice.
\end{remark}

\begin{proposition} \label{prop: cp-morph-qmorph}
    Let $\mc M_k \subseteq B(\mc H_k)$ be von Neumann algebras and suppose $\mc S_k \subseteq B(\mc H_k)$ are quantum graphs over the $\mc M_k$ for $k =1,2$. Suppose $\theta: \mc M_1 \to \mc M_2$ is a normal $*$-homomorphism. The map $\theta(x) = \sum_i K_i^\dagger x K_i$ with Kraus operators $K_i: \mc H_2 \to \mc H_1$ implements a CP morphism (Definition \ref{defn: cp morphism}) if and only if $\theta$ implements a quantum graph morphism (Definition \ref{defn: qgraph morph}). 
\end{proposition}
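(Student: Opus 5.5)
The plan is to translate both conditions into statements about $\Phi$ applied to elements of $\Ann(\mc S_1)$, and to use the positivity lemmas (Lemma \ref{lemma: positivity-weaver} and Lemma \ref{lem: positivity-weaver-2}) to pass between the Kraus sums and individual Kraus terms. The key identity is the following. For $z = \sum_\alpha a_\alpha \otimes b_\alpha \in \mc M_1 \eh \mc M_1$, written as a weak representation, and $T \in B(\mc H_2)$,
\[
\Phi_{\theta\otimes\theta(z)}(T) = \sum_\alpha \theta(a_\alpha) T \theta(b_\alpha) = \sum_{i,j} K_i^\dagger \Phi_z(K_i T K_j^\dagger) K_j .
\]
I will first verify this on elementary tensors. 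I will then extend it to all of $\mc M_1\eh\mc M_1$ using Theorem \ref{thm: eh extension} (normality of $\theta\otimes\theta$) together with the separate weak$^*$ continuity of multiplication.

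\textbf{CP morphism $\Rightarrow$ quantum graph morphism.} Suppose $K_i\mc S_2K_j^\dagger \subseteq \mc S_1$ for all $i,j$, and let $z\in\Ann(\mc S_1)$ and $T\in\mc S_2$. Then each inner term $\Phi_z(K_iTK_j^\dagger)$ vanishes, so by the identity above $\Phi_{\theta\otimes\theta(z)}(T)=0$. Hence $\theta\otimes\theta(z)\in\Ann(\mc S_2)$. This direction is routine once the identity is in place.

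\textbf{Quantum graph morphism $\Rightarrow$ CP morphism.} Let $T\in\mc S_2$. By Theorem \ref{thm: gp-bijection}, $\mc S_1$ is exactly the set annihilated by $\Ann(\mc S_1)$, so it suffices to show $\Phi_z(K_iTK_j^\dagger)=0$ for every $z\in\Ann(\mc S_1)$. The obstacle is that $\Phi_z$ is not a single product $A\,\cdot\,B$ with $A,B$ positive, so Lemma \ref{lemma: positivity-weaver} does not apply directly. My plan is to exploit the left ideal structure. For $z = x\odot y$ with $x$ a row and $y$ a column, the elements $(x_k^*\otimes 1)z$ and $z(1\otimes y_l^*)$ also lie in $\Ann(\mc S_1)$. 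More usefully, for any $a,b\in\mc M_1$ the element $(a^*a\otimes 1)\cdot$ and the right-hand analogues stay in the ideal. I will try to reduce to the case of elements of the form $\sum_{k} x_k^*x_k\otimes$-type positive slices by choosing a factorisation $z = x\odot y$ in $B(\ell^2)\bar\otimes$ form. After amplifying, $\Phi_z(S) = x(I\otimes S)y$, where $x\in M_{1,I}(\mc M_1)$ and $y\in M_{I,1}(\mc M_1)$. Then $\Phi_{\theta\otimes\theta(z)}(T) = \theta^{(\infty)}(x)(I\otimes T)\theta^{(\infty)}(y)=0$. Replacing $z$ by $(x^*\otimes 1)$-multiples, which remain in the left ideal $\Ann(\mc S_1)$, turns the outer factors into the positive operators $x^*x$ and $yy^*$ in $B(\ell^2)\bar\otimes\mc M_1$. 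Expanding $\theta^{(\infty)}$ through the Kraus form gives sums of positive terms $(I\otimes K_i^\dagger)(x^*x)(I\otimes K_i)$. Lemma \ref{lem: positivity-weaver-2} then splits these into the individual $i,j$ terms, and Lemma \ref{lemma: positivity-weaver} strips the outer $K_i^\dagger,K_j$ off. The result is $x^*x(I\otimes K_iTK_j^\dagger)yy^*=0$, from which $x(I\otimes K_iTK_j^\dagger)y=0$ follows by the $a^*a=0$ trick. I expect this step to be the main obstacle: justifying that the needed multiples remain in $\Ann(\mc S_1)$, or replacing $z$ by a positive-sandwich form, together with the topological bookkeeping for infinite $I$. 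If the ideal trick fails, I would instead take $z = \sum_\alpha a_\alpha\otimes b_\alpha$ and invoke Theorem \ref{defn: mult-prod-gp} for continuity. Once the inclusion $K_iTK_j^\dagger\in\mc S_1$ is established, weak$^*$ closedness of $\mc S_1$ yields the full span inclusion required in Definition \ref{defn: cp morphism}.
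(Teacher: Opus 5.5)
Your plan works, and your forward direction is the paper's. For the converse you share the paper's engine: amplify by $I_{\ell^2}\otimes\theta$, expand through the Kraus form, split the double sum with Lemma~\ref{lem: positivity-weaver-2}, and strip off $K_i^\dagger$, $K_j$ with Lemma~\ref{lemma: positivity-weaver}. What differs is where the positive operators come from.

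The paper uses the intrinsic-relation form of \cite[Theorem 3.3]{gp-obs}. There $\mc S_1$ is cut out by pairs of projections $(P,Q)$ in $B(\ell^2)\overline{\otimes}\mc M_1$ with $P\odot Q\in B(\ell^2)\overline{\otimes}\Ann(\mc S_1)$. Positivity is therefore built in, and the output $P(I_{\ell^2}\otimes K_iTK_j^\dagger)Q=0$ is already the membership test for $\mc S_1$.

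You instead take an arbitrary $z=x\odot y\in\Ann(\mc S_1)$ (row $x$, column $y$), manufacture the positive pair $x^*x$, $yy^*$, and then must undo the sandwich. Your route needs only that $\mc S_1$ is the joint kernel of $\Ann(\mc S_1)$, which follows from Theorem~\ref{thm: gp-bijection}; it does not need the projection characterization. The paper's route avoids the un-sandwiching and follows Weaver's finite-dimensional proof \cite[Theorem 8.4]{weavergraphsrelations} directly.

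The step you flag as the main obstacle is immediate. With the paper's convention $(a\otimes b)(c\otimes d)=ac\otimes db$, the elements $(x_k^*\otimes y_l^*)z=\sum_m x_k^*x_m\otimes y_my_l^*$ are left multiples of $z$, so they lie in $\Ann(\mc S_1)$. The matrix of these elements indexed by $(k,l)$ is $(x^*x)\odot(yy^*)$. Note that your ``$z(1\otimes y_l^*)$'' is a right multiple under this convention and need not stay in the left ideal.

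You can even skip the ideal entirely. Since $\theta$ is a $*$-homomorphism, applying $\theta^{(\infty)}$ entrywise gives $(I_{\ell^2}\otimes\theta)(x^*x)\,(I_{\ell^2}\otimes T)\,(I_{\ell^2}\otimes\theta)(yy^*)=\theta^{(\infty)}(x)^*\,\Phi_{\theta\otimes\theta(z)}(T)\,\theta^{(\infty)}(y)^*=0$.

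For the un-sandwiching with $S=K_iTK_j^\dagger$, apply the $C^*$-identity twice:
\begin{itemize}
\item From $x^*x(I_{\ell^2}\otimes S)yy^*=0$, set $D=(I_{\ell^2}\otimes S)yy^*$. Then $(xD)^*(xD)=0$, so $x(I_{\ell^2}\otimes S)yy^*=0$.
\item Set $w=x(I_{\ell^2}\otimes S)y$. Then $ww^*=x(I_{\ell^2}\otimes S)yy^*(I_{\ell^2}\otimes S^*)x^*=0$, so $\Phi_z(S)=w=0$.
\end{itemize}
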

\begin{proof}
    $\underline{\implies}$: By definition, the map $\theta$ implements a CP morphism if 
    \[
    \mc S_1 \supseteq \overline{\Span}^{w^*}\{K_i T K_j^\dagger : T \in \mc S_2, i,j \in I\}.
    \]
    Since every weak representation $\sum_{k \in K} a_k \otimes b_k \in \Ann(\mc S_1) \subseteq \mc M_1 \eh \mc M_1$ implements a $w^*$-continuous map on $B(\mc H_1)$ via $\Phi$, the above condition holds if and only if for all $T \in \mc S_2$, $i,j \in I$, and $\sum_{k \in K} a_k \otimes b_k \in \Ann(\mc S_1)) \subseteq \mc M_1 \eh \mc M_1$ we have
    \begin{align} \label{eqn: 1}
    \Phi_{\sum_k a_k \otimes b_k} (K_i T K_j^\dagger) = \sum_k  a_k K_iTK_j^\dagger  b_k = 0.
    \end{align}
    What follows are some miracles about the $\eh$ tensor product. The tensor map
    $\theta \otimes \theta$ extends to a unique, well-defined map on the respective extended Haagerup tensor products (\cite[Proposition 3.7]{blecher-smith-w*h}), so in particular the double sum $\sum_{i,j}$ below converges in the $\sigma ({\mc M_2}_* \eh {\mc M_2}_*)$ topology. 
    \begin{align*}
    \sum_{i,j} \sum_k K_i^\dagger a_kK_i \otimes K_j^\dagger b_kK_j = (\theta \otimes \theta) \sum_k a_k \otimes b_k = \sum_k \theta(a_k) \otimes \theta(b_k).
    \end{align*}
    Since $\Phi$ is a completely isometric isomorphism between $\mc M_1 \eh \mc M_1$ and $CB^\sigma_{\mc M_1', \mc M_1'}(B(\mc H_1))$ \cite[Theorem 4.2]{blecher-smith-w*h}, 
    \[
    \Phi_{\sum_k \theta(a_k) \otimes \theta(b_k)}(T) =\sum_k \theta(a_k)T \theta(b_k)= \sum_{i,j} \sum_k (K_i^\dagger a_kK_i )T (K_j^\dagger b_kK_j) .
    \]
    The assumption that $\sum_k a_k K_iTK_j^\dagger b_k = 0$ yields an equality 
    \[
    \Phi_{\sum_k \theta(a_k) \otimes \theta(b_k)}(T) = 0,
    \]
    showing that $\theta \otimes \theta(\Ann(\mc S_1)) \subseteq \Ann(\mc S_2)$. \\\\
    $\underline{\Longleftarrow}:$ The other direction portion of this proof follows \cite[Theorem 8.4]{weavergraphsrelations}. We assume $\theta \otimes \theta(\Ann(\mc S_1)) \subseteq \Ann(\mc S_2)$. From \cite[Theorem 3.3]{gp-obs}, any quantum graph $\mc S_1$ can be uniquely identified by a collection of pairs of projections in $B(\ell^2) \overline{\otimes} \mc M_1$ in the following way
    \[
    \mc R_1 := \{(P,Q) \in \mc P(B(\ell^2) \overline{\otimes} \mc M_1)^2 : P \odot Q \notin B(\ell^2) \overline{\otimes} \Ann(\mc S_1)\}
    \]
    (this is what Weaver calls an \textit{intrinsic quantum relation} \cite[Definition 2.24]{weaverqrelations}). Of course, we can also identify $\mc S_1$ by the complement of $\mc R_1$ in the cartesian product of projections:
    \[
    \mc R_1^c := \{(P,Q) \in \mc P(B(\ell^2) \overline{\otimes} \mc M_1)^2 : P \odot Q \in B(\ell^2) \overline{\otimes} \Ann(\mc S_1)\}.
    \]
    Every $P \odot Q$ can be written as an infinite matrix with entries in $\mc M_1 \eh \mc M_1$. Explicitly, if $P = [p_{st}]_{s,t}$ and $Q = [q_{st}]_{s,t}$ then
    \[
    P \odot Q = \bigg[ \sum_{} p_{su} \otimes q_{ut} \bigg]_{s,t}, \quad \sum_u p_{su} \otimes q_{ut} \in \Ann(\mc S_1) \subseteq  \mc M_1 \eh \mc M_1.
    \]
    For $T \in \mc S_2$ and $i,j \in I$, 
    \begin{align*}
     K_i T K_j^\dagger \in \mc S_1 &\iff \forall (P,Q) \in \mc R_1^c \, \, \forall s,t  \, \,\sum_u p_{su} K_i T K_j^\dagger q_{ut} = 0 \\
     &\iff  \forall (P,Q) \in \mc R_1^c  \, \, P(I_{\ell^2} \otimes K_iTK_j^\dagger) Q  = 0.
    \end{align*}
    We now use our assumption to prove the last condition. Suppose $(P,Q) \in \mc R_1^c$. Since $\theta$ is normal, $(I_{\ell^2} \otimes \theta)P$ and $(I_{\ell^2} \otimes \theta)Q$ are well-defined elements of $B(\ell^2) \overline{\otimes} \mc M_2$. %well-defined because the spatial tensor product is the dual of B(\ell^2)_* \hat \otimes \mc M_1*. Convergence in spatial tensor product is equivalent to ``entrywise w*'' because finite rank operators are norm dense in B(\ell^2)_* and those can be decomposed to entry-corners. Continuity of 1 \otimes \theta is equivalent to entrywise w^* cty then.
    Entrywise we can write them as
    \[
    (I_{\ell^2} \otimes \theta)P = [\theta(p_{st})]_{s,t} \qquad (I_{\ell^2} \otimes \theta)Q = [\theta(q_{st})]_{s,t}
    \]
    so their multiplicative product is
    \[
    (I_{\ell^2} \otimes \theta)P \odot(I_{\ell^2} \otimes \theta)Q =  \bigg[ \sum_u \theta(p_{su}) \otimes \theta(q_{ut}) \bigg]_{s,t}.
    \]
    By hypothesis (and the functorality of the $\eh$ tensor product) we conclude that 
    \[
    (I_{\ell^2} \otimes \theta)P \odot(I_{\ell^2} \otimes \theta)Q \in B(\ell^2) \overline{\otimes} \Ann(\mc S_2)
    \]
    which by \cite[Theorem 3.3]{gp-obs} is equivalent to
    \begin{align} \label{eqn: blah}
    [(I_{\ell^2} \otimes \theta)P] (I_{\ell^2} \otimes T) [(I_{\ell^2} \otimes \theta)Q] = 0 \qquad \forall T \in \mc S_2.
    \end{align}
    A Kraus form of $I_{\ell^2} \otimes \theta$ is
    \[
    I_{\ell^2} \otimes \theta(\cdot) = \sum_i (I_{\ell^2} \otimes K_i^\dagger ) \cdot (I_{\ell^2} \otimes K_i ).
    \]
    %Same justification as above.
    Continuing Equation \ref{eqn: blah} and using the Kraus form of $1 \otimes \theta$,
    \[
    \bigg[\sum_i (I_{\ell^2} \otimes K_i^\dagger ) P(I_{\ell^2} \otimes K_i ) \bigg] (I_{\ell^2} \otimes T) \bigg[\sum_j (I_{\ell^2} \otimes K_j^\dagger ) Q(I_{\ell^2} \otimes K_j ) \bigg] = 0.
    \]
    From Lemma \ref{lem: positivity-weaver-2}, the previous equation holds if and only if for all $i,j \in I$
    \[
    \bigg[(I_{\ell^2} \otimes K_i^\dagger ) P(I_{\ell^2} \otimes K_i )\bigg] (I_{\ell^2} \otimes T) \bigg[(I_{\ell^2} \otimes K_j^\dagger ) Q(I_{\ell^2} \otimes K_j)\bigg]  = 0
    \]
    which from Lemma \ref{lemma: positivity-weaver} holds if and only if
    \[
    P(I_{\ell^2} \otimes K_iTK_j^\dagger) Q  = 0.
    \]
    Recalling that $(P,Q)$ was an arbitrary pair in $\mc R_1^c$, we conclude that $K_iTK_j^\dagger \in \mc S_1$. 
\end{proof}

\section{Quantum Graphs} \label{sec: limits-qgraphs}
Recall that in Subsubsection \ref{par: annihilator characterization} we saw that we have a representation-free way to express quantum graphs. We repeat it here for convenience.

\begin{theorem}\protect{\cite[Theorem 3.3]{gp-obs}} 
    Let $\mc M \subset B(\mc H)$ be a von Neumann algebra. There is a bijection between $\sigma(\mc M_* \eh \mc M_*)$-closed left ideals in $\mc M \eh \mc M$ and quantum graphs on $\mc M$. Namely, each quantum graph corresponds uniquely to its annihilator in $\mc M \eh \mc M$:
    \[
    \Ann(\mc S) = \{z \in \mc M \otimes_{eh} \mc M : \Phi_z(T) = 0, \, T \in \mc S\}
    \]
    where $\Phi$ is the map in Definition \ref{defn: Phi-map}.
\end{theorem}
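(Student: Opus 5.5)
We will prove that $\mc S \mapsto \Ann(\mc S)$ is a bijection by exhibiting its inverse, namely the preannihilator-type map
\[
\mc J \mapsto \mc S_{\mc J} := \{T \in B(\mc H) : \Phi_z(T) = 0 \ \forall z \in \mc J\}.
\]
The work splits into four steps: checking that each map lands where it should, the easy composite $\mc S \mapsto \Ann(\mc S) \mapsto \mc S_{\Ann(\mc S)}$, the hard composite $\mc J \mapsto \mc S_{\mc J} \mapsto \Ann(\mc S_{\mc J})$, and a final identification.

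\emph{Step 1: both maps are well defined.}
\begin{itemize}
\item $\Ann(\mc S)$ is a left ideal. Under Theorem \ref{thm: m-haagerups}, multiplication in $\mc M \eh \mc M$ corresponds to composition $\Phi_{wz} = \Phi_w \circ \Phi_z$, which one checks on elementary tensors and extends by weak representations. Hence $\Phi_z(\mc S) = 0$ implies $\Phi_{wz}(\mc S) = 0$.
\item $\Ann(\mc S)$ is $\sigma(\mc M_* \eh \mc M_*)$-closed. Via Remark \ref{rmk: predual-tops}, the functional $z \mapsto \phi(\Phi_z(T))$ for $T \in B(\mc H)$ and $\phi \in B(\mc H)_*$ is continuous in this topology, being given by a normal functional on $\mc M \otimes_{\sigma h} \mc M = CB_{\mc M',\mc M'}(B(\mc H))$. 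So $\Ann(\mc S)$ is an intersection of kernels of continuous functionals.
\item $\mc S_{\mc J}$ is a quantum graph. Each $\Phi_z$ is normal, so $\mc S_{\mc J}$ is weak$^*$ closed. Each $\Phi_z$ is an $\mc M'$-bimodule map, so $\mc S_{\mc J}$ is an $\mc M'$-bimodule.
\end{itemize}

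\emph{Step 2: $\mc S_{\Ann(\mc S)} = \mc S$.} The inclusion $\mc S \subseteq \mc S_{\Ann(\mc S)}$ is trivial. For the reverse, take $T \notin \mc S$. We need some $z \in \mc M \eh \mc M$ with $\Phi_z(\mc S) = 0$ but $\Phi_z(T) \neq 0$.
\begin{itemize}
\item By Hahn--Banach there is a normal functional $\omega$ vanishing on $\mc S$ with $\omega(T) \neq 0$.
\item Ampliate to $\mc H \otimes \ell^2$, where $\mc M$ acts with infinite multiplicity. Here $\omega$ becomes a vector functional $\langle \xi | \cdot\, \eta\rangle$.
\item Take the cyclic projections $P = [\mc M' \xi]$ and $Q = [\mc M' \eta]$, which lie in $B(\ell^2) \bar\otimes \mc M$. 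Form $P \odot Q$ using Theorem \ref{defn: mult-prod-gp}.
\item $P(I \otimes \mc S) Q = 0$ because $\mc S$ is an $\mc M'$-bimodule annihilated by $\omega$. On the other hand $\langle \xi | P (I\otimes T) Q\, \eta\rangle \neq 0$.
\end{itemize}
The entries of $P \odot Q$ then lie in $\Ann(\mc S)$, and some entry fails to kill $T$. This is essentially Weaver's intrinsic-relation argument.

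\emph{Step 3: $\Ann(\mc S_{\mc J}) = \mc J$.} This is the main obstacle. The inclusion $\mc J \subseteq \Ann(\mc S_{\mc J})$ is trivial. For the reverse, suppose $z_0 \in \Ann(\mc S_{\mc J}) \setminus \mc J$.
\begin{itemize}
\item Since $\mc J$ is $\sigma(\mc M_* \eh \mc M_*)$-closed, Hahn--Banach in the duality $(\mc M_* \eh \mc M_*)^* = \mc M \otimes_{\sigma h} \mc M \supseteq \mc M \eh \mc M$ gives $u \in \mc M_* \eh \mc M_*$ with $u(\mc J) = 0$ and $u(z_0) \neq 0$.
\item Realise $u$ concretely, by a representation of $\mc M_* \eh \mc M_*$ in the spirit of Effros--Ruan, as $u(z) = \phi(\Phi_z(T))$ for some $T \in B(\mc H^{(\infty)})$ and normal $\phi$. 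The ampliation is needed here because $\mc M_* \eh \mc M_*$ is strictly larger than $\mc M_* \otimes_h \mc M_*$.
\item Use that $\mc J$ is a left ideal. Replacing $T$ by the weak$^*$-closed span of $\{\Phi_w(T) : w \in \mc M \eh \mc M\}$ and using $\Phi_{wz} = \Phi_w \Phi_z$, the condition $u(w\mc J) = 0$ for all $w$ upgrades to $\Phi_z(T) = 0$ for all $z \in \mc J$, by varying the functional $w \mapsto \phi(\Phi_w(\cdot))$.
\item Then $T \in \mc S_{\mc J}$ but $\Phi_{z_0}(T) \neq 0$. This contradicts $z_0 \in \Ann(\mc S_{\mc J})$.
\end{itemize}

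\emph{Step 4: representation independence.} The construction in Step 3 takes place on an ampliation $\mc H^{(\infty)}$, whereas the statement concerns the fixed representation $\mc M \subseteq B(\mc H)$. To close the gap, use Weaver's correspondence \cite[Theorem 2.7]{weaverqrelations} between quantum graphs in different representations, and check that it preserves $\Ann$. It suffices to verify this for ampliation and for compression by projections in $\mc M'$, both of which commute with $\Phi_z$.

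The delicate points are the exact description of $\mc M_* \eh \mc M_*$ as functionals of the form $z \mapsto \phi(\Phi_z(T))$ and the left-ideal bootstrapping in Step 3. I expect these to be the main obstacles.
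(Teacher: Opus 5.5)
The paper imports this theorem from \cite{gp-obs} without proof, so I am judging your outline on its own terms. Steps 1, 2 and 4 are fine. Step 2 is exactly the description of $\mc S$ by pairs $(P,Q)$ with $P\odot Q$ entrywise in $\Ann(\mc S)$, which the paper uses in Proposition~\ref{prop: cp-morph-qmorph}. Your concrete form of $u$ in Step 3 is \cite[Lemma 2.3]{gp-obs}, quoted in the proof of Lemma~\ref{lem: inv-cty}.

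\textbf{The gap is the third bullet of Step 3.} From $u(w\mc J)=0$ you only get $\phi(\Phi_w(\Phi_z(T)))=0$ for all $w$. That says $\Phi_z(T)$ is killed by the functionals $\phi\circ\Phi_w$, but these do not separate points, so $\Phi_z(T)=0$ does not follow. Replacing $T$ by the weak$^*$-closed span of its orbit $\{\Phi_w(T)\}$ goes in the wrong direction. Indeed $\Phi_z\Phi_w=\Phi_{zw}$, and $zw\notin\mc J$ in general because $\mc J$ is only a left ideal.

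Here is a concrete failure. Take $\mc M=\ell^\infty(\{1,2\})\subset B(\C^2)$, $e_i=|i\>\<i|$, and the left ideal $\mc J=\C\,e_1\otimes e_2$, so that $\mc S_{\mc J}=\{T:\<1|T|2\>=0\}$. Let $\phi=\<1|\cdot|1\>$ and let $T$ be the all-ones matrix.
\begin{itemize}
\item $u(e_1\otimes e_2)=\<1|e_1Te_2|1\>=0$, so $u$ vanishes on $\mc J$.
\item Yet $\Phi_{e_1\otimes e_2}(T)=|1\>\<2|\neq0$.
\item The orbit of $T$ spans all of $M_2$, which is not contained in $\mc S_{\mc J}$.
\end{itemize}

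\textbf{The missing idea} is to compress $T$ by support projections in the commutant, dually to Step 2. I suppress the ampliation in what follows. After enlarging the multiplicity, write $\phi=\<\xi|\cdot\,\eta\>$ and set $P=[\mc M\xi]$, $Q=[\mc M\eta]$. These lie in the ampliation of $\mc M'$, not in $\mc M$ as the projections of Step 2 did. Testing $u(\mc J)=0$ only against $w=a\otimes b$ with $a,b\in\mc M$ gives $P\Phi_z(T)Q=0$ for $z\in\mc J$. The $\mc M'$-bimodularity of $\Phi_z$ then yields
\[ \Phi_z(PTQ)=P\Phi_z(T)Q=0 \quad (z\in\mc J), \qquad u(z)=\<P\xi|\Phi_z(T)Q\eta\>=\phi(\Phi_z(PTQ)) \quad (z\in\mc M\eh\mc M). \]
So every matrix entry of $PTQ$ lies in $\mc S_{\mc J}$. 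Hence $\Phi_{z_0}(PTQ)=0$, so $u(z_0)=0$, which is the desired contradiction. In the example above, $P=Q=e_1$ and $PTQ=e_1\in\mc S_{\mc J}$.

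With this replacement your outline becomes a complete proof. Also, Step 4 does not need the full \cite[Theorem 2.7]{weaverqrelations}. It only needs the entrywise fact: an operator on the ampliation is killed by all $\Phi_z$, $z\in\mc J$, iff each of its entries lies in $\mc S_{\mc J}$, and $\Phi_{z_0}$ acts entrywise.
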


\subsection{Properties of Quantum Graphs} \label{subsec: qgraph properties}

In this subsection we establish some properties of quantum graphs (namely, reflexivity, symmetry, and connectivity) that are easily detected in their annihilators. The following lemma is surely known (or obvious) to experts, but we explicitly record it here to make sense of the involution $\ddagger$ on $\eh$. (Recall $(x \otimes y)^\ddagger := y^* \otimes x^*$ on simple tensors.)

\begin{lemma} \label{lem: inv-cty}
    The involution on $\mc M \eh \mc M$ extending 
    \[
    (x \otimes y)^\ddagger  = y^* \otimes x^*
    \]
    is $\sigma(\mc M_* \otimes_{h} \mc M_*)$-continuous and $\sigma(\mc M_* \otimes_{eh} \mc M_*)$-continuous.
\end{lemma}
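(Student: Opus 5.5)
The approach is to realize $\ddagger$ as the adjoint (transpose) of an isometric, conjugate-linear involution on each predual, $\mc M_* \otimes_h \mc M_*$ and $\mc M_* \eh \mc M_*$. The adjoint of a bounded map between preduals is automatically continuous for the corresponding weak$^*$ topologies.

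\textbf{Predual involution.} For $\phi \in \mc M_*$ set $\phi^*(x) := \overline{\phi(x^*)}$, which is again normal. On elementary tensors define
\[
(\phi \otimes \psi)^\flat := \psi^* \otimes \phi^*.
\]
For $x\otimes y \in \mc M \otimes \mc M$ this gives
\[
\langle (x\otimes y)^\ddagger, \phi\otimes\psi\rangle = \phi(y^*)\psi(x^*) = \overline{\psi^*(x)\,\phi^*(y)} = \overline{\langle x\otimes y, (\phi\otimes\psi)^\flat\rangle}.
\]
So on algebraic tensors $\ddagger$ is the conjugate-transpose of $\flat$.

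\textbf{Boundedness of $\flat$.} This is the main obstacle: the Haagerup norm is not symmetric, so the flip alone is not bounded. The key point is that flip combined with the adjoint operation is isometric. Concretely, $\|\sum_i x_i\otimes y_i\|_h$ is computed from $\|\sum x_ix_i^*\|$ and $\|\sum y_i^*y_i\|$. For $\sum y_i^*\otimes x_i^*$ the row/column roles swap: $\sum (y_i^*)(y_i^*)^* = \sum y_i^* y_i$ and $\sum (x_i^*)^*x_i^* = \sum x_i x_i^*$. So the norm is preserved, including at matrix levels after applying the conjugate transpose to the matrix indices.

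On the predual side I would instead use the operator space structure. The map $\phi\mapsto\phi^*$ is a complete isometry $\mc M_* \to \overline{\mc M_*}^{\,op}$-type. The standard identity $X\otimes_h Y \cong (\overline{Y}^{op} \otimes_h \overline{X}^{op})$ via $x\otimes y\mapsto \bar y\otimes\bar x$ (see \cite[Chapter 5]{intro-op-sp-pisier}) then shows $\flat$ is an isometry of $\mc M_*\otimes_h\mc M_*$. Since $\mc M_*\otimes_h\mc M_*$ is a norm completion of the algebraic tensor product, $\flat$ extends.

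For $\eh$ I would run the same argument with the infinite-matrix representations of Definition \ref{defn: eh}. A representation $x_1\odot_I x_2$ is sent to $(x_2^{*})^{t}\odot_I (x_1^{*})^{t}$, with the same norms, so $\flat$ is also isometric on $\mc M_*\eh\mc M_*$. Here one must check that truncations and SOT-limits commute with the entrywise adjoint-transpose, which they do because truncation by $P(F)$ is preserved.

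\textbf{Conclusion.} The (conjugate) adjoint $\flat^*$ is weak$^*$ continuous on $(\mc M_*\otimes_h\mc M_*)^* = \mc M\eh\mc M$ (Remark \ref{rmk: predual-tops}(1)) and on $(\mc M_*\eh\mc M_*)^*=\mc M\otimes_{\sigma h}\mc M$. Both agree with $\ddagger$ on $\mc M\otimes\mc M$.

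By Corollary \ref{cor: alg-eh-density}, $\mc M\otimes\mc M$ is $\sigma(\mc M_*\otimes_h\mc M_*)$-dense in $\mc M\eh\mc M$, so $\ddagger=\flat^*$ there. This gives well-definedness and $\sigma(\mc M_*\otimes_h\mc M_*)$-continuity.

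For the finer topology, the restriction of the weak$^*$-continuous $\flat^*$ on $\mc M\otimes_{\sigma h}\mc M$ to the subspace $\mc M\eh\mc M$ agrees with $\ddagger$. The two coincide on the dense algebraic tensors and both are $\sigma(\mc M_*\otimes_h\mc M_*)$-continuous on $\mc M\eh\mc M$, because $\flat$ maps $\mc M_*\otimes_h\mc M_*$ into itself inside $\mc M_*\eh\mc M_*$. This gives $\sigma(\mc M_*\eh\mc M_*)$-continuity.
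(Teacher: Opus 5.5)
Your $\sigma(\mc M_*\h\mc M_*)$ half is sound, and it takes a different route from the paper. You build a conjugate-linear isometric involution $\flat$ on the predual from $\overline{\mc M_*}\cong\mc M_*^{op}$ and the flip, and realise $\ddagger$ as its adjoint on $(\mc M_*\h\mc M_*)^*=\mc M\eh\mc M$. Extending $\flat$ to $\mc M_*\eh\mc M_*$ and taking its weak$^*$-continuous adjoint $\flat^*$ on $\mc M\otimes_{\sigma h}\mc M$ is also fine.

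The gap is the last step. Write $j:\mc M\eh\mc M\hookrightarrow\mc M\otimes_{\sigma h}\mc M$ for the embedding and $\iota^*$ for the projection dual to $\iota:\mc M_*\h\mc M_*\hookrightarrow\mc M_*\eh\mc M_*$. You need $\flat^*\circ j=j\circ\ddagger$, i.e.\ that $\flat^*$ leaves $j(\mc M\eh\mc M)$ invariant. This is equivalent to the $\sigma(\mc M_*\eh\mc M_*)$-continuity you want, and your argument does not establish it:
\begin{itemize}
\item Calling $\flat^*|_{\mc M\eh\mc M}$ ``$\sigma(\mc M_*\h\mc M_*)$-continuous on $\mc M\eh\mc M$'' already presupposes the invariance.
\item The fact that $\flat$ preserves $\mc M_*\h\mc M_*$ only gives $\iota^*\circ\flat^*\circ j=\ddagger$, i.e.\ agreement modulo $\ker\iota^*=(\mc M_*\h\mc M_*)^\perp$.
\item Density cannot close this. $\mc M\otimes\mc M$ is $\sigma(\mc M_*\h\mc M_*)$-dense, but $j$ is not $\sigma(\mc M_*\h\mc M_*)$-to-$\sigma(\mc M_*\eh\mc M_*)$ continuous (Example \ref{ex: non-example graph}). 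So agreement on algebraic tensors does not propagate to all of $\mc M\eh\mc M$.
\item Using $\sigma(\mc M_*\eh\mc M_*)$-density instead would need the continuity of $j\circ\ddagger$, which is what you are proving.
\end{itemize}

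What is missing is the identity $\langle z^\ddagger,\omega\rangle=\overline{\langle z,\omega^\flat\rangle}$ for every $z\in\mc M\eh\mc M$ and $\omega\in\mc M_*\eh\mc M_*$, with the pairing coming from $j$. You can get it from representations, as follows.
\begin{itemize}
\item Take $z=[x_i]_i\odot[y_i]_i$ with a bounded row and a bounded column. Its truncations $z_F=\sum_{i\in F}x_i\otimes y_i$ converge to $z$ in $\sigma(\mc M_*\eh\mc M_*)$.
\item Against $\omega=[\phi_k]\odot[\psi_k]$ the pairing is $\sum_{i,k}\phi_k(x_i)\psi_k(y_i)$. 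This is absolutely summable because $[\phi_k(x_i)]$ and $[\psi_k(y_i)]$ lie in $\ell^2(I\times K)$.
\item Concretely, $\sum_{i\in F}x_iBy_i\to\Phi_z(B)$ strongly for every $B$, also after amplification by $B(\ell^2)$.
\item The elements $(z_F)^\ddagger$ are exactly the truncations of the representation $[y_i^*]_i\odot[x_i^*]_i$ of $z^\ddagger$. So the identity, which holds on algebraic tensors, passes to the limit along this particular net.
\end{itemize}

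The paper avoids this detour. By \cite[Lemma 2.3]{gp-obs}, every functional in $\mc M_*\eh\mc M_*$ has the form $z\mapsto\Tr(T^*(I\otimes\Phi_z)(B))$ with $B\in B(\ell^2\otimes_2\mc H)$. Normality of $\Phi_z$ gives $(I\otimes\Phi_{z^\ddagger})(B)=((I\otimes\Phi_z)(B^*))^*$. So $\ddagger$ is visibly the transpose of $(T,B)\mapsto(T^*,B^*)$ on these functionals, and restricting $B$ to be compact gives the $\sigma(\mc M_*\h\mc M_*)$ case by the same computation.
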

\begin{proof}
    We will represent $\mc M \subset B(\mc H)$ and only prove $\sigma(\mc M_* \otimes_{eh} \mc M_*)$-continuity since $\sigma(\mc M_* \otimes_{h} \mc M_*)$-continuity follows the same proof by replacing the operator $B$ below by a compact operator. By definition, a net $(z_\lambda)_{\lambda \in \Lambda}$ in $\mc M \eh \mc M$ converges to $z \in \mc M \eh \mc M$ in the $\sigma(\mc M_* \eh \mc M_*)$-topology if and only if for every $\phi \in \mc M_* \eh \mc M_*$ we have
    \[
    \phi(z_\lambda - z) \to 0.
    \]
    From \cite[Lemma 2.3]{gp-obs}, every linear functional $\phi \in \mc M_* \otimes_{eh} \mc M_*$ there exists a unique $B \in B(\ell^2 \otimes_2 \mc H)$ and a trace class operator $T \in B(\ell^2 \otimes_2 \mc H)_*$ such that for $z \in \mc M \eh \mc M$,
    \[
    \phi(z) = Tr_{\ell^2 \otimes_2 \mc H} (T^*(I_{\ell^2} \otimes \Phi(z))(B)).
    \]
    Since $Tr_{\ell^2 \otimes_2 \mc H}(S^*) = \overline{Tr_{\ell^2 \otimes_2 \mc H}(S)}$,
    \[
    \Tr(T^*(Id \otimes \Phi(z_\lambda - z))(B))\to 0 \iff \Tr(T(Id \otimes \Phi(z_\lambda^\ddagger - z^\ddagger))(B^*))\to 0.
    \]
    Since the trace class operators $B(\ell^2 \otimes_2 \mc H)$ and $B(\ell^2 \otimes_2 \mc H)$ are closed under involution, the latter statement suffices to show that $z_\lambda^\ddagger \to z^\ddagger$ in the $\sigma(\mc M_* \eh \mc M_*)$ topology.
\end{proof}

\cite[Corollary 5.13]{daws} provides the finite dimensional version of the proposition below, but note that the correspondence that the author of \cite{daws} uses between quantum graphs and projections is order-preserving while the correspondence between quantum graphs and  annihilators is order-reversing. To be precise, the projection $e$ in \cite[Corollary 5.13]{daws} is related to the annihilator by way of the equality
\[
\Ann(\mc S) = (\mc M \otimes_{eh} \mc M)(1-e).
\]

\begin{proposition} \label{prop: qr-to-ann}
    Let $\mc S$ be a quantum graph on a von Neumann algebra $\mc M \subseteq B(\mc H)$, and let $\Ann(\mc S)$ be its annihilator in $\mc M \otimes_{eh} \mc M$. The following properties of $\mc S$ are reflected in properties of its annihilator:
    \begin{enumerate}
        \item $\mc S$ is reflexive quantum graph if and only if for every $z \in \Ann(\mc S)$ we have $m(z) = 0$.
        \item $\mc S$ is symmetric quantum graph if and only if $\Ann(\mc S)$ is closed under the involution $(x \otimes y)^\ddagger = y^* \otimes x^*$.
    \end{enumerate}
\end{proposition}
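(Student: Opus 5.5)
Both parts rest on two facts from the bijection in Theorem \ref{thm: gp-bijection}: for any $\mc M'$-bimodule $\mc S$ and any $T \in B(\mc H)$, $T \in \mc S$ iff $\Phi_z(T)=0$ for all $z \in \Ann(\mc S)$, and every $\sigma(\mc M_*\eh\mc M_*)$-closed left ideal is $\Ann$ of a unique quantum graph. We will also use that each $\Phi_z$ is a normal $\mc M'$-bimodule map.

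For (1), the key identity is $\Phi_z(a) = a\, m(z)$ for $a \in \mc M'$. On elementary tensors, $\Phi_{x\otimes y}(a) = xay = axy = a\,m(x\otimes y)$. Both sides are $\sigma(\mc M_*\eh\mc M_*)$-continuous in $z$: the left because evaluation at a fixed $a\in B(\mc H)$ followed by a normal functional lies in the predual, the right by Corollary \ref{cor: sigma-mult} together with the inclusion $\mc M\eh\mc M\subset \mc M\otimes_{\sigma h}\mc M$ of Remark \ref{rmk: predual-tops}. Since the algebraic tensor product is dense in this topology, the identity holds for all $z$. In particular $\Phi_z(1)=m(z)$.

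\emph{Forward direction.} If $\mc M'\subset\mc S$, then $1\in\mc S$, so $m(z)=\Phi_z(1)=0$ for all $z\in\Ann(\mc S)$.

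\emph{Converse.} If $m(z)=0$ on $\Ann(\mc S)$, then $\Phi_z(a)=a\,m(z)=0$ for every $a\in\mc M'$. Hence $\mc M'$ is annihilated by $\Ann(\mc S)$, and so $\mc M'\subset\mc S$.

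For (2), the key identity is $\Phi_{z^\ddagger}(T)=\Phi_z(T^*)^*$. It holds on elementary tensors, since $(xT^*y)^*=y^*Tx^*$, and it extends to all $z$ by the same density and continuity argument. The continuity of $\ddagger$ needed here is Lemma \ref{lem: inv-cty}; on the right side, $z\mapsto \Phi_z(T^*)^*$ is continuous because the adjoint is weak$^*$ continuous.

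\emph{Forward direction.} If $\mc S^*=\mc S$, $z\in\Ann(\mc S)$ and $T\in\mc S$, then $\Phi_{z^\ddagger}(T)=\Phi_z(T^*)^*=0$. So $\Ann(\mc S)^\ddagger\subset\Ann(\mc S)$.

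\emph{Converse.} If $\Ann(\mc S)$ is $\ddagger$-closed and $T\in\mc S$, then for every $z\in\Ann(\mc S)$ we get $\Phi_z(T^*)=\Phi_{z^\ddagger}(T)^*=0$, using $z^\ddagger\in\Ann(\mc S)$. Hence $T^*\in\mc S$ by the first fact above. (Note that $\ddagger$ is an anti-automorphism, so $\Ann(\mc S)^\ddagger$ is a right ideal. This does not matter, because we only use pointwise annihilation and never need $\Ann(\mc S)^\ddagger$ to be a left ideal.)

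The main obstacle is the density and continuity step that extends the two identities from elementary tensors to all of $\mc M\eh\mc M$. The point needing care is that $m$ is continuous only for $\sigma(\mc M_*\eh\mc M_*)$ and not for the coarser $\sigma(\mc M_*\otimes_h\mc M_*)$. For this reason I will use weak representations $z=\sum_i x_i\otimes y_i$ directly instead of a density argument. With a weak representation, $\Phi_z(a)=\sum_i x_i a y_i$ converges weak$^*$. The $\mc M'$-bimodularity of $\Phi_z$ from Theorem \ref{thm: m-haagerups} gives $\Phi_z(a)=a\,\Phi_z(1)$ directly, and $\Phi_z(1)=\sum_i x_i y_i=m(z)$.
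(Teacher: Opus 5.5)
Your proposal is correct and follows essentially the paper's argument. The paper also reduces (1) to $\Phi_z(a)=a\,\Phi_z(1)=a\,m(z)$ for $a\in\mc M'$, and (2) to $\Phi_{z^\ddagger}(T)=\Phi_z(T^*)^*$ via weak representations; you additionally make explicit the double-annihilator step ($T\in\mc S$ iff every $z\in\Ann(\mc S)$ kills $T$, from injectivity of the bijection), which the paper leaves implicit. One harmless aside is wrong: since $(zw)^\ddagger=z^\ddagger w^\ddagger$, the map $\ddagger$ is a conjugate-linear automorphism, not an anti-automorphism, so $\Ann(\mc S)^\ddagger=\Ann(\mc S^*)$ is again a left ideal.
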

\begin{proof}
    For (1): Since $\mc M \eh \mc M$ embeds completely isometrically into $\mc M \otimes_{\sigma h} \mc M$ (Remark \ref{rmk: predual-tops}), the multiplication map $m: \mc M \otimes_{\sigma h} \mc M \to \mc M$ from Corollary \ref{cor: sigma-mult} is also defined on $\mc M \eh \mc M$. We have that $z \in \Ann(\mc M')$ if and only if for all $T \in \mc M'$
    \[
    0 = \Phi_z(T) = \sum_i x_iTy_i = T \sum_i x_i 1 y_i = T\Phi_z(1).
    \]
    Since $1 \in \mc M'$, we have
    \[
    \sum_i x_i y_i = \Phi_z(1) = 0 \iff z \in \Ann(\mc M').
    \]
    For (2): From the definition of a symmetric quantum graph, $T \in \mc S$ if and only if $z = \sum_i x_i \otimes y_i \in \Ann(\mc S)$ annihilates $T^*$ as well:
    \[
    \Phi_z(T^*) = \sum_i x_i T^* y_i = 0
    \]
    where the partial sums are uniformly bounded and converging in the WOT topology \cite[Theorem 2.2]{blecher-smith-w*h}. Hence the last equality holds if and only if 
    \[
    \sum_i y_i^* T x_i^* = 0.
    \]
    That is, $\sum_i y_i^*\otimes x_i^* \in \Ann(\mc S)$. 
\end{proof}

\begin{remark}
    The astute reader may notice we are missing a characterization of one of the key properties of quantum graphs established in Definition \ref{defn: qgraph properties}: transitivity. This has proven to be a particularly thorny problem. See Question \ref{question: transitivity}.
\end{remark}

A recent paper \cite{connected-qgraphs} has established a unifying notion of connectivity for quantum graphs in the case where $\mc M$ is finite dimensional in terms of adjacency operators. To avoid introducing these operators, we instead present two of the equivalent notions to point a way to generalize connectedness to the infinite dimensional case.

\begin{theorem} \protect{\cite[Theorem 3.4]{connected-qgraphs}}
    A quantum graph $\mc S$ on $\mc M \subset B(\mc H)$ for a finite dimensional von Neumann algebra $\mc M$ is \textit{strongly connected} if and only if there does not exist a graph homomorphism in the sense of \cite[Proposition 5.3]{musto-reutter-verdon} from $\mc S$ to $T_2$, where $T_2$ is the classical totally disconnected graph on two vertices. 
\end{theorem}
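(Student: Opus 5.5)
The plan is to translate both sides of the equivalence into a statement about projections $p\in\mc M$. On one side we use the dictionary between Definition \ref{defn: string morph} and Definition \ref{defn: qgraph morph} furnished by Proposition \ref{thm: mrv-ann}. On the other side we use the description of strong connectivity in \cite{connected-qgraphs}. I read that description as: the adjacency operator admits no nontrivial ``invariant block''. Equivalently, there is no projection $p\in\mc M$ with $p\neq 0,1$ such that $p\mc S(1-p)=0=(1-p)\mc S p$.

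I also read the statement as excluding the trivial homomorphisms, i.e.\ those that are not surjective on vertices. The constant maps onto a single vertex of $T_2$ always exist, so without this restriction the equivalence would fail.

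First I will identify morphisms into $T_2$. We have $T_2=(\ell^\infty(\{1,2\}),\mc S_{T_2})$, where $\mc S_{T_2}=\ell^\infty(\{1,2\})$ is the diagonal (reflexive, with no edges between distinct vertices). By Example \ref{ex: eh-ann},
\[
\Ann(\mc S_{T_2})=\Span\{\chi_1\otimes\chi_2,\ \chi_2\otimes\chi_1\}.
\]
A normal unital $*$-homomorphism $\theta:\ell^\infty(\{1,2\})\to\mc M$ is the same thing as a projection $p=\theta(\chi_1)\in\mc M$, with $\theta(\chi_2)=1-p$. The homomorphism is surjective on vertices exactly when $p\neq0,1$. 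By Proposition \ref{thm: mrv-ann}, $\theta$ induces a morphism $T_2\leftarrow(\mc S,\mc M)$ if and only if $\theta\otimes\theta(\Ann(\mc S_{T_2}))\subseteq\Ann(\mc S)$. Since $\Ann(\mc S)$ is a left ideal and $\Ann(\mc S_{T_2})$ is spanned by the two elementary tensors, this reduces to
\[
p\otimes(1-p)\in\Ann(\mc S)\quad\text{and}\quad(1-p)\otimes p\in\Ann(\mc S).
\]
By Theorem \ref{thm: gp-bijection} and the definition of $\Phi$, these conditions say exactly that $p\mc S(1-p)=0$ and $(1-p)\mc S p=0$.

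Second, I will match this with the definition of strong connectivity. In \cite{connected-qgraphs} strong connectivity is phrased through the quantum adjacency operator $A$. The translation I would use goes through the projector $P$ of Definition \ref{defn: projector} and the projection $e\in\mc M\eh\mc M$ of Lemma \ref{lem: proj-proj}, which satisfies $\Ann(\mc S)=(\mc M\eh\mc M)(1-e)$ by Corollary \ref{cor: proj-ann}. For a projection $p\in\mc M$, I will check that the condition ``$A$ maps $p\mc M$-type blocks into themselves'' (the disconnection criterion) is equivalent to $(p\otimes(1-p))e=0=((1-p)\otimes p)e$. This in turn is equivalent to $p\otimes(1-p)\in\Ann(\mc S)$, and likewise for $(1-p)\otimes p$. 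Combining with the first step, a nontrivial projection witnessing disconnection exists if and only if a nontrivial morphism to $T_2$ exists, which is the contrapositive of the theorem.

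I expect the main obstacle to be this second step. The notion of connectivity in \cite{connected-qgraphs} is stated for adjacency operators, possibly only with $p$ required to be central or to satisfy an $A$-invariance condition. Converting it faithfully into the bimodule condition $p\mc S(1-p)=0$ requires careful string-diagram bookkeeping, namely unwinding $P$ and the snake equation \eqref{eqn: snake}. My first attempt will be to compute directly in the finite-dimensional identification $\mc M\eh\mc M=\mc M\otimes\mc M^{op}$. I will use the fact that $\Phi_e$ is the projection onto $\mc S$, which makes the block condition equivalent to $\Phi_{p\otimes(1-p)}\circ\Phi_e=0$.
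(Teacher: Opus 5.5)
Your first step is correct. The paper does not prove this cited theorem, but for the Proposition following Definition~\ref{defn: connected} it uses the same argument you do. A $\mathbf{W^*}$ morphism $\ell^\infty(\{1,2\})\to\mc M$ is identified with a projection $p$, and $\Ann(T_2)=\Span\{\chi_1\otimes\chi_2,\chi_2\otimes\chi_1\}$, so a morphism exists iff $p\mc S(1-p)=0=(1-p)\mc S p$. You are also right that the constant morphisms $p\in\{0,1\}$ must be excluded; the paper builds this in through the clause ``$\theta(\chi_1)=0$ or $\theta(\chi_2)=0$''.

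The gap is that everything then rests on your reading of strong connectivity as that two-sided corner condition. You leave this as ``I will check'', but that identification is the actual content of the cited result. Your hedge that $p$ might range only over central projections would break it. On $\mc M=M_2$, the trivial graph $\mc S=\C I_{\C^2}$ (i.e.\ $A=I_{\mc M}$) has no nontrivial central projections, yet every rank-one $p$ gives a nontrivial morphism to $T_2$.

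Two ideas are missing.

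First, realize the adjacency operator in Kraus form $A(x)=\sum_i K_ixK_i^*$ with the $K_i$ spanning $\mc S$ (up to a transpose convention). Then, exactly as in Lemma~\ref{lemma: positivity-weaver},
\[
(1-p)A(p)(1-p)=\sum_i \bigl((1-p)K_ip\bigr)\bigl((1-p)K_ip\bigr)^* .
\]
Since $A(p)\ge 0$, the condition $A(p)\le\lambda p$ for some $\lambda$ is equivalent to the \emph{one-sided} condition $(1-p)\mc S p=0$. So irreducibility of $A$ (no nontrivial projection $p$ with $A(p)\le\lambda p$) is the absence of a nontrivial such $p$. This is precisely the paper's Definition~\ref{defn: connected}, namely $(1-p)\otimes p\in\Ann(\mc S)$. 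No projector or snake-equation bookkeeping is needed.

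Second, you must use $A=A^\dagger$, i.e.\ $\mc S=\mc S^*$, which is built into the definition of a quantum adjacency operator. This is what lets you pass from $(1-p)\mc S p=0$ to the two-sided condition your first step produces. It is not optional. Take the classical directed graph on $\{1,2\}$ with both loops and the single edge $(1,2)$. With $p=\chi_1$ we get $\chi_2\otimes\chi_1\in\Ann(\mc S)$, so the graph is not strongly connected. Yet it admits no nontrivial morphism to $T_2$, because $\chi_1\otimes\chi_2\notin\Ann(\mc S)$.
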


\begin{proposition} \protect{\cite[Proposition 3.7]{connected-qgraphs}}
    A quantum graph $\mc S$ on $\mc M \subset B(\mc H)$ is \textit{strongly connected} for finite dimensional $\mc M$ if and only if the algebra generated by $\mc S$ is equal to $B(\mc H)$. 
\end{proposition}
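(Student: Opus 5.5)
The plan is to translate both conditions into statements about projections in $\mc M$ commuting with $\mc S$, and then close the gap with Burnside's theorem. Throughout, $\mc M$ is finite dimensional. I assume $\mc S$ is reflexive and symmetric, which is the setting of \cite{connected-qgraphs} and of Weaver's convention. I also read ``graph homomorphism to $T_2$'' as one induced by a \emph{nontrivial} unital $*$-homomorphism, that is, one not factoring through a single vertex.

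\textbf{Step 1: unpack the homomorphism condition.} By Proposition \ref{thm: mrv-ann}, a homomorphism $\mc S \to T_2$ is the same as a morphism in the sense of Definition \ref{defn: qgraph morph}. So it is induced by a unital $*$-homomorphism $\theta: \ell^\infty(\{1,2\}) \to \mc M$, which is determined by the projection $e := \theta(\chi_1) \in \mc M$, with $\theta(\chi_2) = 1-e$. By Example \ref{ex: eh-ann},
\[
\Ann(T_2) = \Span\{\chi_1 \otimes \chi_2,\ \chi_2 \otimes \chi_1\}.
\]
Since $\Ann(\mc S)$ is a left ideal, the condition $\theta\otimes\theta(\Ann(T_2)) \subseteq \Ann(\mc S)$ becomes
\[
e\,T\,(1-e) = 0 = (1-e)\,T\,e \qquad \forall T \in \mc S,
\]
which says exactly that $e$ commutes with $\mc S$. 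The trivial choices $e \in \{0,1\}$ always satisfy this. Hence $\mc S$ fails to be strongly connected if and only if $\mc M \cap \mc S'$ contains a projection $e \neq 0,1$.

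\textbf{Step 2: relate to the generated algebra.} Let $\mc A$ be the algebra generated by $\mc S$. Reflexivity gives $1 \in \mc M' \subseteq \mc S$, so $\mc A$ is unital. Symmetry gives $\mc A^* = \mc A$, so $\mc A$ is a finite-dimensional $C^*$-subalgebra of $B(\mc H)$ (here $\mc H$ is finite dimensional in this setting).

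If $\mc A \neq B(\mc H)$, Burnside's theorem (or the double commutant theorem, $\mc A = \mc A''$) gives a nontrivial projection $e \in \mc A' = \mc S'$. Since $\mc M' \subseteq \mc S$, we have $\mc S' \subseteq \mc M'' = \mc M$. So $e \in \mc M \cap \mc S'$ is nontrivial, and by Step 1 it yields a homomorphism to $T_2$.

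Conversely, suppose such a nontrivial $e$ exists. Then $e$ commutes with every element of $\mc A$, so $\mc A \subseteq \{e\}' \subsetneq B(\mc H)$.

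\textbf{Main obstacle.} I expect the delicate point to be the role of the hypotheses rather than any computation. Without symmetry, $\mc S'$ need not be self-adjoint, and an $\mc A$-invariant subspace need not be reducing. Then Burnside only gives a projection $e$ with $(1-e)\mc S e = 0$, not both vanishing conditions of Step 1. Similarly, without reflexivity the inclusion $\mc S' \subseteq \mc M$ fails. So I would first check the precise hypotheses of \cite{connected-qgraphs}. If they allow non-symmetric graphs, I would replace $\mc S$ by $\mc S + \mc S^*$ and check that both the homomorphism condition and the generated algebra are unchanged under this replacement.
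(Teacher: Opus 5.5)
The paper does not prove this statement; it quotes it from the connectivity reference without proof. The closest argument in the paper is the Remark after the characterization of strong connectedness via morphisms to $T_2$. That Remark uses exactly your Step~2 idea: a nontrivial projection in the commutant of the algebra generated by $\mc S$ lies in $\mc M$ because $\mc M'\subseteq\mc S$, and such a projection witnesses disconnectedness. For reflexive, symmetric $\mc S$ on finite-dimensional $\mc H$, your proof is correct and follows that route. In Step~1 you only need linearity of $\theta\otimes\theta$, not the left-ideal property of $\Ann(\mc S)$.

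The part that would not work is your fallback for non-symmetric graphs. Replacing $\mc S$ by $\mc S+\mc S^*$ changes the generated algebra. Take $\mc M=\ell^\infty(\{1,2\})$ acting diagonally on $\C^2$ and $\mc S=\Span\{|1\rangle\langle 1|,\,|2\rangle\langle 2|,\,|1\rangle\langle 2|\}$, the directed edge $1\to 2$ with loops. This is a reflexive quantum graph and is already an algebra (the upper triangular matrices), so it does not generate $B(\C^2)$. On the other hand, $\mc S+\mc S^*=B(\C^2)$.

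The example also shows the equivalence fails for the reading your Step~1 uses. Here $\mc M\cap\mc S'=\C 1$, so there is no nontrivial morphism to $T_2$. Thus the two-sided condition of Step~1 (no nontrivial projection of $\mc M$ commuting with $\mc S$) does not force $\mc S$ to generate $B(\mc H)$ once symmetry is dropped, and symmetrizing cannot fix that.

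For non-symmetric graphs, the statement should be read against the one-sided Definition~\ref{defn: connected}, and the proof is a small variant of yours:
\begin{enumerate}
\item If $\mc A\neq B(\mc H)$, Burnside's theorem gives a nontrivial $\mc A$-invariant subspace $K$.
\item Since $\mc M'\subseteq\mc A$ and $\mc M'$ is self-adjoint, $K$ reduces $\mc M'$. Its projection $p$ therefore lies in $\mc M''=\mc M$.
\item Invariance of $K$ gives $(1-p)\mc S p=0$, i.e.\ $(1-p)\otimes p\in\Ann(\mc S)$. In the example, $p=|1\rangle\langle 1|$.
\item Conversely, such a nontrivial $p$ makes $p\mc H$ invariant under $\mc A$, so $\mc A\neq B(\mc H)$.
\end{enumerate}
So reflexivity is the hypothesis you genuinely need. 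Symmetry only serves to identify the one-sided and two-sided conditions.
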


The following definition is equivalent to the definition of connectedness given by \cite[Definition 3.2]{jun-alg-connect} in the finite dimensional case for a symmetric, reflexive quantum graph.

\begin{definition} \label{defn: connected}
    A quantum graph $\mc S$ on $\mc M \subset B(\mc H)$ is \textit{strongly connected} if there does not exist a non-trivial projection $p \in \mc M$ such that 
    \[
    (1-p) \otimes p \in \Ann(\mc S).
    \]
\end{definition}

\begin{proposition}
    A quantum graph $\mc S$ on $\mc M \subset B(\mc H)$ is \textit{strongly connected} if and only if for every $\mathbf{W^*}$ morphism $\ell^\infty(\{1,2\}) \overset{\theta}{\to} \mc M$ such that 
     \[
     (\theta \otimes \theta )(\Ann(T_2)) \subseteq \Ann(\mc S)
     \]
    either $\theta(\chi_1) = 0$ or $\theta(\chi_2) = 0$.
\end{proposition}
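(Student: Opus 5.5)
The plan is to translate both sides into statements about the projection $p := \theta(\chi_1)$, and then compare them directly with Definition \ref{defn: connected}.

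First, $\mathbf{W^*}$ morphisms $\ell^\infty(\{1,2\}) \overset{\theta}{\to} \mc M$ are exactly the unital normal $*$-homomorphisms. Such a map is determined by the projection $p := \theta(\chi_1)$, with $\theta(\chi_2) = 1-p$. Conversely, every projection $p \in \mc M$ gives such a $\theta$. The condition ``$\theta(\chi_1)=0$ or $\theta(\chi_2)=0$'' then reads ``$p$ is trivial,'' i.e.\ $p \in \{0,1\}$.

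Next, compute $\Ann(T_2)$. The graph $T_2$ is the totally disconnected classical graph on two vertices. Whether or not loops are included, Example \ref{ex: eh-ann} shows that $\Ann(T_2)$ contains $\chi_1 \otimes \chi_2$ and $\chi_2 \otimes \chi_1$. If $T_2$ has no loops at all, it also contains $\chi_1\otimes\chi_1$ and $\chi_2\otimes\chi_2$. I take the convention where $T_2$ carries loops, so it is reflexive as Weaver's graphs are. Then $\Ann(T_2) = \Span\{\chi_1\otimes\chi_2, \chi_2\otimes\chi_1\}$, and this space is finite dimensional, so $\eh$ agrees with the algebraic tensor product. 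By linearity, the hypothesis $(\theta\otimes\theta)(\Ann(T_2)) \subseteq \Ann(\mc S)$ is equivalent to
\[
p\otimes(1-p) \in \Ann(\mc S) \quad\text{and}\quad (1-p)\otimes p \in \Ann(\mc S).
\]

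The main obstacle is the mismatch between this pair of conditions and Definition \ref{defn: connected}, which asks only for $(1-p)\otimes p \in \Ann(\mc S)$. I handle it in two directions.

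Suppose $\mc S$ is strongly connected and $\theta$ satisfies the inclusion. Then $(1-p)\otimes p \in \Ann(\mc S)$, so $p$ is trivial, and hence $\theta(\chi_1)=0$ or $\theta(\chi_2)=0$.

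Conversely, suppose $\mc S$ is not strongly connected, witnessed by a nontrivial $p$ with $(1-p)\otimes p \in \Ann(\mc S)$. I need a $\theta$ for which both tensors lie in $\Ann(\mc S)$. Choosing $\theta(\chi_1) = 1-p$ instead of $p$ only swaps which of the two tensors appears, so that alone does not help. Here I would argue that the intended reading of ``strongly connected'' ranges over all nontrivial projections. Since $1-p$ is nontrivial whenever $p$ is, the definition also forbids $p\otimes(1-p)\in\Ann(\mc S)$. Under that reading the two conditions are symmetric, and a single failure of connectedness produces the required $\theta$, possibly after replacing $p$ by $1-p$.

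If this symmetric reading is not intended (for directed graphs), I would instead take $T_2$ to be the directed graph whose complement is just the edge $(1,2)$. Then $\Ann(T_2) = \Span\{\chi_2 \otimes \chi_1\}$, up to the left-ideal structure, and the equivalence becomes exact with $\theta(\chi_1)=p$. Finally, I note that the left-ideal generation adds nothing: since $\theta\otimes\theta$ is a homomorphism (Theorem \ref{thm: eh extension}) and $\Ann(\mc S)$ is a left ideal, the inclusion reduces to the spanning tensors.
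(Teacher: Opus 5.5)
Your set-up is the paper's: identify $\theta$ with the complementary pair $\theta(\chi_1),\theta(\chi_2)$. Your computation of $\Ann(T_2)$, with loops as in the paper's convention for $T_n$, correctly shows that the inclusion $(\theta\otimes\theta)(\Ann(T_2))\subseteq\Ann(\mc S)$ says exactly that both $p\otimes(1-p)$ and $(1-p)\otimes p$ lie in $\Ann(\mc S)$, where $p=\theta(\chi_1)$. Your forward direction is correct.

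The gap is in the converse. Failure of strong connectedness gives you one nontrivial $p$ with $(1-p)\otimes p\in\Ann(\mc S)$. You need a \emph{single} projection for which both tensors lie in $\Ann(\mc S)$. The ``symmetric reading'' does not supply one. The fact that the definition also forbids $p\otimes(1-p)\in\Ann(\mc S)$ is just the definition reindexed by $p\mapsto 1-p$. It shows that the two statements ``some nontrivial $p$ has $(1-p)\otimes p\in\Ann(\mc S)$'' and ``some nontrivial $p$ has $p\otimes(1-p)\in\Ann(\mc S)$'' are equivalent. It does not show that one $p$ satisfies both. This is the same swap you had correctly dismissed a sentence earlier.

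The missing ingredient is symmetry of $\mc S$, which is the tacit standing assumption here. Definition \ref{defn: connected} is introduced as matching connectedness for symmetric, reflexive quantum graphs, and the paper's one-line proof passes over this point. If $\mc S^*=\mc S$, Proposition \ref{prop: qr-to-ann}(2) says $\Ann(\mc S)$ is closed under $\ddagger$. Since $((1-p)\otimes p)^\ddagger=p^*\otimes(1-p)^*=p\otimes(1-p)$, the witness $p$ itself yields the required nontrivial $\theta$ with $\theta(\chi_1)=p$.

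Without symmetry the converse is false, so no reading of the definition can rescue it. Take the classical graph $V=\{a,b\}$, $E=\{(a,a),(b,b),(a,b)\}$. Example \ref{ex: eh-ann} gives $\Ann(\mc S)=\C\,\chi_b\otimes\chi_a$, so $p=\chi_a$ shows $\mc S$ is not strongly connected. Yet any nontrivial $\theta$ would also need $\chi_a\otimes\chi_b\in\Ann(\mc S)$, so the right-hand side holds vacuously.

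Your fallback of replacing $T_2$ by a directed graph proves a different statement. Also, as written, the non-edge there should be $(2,1)$, not $(1,2)$, to get $\Ann(T_2)=\Span\{\chi_2\otimes\chi_1\}$.
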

\begin{proof}
    A quantum graph is not strongly connected if and only if there exists a $*$-homomorphism $\ell^\infty(\{1,2\}) \overset{\theta}{\to} \mc M$ such that $1 - p := \theta(\chi_1)$ and $p = \theta(\chi_2)$ are both non-zero and complementary projections.
\end{proof}

\begin{remark}
    In the case where $\mc S$ is a quantum graph on $\mc M$, each invariant subspace of the von Neumann algebra $W^*(\mc S)$ generated by $\mc S$ is given by a projection $p$ in the commutant $W^*(\mc S)'$. Since $\mc M' \subset \mc S$, any such non-trivial $p$ witnesses that $\mc S$ is not strongly connected. This is a generalization of \cite[Theorem 3.4(4)]{connected-qgraphs}.
\end{remark}

\begin{remark}
    Characterizing strong connectedness via morphisms is another way to define \textit{quantum independence number} of a quantum confusability graph (i.e., where $\mc S$ is genuinely the confusability graph of some quantum channel). Recall the definition:
    \begin{definition} \cite[p. 6]{dsw}
        If $\mc S$ is a quantum confusability graph on $\mc M =  B(\mc H)$ for dim$(\mc H) < \infty$, its \textit{quantum independence number} $\alpha_q(\mc S)$ is the dimension of the largest subspace $C$ of $\mc H$ such that 
        \[
        P_C \mc S P_C = \C P_C
        \]
        where $P_C$ is the projection onto $C$.
    \end{definition}
     In the terminology of \cite{connected-qgraphs}, $\alpha_q(\mc S)$ is number of (strongly) connected components of $\mc S$. Let $T_n$ be the classical graph on $n$ vertices whose only edges are self-loops, and let Mor$(\mc S, T_n)$ be the collection of ${\bf W^*}$ morphisms $\theta$ implementing a morphism from $\mc S$ to $T_n$. In the terminology of morphisms,
        \[
        \alpha_q(\mc S) = \min\{n : \theta \in \text{Mor}(\mc S, T_n) \text{ injective}\}.
        \]
    One should think of $\theta$ as the dualization of a function that picks one ``vertex'' from each (strongly) connected component for every $k \in \{1,..,n\}$.
\end{remark}

\section{Limit of Quantum Graphs} \label{sec: limit of q graphs}

We have arrived at the promised construction from the introduction. This section will construct the projective limit of quantum graphs in detail. The inductive limit can also be constructed in an analogous manner; see Remark \ref{rmk: inductive limit}. The diagrams in this section suggest that the limits are taken along $\N$, but the construction will work for any directed set $J$.

Recall the maps that led to our definition of a morphism of quantum graphs:
\begin{center}
\begin{tikzcd} [scale cd=1.1]
\mathbf{Set}  & V_1  &  & V_2 \arrow[ll, "f"']               \\
\textbf{W*}  & \mathcal M_1 \arrow[rr, "\theta"]  &  & \mathcal M_2                    \\
& \mathcal M_1 \otimes_{eh} \mathcal M_1 \arrow[rr, "\theta \otimes \theta"] &  & \mathcal M_2 \otimes_{eh} \mathcal M_2 \\
\textbf{OpSp} & \Ann(\mathcal S_1) \arrow[rr, "\theta \otimes \theta"] \arrow[u, hook]    &  & \Ann(\mathcal S_2). \arrow[u, hook]  
\end{tikzcd}
\end{center}

Suppose we have a projective system $(G_j = (V_j, E_j), V_j \overset{f^{j,k}}{\leftarrow} V_k)$ along a directed set $J$ of classical graphs. A schematic for this system is

\begin{center}
\begin{tikzcd}[scale cd=1.2]
V_1 &  & V_2 \arrow[ll, "{f^{1,2}}", bend right]           &  & V_3 \arrow[ll, "{f^{2,3}}", bend right] \arrow[llll, "{f^{1,3}}"', bend right=49]                     & \cdots \arrow[l] \\
    &  &    &  &     &    \\
    &  &       &  &    &     \\
E_1 &  & E_2 \arrow[ll, "{(f^{1,2},f^{1,2})}", bend right] &  & E_3 \arrow[ll, "{(f^{2,3},f^{2,3})}", bend right] \arrow[llll, "{(f^{1,3},f^{1,3})}"', bend right=49] & \cdots \arrow[l]
\end{tikzcd}
\end{center}
In the classical case, the projective limits of the vertices and edges will provide the projective limit of graphs. Since we have dualized the above morphisms, the \textit{projective} limit of quantum graphs will be build on the \textit{inductive} limits of von Neumann algebras (the vertices) and the annihilators of the quantum graphs (the complementary set of edges). 

Now suppose for each $j \in J$ we have a von Neumann algebra $\mc M_j$ and quantum graphs $\mc S_j$ on $\mc M_j$. Further suppose for each $j,k \in J$ with $j \leq k$ we have ${\bf{W^*}}$ morphisms $\mc M_j \overset{\theta_{j,k}} {\to} \mc M_k$ inducing quantum graph morphisms $\mc S_j \leftarrow \mc S_k$. This yields a $\bf{W^*}$ inductive limit $(\mc M, \theta_{j,\infty})$:
\begin{center}
\begin{tikzcd}
\mathcal M_1 \arrow[r, "{\theta_{1,2}}", bend left=49] \arrow[rrrr, "{\theta_{1,\infty}}" description, bend right] \arrow[rr, "{\theta_{1,3}}", bend left=60, shift left=2] & \mathcal M_2 \arrow[r, "{\theta_{2,3}}", bend left=49] \arrow[rrr, "{\theta_{2,\infty}}" description, bend right] & \mathcal M_3 \arrow[rr, "{\theta_{3, \infty}}" description, bend right] & ... &  \varinjlim_{\bf{W^*}} \mc M_j=: \mc M.
\end{tikzcd}
\end{center}
To reduce notational clutter, we will make the following abbreviations in the remainder of this paper with $j \in J$ and $k \in J \cup\{\infty\}$: 
\begin{align*}
    \theta_{j,k}^{(2)} &:= \theta_{j,k} \otimes \theta_{j,k}\\
    \theta_{j, \infty}^{(2)} &:= \theta_{j,\infty} \otimes \theta_{j,\infty}\\
    \mc M_j^{(2)} &:= \mc M_j \otimes_{eh} \mc M_j \\
    \mc N &:= \bigcup_j \theta_{j,\infty}(\mc M_j) \\
    \mc M &:= \varinjlim_{\bf W^*} \mc M_j \\
    \mc M^{(2)} &:= \mc M \otimes_{eh} \mc M \\
    \mc A &:= \bigcup_j \theta_{j,\infty}^{(2)}(\Ann(\mc S_j)) \subseteq \mc M^{(2)}.
\end{align*}
Directly from Theorem \ref{thm: eh extension} we have well-defined extensions
\[
\mc M_j^{(2)} \overset{\theta_{j,k}^{(2)} }{\longrightarrow} \mc M_k^{(2)} 
\]
making the following diagram commute.
\begin{center}
\begin{tikzcd}
\mathcal M_1 ^{(2)} \arrow[r, "{\theta_{1,2}^{(2)}}", bend left=49] \arrow[rrrr, "{\theta_{1,\infty}^{(2)}}", bend right=60] \arrow[rr, "{\theta_{1,3} ^{(2)}}", bend left=49, shift left=5] & \mathcal M_2^{(2)} \arrow[r, "{\theta_{2,3}^{(2)}}", bend left=49] \arrow[rrr, "{\theta_{2,\infty}^{(2)}}", bend right=49] & \mathcal M_3^{(2)} \arrow[rr, "{\theta_{3,\infty}^{(2)}}", bend right=49] & ... &  \mc M^{(2)}
\end{tikzcd}
\end{center}
(Note that we are not saying that $\mc M^{(2)}$ is the inductive limit of $(\mc M_k^{(2)}, \theta_{j,k})$, only specifying the domains/codomains of the maps.) Since the $\theta_{j,k}$ induce quantum graph morphisms by hypothesis, the following diagram also commutes:
\begin{center}
\begin{tikzcd}
\Ann(\mc S_1) \arrow[r, "{\theta_{1,2}^{(2)}}", bend left=49] \arrow[rrrr, "{\theta_{1,\infty}^{(2)}}", bend right=60] \arrow[rr, "{\theta_{1,3} ^{(2)}}", bend left=49, shift left=5] & \Ann(\mc S_2) \arrow[r, "{\theta_{2,3}^{(2)}}", bend left=49] \arrow[rrr, "{\theta_{2,\infty}^{(2)}}", bend right=49] & \Ann(\mc S_3) \arrow[rr, "{\theta_{3,\infty}^{(2)}}", bend right=49] & ... & \mc M^{(2)}
\end{tikzcd}
\end{center}
It is at this point we must confront an uncomfortable truth. In order to take the projective limit of quantum graphs, we must restrict ourselves to the quantum graphs whose annihilators are $\sigma(\mc M_* \otimes_h \mc M_*)$-closed left ideals. This is a consequence of the following facts.

\begin{enumerate}
    \item The projective limit of the quantum graphs $(\mc M_j, \Ann(\mc S_j))$ must be a quantum graph $\mc S$ on $\mc M$ (that is, a $\sigma(\mc M_* \eh \mc M_*)$-closed left ideal in $\mc M \eh \mc M$) in order to satisfy the universal property of projective limits.
    \item The annihilator $\Ann(\mc S)$ must contain $\mc A$ in order to satisfy the universal property.
    \item We are only guaranteed that $\mc A$ absorbs left multiplication by $\mc N \otimes \mc N \subset \mc M \eh \mc M$. 
    \item If we take the $\Ann(\mc S) := \overline{\mc A}^{\sigma(\mc M_* \eh \mc M_*)}$, then $\mc A$ is not necessarily a left ideal in $\mc M \eh \mc M$. We do have that for every $n \in \mc N \otimes \mc N$ and $a \in \mc A$ that $na \in \mc A$. Using $w^*$ continuity of multiplication in dual Banach algebras, we even have that 
    \[
    n (\lim a_\lambda) \in \overline{\mc A}^{\sigma(\mc M_* \eh \mc M_*)}
    \]
    for $a_\lambda \in \mc A$ and the limit taken in the $\sigma(\mc M_* \eh \mc M_*)$ topology. However, since $\mc N$ is only a $\sigma(\mc M_*)$-dense subspace of $\mc M$, the algebraic tensor product $\mc N \otimes \mc N$ is not $\sigma(\mc M_* \eh \mc M_*)$-dense in $\mc M \eh \mc M$. We therefore cannot conclude that $\overline{\mc A}^{\sigma(\mc M_* \eh \mc M_*)}$ is a left ideal with respect to $\mc M \eh \mc M$ by taking the appropriate limits of elements of $\mc N \otimes \mc N$.
    \item However, the tensor product $\mc N \otimes \mc N$ \textit{is} $\sigma(\mc M_* \otimes_h \mc M_*)$-dense in $\mc M \eh \mc M$ (Lemma \ref{lem: density 1}). We can then use continuity of multiplication to conclude that 
    \[
    (\lim_\mu n_\mu )(\lim_\lambda a_\lambda )
    \]
    converges (where the limits are taken in the $\sigma(\mc M_* \otimes_h \mc M_*)$ topology). Since every element of $\mc M \eh \mc M$ can be written as $\lim_\mu n_\mu$, we can thusly show that $\overline{\mc A}^{\sigma(\mc M_* \h \mc M_*)}$ is a left ideal in $\mc M \eh \mc M$.
    \item Finally, every object is the projective limit along the directed set $(\{1\}, \leq)$ (the directed set of one element that is comparable to itself). Hence we must necessarily exclude the $\sigma(\mc M_* \eh \mc M_*)$-closed left ideals if we would like to take arbitrary projective limits of a class of quantum graphs. 
\end{enumerate}

See Subsection \ref{subsec: h-closed qgraphs} for examples and a non-example of such quantum graphs. This class of graphs does include all quantizations of classical graphs and quantum Cayley graphs of discrete quantum groups \cite{quantum-cayley}, for example. With this caveat out of the way, we proceed to prove that the projective limit of $\sigma(\mc M_* \h \mc M_*)$ quantum graphs exists.

\begin{lemma} \label{lem: density 1}
    Suppose $\mc M$ is a von Neumann algebra. If $\mc N \subseteq \mc M$ is a $\sigma(\mc M_*)$-dense unital $*$-algebra then $\mc N \otimes \mc N$ is a $\sigma(\mc M_* \otimes_{h} \mc M_*)$-dense subset of $\mc M \otimes_{eh} \mc M$.
\end{lemma}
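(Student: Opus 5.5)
The plan is a two-stage density argument. By Corollary \ref{cor: alg-eh-density}, the algebraic tensor product $\mc M \otimes \mc M$ is $\sigma(\mc M_* \h \mc M_*)$-dense in $\mc M \eh \mc M$, so it suffices to show that every elementary tensor $x \otimes y$ with $x,y \in \mc M$ lies in the $\sigma(\mc M_* \h \mc M_*)$-closure of $\mc N \otimes \mc N$. Since that closure is a linear subspace, this then gives all of $\mc M \otimes \mc M$, and hence all of $\mc M \eh \mc M$ by transitivity of closures.

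For the elementary tensors, I first use the Kaplansky density theorem to choose nets $(x_\lambda)$ and $(y_\mu)$ in $\mc N$ with $\|x_\lambda\| \le \|x\|$ and $\|y_\mu\| \le \|y\|$ that converge to $x$ and $y$ in the $\sigma(\mc M_*)$ topology. Kaplansky applies because $\mc N$ is a $\sigma(\mc M_*)$-dense unital $*$-subalgebra; its norm closure is a $C^*$-algebra with the same weak$^*$ closure, and approximants from the norm closure can be replaced by nearby elements of $\mc N$. I then take the limit in two steps. First, $x_\lambda \otimes y_\mu \to x \otimes y_\mu$ as $\lambda$ runs, with $\mu$ fixed. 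Second, $x \otimes y_\mu \to x \otimes y$. This shows that $x \otimes y$ is in the closure of the closure of $\mc N \otimes \mc N$, which is the closure itself, so no joint net is needed.

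Each step rests on the separate weak$^*$ continuity of the maps $a \mapsto a \otimes b$ and $b \mapsto a \otimes b$ from $\mc M$ into $(\mc M_* \h \mc M_*)^* = \mc M \eh \mc M$ (Remark \ref{rmk: predual-tops}(1)). This is the step I expect to need the most care. For $\omega \in \mc M_* \h \mc M_*$, the functional $a \mapsto \omega(a \otimes b)$ is bounded on $\mc M_*$-valued data. To see that it is normal, I would first check it for $\omega$ in the algebraic tensor product $\mc M_* \otimes \mc M_*$. There it equals $\sum_i \phi_i(a)\psi_i(b)$, which is plainly normal in $a$. Since $\|\omega(\cdot \otimes b)\| \le \|\omega\|_h \|b\|$ and the algebraic tensor product is norm dense in $\mc M_* \h \mc M_*$, the general functional is a norm limit of normal functionals. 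It is therefore normal, because $\mc M_*$ is norm closed in $\mc M^*$.

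I would also add a remark in the proof explaining why this argument does not work for $\sigma(\mc M_* \eh \mc M_*)$. In that case $\mc M_* \otimes \mc M_*$ is not norm dense in $\mc M_* \eh \mc M_*$, so the normality step breaks down. This matches item (4) of the preceding discussion.
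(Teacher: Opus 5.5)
Your proof is correct and is essentially the paper's argument in primal rather than dual form. The paper also reduces to $\mc M\otimes\mc M$ via Corollary~\ref{cor: alg-eh-density}, then argues by Hahn--Banach that no functional separates $\mc N\otimes\mc N$ from $\mc M\otimes\mc M$, citing norm density of $\mc M_*\otimes\mc M_*$ in $\mc M_*\h\mc M_*$. Your observation that each slice $a\mapsto\omega(a\otimes b)$ is a norm limit of normal functionals is exactly the step the paper leaves implicit. (Kaplansky is unnecessary, since that continuity holds on all of $\mc M$, not just on bounded sets.)

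Do not add the closing remark, though. For $\omega=\phi\odot\psi\in\mc M_*\eh\mc M_*$ the slice $a\mapsto\sum_i\phi_i(a)\psi_i(b)$ is still the norm limit of the normal functionals $\sum_{i\in F}\psi_i(b)\phi_i$, because $\bigl\|\sum_{i\notin F}\psi_i(b)\phi_i\bigr\|\le\|\phi\|\,\|(\psi_i(b))_{i\notin F}\|_{\ell^2}\to 0$. So in the $\eh$ case only your justification fails, not the conclusion. Your two-step argument in fact puts $\mc M\otimes\mc M$ inside the $\sigma(\mc M_*\eh\mc M_*)$-closure of $\mc N\otimes\mc N$ as well, so it cannot be cited in support of item (4).
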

\begin{proof}
    Recall that $\mc M_* \otimes \mc M_*$ is norm-dense in $\mc M_* \h \mc M_*$ (Definition \ref{defn: h}) and that $\mc M \otimes \mc M$ is $\sigma(\mc M_* \otimes_{h} \mc M_*)$-dense in $\mc M \eh \mc M$ (Corollary \ref{cor: alg-eh-density}). It thus suffices to show that we cannot separate $\mc N \otimes \mc N$ from $\mc M \otimes \mc M$ by functionals in $\mc M_* \otimes \mc M_*$, and this is precisely the property guaranteed by the $\sigma(\mc M_*)$-density of $\mc N$.
\end{proof}

\begin{lemma} \label{lem: alg-left-ideal}
    If $z \in \bigcup_j \theta^{(2)}(\mc M_j^{(2)})$ and $a \in \mc A$, then $za \in \mc A$.
\end{lemma}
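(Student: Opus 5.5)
The plan is to use directedness of $J$ to move both $z$ and $a$ into a single stage $\mc M_k^{(2)}$. At that stage the left ideal property of $\Ann(\mc S_k)$ and the multiplicativity of $\theta_{k,\infty}^{(2)}$ finish the argument.

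Write $z = \theta_{j,\infty}^{(2)}(w)$ with $w \in \mc M_j^{(2)}$, and $a = \theta_{i,\infty}^{(2)}(b)$ with $b \in \Ann(\mc S_i)$. Since $J$ is directed, choose $k \geq i,j$. By compatibility of the inductive system, $\theta_{j,\infty} = \theta_{k,\infty}\circ\theta_{j,k}$ and likewise for $i$. We then need the same identity after applying $\otimes_{eh}$, namely
\[
\theta_{j,\infty}^{(2)} = \theta_{k,\infty}^{(2)} \circ \theta_{j,k}^{(2)}.
\]
This is the commutativity of the diagram of $\mc M_j^{(2)}$'s drawn above. To justify it, I would use the uniqueness of the canonical extension in Theorem \ref{thm: eh extension}: both sides are normal maps extending the same map on $\mc M_j \otimes \mc M_j$. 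Alternatively, both sides act on weak representations $\sum_s x_s \otimes y_s$ by the same formula $\sum_s \theta_{j,\infty}(x_s)\otimes \theta_{j,\infty}(y_s)$.

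Hence $z = \theta_{k,\infty}^{(2)}(w')$ and $a = \theta_{k,\infty}^{(2)}(b')$, where $w' := \theta_{j,k}^{(2)}(w) \in \mc M_k^{(2)}$ and $b' := \theta_{i,k}^{(2)}(b)$. The hypothesis that $\theta_{i,k}$ induces a quantum graph morphism (Definition \ref{defn: qgraph morph}) gives $b' \in \Ann(\mc S_k)$.

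Next, by Theorem \ref{thm: eh extension} the map $\theta_{k,\infty}^{(2)}$ is a homomorphism of the Banach algebras $\mc M_k^{(2)} \to \mc M^{(2)}$, since $\theta_{k,\infty}$ is a normal $*$-homomorphism. So $za = \theta_{k,\infty}^{(2)}(w'b')$. Finally, $\Ann(\mc S_k)$ is a left ideal in $\mc M_k^{(2)}$ by Theorem \ref{thm: gp-bijection}, so $w'b' \in \Ann(\mc S_k)$. Therefore $za \in \theta_{k,\infty}^{(2)}(\Ann(\mc S_k)) \subseteq \mc A$.

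The main obstacle is the functoriality identity $\theta_{j,\infty}^{(2)} = \theta_{k,\infty}^{(2)}\circ\theta_{j,k}^{(2)}$ on all of $\mc M_j^{(2)}$, not just on the algebraic tensor product. My first attempt would be to check it on weak representations, using the well-definedness statement in Theorem \ref{thm: eh extension}, and then invoke normality of both composites. This works because both sides are weak$^*$-continuous and agree on $\mc M_j\otimes\mc M_j$, which is weak$^*$-dense.
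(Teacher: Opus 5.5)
Your proposal is correct and follows essentially the same route as the paper: move $z$ and $a$ to a common stage by directedness, use the quantum graph morphism hypothesis to place the image of $a$ in $\Ann(\mc S_k)$, and pull the product out through the homomorphism $\theta_{k,\infty}^{(2)}$. You are slightly more careful than the paper in two places. You explicitly invoke the left ideal property of $\Ann(\mc S_k)$. You also justify the identity $\theta_{j,\infty}^{(2)}=\theta_{k,\infty}^{(2)}\circ\theta_{j,k}^{(2)}$ via normality and weak$^*$ density of $\mc M_j\otimes\mc M_j$, which the paper simply reads off its commuting diagram.
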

\begin{proof}
    There must exist $j,k\in I$ such that 
    \[
    z = \theta_{j,\infty}^{(2)} (z_j) \quad a = \theta_{k,\infty}^{(2)}(a_k).
    \]
    Since $I$ is a directed set, there must further exist $\ell \in I$ such that
    \begin{align*}
    z &= \theta_{j,\infty}^{(2)}(z_j) = \theta_{\ell,\infty}^{(2)} \circ \theta_{j,\ell}^{(2)}(z_j) \\
    a &= \theta_{k,\infty}^{(2)}(a_k) = \theta_{\ell,\infty}^{(2)} \circ \theta_{k,\ell}^{(2)}(a_k).
    \end{align*}
    Since $\theta_{\ell,\infty}^{(2)}$ is a homomorphism
    \[
    za = \theta_{\ell,\infty}^{(2)}(\theta_{j,\ell}^{(2)}(z_j)\theta_{k,\ell}^{(2)}(a_k)).
    \]
    But $\theta_{k,\ell}^{(2)}(a_k) \in \Ann(\mc S_\ell)$ since each $\theta_{k,\ell}$ induces a morphism of quantum graphs. Hence 
    \[
    za = \theta_{j,\infty}^{(2)}(z_j)\theta_{k,\infty}^{(2)}(a_k) \in \theta_{\ell,\infty}^{(2)} (\Ann(\mc S_\ell)) \subset \mc A.
    \]
\end{proof}

\begin{lemma} \label{lemma: left ideal}
    The following closure of $\mc A$ 
    \[ 
    \overline{\bigcup_{k \in I} \theta_{k,\infty}^{(2)}(\Ann(\mc S_k))}^{\sigma(\mc M_*\otimes_{h} \mc M_*)} \subseteq \mc M^{(2)}
    \]
    is a $\sigma(\mc M_* \otimes _{h} \mc M_*)$-closed left ideal in $\mc M^{(2)}$. 
\end{lemma}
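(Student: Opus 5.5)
Write $\overline{\mc A}$ for the $\sigma(\mc M_*\otimes_h \mc M_*)$-closure of $\mc A$. The plan follows item (5) of the list preceding Lemma \ref{lem: density 1}. We show that $\overline{\mc A}$ is a subspace, and then prove left-ideal absorption in two limiting steps: first in the ideal variable, then in the multiplier variable.

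\emph{Step 0: $\overline{\mc A}$ is a linear subspace.} Each $\Ann(\mc S_k)$ is a subspace and each $\theta_{k,\infty}^{(2)}$ is linear. Given $\theta^{(2)}_{j,\infty}(a_j)$ and $\theta^{(2)}_{k,\infty}(a_k)$, directedness gives an $\ell\geq j,k$. Then
\[
\theta^{(2)}_{j,\infty}(a_j)+\theta^{(2)}_{k,\infty}(a_k)=\theta^{(2)}_{\ell,\infty}\big(\theta^{(2)}_{j,\ell}(a_j)+\theta^{(2)}_{k,\ell}(a_k)\big),
\]
and the right-hand side lies in $\theta^{(2)}_{\ell,\infty}(\Ann(\mc S_\ell))$ because the $\theta_{j,\ell}$ induce quantum graph morphisms. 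So $\mc A$ is a subspace, and hence so is its weak$^*$ closure.

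\emph{Step 1: absorption for algebraic multipliers.} Fix $z\in \mc N\otimes\mc N$. Each elementary tensor $\theta_{j,\infty}(x)\otimes\theta_{k,\infty}(y)$ can be pushed to a common index, so $z\in\bigcup_j\theta^{(2)}_{j,\infty}(\mc M_j^{(2)})$. Lemma \ref{lem: alg-left-ideal} then gives $z\mc A\subseteq\mc A$. Next we need that left multiplication $L_z:w\mapsto zw$ is $\sigma(\mc M_*\otimes_h\mc M_*)$-continuous on $\mc M^{(2)}$. Since $\mc M^{(2)}=(\mc M_*\otimes_h\mc M_*)^*$ is a dual Banach algebra, multiplication is separately weak$^*$ continuous, and this covers it. If one wants to check it concretely for $z=x\otimes y$: we have $(x\otimes y)(\sum a_i\otimes b_i)=\sum xa_i\otimes b_iy$. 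The preadjoint of this map sends $\omega\otimes\eta$ to $\omega(x\,\cdot)\otimes\eta(\cdot\,y)$, which is a complete contraction (up to $\|x\|\|y\|$) on $\mc M_*\otimes_h\mc M_*$ by functoriality of $\otimes_h$. It follows that $z\overline{\mc A}\subseteq\overline{\mc A}$ for all $z\in\mc N\otimes\mc N$.

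\emph{Step 2: absorption for arbitrary multipliers.} Fix $a\in\overline{\mc A}$ and $w\in\mc M^{(2)}$. By Lemma \ref{lem: density 1} there is a net $z_\mu\in\mc N\otimes\mc N$ with $z_\mu\to w$ in $\sigma(\mc M_*\otimes_h\mc M_*)$. Right multiplication $R_a$ is also weak$^*$ continuous, again by separate continuity; concretely, $\omega\otimes\eta\mapsto$ the functional $u\mapsto(\omega\otimes\eta)(ua)$ lies in $\mc M_*\otimes_h\mc M_*$. Hence $z_\mu a\to wa$. Each $z_\mu a$ lies in $\overline{\mc A}$ by Step 1, and $\overline{\mc A}$ is closed, so $wa\in\overline{\mc A}$. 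Closedness of $\overline{\mc A}$ is by construction.

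\emph{Main obstacle.} The main obstacle is justifying separate weak$^*$ continuity of multiplication on $\mc M\eh\mc M$ for the predual $\mc M_*\otimes_h\mc M_*$, rather than only for $\sigma(\mc M_*\eh\mc M_*)$. The paper quotes \cite[Section 4]{blecher-smith-w*h} that $\mc M\eh\mc M$ is a dual Banach algebra, and that result is stated with respect to the predual $\mc M_*\otimes_h\mc M_*$ of item (1) in Remark \ref{rmk: predual-tops}, so I would cite it directly. If needed, I would verify it for $R_a$ by approximating $a$ by weak representations and using the $\Phi$-picture on compact operators. Specifically, pairing with $\mc M_*\otimes_h\mc M_*$ amounts to evaluating $\Phi_u$ on $K(\mc H)$. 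Since $\Phi_{ua}=\Phi_a\circ\Phi_u$ under the algebra convention, we would need $\Phi_a$ to preserve $K(\mc H)$, or else an argument by duality through $CB(K(\mc H),B(\mc H))$. This is the delicate point to confirm.
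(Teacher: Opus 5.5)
\paragraph{Gap in Step 2.} There is a genuine gap in Step 2, and it is the point you flagged as delicate. Right multiplication $R_a\colon u\mapsto ua$ is \emph{not} $\sigma(\mc M_*\h\mc M_*)$-continuous for general $a\in\mc M^{(2)}$. So $\mc M\eh\mc M$ is not a dual Banach algebra for this predual, and no citation can supply this step.

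Here is why. With the paper's product $(x\otimes y)(x'\otimes y')=xx'\otimes y'y$ one has $\Phi_{ua}=\Phi_u\circ\Phi_a$ (not $\Phi_a\circ\Phi_u$). Pairing with $\mc M_*\h\mc M_*$ only tests $\Phi_u$ on finite-rank operators, but $\Phi_a(K)$ need not be compact.

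A concrete counterexample: let $\mc M=B(\ell^2)$ with matrix units $e_{ij}$, and take $a=\sum_n e_{n1}\otimes e_{1n}$. This is a weak representation with $\sum_n e_{n1}e_{n1}^*=\sum_n e_{1n}^*e_{1n}=I$, and $\Phi_a(T)=\langle e_1,Te_1\rangle I$. The net $z_F$ of Example \ref{ex: non-example graph} satisfies $z_F\to I\otimes I$ in $\sigma(\mc M_*\h\mc M_*)$ and $\Phi_{z_F}(I)=0$. Hence
\[
\Phi_{z_Fa}(T)=\Phi_{z_F}\big(\Phi_a(T)\big)=\langle e_1,Te_1\rangle\,\Phi_{z_F}(I)=0 ,
\]
so $z_Fa=0$ for every $F$, while $(I\otimes I)a=a\neq 0$.

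Your concrete claim that $u\mapsto(\omega\otimes\eta)(ua)$ lies in $\mc M_*\h\mc M_*$ is therefore true for $a\in\mc M\h\mc M$ but false in general. Annihilators do contain such elements: for instance $a-I\otimes I\in\Ann(\C I)$.

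The paper's own proof invokes the same separate continuity: its limit $\lim_\lambda z_\lambda a_\mu$ uses $R_{a_\mu}$. So you have reproduced its argument with the two limits in the other order, defect included. Your Steps 0 and 1 are fine, because left multiplications $L_z$ really are weak$^*$-continuous (by your preadjoint computation, or because $\Phi_z$ is normal).

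\paragraph{How to repair it.} The statement survives if you approximate the multiplier in a finer sense than Lemma \ref{lem: density 1} provides. Call a bounded net $w_\lambda\to w$ in $\mc M^{(2)}$ \emph{pointwise convergent} if $\Phi_{w_\lambda}(T)\to\Phi_w(T)$ weak$^*$ for every $T\in B(\mc H)$, not only for compact $T$.

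Then for any $a\in\mc M^{(2)}$ and finite-rank $K$ you get $\Phi_{w_\lambda a}(K)=\Phi_{w_\lambda}(\Phi_a(K))\to\Phi_{wa}(K)$. Two facts let you upgrade this to $w_\lambda a\to wa$ in $\sigma(\mc M_*\h\mc M_*)$, whether or not $\Phi_a(K)$ is compact:
\begin{itemize}
\item the net $(w_\lambda a)$ is bounded;
\item $(\omega\otimes\eta)(u)$, for vector functionals $\omega,\eta$, is a vector functional applied to $\Phi_u$ of a rank-one operator, and such $\omega\otimes\eta$ span a dense subspace of $\mc M_*\h\mc M_*$.
\end{itemize}

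Hence $C:=\{w\in\mc M^{(2)}: w\overline{\mc A}\subseteq\overline{\mc A}\}$ is closed under bounded pointwise limits. It is a subspace, and it contains $\mc N\otimes\mc N$ by your Step 1.

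$C$ contains $\mc M\otimes\mc M$. By Kaplansky density, each $y\in\mc M$ is the weak$^*$ limit of a bounded net $y_\beta$ in $\mc N$, and then $xTy_\beta\to xTy$ weak$^*$. So you can pass from $\mc N$ to $\mc M$ one tensor leg at a time.

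Finally, $C=\mc M^{(2)}$. Take a weak representation $w=\sum_i x_i\otimes y_i$ with $\sum_i x_ix_i^*$ and $\sum_i y_i^*y_i$ bounded. Its partial sums are bounded in the $\eh$-norm by $\|\sum_i x_ix_i^*\|^{1/2}\|\sum_i y_i^*y_i\|^{1/2}$, and $\sum_{i\in F}x_iTy_i\to\Phi_w(T)$ weak$^*$ for every $T\in B(\mc H)$.

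With Step 2 replaced by this argument, your proof goes through.
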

\begin{proof}
    From our assumption need only show this closure is a left ideal. We will use the following notation for arbitrary elements in the respective spaces:
    \begin{align*}
        a_\mu &\in \bigcup_k \theta_{k,\infty}^{(2)} (\Ann(\mc S_k)=: \mc A \\
        a &\in \overline{\bigcup_k \theta_{k,\infty}^{(2)} (\Ann(\mc S_k))}^{\sigma(\mc M_* \otimes_{h} \mc M_*)}=:\overline{\mc A}^{w^*} \\
        z_\lambda & \in N \otimes \mc N \\
        z & \in \mc M^{(2)}.
    \end{align*}
    From Lemma \ref{lem: density 1}, for any $z \in \mc M^{(2)}$ there exists $z_\lambda \to z$ in the $\sigma(\mc M_* \otimes_{h} \mc M_*)$ topology with $z_\lambda \in \mc N \otimes \mc N$. By the slice map property (\cite[Theorem  3.1]{blecher-smith-w*h}), we can assume that 
    \[
    z_\lambda \in \bigcup_j \theta^{(2)}(\mc M_j^{(2)}).
    \]
    Multiplication in any dual Banach algebra is separately continuous in the predual topology. Since $\mc M \eh \mc M$ is a dual Banach algebra in the $\sigma(\mc M_* \h \mc M_*)$ topology, we have by Lemma \ref{lem: alg-left-ideal} that $z_\lambda a \in \mc A$.  
    \[
    z a = (\lim_\lambda z_\lambda) a \in \mc A.
    \]
    Taking $a_\mu \to a$ is a $\sigma(\mc M_* \h \mc M_*)$-limit with $a_\mu \in \mc A$, we can again use the continuity of multiplication to show that $za = z (\lim_\mu a_\mu) \in \overline{\mc A}^{w^*}$. Thus $\overline{\mc A}^{w^*}$ is a left ideal in $\mc M^{(2)}$.
\end{proof}

From the discussion above, $\overline{\mc A}^{w^*}$ satisfies the defining universal property of a projective limit. We thus have our main theorem:

\begin{theorem} \label{thm: qraphlim}
    Suppose $J$ is a directed set, and for each $k \in J$ we have quantum graphs $(\mc M_j, \Ann(\mc S_j))$. For $j,k \in J$ such that $j \le k$, suppose we have normal, unital $*$-homomorphisms $\theta_{j,k}: \mc M_j \to \mc M_k$  implementing morphisms of quantum graphs $\mc S_j \leftarrow \mc S_k$. Let 
    \[
    \overline{\mc A}^{w^*} := \overline{\bigcup_{j \in J} \theta_{j,\infty}^{(2)}(\Ann(\mc S_j))}^{\sigma(\mc M_* \otimes_{eh} \mc M_*)} \subseteq \mc M^{(2)}.
    \]
    The projective limit of the quantum graphs $(\mc M, \overline{\mc A}^{w^*})$.
\end{theorem}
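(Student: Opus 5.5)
We must check three things: that $(\mc M, \overline{\mc A}^{w^*})$ is a quantum graph on $\mc M$; that each $\theta_{j,\infty}$ induces a morphism $(\mc M_j,\Ann(\mc S_j)) \leftarrow (\mc M,\overline{\mc A}^{w^*})$ compatible with the $\theta_{j,k}$; and that the universal property holds. We work in the class of quantum graphs whose annihilators are $\sigma(\mc M_*\otimes_h\mc M_*)$-closed left ideals, as discussed before Lemma \ref{lem: density 1}. In that class, the two candidate closures of $\mc A$ coincide or can be compared.

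\textbf{Step 1: $\overline{\mc A}^{w^*}$ is an annihilator of a quantum graph.} By Lemma \ref{lemma: left ideal}, the $\sigma(\mc M_*\otimes_h\mc M_*)$-closure of $\mc A$ is a left ideal. Since $\sigma(\mc M_*\eh\mc M_*)$ is finer (Remark \ref{rmk: predual-tops}), this closure is also $\sigma(\mc M_*\eh\mc M_*)$-closed. It contains the $\sigma(\mc M_*\eh\mc M_*)$-closure of $\mc A$, so within our class we identify the two closures. Theorem \ref{thm: gp-bijection} then produces a quantum graph $\mc S$ on $\mc M$ with $\Ann(\mc S)=\overline{\mc A}^{w^*}$.

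\textbf{Step 2: cone maps.} By construction, $\theta^{(2)}_{j,\infty}(\Ann(\mc S_j))\subseteq\mc A\subseteq\Ann(\mc S)$. So each $\theta_{j,\infty}$ induces a quantum graph morphism in the sense of Definition \ref{defn: qgraph morph}. Compatibility, $\theta_{k,\infty}\circ\theta_{j,k}=\theta_{j,\infty}$, is inherited from the $\mathbf{W^*}$ inductive limit.

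\textbf{Step 3: universal property.} Suppose $(\mc M_{\mc B},\Ann(\mc S_{\mc B}))$ is a quantum graph in our class with compatible morphisms $\phi_j:\mc M_j\to\mc M_{\mc B}$. The $\mathbf{W^*}$ colimit gives a unique normal unital $*$-homomorphism $\psi:\mc M\to\mc M_{\mc B}$ with $\psi\circ\theta_{j,\infty}=\phi_j$. Uniqueness of the graph morphism then follows from uniqueness in $\mathbf{W^*}$, since a graph morphism is determined by its underlying $\mathbf{W^*}$ morphism. It remains to show that $\psi^{(2)}(\overline{\mc A}^{w^*})\subseteq\Ann(\mc S_{\mc B})$.

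On $\mc A$ this inclusion is immediate, since $\psi^{(2)}\theta^{(2)}_{j,\infty}=\phi_j^{(2)}$ by functoriality in Theorem \ref{thm: eh extension}. We then need $\psi^{(2)}$ to be continuous from $\sigma(\mc M_*\otimes_h\mc M_*)$ to $\sigma((\mc M_{\mc B})_*\otimes_h(\mc M_{\mc B})_*)$. For this we would show that $\psi^{(2)}$ is the adjoint of the preadjoint $\psi_*\otimes_h\psi_*$. That tensor map is completely contractive on $\otimes_h$ because $\psi_*$ is a complete contraction. Its adjoint agrees with $\psi\otimes\psi$ on $\mc M\otimes\mc M$, and that subspace is dense in the relevant weak$^*$ topology by Corollary \ref{cor: alg-eh-density}, so the two maps agree everywhere. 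Once this continuity is established, the assumption that $\Ann(\mc S_{\mc B})$ is $\sigma(\otimes_h)$-closed gives the inclusion.

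\textbf{Main obstacle.} The hardest step is this topological one: identifying $\psi^{(2)}$ with a weak$^*$-continuous map for the $\otimes_h$ predual topologies, together with the closely related issue in Step 1 of reconciling the $\sigma(\mc M_*\eh\mc M_*)$-closure in the statement with the $\sigma(\mc M_*\otimes_h\mc M_*)$-closure produced by Lemma \ref{lemma: left ideal}. I would first try to settle the matter by working throughout with the $\otimes_h$ closure. I would then note that, restricted to the class of $\sigma(\otimes_h)$-closed annihilators, the two closures of $\mc A$ yield the same object up to the isomorphism guaranteed by the universal property.
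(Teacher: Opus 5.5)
Your skeleton matches the paper's. Lemma \ref{lemma: left ideal} supplies the left ideal, the cone maps are the $\theta_{j,\infty}$, and uniqueness of the mediating map comes from the $\mathbf{W^*}$ colimit. Your Step~3 argument is an improvement on the paper: you show that $\psi^{(2)}$ carries the closure into $\Ann(\mc S_{\mc B})$, using $\sigma(\mc M_*\h\mc M_*)$-continuity of $\psi^{(2)}$ and $\sigma(\mc M_*\h\mc M_*)$-closedness of the test object, and the paper leaves this existence step implicit.

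\textbf{The gap is in Step~1.} Knowing that $\overline{\mc A}^{\sigma(\mc M_*\h\mc M_*)}$ is $\sigma(\mc M_*\eh\mc M_*)$-closed gives only $\overline{\mc A}^{\sigma(\mc M_*\eh\mc M_*)}\subseteq\overline{\mc A}^{\sigma(\mc M_*\h\mc M_*)}$. Equality holds exactly when the smaller set is already $\sigma(\mc M_*\h\mc M_*)$-closed, and nothing you have shown gives that. Restricting to the class of $\sigma(\mc M_*\h\mc M_*)$-closed graphs constrains the $\Ann(\mc S_j)$ and the test objects, not this closure of the union. So ``within our class we identify the two closures'' does not follow.

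The fallback in your last paragraph is circular. The universal property only compares objects already known to be limits. The $\sigma(\mc M_*\eh\mc M_*)$-closure is not even known, by the paper's methods, to be a left ideal: item~4 of the list preceding Lemma \ref{lem: density 1} says precisely that the density argument breaks down for this topology. Moreover, any comparison isomorphism would have underlying $\mathbf{W^*}$ map $\mathrm{id}_{\mc M}$. It would therefore force literal equality of the two ideals, which is the very claim at issue.

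What your Steps~1--3 actually prove is that $(\mc M,\overline{\mc A}^{\sigma(\mc M_*\h\mc M_*)})$ is the projective limit within the class of $\sigma(\mc M_*\h\mc M_*)$-closed quantum graphs. That is also all the paper's proof delivers. It rests on Lemma \ref{lemma: left ideal}, whose $\overline{\mc A}^{w^*}$ is the $\sigma(\mc M_*\h\mc M_*)$-closure, so the $\otimes_{eh}$ in the displayed closure should be read as $\otimes_h$. If $\overline{\mc A}^{\sigma(\mc M_*\eh\mc M_*)}$ were strictly smaller, it would fail to be $\sigma(\mc M_*\h\mc M_*)$-closed and so could not be the limit in this class at all. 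State and prove the result for the $\sigma(\mc M_*\h\mc M_*)$-closure, and drop the identification.

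One smaller repair is needed in Step~3. Agreement of $(\psi_*\h\psi_*)^*$ with $\psi^{(2)}$ on the weak$^*$-dense subspace $\mc M\otimes\mc M$ implies agreement everywhere only if both maps are weak$^*$-continuous. For $\psi^{(2)}$, that is what you are trying to prove. Instead, use the weak-representation formula in Theorem \ref{thm: eh extension}. A weak representation $\sum_i x_i\otimes y_i$ with $\sum_i x_ix_i^*$ and $\sum_i y_i^*y_i$ bounded has bounded partial sums, hence converges in $\sigma(\mc M_*\h\mc M_*)$, and both maps send it to $\sum_i\psi(x_i)\otimes\psi(y_i)$. Alternatively, take the adjoint as the definition of the normal extension.
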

\begin{proof}
    The previous discussion shows we have candidates for an object and morphisms witnessing that $(\mc M, \bar {\mc A}^{w^*})$ is indeed the projective limit. What remains is to establish uniqueness, but this follows immediately from the uniqueness of $\mc M$ as an inductive limit in $\mathbf{W^*}$. 
\end{proof}

\subsection{Examples and a Non-Example of \texorpdfstring{$\sigma(\mc M_* \h \mc M_*)$}{}-closed Quantum Graphs} \label{subsec: h-closed qgraphs}

To cut down on word count, by ``$\tau$-closed quantum graph'' we mean ``a quantum graph $\mc S$ on $\mc M$ such that $\Ann(\mc S)$ is $\tau$-closed in $\mc M \eh \mc M$'' and $\tau$ is either $\sigma(\mc M_* \h \mc M_*)$ or $\sigma(\mc M_* \eh \mc M_*)$. Below we present an example and a non-example of $\sigma(\mc M_* \h \mc M_*)$-closed quantum graphs. A conjecture for a characterization of $\sigma(\mc M_* \h \mc M_*)$-closed quantum graphs is the content of Question \ref{question: left ideals}.

\begin{proposition} \label{prop: K(H) example}
    If $\mc S$ is a quantum graph on $\mc M \subset B(\mc H)$ such that 
    \[
    \overline{\mc S \cap K(\mc H) }^{\sigma(B(\mc H)_*)} = \mc S
    \]
    then $\mc S$ is a $\sigma(\mc M_* \h \mc M_*)$-closed quantum graph.
\end{proposition}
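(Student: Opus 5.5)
We aim to show that $\Ann(\mc S)$ is $\sigma(\mc M_* \h \mc M_*)$-closed. Set $\mc S_0 := \mc S \cap K(\mc H)$. First we prove that $\Ann(\mc S) = \Ann(\mc S_0)$, where $\Ann(\mc S_0) := \{z \in \mc M \eh \mc M : \Phi_z(T) = 0 \ \forall T \in \mc S_0\}$. Then we prove that annihilators of sets of \emph{compact} operators are automatically $\sigma(\mc M_* \h \mc M_*)$-closed.

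For the equality, the inclusion $\Ann(\mc S) \subseteq \Ann(\mc S_0)$ is trivial because $\mc S_0 \subseteq \mc S$. For the reverse inclusion, take $z \in \Ann(\mc S_0)$. By Theorem \ref{thm: m-haagerups}, $\Phi_z$ lies in $CB^\sigma_{\mc M',\mc M'}(B(\mc H))$, so it is $\sigma(B(\mc H)_*)$-continuous. Its kernel is therefore weak$^*$ closed. This kernel contains $\mc S_0$, so it also contains $\overline{\mc S_0}^{\sigma(B(\mc H)_*)} = \mc S$ by hypothesis. Hence $z \in \Ann(\mc S)$. This is where the hypothesis is used.

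For the closedness, let $z_\lambda \in \Ann(\mc S_0)$ converge to $z$ in $\sigma(\mc M_* \h \mc M_*)$. We need $\Phi_z(T) = 0$ for each $T \in \mc S_0$, i.e.\ $\langle \phi, \Phi_z(T)\rangle = 0$ for every $\phi \in B(\mc H)_*$. It suffices to show that for fixed compact $T$ and normal $\phi$, the map $z \mapsto \langle \phi, \Phi_z(T) \rangle$ is a $\sigma(\mc M_* \h \mc M_*)$-continuous functional on $\mc M \eh \mc M$, i.e.\ that it is given by an element of $\mc M_* \h \mc M_*$. I would argue via the identification in Theorem \ref{defn: Phi-map} and Remark \ref{rmk: predual-tops}: $B(\mc H)\eh B(\mc H) \cong CB^\sigma(B(\mc H))$, and its restriction to $K(\mc H)$ yields $CB(K(\mc H),B(\mc H)) = (K(\mc H)\hat\otimes B(\mc H)_*)^*$. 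Under this identification, the predual $B(\mc H)_* \h B(\mc H)_*$ corresponds to the functionals $z \mapsto \langle \phi, \Phi_z(T)\rangle$ with $T \in K(\mc H)$ (this is the content of the topology comparison in Remark \ref{rmk: gp-topologies} and the remark after Theorem \ref{thm: gp-bijection}, where $\sigma(\mc M_*\otimes_h\mc M_*)$-convergence gives $\langle \phi,\Phi_{z_i-z}(T)\rangle\to 0$ for $T$ compact). A concrete check is possible for rank-one data: for $T = |\xi\rangle\langle\eta|$ and $\phi = \langle \zeta|\cdot|\omega\rangle$, one gets $\langle \zeta|\Phi_{x\otimes y}(T)|\omega\rangle = \langle \zeta|x|\xi\rangle\langle \eta|y|\omega\rangle$, which is an elementary tensor $\omega_{\zeta,\xi}\otimes\omega_{\eta,\omega}$ in $\mc M_*\otimes\mc M_*$. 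By norm approximation of compact $T$ and trace class $\phi$, together with the bound $\|\Phi_z\|_{cb} = \|z\|$, the general functional is a norm limit of such tensors and so lies in $\mc M_*\h\mc M_*$. For $\mc M \subsetneq B(\mc H)$ we restrict functionals along the quotient $B(\mc H)_*\h B(\mc H)_* \to \mc M_*\h\mc M_*$, using injectivity of $\h$ and the embedding $\mc M \eh \mc M \subseteq B(\mc H)\eh B(\mc H)$.

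The main obstacle is this last step: verifying that the functional $z\mapsto\langle\phi,\Phi_z(T)\rangle$ is genuinely implemented by $\mc M_*\h\mc M_*$ on all of $\mc M\eh\mc M$, not merely on the algebraic tensor product. In particular, the norm estimate must hold uniformly in the Haagerup norm so that the limit passes through. I would handle it through the duality $(\mc M_*\h\mc M_*)^* = \mc M\eh\mc M$ and the compatibility of $\Phi$ with weak representations. With this in place, $\Ann(\mc S) = \bigcap_{T,\phi}\ker$ of $\sigma(\mc M_*\h\mc M_*)$-continuous functionals, and is therefore closed.
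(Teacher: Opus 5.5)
Your proposal is correct and follows the same route as the paper. Both arguments use that $\sigma(\mc M_*\h\mc M_*)$-convergence gives $\langle\phi,\Phi_{z_\lambda}(T)\rangle\to\langle\phi,\Phi_z(T)\rangle$ for compact $T$, combined with normality of $\Phi_z$ and the weak$^*$-density of $\mc S\cap K(\mc H)$ in $\mc S$. The paper cites \cite[Theorem 4.2(ii)]{blecher-smith-w*h} for that continuity, whereas you verify it by the rank-one computation and norm approximation; this works directly with $\omega_{\zeta,\xi}|_{\mc M}\otimes\omega_{\eta,\omega}|_{\mc M}\in\mc M_*\otimes\mc M_*$, so your detour through $B(\mc H)_*\h B(\mc H)_*$ is unnecessary. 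Splitting off $\Ann(\mc S)=\Ann(\mc S\cap K(\mc H))$ first also spares you the iterated limit, which the paper writes as $\lim_\lambda\lim_\mu$ when only $\lim_\mu\lim_\lambda$ is justified.
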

\begin{proof}
    Suppose $z_\lambda \to z$ is a net in $\Ann(\mc S)$ converging in the $\sigma(\mc M_* \h \mc M_*)$ topology. From \cite[Theorem 4.2(ii)]{blecher-smith-w*h}, we also have the convergence of $\Phi_{z_\lambda} \to \Phi_z$ in the $\sigma(K(\mc H) \hat \otimes B(\mc H)_*)$ topology. From our hypothesis, for every $T \in \mc S$ there exists a net $T_\mu \to T$ in the $\sigma(\mc B(\mc H)_*)$ topology such that $T_\mu \in K(\mc H) \cap \mc S$. In particular, $ \Phi_{z_\lambda}(T_\mu) = 0$ for all $\lambda, \mu$, so
    \[
    0 = \Phi_z(T) = \lim_\lambda \lim_\mu \Phi_{z_\lambda}(T_\mu) 
    \]
    so $z \in \Ann(\mc S)$.
\end{proof}

\begin{remark}
    Note that the proof above works if we replace $K(\mc H)$ with the finite rank operators in $B(\mc H)$.
\end{remark}

\begin{corollary} \label{cor: K(H)graphex}
    If $\mc M = \prod_{i \in I}^\infty M_{n_i} \subset B(\prod_{i \in I}^{2} \C^{n_i}) =: B(\mc H)$ is the $\ell^\infty$ product of an arbitrary number of matrix algebras, then every quantum graph is $\sigma(\mc M_* \h \mc M_*)$-closed.
\end{corollary}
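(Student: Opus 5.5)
The plan is to reduce the corollary to Proposition \ref{prop: K(H) example}. It suffices to show that every quantum graph $\mc S$ on $\mc M = \prod^\infty_{i\in I} M_{n_i}$ satisfies
\[
\overline{\mc S \cap K(\mc H)}^{\sigma(B(\mc H)_*)} = \mc S .
\]
By the remark following that proposition, it is even enough to approximate elements of $\mc S$ weak$^*$ by finite-rank elements of $\mc S$.

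The key structural input is that the commutant $\mc M'$ is large. With $\mc H = \bigoplus^2_i \C^{n_i}$ and $M_{n_i}$ acting on the $i$-th summand, we have $\mc M' = \prod^\infty_i \C I_{n_i} \cong \ell^\infty(I)$. In particular, the central projections $P_i$ onto $\C^{n_i}$ lie in $\mc M'$. For a finite subset $F \Subset I$ set $P_F := \sum_{i\in F} P_i \in \mc M'$.

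The steps, in order, are as follows.
\begin{enumerate}
\item Take $T \in \mc S$. Since $\mc S$ is an $\mc M'$-bimodule, $P_F T P_F \in \mc S$.
\item Since $P_F$ has finite rank $\sum_{i\in F} n_i$, the operator $P_F T P_F$ is finite rank, hence compact. So $P_F T P_F \in \mc S \cap K(\mc H)$.
\item Because $P_F \to I_{\mc H}$ strongly along the net of finite subsets, $P_F T P_F \to T$ in the weak operator topology. The net is bounded by $\|T\|$, so the convergence also holds in $\sigma(B(\mc H)_*)$.
\end{enumerate}
This gives $\mc S \subseteq \overline{\mc S \cap K(\mc H)}^{\sigma(B(\mc H)_*)}$. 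The reverse inclusion holds because $\mc S$ is weak$^*$ closed. Proposition \ref{prop: K(H) example} then yields that $\Ann(\mc S)$ is $\sigma(\mc M_* \h \mc M_*)$-closed.

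The only point needing care is step 3. Convergence of $P_F T P_F$ to $T$ on vector functionals is immediate: $\langle \xi | P_F T P_F \eta\rangle \to \langle \xi | T\eta\rangle$ because $P_F\xi \to \xi$ and $P_F \eta\to\eta$ in norm. To pass to arbitrary normal functionals, use that the net is uniformly bounded and that bounded WOT and weak$^*$ convergence coincide. I expect no real obstacle here.

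One caveat concerns the representation. If the corollary were read for a different faithful normal representation of $\mc M$, one would appeal to representation independence of the annihilator (Theorem \ref{thm: gp-bijection}). As stated, however, the representation is the canonical one on $\bigoplus^2_i \C^{n_i}$, so the argument above applies directly.
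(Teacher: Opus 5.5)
Your proposal is correct and follows the paper's route. You compress $T\in\mc S$ by finite sums $P_F$ of the central projections $c_i$, which lie in $\mc M'$, so the compressions are finite-rank elements of $\mc S$; you then take the bounded weak$^*$ limit and invoke Proposition~\ref{prop: K(H) example}. Your two-sided compression $P_FTP_F=\sum_{i,j\in F}c_iTc_j$ is in fact the correct one: the paper's displayed net $\sum_{i\in F}c_iTc_i$ only recovers the block-diagonal part $\sum_i c_iTc_i$ of $T$ (it kills an off-diagonal edge $|v\rangle\langle w|$, $v\neq w$, of a classical graph), so your version fixes a slip in the paper's proof.
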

\begin{proof}
     In this case the commutant $\mc M'$ is isomorphic to the collection of $\ell^\infty$ functions on the central projections $c_i$, and for every $i,j \in I$ we have that the block $c_i B(\mc H) c_j$ is finite dimensional. Since $\mc S$ is an $\mc M'$-bimodule, every $T \in \mc S$ is the $\sigma(B(\mc H)_*)$-limit of finite rank operators:
    \[
    T = \lim_{F \Subset I} \bigg(\sum_{i \in F} c_iTc_i\bigg)
    \]
    Using Proposition \ref{prop: K(H) example}, we reach the desired conclusion.
\end{proof}

\begin{corollary}
     Every quantization of a classical graph as constructed in Example \ref{ex: classical-to-quantum-graph} and the quantum Cayley graphs constructed in \cite{quantum-cayley} are $\sigma(\mc M_* \h \mc M_*)$-closed quantum graphs.
\end{corollary}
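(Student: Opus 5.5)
The plan is to reduce both claims to Corollary \ref{cor: K(H)graphex}, which is itself an application of Proposition \ref{prop: K(H) example}. The underlying point is that each family of graphs lives on a von Neumann algebra that is an $\ell^\infty$ product of matrix algebras, represented in the standard block-diagonal way.

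\emph{Classical graphs.} For a classical graph $G=(V,E)$ quantized as in Example \ref{ex: classical-to-quantum-graph}, we have $\mc M=\ell^\infty(V)=\prod^\infty_{v\in V}M_1\subset B(\ell^2(V))$. This is exactly the situation of Corollary \ref{cor: K(H)graphex} with all $n_i=1$ and index set $I=V$. The corollary then applies verbatim, with no restriction on the cardinality of $V$.

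One could also argue directly, as a sanity check. Take $\mc S=\overline{\Span}^{w^*}\{|x\>\<y|:(x,y)\in E\}$. Each $|x\>\<y|$ is rank one, so $\mc S\cap K(\mc H)$ contains a $w^*$-spanning set of $\mc S$. Hence $\overline{\mc S\cap K(\mc H)}^{\sigma(B(\mc H)_*)}=\mc S$, and Proposition \ref{prop: K(H) example} gives the claim.

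\emph{Quantum Cayley graphs.} For the quantum Cayley graphs of \cite{quantum-cayley}, the vertex algebra is $\ell^\infty(\mathbb G)$ for a discrete quantum group $\mathbb G$. Such an algebra is, by definition, $\ell^\infty$ of its irreducible corepresentations: $\ell^\infty(\mathbb G)\cong\prod^\infty_{\alpha\in\mathrm{Irr}(\hat{\mathbb G})}M_{n_\alpha}$.

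The step I expect to need care is the representation. Quantum graphs in \cite{quantum-cayley} may be presented on $\ell^2(\mathbb G)$ rather than on $\prod^2_\alpha\C^{n_\alpha}$, and there the commutant is not of the form required in Corollary \ref{cor: K(H)graphex}. To handle this, I would not verify finite-rank density in the given representation. Instead I would note that the property "$\Ann(\mc S)$ is $\sigma(\mc M_*\h\mc M_*)$-closed" is stated entirely inside $\mc M\eh\mc M$. By Theorem \ref{thm: gp-bijection} and Weaver's representation independence \cite[Theorem 2.7]{weaverqrelations}, the annihilator of a quantum graph is unchanged when we transport the graph to a different faithful normal representation of $\mc M$. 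So I may pass to the standard block-diagonal representation $\mc M\subset B(\prod^2_\alpha\C^{n_\alpha})$, where Corollary \ref{cor: K(H)graphex} asserts that \emph{every} quantum graph on $\mc M$ is $\sigma(\mc M_*\h\mc M_*)$-closed. In particular the transported Cayley graph is, and hence so is the original.

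The only thing to double-check is that the Weaver bijection between quantum graphs in different representations preserves annihilators in $\mc M\eh\mc M$. I expect this to follow because, for a normal $*$-isomorphism implemented by amplification and compression, $\Phi_z$ intertwines with the amplification/compression maps for every $z\in\mc M\eh\mc M$.
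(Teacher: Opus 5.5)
Your proposal is correct and follows the route the paper intends. The corollary is stated without proof right after Corollary \ref{cor: K(H)graphex}, and it rests on the fact that both $\ell^\infty(V)\subset B(\ell^2(V))$ and $\ell^\infty(\mathbb G)\cong\prod^\infty_\alpha M_{n_\alpha}$ are $\ell^\infty$ products of matrix algebras, so that corollary applies.

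Your representation-transport step, which the paper leaves implicit, is sound. You do not even need to check Weaver's correspondence: Theorem \ref{thm: gp-bijection} holds in every faithful normal representation, and $\mc M\eh\mc M$, its left ideals and the $\sigma(\mc M_*\eh\mc M_*)$ topology are intrinsic, so being $\sigma(\mc M_*\h\mc M_*)$-closed is a property of $\mc M$ alone. You could also skip the transport entirely. On $\ell^2(\mathbb G)$ the central projections $c_\alpha$ are still finite rank and lie in $\mc M'$, so compressing by $P_F=\sum_{\alpha\in F}c_\alpha$ verifies the hypothesis of Proposition \ref{prop: K(H) example} directly.
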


One does not have to go far to find a quantum graph that is not $\sigma(\mc M_* \h \mc M_*)$-closed.

\begin{example}[Non-Example] \label{ex: non-example graph}
    Assume $0 \notin \N$ for this non-example. If $\mc M = B(\ell^2(\N)) = B(\ell^2)$, the annihilator of the quantum graph $\mc S := \C I_{\ell^2}$ is not $\sigma(\mc M_* \h \mc M_*)$-closed. It suffices to find 
    \begin{itemize}
        \item a net $(T_\mu)_\mu \subset K(\ell^2)$ converging in the $\sigma(B(\ell^2)_*)$ topology to $I_{\ell^2}$
        \item a net $(z_\lambda)_\lambda \subset \mc M \eh \mc M$ with $I_{\ell^2} \in \text{ker}(\Phi_{z_\lambda})$ converging in the $\sigma(\mc M_* \h \mc M_*)$ topology
    \end{itemize}
    such that 
    \[
    0 = \lim_\lambda \Phi_{z_\lambda}(\lim_\mu T_\mu) =  \lim_\lambda \lim_\mu  \Phi_{z_\lambda}(T_\mu)
    \quad\text{and}\quad
    0 \ne \lim_\mu  \lim_\lambda \Phi_{z_\lambda}(T_\mu).
    \]
    For $F \Subset \N$ let $P_F \in B(\ell^2)$ be the projection onto $\ell^2(F)$. Our net $(T_\mu)_\mu$ will be the net $(P_F)_{F \Subset \N}$. The construction of the elements $z_\lambda$ is based on the observation that
    \[
    \Phi_{\begin{pmatrix}
        1 & -1 \\ 0 & 0 
    \end{pmatrix} \otimes \begin{pmatrix}
        1 & 0 \\ 1 & 0
    \end{pmatrix}} \bigg( \begin{pmatrix}
        1 & 0 \\ 0 & 1 
    \end{pmatrix}\bigg) = \begin{pmatrix}
        1 & -1 \\ 0 & 0 
    \end{pmatrix}
    \begin{pmatrix}
        1 & 0 \\ 0 & 1 
    \end{pmatrix}
    \begin{pmatrix}
        1 & 0 \\ 1 & 0
    \end{pmatrix} = \begin{pmatrix}
        0 & 0 \\ 0 & 0
    \end{pmatrix}.
    \]
    We will generalize this phenomenon by replacing $\begin{pmatrix}
        1 & 0 \\ 0 & 1
    \end{pmatrix}$ by $I_{\ell^2}$ and the matrix $\begin{pmatrix}
        1 & 0 \\ 1 & 0 
    \end{pmatrix}$ by the sum of $P_F$ and an off-diagonal operator denoted $Q_F$ below. Define 
    \[
    F + \max F = \{n + \max F : n \in F\}
    \]
    so that $F + \max F$ is disjoint from $F$. Now define $Q_F \in B(\ell^ 2)$ to be the following operator that shifts all functions supported on $F$ to those on $F + \max F$:
    \begin{align*}
        Q_F(f) = \sum_{n \in F} \<n| f|n+ \max F\>.
    \end{align*}
    Setting 
    \[
    z_F := (P_F - Q_F^\dagger) \otimes (P_F + Q_F)
    \]
    we have that $z_F \in \mc M \otimes \mc M \subset \mc M \eh \mc M$ and $\Phi_{z_F}(I_{\ell^2})=0$ by construction. To see that $(z_F)_{F \Subset \N}$ converges, let $K \in K(\ell^2)$ be a compact operator and let $\phi \in B(\ell^2)_*$. Explicitly writing out $\phi(\Phi_{z_F}(K))$, 
    \[
    \phi(\Phi_{z_F}(K)) = \phi(P_F K P_F + P_F K Q_F - Q^\dagger_F KP_F  - Q^\dagger_FKQ_F)
    \]
    Note that $\Phi_{z_F}(K) \overset{F \Subset \N}{\to} K$ in norm and so certainly $ \phi(\Phi_{z_F}(K)) \overset{F \Subset \N}{\to} \phi(K)$. Thus $z_F \overset{F \Subset \N}{\to} I_{\ell^2} \otimes I_{\ell^2} $ in the $\sigma(\mc M_* \h \mc M_*)$ topology\footnote{Alternatively, one can see convergence is assured by first noting that $\|z_\lambda\| = \|\Phi_{z_F}\|_{cb} = \|\Phi_{z_F}\|$ since $\Phi_{z_F}$ is a map between operator spaces where the codomain is equipped with its minimal OSS \cite[Chapter 3]{intro-op-sp-pisier} and then applying Banach-Alaoglu.}. Now, to finish the example: we have
    \begin{align*}
        \lim_\lambda \lim_\mu \Phi_{z_\lambda} (T_\mu) &= \lim_{G \Subset \N} \lim_{F \Subset \N} \Phi_{z_G}(P_F) \\
        &= \lim_{G \Subset \N} \Phi_{z_G}( \lim_{F \Subset \N} P_F) \\
        &= \lim_{G \Subset \N} \Phi_{z_G}(I_{\ell^2}) \\
        &= 0
    \end{align*}
    and also
    \begin{align*}
         \lim_\mu \lim_\lambda  \Phi_{z_\lambda} (T_\mu) &=\lim_{F \Subset \N} \lim_{G \Subset \N} \Phi_{z_G}(P_F) \\
         &= \lim_{F \Subset \N} \Phi_{I_{\ell^2} \otimes I_{\ell^2}} P_F \\
         &= I_{\ell^2}
    \end{align*}
    so the nets above witness that $I_{\ell^2}$ is a $\sigma(\mc M_* \eh \mc M_*)$-closed quantum graph that is not $\sigma(\mc M_* \h \mc M_*)$-closed.
\end{example}

\subsection{Implications of Quantum Graph Limits}

Here we make some observations on the projective limit above and collect some easy corollaries.

\begin{remark} \label{rmk: inductive limit}
    The work above is done for the projective limit of quantum graphs. The same procedure can be done for the \textit{inductive} limit of quantum graphs. Recall the categories of ${\mathbf{W^*}}$ and ${\mathbf{OpSp}}$ contain all small limits as well (Section \ref{sec: limits-prelim}). That is, 
    \[
    (\mc N, \varphi_k) := \varprojlim_{\bf W^*} \mc M_k \qquad \mc B := \varprojlim_{\bf OpSp} \mc \Ann_k
    \]
    both exist. The algebraic tensor product $\mc N \otimes \mc N$ is still $\sigma(\mc N_* \eh \mc N_*)$-dense in $\mc M \otimes_{eh} \mc M$. Following Lemma \ref{lem: alg-left-ideal}, we can show the set
    \[
    \bigg\{b \in \mc B : \exists k \in I, b_k \in \mc M_k^{(2)}  \text{ s.t. } \varphi_{k}^{(2)}(b_k) = b\bigg\}
    \]
    absorbs left multiplication by the following set
    \[
    \bigg\{z \in \mc N \otimes \mc N : \exists k \in I, z_k \in \mc M_k^{(2)}  \text{ s.t. } \varphi_{k}^{(2)}(z_k) = z\bigg\}.
    \]
The remainder of the argument follows similarly by using separate $w^*$ continuity of multiplication and approximation of elements.
\end{remark}

\begin{corollary}
    The inductive or projective limit of reflexive quantum graphs is not necessarily reflexive. 
\end{corollary}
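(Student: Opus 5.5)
We will produce an explicit counterexample, a projective system of reflexive classical graphs whose limit fails to be reflexive. Since a counterexample is all that is required, we work entirely with quantizations of classical graphs (Example \ref{ex: classical-to-quantum-graph}). By Corollary \ref{cor: K(H)graphex} these are $\sigma(\mc M_* \h \mc M_*)$-closed, so Theorem \ref{thm: qraphlim} applies. Reflexivity will be tested through Proposition \ref{prop: qr-to-ann}(1): the limit graph is reflexive iff $m(z)=0$ for every $z$ in the limit annihilator $\overline{\mc A}^{w^*}$.

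The idea is that a projective limit of finite vertex sets can collapse in a way that forces an annihilator element with nonzero multiplication. We dualize to a setting where the inductive limit of von Neumann algebras is not simply the union of the images. The candidate is $\mc M_j=\ell^\infty(V_j)$ with $V_j=\{0,1\}^j$ and $f^{j,k}$ the truncation maps. The graph $G_j$ has all self-loops, together with an edge $(v,w)$ exactly when $v$ and $w$ differ only in the last coordinate. Truncation sends such pairs to loops, so the truncations are graph morphisms.

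By Example \ref{ex: eh-ann}, $\Ann(\mc S_j)$ is spanned by $\chi_v\otimes\chi_w$ over non-edges $(v,w)$, which are pairs differing before the last coordinate. Pushing these forward by $\theta_{j,\infty}^{(2)}$ gives elements $\theta_{j,\infty}(\chi_v)\otimes\theta_{j,\infty}(\chi_w)$, whose factors are orthogonal cylinder projections. Each such element satisfies $m=0$, as reflexivity of the $G_j$ predicts.

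The obstacle is the closure step. Multiplication $m$ is $\sigma(\mc M_*\eh\mc M_*)$-continuous (Corollary \ref{cor: sigma-mult}), but it is \emph{not} continuous for $\sigma(\mc M_*\h\mc M_*)$. This is exactly the phenomenon exhibited in Example \ref{ex: non-example graph}, where $z_F\to I\otimes I$ while $\Phi_{z_F}(I)=0$. Lemma \ref{lemma: left ideal} builds the limit ideal as a $\sigma(\mc M_*\h\mc M_*)$-closure, so limit points can escape $\ker m$.

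The plan is therefore to choose the inductive limit $\mc M$ so that it is not atomic on the relevant part, or even $B(\ell^2)$-like. One could replace the classical $\ell^\infty(V_j)$ by matrix algebras $M_{2^j}$ with unital embeddings $x\mapsto x\otimes 1$ and take the limit in ${\bf W^*}$. Then one mimics Example \ref{ex: non-example graph}, finding elements of the $\Ann(\mc S_j)$ whose images converge in $\sigma(\mc M_*\h\mc M_*)$ to $1\otimes 1$. Since $m(1\otimes 1)=1\neq 0$, the limit is not reflexive. Concretely, one would take differences of approximately diagonal tensors such as $(P-Q^\dagger)\otimes(P+Q)$, supported on finite blocks. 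The main check is that these tensors lie in the annihilators of reflexive graphs at each finite stage.

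The inductive limit case goes the same way via Remark \ref{rmk: inductive limit}. Simplest of all, the statement only says the limit is ``not necessarily'' reflexive, so it suffices to exhibit the failure for one of the two kinds of limit. I would first try a degenerate case: a single-step system along the one-element directed set, where the closure of $\mc A$ in the weaker topology already differs from $\Ann(\mc S)$.
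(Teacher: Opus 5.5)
\textbf{There is a gap: no counterexample is actually produced.} Your mechanism is the same one the paper's short proof invokes: $m$ is not $\sigma(\mc M_*\h\mc M_*)$-continuous (Example \ref{ex: non-example graph}), while the limit annihilator of Lemma \ref{lemma: left ideal} is a $\sigma(\mc M_*\h\mc M_*)$-closure. But that discontinuity only shows the obvious argument for reflexivity breaks down. It does not show that the specific set $\mc A=\bigcup_j\theta^{(2)}_{j,\infty}(\Ann(\mc S_j))$ has a limit point outside $\ker m$. Since you set out to give an explicit counterexample, exhibiting that limit point \emph{is} the proof, and none of your three routes reaches it.

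\begin{enumerate}
\item You abandon the Cantor system before identifying $\varinjlim_{\mathbf{W^*}}\ell^\infty(\{0,1\}^j)$. It is $C(\{0,1\}^{\N})^{**}$, which already has a nonzero diffuse summand, so your stated reason for moving on (wanting a non-atomic limit) does not apply.
\item For the $M_{2^j}$ system you never specify the reflexive graphs $\mc S_j$, and you defer exactly the decisive check. The shift tensors of Example \ref{ex: non-example graph} also do not transfer. They rely on finite-rank $P_F\nearrow I$ and partial isometries with $Q_F^\dagger Q_F=P_F$, $P_FQ_F=0$. Inside the tracial algebra $\bigcup_j\theta_{j,\infty}(M_{2^j})$ such pairs force $\tau(P)\le 1/2$, so $P$ cannot approach $1$.
\item The one-element fallback cannot work. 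A limit over a one-object diagram is that object, and Theorem \ref{thm: qraphlim} is only claimed for $\sigma(\mc M_*\h\mc M_*)$-closed graphs, for which the closure of $\mc A$ is just $\Ann(\mc S)$. The graph $\C I_{\ell^2}$ on $B(\ell^2)$ is precisely the case the paper excludes.
\end{enumerate}

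\textbf{Your second route can be completed with a different witness.}
\begin{itemize}
\item Take $\mc S_j=\C 1=\mc M_j'$ on $\mc M_j=M_{2^j}$, so $\Ann(\mc S_j)=\ker m_j$ by Proposition \ref{prop: qr-to-ann}(1).
\item Take $\theta_{j,k}(x)=x\otimes 1$. These induce morphisms because $m\circ\theta^{(2)}_{j,k}=\theta_{j,k}\circ m$.
\item For a unitary error basis $\{u_a\}_{a\le 4^j}$ of $M_{2^j}$ (e.g.\ Pauli strings), put $z_j=1\otimes 1-4^{-j}\sum_a u_a\otimes u_a^*$. Then $m(z_j)=0$ and $\|z_j\|_{eh}\le 2$.
\item In any faithful normal representation of $\mc M$ one has $\mc H\cong\C^{2^j}\otimes\mc K_j$ with $\theta_{j,\infty}(x)=x\otimes 1$. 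So for trace-class $T$, $\Phi_{\theta^{(2)}_{j,\infty}(z_j)}(T)=T-2^{-j}(1\otimes\Tr_1 T)$, with $\Tr_1$ the partial trace over $\C^{2^j}$. The correction has norm at most $2^{-j}\|T\|_1$.
\item With the uniform bound, $\Phi_{\theta^{(2)}_{j,\infty}(z_j)}(T)\to T$ in norm for every compact $T$. Via $\mc M\eh\mc M\cong CB_{\mc M',\mc M'}(K(\mc H),B(\mc H))$, this gives $\theta^{(2)}_{j,\infty}(z_j)\to 1\otimes 1$ in $\sigma(\mc M_*\h\mc M_*)$.
\item Hence $1\otimes 1$ lies in the limit annihilator, the limit graph is $\{0\}$, and it is not reflexive.
\end{itemize}

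Your Cantor system also works. The element $1\otimes 1-\sum_{|a|=j-1}[a]\otimes[a]$, with $[a]$ the cylinder projections, lies in $\mc A$. It converges in $\sigma(\mc M_*\h\mc M_*)$ to an element whose multiplication is the nonzero diffuse central projection of $C(\{0,1\}^{\N})^{**}$.
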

\begin{proof}
    Example \ref{ex: non-example graph} demonstrates that although the multiplication map $m: \mc M \otimes_{\sigma h} \mc M$ (Corollary \ref{cor: sigma-mult}) can be defined on $\mc M \eh \mc M$, it is not $\sigma(\mc M_* \h \mc M_*)$-continuous. Hence we cannot conclude that the projective limit $\overline{\mc A}^{w^*}$ in Theorem \ref{thm: qraphlim} is the annihilator of a reflexive quantum graph even if the quantum graphs in the projective system are reflexive.
\end{proof}

\begin{corollary}
    The inductive or projective limit of symmetric quantum graphs is  symmetric. 
\end{corollary}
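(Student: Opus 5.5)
We prove this with Proposition \ref{prop: qr-to-ann}(2): a quantum graph is symmetric exactly when its annihilator is closed under the involution $\ddagger$. So it suffices to show that the limit annihilator $\overline{\mc A}^{w^*}$ from Theorem \ref{thm: qraphlim} is $\ddagger$-closed whenever each $\Ann(\mc S_j)$ is. We argue in two steps: first for the union $\mc A$, then for its closure. The inductive limit of Remark \ref{rmk: inductive limit} is handled by the same two steps with the maps $\varphi_k^{(2)}$ in place of $\theta_{j,\infty}^{(2)}$.

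\emph{Step 1 (the union).} We check that $\theta_{j,\infty}^{(2)}$ intertwines the involutions, i.e.\ $\theta_{j,\infty}^{(2)}(z^\ddagger) = (\theta_{j,\infty}^{(2)}(z))^\ddagger$ for $z\in\mc M_j^{(2)}$. On elementary tensors this holds because $\theta_{j,\infty}$ is a $*$-homomorphism:
\[
\theta_{j,\infty}^{(2)}\big((x\otimes y)^\ddagger\big) = \theta_{j,\infty}(y)^*\otimes\theta_{j,\infty}(x)^* = \big(\theta_{j,\infty}(x)\otimes\theta_{j,\infty}(y)\big)^\ddagger .
\]
To pass to all of $\mc M_j^{(2)}$, we write $z$ as a weak representation $\sum_i x_i\otimes y_i$. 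Then $z^\ddagger=\sum_i y_i^*\otimes x_i^*$, which is again a weak representation, since the adjoint map is weak$^*$-continuous and the slice functionals are preserved up to conjugation. By Theorem \ref{thm: eh extension}, $\theta_{j,\infty}^{(2)}$ acts termwise on weak representations, so the identity follows term by term. As a cross-check, both sides are $\sigma(\mc M_*\otimes_h\mc M_*)$-continuous (by Lemma \ref{lem: inv-cty} and normality of $\theta^{(2)}$), and they agree on the dense algebraic tensor product (Corollary \ref{cor: alg-eh-density}). Now take $a=\theta_{j,\infty}^{(2)}(a_j)\in\mc A$ with $a_j\in\Ann(\mc S_j)$. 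Then $a^\ddagger=\theta_{j,\infty}^{(2)}(a_j^\ddagger)$, and $a_j^\ddagger\in\Ann(\mc S_j)$ by symmetry of $\mc S_j$. Hence $a^\ddagger\in\mc A$.

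\emph{Step 2 (the closure).} Let $a\in\overline{\mc A}^{w^*}$ and choose a net $a_\mu\in\mc A$ with $a_\mu\to a$. By Lemma \ref{lem: inv-cty}, $\ddagger$ is continuous in both $\sigma(\mc M_*\otimes_h\mc M_*)$ and $\sigma(\mc M_*\eh\mc M_*)$. Therefore $a_\mu^\ddagger\to a^\ddagger$ in whichever of the two topologies was used to form the closure, and each $a_\mu^\ddagger$ lies in $\mc A$ by Step 1. So $a^\ddagger\in\overline{\mc A}^{w^*}$. Applying Proposition \ref{prop: qr-to-ann}(2) shows the limit graph is symmetric.

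The main obstacle is the extension in Step 1, namely that $\theta^{(2)}$ commutes with $\ddagger$ beyond elementary tensors. The termwise action on weak representations from Theorem \ref{thm: eh extension} settles it. If one prefers a continuity argument, one must use normality of $\theta^{(2)}$ in a topology in which the algebraic tensor product is dense; $\sigma(\mc M_*\otimes_h\mc M_*)$ serves by Corollary \ref{cor: alg-eh-density}.
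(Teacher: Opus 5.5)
Your proposal is correct and follows the paper's route. The paper's proof is just the observation that $\ddagger$ is $\sigma(\mc M_*\h\mc M_*)$-continuous (Lemma \ref{lem: inv-cty}), so $\overline{\mc A}^{w^*}$ stays $\ddagger$-invariant and Proposition \ref{prop: qr-to-ann}(2) applies; this is your Step 2. Your Step 1 (that $\theta_{j,\infty}^{(2)}$ intertwines $\ddagger$, so $\mc A$ itself is $\ddagger$-invariant) is left implicit in the paper, as is your hedge that $\ddagger$ is continuous in both predual topologies. That hedge covers the paper's inconsistent choice of closure topology between Lemma \ref{lemma: left ideal} and Theorem \ref{thm: qraphlim}.
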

\begin{proof}
    This follows from the $\sigma(\mc M_* \h \mc M_*)$-continuity of the involution $(x \otimes y)^\ddagger$ (Lemma  \ref{lem: inv-cty}). 
\end{proof}

Since we framed connectedness in terms of morphisms (Definition \ref{defn: connected}), we also immediately have the following corollary.

\begin{corollary}
    If $(\mc M_k, \Ann_k, \varphi_{j,k})$ is an inductive system of quantum relations that are each not strongly connected, the inductive limit is also not strongly connected.
\end{corollary}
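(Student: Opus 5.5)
The plan is to reduce the statement to the universal property of the limit by using the morphism characterization of strong connectedness, namely the Proposition following Definition \ref{defn: connected}. A quantum graph $\mc S$ on $\mc M$ fails to be strongly connected exactly when some $\mathbf{W^*}$ morphism $\theta:\ell^\infty(\{1,2\})\to\mc M$ satisfies $(\theta\otimes\theta)(\Ann(T_2))\subseteq\Ann(\mc S)$ with $\theta(\chi_1)\neq 0\neq\theta(\chi_2)$. In other words, it fails exactly when there is a quantum graph morphism $\mc S\to T_2$ that does not factor through a single vertex. The aim is therefore to produce such a morphism out of the limit object.

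First I fix the directions. The connecting morphisms of the inductive system of quantum graphs are implemented, contravariantly, by $\mathbf{W^*}$ morphisms $\psi_{j,k}:\mc M_k\to\mc M_j$ for $j\le k$. By Remark \ref{rmk: inductive limit}, the limit object sits on $\mc N:=\varprojlim_{\bf W^*}\mc M_k$ with projections $\varphi_k:\mc N\to\mc M_k$. Its annihilator is the $\sigma(\mc N_*\h\mc N_*)$-closed left ideal obtained, exactly as in Lemma \ref{lemma: left ideal}, from the $\varphi_k^{(2)}$-preimages of the $\Ann_k$. By hypothesis each $\mc S_k$ has a witness $\theta_k:\ell^\infty(\{1,2\})\to\mc M_k$, equivalently a non-trivial projection $p_k=\theta_k(\chi_2)$ with $(1-p_k)\otimes p_k\in\Ann_k$.

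Second, composition of morphisms gives coherence downward. Composing $\mc S_j\to\mc S_k\to T_2$ shows that $\psi_{j,k}\circ\theta_k$ again satisfies $(\cdot\otimes\cdot)(\Ann(T_2))\subseteq\Ann_j$. This holds because $\psi_{j,k}^{(2)}$ is a homomorphism carrying $\Ann_k$ into $\Ann_j$, and $\Ann(T_2)$ is spanned by the two mixed indicator tensors. I then want a family $(\theta_k)_k$ with $\psi_{j,k}\circ\theta_k=\theta_j$ for all $j\le k$. Such a family is a cone over the projective system of $\mathbf{W^*}$ algebras, so the universal property of $\mc N$ yields a unique $\theta:\ell^\infty(\{1,2\})\to\mc N$ with $\varphi_k\circ\theta=\theta_k$.

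The check that $\theta$ is a morphism from the limit graph to $T_2$ then goes through the annihilator. The element $(1-\theta(\chi_2))\otimes\theta(\chi_2)$ maps under every $\varphi_k^{(2)}$ to $(1-p_k)\otimes p_k\in\Ann_k$, so it lies in the limit annihilator. Non-triviality also passes to the limit: since $\varphi_k(\theta(\chi_i))=\theta_k(\chi_i)\neq 0$, neither $\theta(\chi_1)$ nor $\theta(\chi_2)$ vanishes. Hence the limit is not strongly connected.

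The main obstacle is the coherence step, that is, producing a compatible family of witnesses. I would take the set $W_k$ of non-trivial witnesses at level $k$, which is non-empty by hypothesis. The difficulty is that $\psi_{j,k}(p_k)$ need not be non-trivial: it could be $0$ or $1$, because unitality only guarantees $\psi_{j,k}(1-p_k)=1-\psi_{j,k}(p_k)$. To handle this, I would pass to the subsets $W_k'\subseteq W_k$ of witnesses whose images remain non-trivial at every lower level, and then take an inverse limit of the non-empty sets $(W_k')$ under the maps $p\mapsto\psi_{j,k}(p)$. For this I would try a compactness (König/Zorn-type) argument, using $\sigma(\mc M_{k*})$-compactness of the projection-valued witnesses of norm $\le1$ together with normality of the $\psi_{j,k}$. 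The weak point of this approach is showing that each $W_k'$ is non-empty. If that fails, the argument at least yields the statement for systems in which the connecting maps preserve non-triviality of some witness, and I would isolate precisely that as the place where the "immediate" argument needs care.
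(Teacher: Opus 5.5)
The coherence step you flag at the end is a genuine gap, and it cannot be closed. The corollary is false as stated, so no choice of witnesses will rescue it.

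Take $\mc M_k=\ell^\infty(\{1,2\})=\C^2$ and $\mc S_k=T_2$ (two looped vertices) for every $k\in\N$, so $\Ann_k=\Span\{\chi_1\otimes\chi_2,\chi_2\otimes\chi_1\}$. Let every connecting map $\psi_{j,k}:\mc M_k\to\mc M_j$ be $(x,y)\mapsto(x,x)$, i.e.\ $\psi_{j,k}(\chi_1)=1$ and $\psi_{j,k}(\chi_2)=0$. Classically this collapses both vertices of $G_j$ onto one looped vertex of $G_k$. Then $\psi_{j,k}^{(2)}(\Ann_k)=0\subseteq\Ann_j$, so this is an inductive system of quantum graph morphisms. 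Each $\mc S_k$ fails to be strongly connected (take $p=\chi_2$).

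However, a compatible family $x_k=(\alpha_k,\beta_k)$ with $\psi_{k,k+1}(x_{k+1})=x_k$ forces $\alpha_k=\beta_k=\alpha_{k+1}$ for all $k$, so $\varprojlim_{\mathbf{W^*}}\mc M_k=\C 1$. This algebra has no non-trivial projections, so whatever annihilator the limit carries, the limit graph is (vacuously) strongly connected. Classically, the colimit is a single looped vertex. The same maps also defeat the opposite reading of ``inductive system'' used in Section~\ref{sec: limit of q graphs}, since the $\mathbf{W^*}$ colimit of $\C^2\to\C^2\to\cdots$ along $(x,y)\mapsto(x,x)$ is again $\C$. In this example your sets $W_k'$ are empty for every $k\ge 2$: the only witnesses are $\chi_1,\chi_2$, and they are sent to $1$ and $0$.

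The paper's one-line proof has exactly the gap you isolated. It picks a witness at each level and appeals to the universal property of the limit, without checking that these witnesses form a compatible cocone of morphisms $\mc S_k\to T_2$ or that non-triviality survives.

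What your argument does prove is the corrected statement with compatible witnesses: non-trivial projections $p_k\in\mc M_k$ with $(1-p_k)\otimes p_k\in\Ann_k$ and $\psi_{j,k}(p_k)=p_j$ for $j\le k$. In that setting:
\begin{itemize}
\item the cone property gives $\theta$;
\item the identity $\varphi_k(\theta(\chi_i))=\theta_k(\chi_i)\neq 0$ gives non-triviality;
\item the membership check is correct once the limit annihilator is identified as $\bigcap_k(\varphi_k^{(2)})^{-1}(\Ann_k)$, the largest left ideal $L$ with $\varphi_k^{(2)}(L)\subseteq\Ann_k$ for all $k$. That is what the colimit property calls for; Lemma~\ref{lemma: left ideal} describes closures of images and is not the relevant object here.
\end{itemize}
Your fallback hypothesis $W_k'\neq\emptyset$ does yield such a family by your inverse-limit argument when the $\mc M_k$ are finite-dimensional, since the witness sets are then norm-compact. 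In general the compactness you invoke is not available. Non-triviality is not a $\sigma(\mc M_{k*})$-closed condition (e.g.\ $\chi_{\{n\}}\to 0$ weak$^*$ in $\ell^\infty(\N)$), and the projections need not even form a weak$^*$-closed set.
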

\begin{proof}
    If each $(\mc M_k, \Ann_k)$ is a strongly disconnected quantum relation, for each $k \in I$ there exists a $\mathbf{W^*}$ morphism $\C^2 \overset{\psi_k}{\to} \mc M_k$ inducing a non-trivial morphism from $\Ann(T_2)$ to $\Ann(\mc S_k)$. By the universal property of the {\bf{OpSp}} limit, there must be a non-trivial morphism from $\Ann(T_2)$ to $\overline{\mc B}^{w^*}$. 
\end{proof}

\section{Operator C*-Spaces} \label{sec: opc*sp}

Working with annihilators $\Ann(\mc S)$ instead of the bimodules $\mc S$ directly seems rather unintuitive. Moreover, much of the motivation for quantum graphs arises from the finite-dimensional case in which the dualization of the vertex set (i.e., $C(V)$ or $\ell^\infty(V)$) is both a $C^*$-algebra and a von Neumann algebra. This section explores the implications of alternative choices for these conventions; that is, we will generalize graphs as $C^*$-algebra bimodules and attempt to define a morphism strictly in terms of the bimodules. 

A belated acknowledgement: This entire paper was initially inspired by \cite{mawtod} in which the authors take some projective limits along $\N$ of finite classical graphs and define their graphs on $C^*$-algebras instead of von Neumann algebras. We sketch their construction below and use it to inspire our $C^*$-bimodule graphs.

\subsection{Operator C*-Systems}

The authors of \cite{mawtod} define the category of \textit{operator $C^*$-systems} which we will denote ${\mathbf {OC^*Sy}}$. The objects in this category consist of an operator system $\mc S$ and a unital $C^*$-algebra $\mc A$ such that $\mc S$ is an $\mc A$-bimodule. The authors use the representation-independent notion of operator system consisting of an order unit and cones, but for this section it will suffice to know that every operator system ``is'' a unital, $*$-closed subspace of some unital $C^*$-algebra where the unit in $\mc S$ is the same unit as the $C^*$-algebra. A morphism of this category is a pair $(u, \pi): (\mc S_1, \mc A_1) \to (\mc S_2, \mc A_2)$ such that $u: \mc S_1 \to \mc S_2$ is a UCP map and $\pi: \mc A_1 \to \mc A_2$ is a ${\bf C^*}$ morphism (i.e., a unital $*$-homomorphism) such that for all $a_1, a_2 \in \mc A$ and $s \in \mc S$
\begin{align} \label{eqn: opsys morphism}
u(a_1 \cdot s \cdot a_2) = \pi(a_1) \cdot u(s) \cdot \pi(a_2)
\end{align}
where we use $\cdot$ to indicate the bimodule action here. The authors then use the category above to define the inductive limit of graph operator systems (these are assumed to be undirected graphs with all self-loops). Let $\mc S_k \subseteq B(\mc H_k)$ be classical finite graph operator systems which will necessarily be bimodules over the diagonal algebras $\mc D_k$ in each $B(\mc H_k)$. Assume there are ${\bf{C^*}}$ morphisms $\pi_{k,k+1} : B(\mc H_k) \to B( \mc H_{k+1})$ such that we get the following inductive system
\begin{center}
\begin{tikzcd}
B(\mathcal H_1) \arrow[r, "\pi_{1,2}"] & B(\mathcal H_2) \arrow[r, "\pi_{2,3}"]  & B(\mathcal H_3) \arrow[r, "\pi_{3,4}"]  & ...            
\end{tikzcd}
\end{center}
and that $\pi_k(\mc S_k) \subseteq \mc S_{k+1}$. Being unital $*$-homomorphisms, the $\pi_k$ maps induce an inductive systems of the operator systems in the category ${\bf{OS}}$ of operator systems
\begin{center}
    \begin{tikzcd}
    \mc S_1 \arrow[r, "\pi_{1,2}"] & \mc S_2 \arrow[r, "\pi_{2,3}"]  & \mc S_3 \arrow[r, "\pi_{3,4}"]  & ...            
    \end{tikzcd}
\end{center}
and of the diagonal algebras in each $B(\mc H_k)$ 
\begin{center}
    \begin{tikzcd}
    \mc D_1 \arrow[r, "\pi_{1,2}"] & \mc D_2 \arrow[r, "\pi_{2,3}"]  & \mc D_3 \arrow[r, "\pi_{3,4}"]  & ...            
    \end{tikzcd}
\end{center}
where the last inductive system is taken in ${\bf{C^*}}$. The authors then go on to show that
\[
(\varinjlim_{\bf{C^*}}\mc D_k, \overline{\varinjlim_{\bf{OS}} \mc S_k}^{\|\cdot\|}, \varinjlim_{\bf{C^*}}B(\mc H_k))
\]
is the inductive limit of $(\mc D_k, \mc S_k, B(\mc H_k))_{k \in \N}$ in ${\mathbf {OC^*Sy}}$ and also define a corresponding inductive limit graph.

\begin{remark} \label{rmk: when is it ucp}
    Note that not every classical graph morphism $G_1 \leftarrow G_2$ induces a UCP map $\mc S_1 \to \mc S_2$ on the graph operator systems. Consider the two graphs 
    \begin{align*} 
    G_1 &= (V_1 = \{v\}, E_1 = \{(v,v)\}) \\
    G_2 &= (V_2 = \{w_1, w_2\}, E_2 = \{(w_1, w_1), (w_2,w_2), (w_1,w_2)\}).
    \end{align*}
    The map
    \begin{align*}
    V_1 \overset{f}{\leftarrow} V_2 
    \end{align*}
    such that $v = f(w_1)= f(w_2)$ induces a graph morphism $G_1 \leftarrow G_2$, but the corresponding induced map $\mc S_1 \overset{\phi}{\leftarrow} \mc S_2$ is not trace-preserving. For instance, we have
    \[
    |v\>\< v| = \phi(|w_1\>\<w_2|).
    \]
    The (pre)adjoint $\phi^\dagger: \mc S_1 \to \mc S_2$ therefore cannot be unital. However, this is essentially the only obstruction. A classical graph morphism $G_1 \overset{f}{\leftarrow} G_2$ will induce a UCP map $\mc S_1 \to \mc S_2$ if and only if 
    \[
    (u,v) \in E_2 \text{ and } u \neq v\implies f(u) \neq f(v).
    \]
\end{remark}

Although classical graph morphisms (between finite graphs) do not generally induce morphisms of operator \textit{systems}, they do always induce morphisms of operator \textit{spaces}. To reframe the \cite{mawtod} work for arbitrary classical graph morphisms, we introduce \textit{operator $C^*$-spaces} in analogy with operator $C^*$-systems. 

\subsection{Operator C*-Spaces} 

Let us begin with the finite classical case where we have a map $V_1 \overset{f}{\leftarrow} V_2$ inducing a morphism of graphs $G_1 = (V_1, E_1)$ and $G_2 = (V_2, E_2)$. The map $f$ induces a unital $*$-homomorphism $C(V_1) \overset{\pi}{\to} C(V_2)$ via the formula
\begin{align*}
    \pi: C(V_1) &\to C(V_2) \\
    g &\mapsto g \circ f
\end{align*}
in the same manner as the quantum graph case (Subsection \ref{subsec: morphisms of qgraphs}). Also as before, if we equip $C(V_k)$ with the standard inner product we have the maps
    \begin{center}
    \begin{tikzcd}
              & V_1            &  & V_2 \arrow[ll, "f"']         \\
            \mathbf {C^*} & C(V_1) \arrow[rr, "\pi"] &  & C(V_2)           \\
            \mathbf {C^*}^{op}              & C(V_1)         &  & C(V_2) \arrow[ll, "\pi^\dagger"']   .
    \end{tikzcd}
    \end{center}
    Equip $C(V_k \times V_k) \cong C(V_k) \otimes C(V_k) $ with the standard inner product to obtain the map $e = \pi^\dagger \otimes \pi^\dagger$:
    \begin{center}
        \begin{tikzcd}
            \mathbf{OpSp} & C(V_1) \otimes C(V_1)  &  & C(V_2) \otimes C(V_2). \arrow[ll, "e"']  
        \end{tikzcd}
    \end{center}
    Define the operator spaces
    \[
    \mc S_k := \Span\{|v\>\<w| : (v,w) \in E_k\} \subset C(V_k) \otimes C(V_k).
    \]
    If $f$ induces a graph morphism, then in particular $e$ must map $\mc S_2$ into $\mc S_1$:
    \begin{center}
        \begin{tikzcd}
            \mathbf{OpSp} & C(V_1) \otimes C(V_1) \supseteq \mathcal S_1   &  & \mathcal S_2 \subseteq C(V_2) \otimes C(V_2). \arrow[ll, "e"']  
        \end{tikzcd}
    \end{center}

(Compare this construction to the operator $C^*$-systems above: the pair $(e^\dagger, \theta)$ implements an operator $C^*$-system homomorphism if and only if $e$ is a UCP map. We saw in Remark \ref{rmk: when is it ucp} that $e$ is not generally UCP when $f$ induces a graph morphism.)
\begin{proposition}
        Let $G_1 = (V_1, E_1)$ and $G_2 = (V_2, E_2)$ be finite classical graphs. Let $V_1 \overset{f}{\leftarrow} V_2$, $C(V_1) \overset{\pi}{\to} C(V_2)$, and $ C(V_1) \otimes C(V_1) \overset{e}{\leftarrow}  C(V_2) \otimes C(V_2)$ be defined as above. Then 
        \begin{align} \label{eqn: c*graph hom}
        a_1 \cdot e(s) \cdot a_2 = e(\pi(a_1) \cdot s \cdot \pi(a_2))
        \end{align}
        and $f$ induces a graph morphism if and only if $\mc S_1 \supseteq e(\mc S_2)$.
\end{proposition}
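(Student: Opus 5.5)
We compute everything on the standard bases. Write $\chi_u$ for the indicator of $u \in V_k$, and identify $\chi_v \otimes \chi_w \in C(V_k)\otimes C(V_k)$ with $|v\>\<w|$; this is the identification that makes $\mc S_k \subseteq C(V_k)\otimes C(V_k)$. As computed in Subsection \ref{subsec: classical graph morphisms}, with the standard inner products we have
\[
\pi(\chi_u) = \sum_{f(v)=u}\chi_v \qquad \text{and} \qquad \pi^\dagger(\chi_v) = \chi_{f(v)}.
\]
Hence
\[
e(|v\>\<w|) = |f(v)\>\<f(w)|.
\]
The bimodule action of $C(V_k)$ on $C(V_k)\otimes C(V_k)$ is the one inherited from viewing $|v\>\<w|$ as an operator on $\ell^2(V_k)$, with $C(V_k)$ acting as diagonal multiplication operators. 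So for $a_1,a_2 \in C(V_k)$,
\[
a_1\cdot|v\>\<w|\cdot a_2 = a_1(v)\,a_2(w)\,|v\>\<w|.
\]

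For Equation \ref{eqn: c*graph hom}, both sides are bilinear in $(a_1,a_2)$ and linear in $s$. It therefore suffices to check the identity for $a_1 = g_1$, $a_2 = g_2 \in C(V_1)$ arbitrary and $s = |v\>\<w|$ with $v,w \in V_2$. Since $\pi(g) = g\circ f$, the right-hand side is
\[
e\big(g_1(f(v))\,g_2(f(w))\,|v\>\<w|\big) = g_1(f(v))\,g_2(f(w))\,|f(v)\>\<f(w)|.
\]
The left-hand side is
\[
g_1\cdot|f(v)\>\<f(w)|\cdot g_2 = g_1(f(v))\,g_2(f(w))\,|f(v)\>\<f(w)|.
\]
The two agree. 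The only point needing care is fixing the correct bimodule convention: left action by the first tensor factor and right action by the second, matching the operator picture. With any other convention the identity could fail or hold only up to a swap.

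For the equivalence, suppose first that $f$ induces a morphism. Then for $(v,w)\in E_2$ we have $(f(v),f(w))\in E_1$, so $e(|v\>\<w|) = |f(v)\>\<f(w)| \in \mc S_1$. By linearity, $e(\mc S_2)\subseteq \mc S_1$.

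Conversely, suppose $e(\mc S_2)\subseteq\mc S_1$ and take $(v,w)\in E_2$. Then $|f(v)\>\<f(w)| = e(|v\>\<w|)\in\mc S_1 = \Span\{|x\>\<y| : (x,y)\in E_1\}$. Because the matrix units $|x\>\<y|$ are linearly independent, a single matrix unit lies in this span exactly when its index pair is an edge. Hence $(f(v),f(w))\in E_1$.

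No step here is a genuine obstacle. The argument is a direct basis computation, and the main thing to get right is the bookkeeping of conventions (the identification of $\pi^\dagger$ and the bimodule action).
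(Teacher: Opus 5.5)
Your proposal is correct and follows the paper's proof: a direct computation on the matrix units $|v\rangle\langle w|$, using $e(|v\rangle\langle w|) = |f(v)\rangle\langle f(w)|$ and the diagonal bimodule action, followed by an edge-by-edge comparison for the equivalence. The only differences are cosmetic. You verify the intertwining identity for arbitrary $g_1, g_2$ via the factor $g_1(f(v))\,g_2(f(w))$, where the paper uses indicators $\chi_t, \chi_u$ and a case split. You also spell out the linear-independence step in the converse, which the paper leaves as ``clearly.''
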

\begin{proof}
    We first check that Equation \ref{eqn: c*graph hom} is does indeed hold. Since we are working in finite dimensions, it suffices to show equality for $a_1 = \chi_t$, $a_2 = \chi_u$ and $s = |v\>\<w|$. In this case, Equation \ref{eqn: c*graph hom} becomes
    \[
    \chi_t \cdot |f(v)\>\<f(w)| \cdot \chi_u = e\bigg( \bigg( \sum_{t = f(v')} \chi_{v'}  \bigg) |v\>\<w| \bigg( \sum_{u = f(w')} \chi_{w'}\bigg)\bigg).
    \]
    The LHS is 0 if and only if $f(v) \ne t$ or $f(w) \ne u$. Therefore either 
    \[
    \bigg(\sum_{t = f(v')} \chi_{v'}  \bigg) |v\> = 0 \qquad \text{or} \qquad \<w| \bigg( \sum_{u = f(w')} \chi_{w'}\bigg) = 0
    \]
    so the equation holds in that case. On the other hand, the LHS is non-zero if and only if 
    \[
     \chi_t \cdot |f(v)\>\<f(w)| \cdot \chi_u =  |f(v)\>\<f(w)| \iff f(v) = t \text{ and } f(w) = u
    \]
    in which case the RHS becomes 
    \[
    e\bigg( \bigg( \sum_{t = f(v')} \chi_{v'}  \bigg) |v\>\<w| \bigg( \sum_{u = f(w')} \chi_{w'}\bigg)\bigg) = e(|v\>\<w|) = |f(v)\>\<f(w)|
    \]
    just as claimed. Finally, we see that $f$ induces a graph morphism if and only if for every $|v\>\<w| \in \mc S_2$ we have $|f(v)\>\<f(w)| \in \mc S_1$, which is clearly exactly the condition that $\mc S_1 \supseteq e(\mc S_2)$.
\end{proof}

We now generalize to arbitrary dimensions. In such cases, we cannot always impose an inner product structure on the $C^*$-algebra $C(V)$, and so the adjoint maps $\pi^\dagger$ and $e = \pi^\dagger \otimes \pi^\dagger$ are not defined. There is also the issue that the algebraic tensor product $C(V) \otimes C(V)$ is not necessarily an operator space. We will therefore simply ask for a $C^*$-algebra morphism $\pi$ and an operator space morphism $e$ that satisfies Equation \ref{eqn: c*graph hom} in order to define a morphism of these bimodules. (For a classical, finite graph this would correspond to asking for both a map $V_1 \overset f \leftarrow V_2$ and a map $E_1 \overset e \leftarrow E_2$ such that
\[
(f(v), f(w)) = e((v,w)).
\]
There may be other ways around the ill-definedness of $^\dagger$ of which I am unaware.)

\begin{definition} \label{defn: c*graph}
    A \textit{C$^*$-graph} is a unital $C^*$-algebra $\mc A$ and an operator space $\mc S$ such that $\mc S$ is an $\mc A$-bimodule. If $(\mc A_1, \mc S_1)$ and $(\mc A_2, \mc S_2)$ are C$^*$-graphs, a pair of maps
    \[
    \mc S_1 \overset{e}{\longleftarrow} \mc S_2 \qquad \mc A_1 \overset{\pi}{\longrightarrow} \mc A_2
    \]
    is a \textit{C$^*$-graph morphism} from $(\mc A_2, \mc S_2)$ to $(\mc A_1, \mc S_1)$ if $e$ is completely contractive and $\pi$ is a $*$-homomorphism such that
    \begin{align} \label{eqn: c*graphmorph}
    a_1 \cdot e(s) \cdot a_2 = e(\pi(a_1) \cdot s \cdot \pi(a_2))
    \end{align}
    for all $a_1,a_2 \in \mc A_1$ and $s \in \mc S_2$.
\end{definition}

\begin{remark}
    Note that Equation \ref{eqn: c*graphmorph} is \textit{not} the usual notion of morphism of $C^*$-correspondences (or of operator bimodules). Generally, one would expect a morphism $X_{\mc A} \to Y_{\mc B}$ of right modules to be given by two maps $e:X_{\mc A} \to  Y_{\mc B}$ and $\pi: A \to B$ such that 
    \[
    e(x \cdot a) = e(x) \cdot \pi(a).
    \]
    In particular, $e$ and $\pi$ are \textit{covariant}. However, we see that our maps $e$ and $\pi$ in Definition \ref{defn: c*graph} above must be contravariant to recover the classical case. 
\end{remark}
    
Naturally, we would like to see if this category has limits. We will show that projective limits exist. See \cite[Chapter 2.6]{tensor-products-pisier} for a quick construction of the (amalgamated) free product of $C^*$-algebras; we hope this convinces the reader that one can take inductive limits in the category ${\bf C^*}$.
    
\begin{theorem} \label{thm: c*graphlimit}
    If $((\mc A_j, \mc S_j),(\pi_{j,k},e^{j,k}))_{j \in J}$ is a projective system of $C^*$-Graphs then
    \[
   \bigg((\mc A := \varinjlim_{\mathbf{C^*}} \mc A_j, \pi_{j,\infty}), (\mc S := \varprojlim_{\mathbf{OpSp}} \mc S_j, e^{j, \infty})\bigg)
    \]
    is its projective limit. 
\end{theorem}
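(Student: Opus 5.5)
To show that the proposed pair is the projective limit, I would first equip $\mc S$ with an $\mc A$-bimodule structure, then check that $(\pi_{j,\infty}, e^{j,\infty})$ are $C^*$-graph morphisms, and finally verify the universal property.

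\textbf{Step 1: the candidate and its bimodule structure.} Take $\mc S = \varprojlim_{\bf OpSp} \mc S_j$ in the concrete form given by the product/equalizer description (Theorem \ref{thm: products-equalizers}, \cite[Propositions 4.11, 4.13]{categoryopsp}). That is, $\mc S$ is the set of bounded families $(s_j)_j \in \prod^\infty_j \mc S_j$ with $e^{j,k}(s_k) = s_j$ for $j \le k$, and $e^{j,\infty}$ is the coordinate projection. Take $\mc A = \varinjlim_{\bf C^*} \mc A_j$, which is the norm closure of $\bigcup_j \pi_{j,\infty}(\mc A_j)$.

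For $a = \pi_{i,\infty}(a_i)$ and $b = \pi_{i,\infty}(b_i)$, define
\[
a\cdot (s_j)_j \cdot b := \big(\pi_{i,j}(a_i)\cdot s_j\cdot \pi_{i,j}(b_i)\big)_{j \ge i},
\]
extended to all $j$ by the compatibility maps. Equation \ref{eqn: c*graphmorph} for $e^{j,k}$ gives
\[
e^{j,k}\big(\pi_{i,k}(a_i) s_k \pi_{i,k}(b_i)\big) = \pi_{i,j}(a_i)\, e^{j,k}(s_k)\, \pi_{i,j}(b_i),
\]
so this family is again compatible. Because $J$ is directed and only coordinates $j \ge i$ matter, the family is determined by its cofinal part, and the definition is independent of the representative $a_i$.

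\textbf{Step 2: extension to all of $\mc A$ and the main obstacle.} The hard part is extending this action from the dense $*$-subalgebra $\bigcup_j \pi_{j,\infty}(\mc A_j)$ to $\mc A$, and showing the result is an operator bimodule. The plan is to bound
\[
\|a\cdot s\cdot b\| \le \|a\|\,\|s\|\,\|b\|,
\]
including at matrix levels. Each $\mc S_j$ is an operator $\mc A_j$-bimodule, so at level $j$ the bound $\|\pi_{i,j}(a_i)\|\,\|s_j\|\,\|\pi_{i,j}(b_i)\|$ holds. The norm of $a$ in the $C^*$-inductive limit is $\lim_j \|\pi_{i,j}(a_i)\|$, which equals $\inf_j \|\pi_{i,j}(a_i)\|$ since the maps are contractive. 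The $\sup$ over $j$ in the $\ell^\infty$ norm needs care, but the compatible families are determined by their cofinal part, which controls it. Continuity then extends the action. The Christensen–Effros–Sinclair characterization identifies the result as an operator bimodule in the sense of Definition \ref{defn: op-modules}. Once this is in place, each $e^{j,\infty}$ is completely contractive as a coordinate projection. Equation \ref{eqn: c*graphmorph} holds on the dense subalgebra by construction and extends by continuity.

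\textbf{Step 3: the universal property.} Let $(\mc B, \mc T)$ be a $C^*$-graph with morphisms $(\rho_j, f^j)$ compatible with the system. The universal property of $\varinjlim_{\bf C^*}$ gives a unique $\rho:\mc A\to\mc B$ with $\rho\circ\pi_{j,\infty} = \rho_j$. The universal property of $\varprojlim_{\bf OpSp}$ gives a unique complete contraction $f:\mc T\to\mc S$, namely $f(t) = (f^j(t))_j$, with $e^{j,\infty}\circ f = f^j$.

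It remains to check that $(\rho,f)$ satisfies Equation \ref{eqn: c*graphmorph}. For $a = \pi_{i,\infty}(a_i)$, the $j$-th coordinate of $a\cdot f(t)\cdot b$ is
\[
\pi_{i,j}(a_i)\, f^j(t)\, \pi_{i,j}(b_i) = f^j\big(\rho_j(\pi_{i,j}(a_i))\, t\, \rho_j(\pi_{i,j}(b_i))\big) = f^j\big(\rho(a)\, t\, \rho(b)\big),
\]
which is exactly the $j$-th coordinate of $f(\rho(a)\cdot t\cdot\rho(b))$. Density and continuity extend the identity to all of $\mc A$. Uniqueness of $(\rho,f)$ follows from the uniqueness in each of the two component universal properties.
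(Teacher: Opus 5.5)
Your construction is the paper's. The coordinates $j\ge i$ of $a\cdot s\cdot b$ are $\pi_{i,j}(a_i)\,s_j\,\pi_{i,j}(b_i)$, the remaining ones are forced by the $e^{j,k}$, the action is extended by norm density, and uniqueness comes from the two component universal properties. Your Step 2 estimate and the explicit check in Step 3 supply details the paper only asserts.

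The one step that does not hold up as written is in Step 1. There you deduce independence of the representative $a_i$ from directedness and from the fact that a compatible family is determined by a cofinal set of coordinates. That inference needs two representatives of the same element to agree at some finite stage. In $\varinjlim_{\mathbf{C^*}}$ they need not: $\pi_{i,\infty}(a_i)=\pi_{i',\infty}(a'_{i'})$ only means $\|\pi_{i,l}(a_i)-\pi_{i',l}(a'_{i'})\|\to 0$. For example, take the restriction maps $C[0,1/n]\to C[0,1/(n+1)]$. The limit is $\C$ (evaluation at $0$), and $x\mapsto x$ and $0$ have the same image even though their images in $C[0,1/n]$ differ for every $n$. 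So cofinality alone gives no reason for your two families to coincide. The paper's proof makes the same tacit assumption: it uses a single $a_{\ell_1}$ for both representatives $a_{j_1}$ and $a_{k_1}$.

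The fix is your own Step 2 bound, proved first and then used for well-definedness. Fix a representative and let $t$ be the compatible family you defined. For any $j$ and any $l\ge i,j$, we have $t_j=e^{j,l}(t_l)$ with $e^{j,l}$ contractive, so
\[
\|t_j\|\le\|t_l\|\le\|\pi_{i,l}(a_i)\|\,\|s\|\,\|\pi_{i,l}(b_i)\|.
\]
The right side decreases in $l$ to $\|\pi_{i,\infty}(a_i)\|\,\|s\|\,\|\pi_{i,\infty}(b_i)\|$. This is exactly the mechanism behind your remark that the cofinal part controls the supremum, and it works verbatim at matrix levels.

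Now replace both representatives by their images at a common index $l$; this is where your cofinality remark is legitimately used. Their difference $x_l\in\mc A_l$ satisfies $\|\pi_{l,\infty}(x_l)\|=0$. By the bound, applied in each variable separately using bilinearity, the family produced by $x_l$ has norm $0$, so the two families agree. With that reordering, the extension to $\mc A$, the Christensen--Effros--Sinclair step, and your verification of the universal property all go through.
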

\begin{proof}
    The uniqueness of the object $(\mc A,\mc S)$ in the projective limit follows from the uniqueness of the limits in $\mathbf{OpSp}$ and $\mathbf{C^*}$. We will verify that $\mc S$ is an $\mc A$-bimodule in a canonical way and each $(e^{j,\infty}, \pi_{j,\infty})$ induces a $C^*$-graph morphism from $(\mc A,\mc S)$ to $(\mc A_j,\mc S_j)$. We thus need only show that $\mc S$ is an $\mc A$-bimodule. There are many ways to write down the inductive/projective limits (again, see \cite{Daws-Limits} for inductive limits of Banach algebras). For $C^*$-algebras, the inductive limit can be realized as
    \[
    \mc A:= \varinjlim_{\mathbf{C^*}} \mc A_j  = \overline{\bigcup_j \pi_{j,\infty}(\mc A_j)}^{\|\cdot\|}.
    \]
    where $\|\cdot \|$ is the norm on $\mc A$. The projective limit in $\mathbf{OpSp}$ can be written as
    \[
    \mc S :=  \varprojlim_{\mathbf{OpSp}} \mc S_j = \bigg\{(s_j)_{j \in J} \in \prod_{j \in J}^\infty \mc S_j: s_j \overset{e^{j,k}}{\mapsfrom} s_k\bigg\}.
    \]
    We will only show the left action $\bigcup_j \pi_{j,\infty}(\mc A_j) \cdot \mc S$ is well-defined, as showing the right action is precisely analogous. Fix
    \[
    \pi_{j_1,\infty}(a_{j_1}) \in \bigcup_j \pi_{j,\infty}(\mc A_j) \quad (s_j)_{j \in J} \in \mc S.
    \]
    We will define $\pi_{j_1,\infty}(a_{j_1}) \cdot s_{j_2}$ for an arbitrary $ s_{j_2} \in (s_j)_{j \in J}$. The action $\mc A \cdot \mc S$ will extend by the norm-density of the union in $\mc A$. Since $J$ is directed, there exists $j_3 \geq j_1, j_2$. By definition, $s_{j_2} = e^{j_2, j_3}(s_{j_3})$. We claim the action
    \[
    \pi_{j_1,\infty}(a_{j_1}) \cdot s_{j_2} :=  e^{j_2, j_3}(\pi_{j_1, j_3}(a_{j_1}) \cdot s_{j_3})
    \]
    is well-defined. Let us see a diagram. 
    \begin{center}
        \begin{tikzcd}
        {(\mathcal A_{j_1}, \mathcal S_{j_1})} \arrow[rrd, "{\pi_{j_1, j_3}}"] &  &  \\
        &  & {(\mathcal A_{j_3}, \mathcal S_{j_3})} \arrow[lld, "{e^{j_2, j_3}}"] \\
        {(\mathcal A_{j_2}, \mathcal S_{j_2})}   &  &  
        \end{tikzcd}
    \end{center}    
    We now show that this action is independent of the choice of $j_3$. Suppose $j_4 \geq j_2, j_1$ and $j_5 \geq j_4, j_3$. We then have the following diagram
    \begin{figure}[H]
    \centering
        \begin{tikzcd} 
        {(\mathcal A_{j_1}, \mathcal S_{j_1})} \arrow[rr, "{\pi_{j_1, j_3}}" description] \arrow[rrdddd, "{\pi_{j_1, j_4}}" description, bend left] \arrow[rrrrdd, "{\pi_{j_1, j_5}}" description, bend left=49] &  & {(\mathcal A_{j_3}, \mathcal S_{j_3})} \arrow[lldddd, "{e_{j_2, j_3}}" description, bend left] \arrow[rrdd, "{\pi_{j_3, j_5}}" description, bend left] &  &   \\
        &  &    &  &      \\
        &  &  &  & {(\mathcal A_{j_5}, \mathcal S_{j_5})} \arrow[lldd, "{e^{j_4, j_5}}" description, bend left] \arrow[lllldd, "{e^{j_2, j_5}}" description, bend left=49] \arrow[lluu, "{e^{j_3, j_5}}" description, bend left] \\
        &  &   &  &   \\
        {(\mathcal A_{j_2}, \mathcal S_{j_2})}   &  & {(\mathcal A_{j_4}, \mathcal S_{j_4})} \arrow[ll, "{e^{j_2, j_4}}" description] \arrow[rruu, "{\pi_{j_4, j_5}}", bend left] &  &      
        \end{tikzcd}
        \caption{Implicitly, an argument for working with annihilators.}
    \end{figure}
It suffices to show that
    \begin{align*}
        e^{j_2,j_5}(\pi_{j_1, j_5}(a_{j_1}) \cdot s_{j_5}) &= e^{j_2,j_3}(\pi_{j_1, j_3}(a_{j_1}) \cdot s_{j_3}) \\
        e^{j_2,j_5}(\pi_{j_1, j_5}(a_{j_1}) \cdot s_{j_5}) &= e^{j_2,j_4}(\pi_{j_1, j_4}(a_{j_1}) \cdot s_{j_4}) 
    \end{align*}
    and since the proofs are precisely analogous we will only show the first equation. Since $e^{j,k}$ and $\pi_{j,k}$ are maps from projective/inductive systems, we have 
    \begin{align} \label{eqn: c*graphlim}
    e^{j_2,j_5}(\pi_{j_1, j_5}(a_{j_1}) \cdot s_{j_5}) = e^{j_2,j_3} \circ e^{j_3, j_5} (\pi_{j_3, j_5} \circ \pi_{j_1, j_3} (a_{j_1}) \cdot s_{j_5}).
    \end{align}
    From the definition of $C^*$-graph morphisms, we generally have
    \[
    e^{j,k} (\pi_{j,k}(a_j) \cdot s_k) = a_j \cdot e^{j,k}(s_k) = a_j \cdot s_j
    \]
    so applying this to Equation \ref{eqn: c*graphlim}
    \[
    e^{j_2,j_3} \circ e^{j_3, j_5} (\pi_{j_3, j_5} \circ \pi_{j_1, j_3} (a_{j_1}) \cdot s_{j_5}) = e^{j_2,j_3}(\pi_{j_1, j_3}(a_{j_1}) \cdot s_{j_3})
    \]
    we obtain precisely what we need. Recall, however, that $\pi_{j_1,\infty}(a_{j_1})$ does not have a unique form so we must show the action is independent of this choice as well. Suppose
    \[
    \pi_{j_1, \infty}(a_{j_1}) = \pi_{k_1,\infty}(a_{k_1}).
    \]
    This proof follows the same rhythm as above: we choose indices further along than $j_1$ and $k_1$ and use the inductive/projective morphisms to cancel the appropriate maps. We include it for completeness. Find $\ell_2 \geq \ell_1 \geq j_1, k_1, j_2$. We always have 
    \[
    \pi_{\ell_1,\infty} \circ \pi_{j_1,\ell_1} (a_{j_1}) = \pi_{\ell_1,\infty} \circ \pi_{k_1,\ell_1} (a_{k_1})
    \]
    and similarly if we replace $\ell_1$ by $\ell_2$. Therefore there exists $a_{\ell_1}$ such that $\pi_{\ell_1, \infty}(a_{\ell_1}) = \pi_{j_1, \infty}(a_{j_1})$ and similarly for $\ell_2$. We claim that 
    \begin{align*}
        \pi_{j_1, \infty}(a_{j_1}) \cdot s_{j_2} &= \pi_{\ell_1, \infty}(a_{\ell_1}) \cdot s_{j_2}     \\
        \pi_{k_1, \infty}(a_{k_1}) \cdot s_{j_2} &= \pi_{\ell_1, \infty}(a_{\ell_1}) \cdot s_{j_2}  
    \end{align*}
    and as before we only prove the first equation. Since $\ell_1 \leq \ell_2$,
    \begin{align*}
    \pi_{j_1, \infty}(a_{j_1}) \cdot s_{j_2} &= e^{j_2, \ell_2} (\pi_{j_1, \ell_2}(a_{j_1}) \cdot s_{\ell_2}) \\
    &= e^{j_2, \ell_1} \circ e^{\ell_1, \ell_2} (\pi_{\ell_1, \ell_2} \circ \pi_{\ell_1, j_1} (a_{j_1}) \cdot s_{\ell_2}) \\
    &= e^{j_2, \ell_1}(\pi_{\ell_1, j_1} (a_{j_1}) \cdot s_{\ell_2})\\
    &= \pi_{\ell_1, \infty}(a_{\ell_1}) \cdot s_{j_2} .
    \end{align*}
\end{proof}

\section{Concluding Remarks} \label{sec: remarks}
We dub quantum graphs ``$W^*$-graphs'' in this subsection for aesthetic symmetry.

\subsection{\texorpdfstring{$C^*$}{}-Graphs}
    
\begin{remark}
    Since $W^*$-graphs quantize the vertices as $\ell^\infty(V)$, there are no extra topological considerations on $V$. If we are to quantize the vertices as $C(V)$, however, Gelfand duality requires $V$ be compact (or at least locally compact) and $f$ be continuous. The dual category to $C^*$-graphs on commutative $C^*$-algebras is therefore no longer quite the category of graphs.
\end{remark}
\begin{remark}
    An argument in favor of $C^*$-algebraic route is that the inductive limit of C$^*$-algebras generally preserves more of the structure of the algebras. For instance, the enveloping von Neumann algebras of any two uniformly hyperfinite (UHF) algebras $\varinjlim_{\mathbf{C^*}} M_{n_k}$ and $\varinjlim_{\mathbf{C^*}} M_{n_j}$ are isomorphic \cite[Corollary 5.2]{pedersenuhf}. For UHF algebras, $\varinjlim_{\mathbf{W^*}} M_{n_k}$ and $\varinjlim_{\mathbf{W^*}} M_{n_j}$ are precisely the enveloping von Neumann algebras of the respective $C^*$-algebraic limits. (One can see this by 1) noting the adjoint to the inclusion function $F: \mathbf{W^*} \to \mathbf{C^*}$ is the functor that sends each $C^*$-algebra to its double dual and 2) recalling left adjoints preserve colimits.) The relationship between $\varinjlim_{\mathbf{C^*}} \mc M_j$ and $\varinjlim_{\mathbf{W^*}} \mc M_j$ for arbitrary von Neumann algebras $\mc M_j$ is generally not as neat, however. We again refer to reader to \cite{guichardet}.
\end{remark}
\begin{remark}
    Could one define a morphism of $C^*$-graphs using annihilators as in Section \ref{sec: morphisms}? No, at least not with the $\eh$ tensor product. The key property of $\eh$ is that for any unital subalgebras $\mc A, \mc B \subset B(\mc H)$ (with no topological assumptions)
        \[
        \mc A' \eh \mc B' = CB_{\mc A,\mc B}(K(\mc H) , B(\mc H)) = CB^\sigma_{\mc A,\mc B}(B(\mc H))
        \]
        \cite[Theorem 4.2(ii)]{blecher-smith-w*h}. Thus $\mc M \eh \mc M$ naturally identifies the $\mc M'$ bimodules, but this does not necessarily hold for bimodules over other algebras. It may be possible to characterize $C^*$-graphs with annihilators in other tensor products.
\end{remark}
\begin{remark}
    Could one define a morphism of $W^*$-graphs by replacing the $\mathbf{C^*}$ morphisms with $\mathbf{W^*}$ morphism and using a similar map $e$ to induce the mapping of edges/operator spaces? Possibly. However, map $e$ would need to be relaxed to a $w^*$-continuous morphism so we must descend to a subcategory of ${\bf {OpSp}}$ where the objects are dual operator spaces. 
\end{remark}

\subsection{\texorpdfstring{$W^*$}{}-Graphs}

\begin{remark} \label{rmk: limit of q graphs}
    As noted above, one can take the projective (or indeed, inductive limit) of $W^*$-graphs along an index category that is a directed set. The construction above does require that the index be directed so that in Lemma \ref{lem: alg-left-ideal} the space $\mc A$ naturally absorbs left multiplication by a $w^*$-dense subset of the ambient von Neumann algebra. 
    
    Given the fuss about products and coproducts in the preliminaries (Section \ref{sec: limits-prelim}), in retrospect it may have been more elegant to find the (co)product and (co)equalizer in the category of $W^*$-graphs to prove such all small (co)limits exist. See Question \ref{question: coproduct}.
\end{remark}

\begin{remark} \label{rmk: w*/ucp}
    In quantum information theory, quantum channels can be characterized as UCP maps between von Neumann algebras (or dually, CPTP maps). More precisely, the quantum channel is a UCP map on the observables that generate a von Neumann algebra. One would naturally try to take a (co)limit in the category of von Neumann algebras with normal UCP maps as morphisms. However, I do not know whether the category of von Neumann algebras with UCP maps has (co)limits.
    
    In any case, this category of von Neumann algebras and normal UCP maps may be the wrong interpretation in the setting of quantum channels. Let $\Phi: \mc M_1 \to \mc M_2$ be a quantum channel (i.e., a normal UCP map) with a Kraus form $\Phi(T) = \sum_{i \in I} K_i^\dagger T K_i$ and define 
    \[
    \mc S := \overline{\Span}^{\sigma(B(\mc H)_*)}\{K_i^\dagger K_j : i,j \in I\}
    \]
    to be its quantum confusability graph. (I do not know if confusability graphs have been defined for quantum channels beyond the finite dimensional case, but this seems like a reasonable definition.) Similarly, let $\mc T$ be the quantum confusability graph for a quantum channel $\Psi: \mc N_1 \to \mc N_2$. A morphism from $\mc S$ to $\mc T$ is implemented by a $\mathbf{W^*}$ morphism $\theta: \mc N_1 \to \mc M_1$. The algebraic relations of $\mc N_1$ and $\mc M_1$ are a fundamental property of the observables in the respective von Neumann algebras -- namely, observables commute if and only if they can be simultaneously measured. A UCP map would not preserve these relations.
\end{remark}

\section{Further Questions} \label{sec: questions}

Here we collect sketch some questions inspired by the work above. Fuller expositions may be found in the author's thesis. Questions \ref{question: transitivity}, \ref{question: left ideals}, and \ref{question: coproduct} are some loose ends from the work above. Question \ref{question: t-morphisms} has connections with quantum information theory. Question \ref{question: qcayleygraph} proposes that quantum Cayley graphs of quantum groups should be considered as left ideals in some $\eh$ tensor product. Question \ref{question: categorification} proposes that the $\eh$ tensor product characterization of quantum graphs could extend the categorification of finite quantum graphs.

\subsection{Transitivity of Quantum Graphs} \label{question: transitivity}

A specter that has haunted many investigations of quantum graphs returns to the question of transitivity. A quantum graph $\mc S$ on a von Neumann algebra $\mc M \subset B(\mc H)$ is \textit{transitive} if $\mc S^2 \subset \mc S$ \cite[Definition 2.4]{weaverqrelations}. In \cite[Remark 3.4(iii)]{gp-obs}, the author asks how one might obtain $\Ann(\mc S^2)$ from $\Ann(\mc S) \subset \mc M \eh \mc M$. It also plays a role in abstractly defining a \textit{quantum distance operator}. Ideally, such an operator should have a categorical characterization as adjacency operators do in Appendix \ref{appendix: string}.

\subsection{\texorpdfstring{$\sigma(\mc M_* \h \mc M_*)$}{}-Closed Left Ideals} \label{question: left ideals}

Given the significance of $\sigma(\mc M_* \h \mc M_*)$-closed graphs in Section \ref{sec: limit of q graphs}, it would nice to have another characterization of such graphs. As noted in Corollary \ref{cor: K(H)graphex}, all quantum graphs on a von Neumann algebra of the form $\mc M = \prod_{i \in I}^\infty M_{n_i}$ are examples.

\subsection{Coproduct of Quantum Graphs} \label{question: coproduct}

The categorical product of two finite classical graphs $G_1$ and $G_2$ is the so-called \textit{tensor product} of graphs, so-called because the adjacency operator of $G_1 \prod G_2$ is the tensor product of the adjacency operators of $G_1$ and $G_2$. The vertex set of this categorical product is $V_1 \otimes V_2$. A naive quantization of the vertex set might be $\ell^\infty(V_1) \otimes \ell^\infty(V_2)$.

In the category of quantum graphs, however, the ``vertex set'' of the categorical product of $G_1$ and $G_2$ is the free product of von Neumann algebras (see Remark \ref{rmk: free product vna}). Without going into precise detail about how to form the free product, it is without a doubt much bigger than $\ell^\infty(V_1) \otimes \ell^\infty(V_2)$. However, this is not so much an obstruction as an interesting feature of non-commutative algebras. The coproduct in the category of \textit{commutative} associative algebras is the tensor product, which aligns with the classical case. Allowing our algebras to be non-commutative necessitates free product-like constructions in the coproduct, a byproduct of which is that a product of quantum graphs must be a much larger object than in the classical case.

\subsection{\textit{t}-Morphisms of Quantum Graphs} \label{question: t-morphisms}

The morphisms explored in Subsection \ref{subsec: equiv-morphs} are considered \textit{classical} morphisms of quantum graphs because no external quantum resources are involved in the morphisms. Allowing the use of quantum resources naturally leads to the graph homomorphism game (see, for example, \cite{qtc-graph-hom}). Non-local game theory distinguishes among the types of quantum resources available to the players in the non-local game. \cite[Definition 4.1]{jun-alg-connect} generalizes $t$-homomorphisms from \cite{qtc-graph-hom} for $t \in \{loc, q, qa, C^*, alg\}$ via ${\bf W^*}$ morphisms. Namely, a $t$-homomorphism from $(\mc S_1, \mc M_1)$ to $(\mc S_2, \mc M_2)$ for finite dimensional von Neumann algebras $\mc M_k$ is a unital $*$-algebra $\mc A_t$ and a unital $*$-homomorphism $\theta: \mc M_2 \to \mc M_1 \otimes \mc A_t$ satisfying
\[
\theta^\dagger (\mc S_1 \otimes 1_{\mc A_t}) \theta \subset \mc S_2 \otimes \mc A_t
\]
and $\mc A_t$ is an algebra whose type depends on $t$. (The notation $\theta^\dagger$ does \textit{not} indicate the adjoint of a map between Hilbert spaces; we refer the reader to \cite{jun-alg-connect} for the definition.) As of this writing there does not seem to be a translation for $t$-homomorphisms in terms of annihilators, but following in the vein of Subsection \ref{subsec: equiv-morphs} such a translation would yield a compatible notion of $t$-homomorphism for infinite quantum graphs. 

\subsection{Quantum Cayley Graphs of (Profinite) Quantum Groups} \label{question: qcayleygraph}

A classical theorem states that a Cayley graph of a profinite group is always a profinite graph \cite[Example 2.1.12]{profinitebook}. The question of whether this statement still holds if ``quantum'' is inserted before every noun is still open. As of this writing, there does not seem to be a definition for the Cayley graph of a general quantum group, though we have one for discrete quantum groups from \cite{quantum-cayley}. In \cite{gp-obs}, the author proposes a definition for the invariant quantum relations (invariant quantum graphs, according to the conventions of this paper) of a quantum group. The author proves that his proposed definition works in the case in which the quantum group is a classical locally compact group and claims that the proof works the same when replacing the relevant objects by their analogues in \cite{lcqg}. 

We have a notion of profinite quantum graphs from Section \ref{sec: limit of q graphs}. If one allows Cayley graphs to be degenerate (i.e., suppose that the generators of the Cayley graph do not generate the group), then $G$-invariant relations in $G \times G$ are precisely the Cayley graphs of $G$. We thus have candidates for the quantum Cayley graphs of a profinite quantum groups. It remains to see if the classical theorem holds in the quantum case.

\subsection{Categorification of Quantum Graphs} \label{question: categorification}

The paper \cite{categorified-graphs} is very roughly speaking the categorification of quantum graphs on finite dimensional von Neumann algebras. One could possibly extend this categorification to quantum graphs on arbitrary von Neumann algebras through the relativized extended Haagerup tensor product as defined in \cite[Definition 1.3]{magajna-strong}. (Thank to Srivatsav Kunnawalkam Elayavalli for pointing out this resource.)

\begin{center}
    
\end{center}

\appendix

\addcontentsline{toc}{section}{Appendix}

\section{A Taste of Categorification} \label{appendix: string}

The purpose of this appendix is to introduce just enough terminology to show that classical homomorphisms of finite quantum graphs introduced in \cite[Section 5.2]{musto-reutter-verdon} are equivalent to a morphism of finite quantum graphs in Definition \ref{defn: qgraph morph}. 

The authors of \cite{cq-metric} quantize (finite) sets, functions, and (finite) graphs through categorification. Roughly, one characterizes the classical objects or properties via morphisms in the category \textbf{Set} and interpret them in another category (\textbf{FHilb}, the category of finite dimensional Hilbert spaces and linear maps, for instance) to obtain their quantum analogs.

Definitions will be accompanied by motivating examples. The authors in \cite{musto-reutter-verdon} develop quantum graphs by quantizing the adjacency matrix of a graph via string diagrams. This is part of a broader trend of categorification: by rephrasing familiar objects and phenomena in categorical terms, we achieve a new intuition of the categorified object. In this appendix we will categorify undirected graphs on finitely many vertices. We will not introduce the graphical methods here. For short overviews on graphical methods, see \cite{musto-reutter-verdon} and \cite[Chapter 4]{adina}. For an excellent textbook, see \cite{heunen-vicary}.

Our conventions: in this appendix, all Hilbert spaces are finite dimensional. If $\varphi: \mc H_1 \to \mc H_2$ is a map between Hilbert spaces, we denote its adjoint by $\varphi^\dagger: \mc H_2 \to \mc H_1$. 

\begin{definition}
    An \textit{algebra} is a Hilbert space $\mc H$ with a multiplication map $m: \mc H \otimes \mc H \to \mc H$ and unit map $u: \C \to \mc H$ satisfying the following equations:
    \begin{align*}
    (m \otimes I_{\mc H}) \circ (m \otimes I_{\mc H}) &= (I_{\mc H} \otimes m)\circ (I_{\mc H} \otimes m) &&(\text{associativity)} \\
    m \circ (I_{\mc H} \otimes u) &= I_{\mc H} = m \circ (u \otimes I_{\mc H}) &&(\text{unitality}).
    \end{align*}
    A \textit{coalgebra} is a Hilbert space $\mc H$ with a comultiplication map $\delta: \mc H \to \mc H \otimes \mc H$ and a counit map $\varepsilon: \mc H \to \C$ satisfying the following equations:
    \begin{align*}
    (\delta \otimes I_{\mc H}) \circ \delta &= (I_{\mc H} \otimes \delta) \circ \delta  &&(\text{coassociativity}) \\
    (I_{\mc H} \otimes \varepsilon) \circ \delta &= I_{\mc H} = (\varepsilon \otimes I_{\mc H}) \circ \delta &&(\text{counitality)}
    \end{align*}
\end{definition}
As the names suggest, if $(\mc H, m, u)$ is an algebra then $(\mc H, m^\dagger, u^\dagger)$ is a coalgebra. 

\begin{example} \label{ex: algebra}
    Let $\mc H$ be a Hilbert space with orthonormal basis $\{|i\>\}_{i \in [n]}$. The following maps make $\mc H$ an algebra:
    \begin{align*}
        m(|i\> \otimes |j\>) &= \<i|j\> |i\> & u(1_{\C}) = \sum_{i \in [n]} |i\>.
    \end{align*}
    Their adjoints are given by
    \begin{align*}
        m^\dagger(|i\>) &= |i\> \otimes |i\> & u^\dagger(|i\>) = 1_{\C}.
    \end{align*}
\end{example}

The objects of interest are both algebras and coalgebras. Namely, the structures $m$ and $u$ are compatible with $m^\dagger$ and $u^\dagger$ in the following way.

\begin{definition} \label{defn: daggerfrob}
    A \textit{dagger Frobenius algebra} is an algebra $(\mc H, m,u)$ such that 
    \begin{align*}
    (m \otimes I_{\mc H}) \circ (I_{\mc H} \otimes m^\dagger) = m^\dagger \circ m = (I_{\mc H} \otimes m) \circ (m^\dagger \otimes I_{\mc H}).
    \end{align*}
    Define the swap map $\sigma: \mc H \otimes \mc H \to \mc H \otimes \mc H$ by $\sigma(|i\> \otimes |j\>) = |j\> \otimes |i\>$. Such an algebra is \textit{special} if
    \[
    m^\dagger \circ m = I_{\mc H}.
    \]
    \textit{symmetric} if
    \[
    u^\dagger \circ m \circ \sigma = u^\dagger \circ m
    \]
    and \textit{commutative} if
    \[
    m \circ \sigma = m.
    \]
\end{definition}

Example \ref{ex: algebra} is also special symmetric dagger Frobenius algebra (abbreviated SSFA). In fact, there is a bijection between finite sets and SCFAs \cite[Corollary 7.2]{scfa-sets} and a bijection between finite dimensional $C^*$-algebras and SSFAs \cite[Theorem 4.6]{vicary}. The authors of \cite{musto-reutter-verdon} thus motivate their definition of SSFAs as \textit{quantum sets}. These will play the role of vertices in quantum graphs. 

In order to describe homomorphisms between quantum graphs, we also need the quantum analog of functions between vertices. For finite sets $V_1, V_2$ there is a bijection between
\[
\{\text{functions } f: V_1 \to V_2\} \leftrightarrow \{*\text{-homomorphisms } \theta: \C(V_2) \to \C(V_1)\}
\]
and 
\[
\{*\text{-homomorphisms } \theta: \C(V_2) \to \C(V_1)\} \leftrightarrow \{*\text{-cohomomorphisms } \theta: \C(V_1) \to \C(V_2)\}.
\]
Unsurprisingly, we can express $*$-(co)homomorphisms using dagger Frobenius algebra structures.
\begin{definition} \label{defn: *-cohom}
    Let $A = (\mc H_A, m_A, u_A)$ and $B = (\mc H_B, m_B, u_B)$ be SSFAs. A $*$-homomorphism $\theta: A \to B$ is a linear map such that
    \begin{align*}
    \theta \circ m_A &= m_B \circ (\theta \otimes \theta) \\
    \theta \circ u_A &= u_A \\
    \theta^\dagger &= (u_B^\dagger \circ m_B \otimes I_A) \circ (I_B \otimes \theta \otimes I_A) \circ (I_B \otimes m_A^\dagger \circ u_A).
    \end{align*}
    Dually, a $*$-cohomomorphism $f: A \to B$ is a linear map such that
    \begin{align*}
        m_B^\dagger \circ f &= (f \otimes f) \circ m_A^\dagger \\
        u_B^\dagger \circ f &= u_B^\dagger \\
        f^\dagger &= (I_A \otimes u_B^\dagger \circ m_B) \circ (I_A \otimes f \otimes I_B) \circ (m_A^\dagger \circ u_A \otimes I_B).
    \end{align*}
\end{definition}
These definitions for $*$-homomorphism and $*$-cohomomorphism coincide with the popular ones for finite dimensional $C^*$-algebras \cite[Theorem 4.7]{vicary}.

\clearpage

\printbibliography[
heading=bibintoc,
title={References}
]

\end{document}